\long\def\@savemarbox#1#2{\global\setbox#1\vtop{\hsize\marginparwidth 
  \@parboxrestore\tiny\raggedright #2}}
\renewcommand*{\backref}[1]{}
\renewcommand*{\backrefalt}[4]{
  \ifcase #1
  [No citations.]
  \or [#2]
  \else [#2]
  \fi }
   \def\MR#1{}
\numberwithin{equation}{section}
\theoremstyle{plain}
\newtheorem{theorem}[equation]{Theorem}
\newtheorem{conj}[equation]{Conjecture}
\newtheorem{lemma}[equation]{Lemma}
\newtheorem{corol}[equation]{Corollary}
\newtheorem{corollary}[equation]{Corollary}
\newtheorem{proposition}[equation]{Proposition}
\newtheorem*{namedtheorem}{\theoremname}
\newcommand{\theoremname}{testing}
\theoremstyle{definition}
\newtheorem{definition}[equation]{Definition}
\newtheorem{remark}[equation]{Remark}
\newtheorem{example}[equation]{Example}
\newcommand{\from}{\colon} % As in ``f maps _from_ X _to_ Y''.
\newcommand{\ZZ}{{\mathbb{Z}}}
\newcommand{\vol}{\operatorname{vol}}
\newcommand{\area}{\operatorname{area}}
\newcommand{\guts}{\operatorname{guts}}
\newcommand{\bdy}{\partial}
\newcommand{\cut}{{\backslash \backslash}}
\renewcommand{\setminus}{{\smallsetminus}}
\newcommand{\tw}{\operatorname{tw}}
\newcommand{\Cr}{\operatorname{cr}}
\newcommand{\refthm}[1]{Theorem~\ref{Thm:#1}}
\newcommand{\reflem}[1]{Lemma~\ref{Lem:#1}}
\newcommand{\refprop}[1]{Proposition~\ref{Prop:#1}}
\newcommand{\refcor}[1]{Corollary~\ref{Cor:#1}}
\newcommand{\refrem}[1]{Remark~\ref{Rem:#1}}
\newcommand{\refeqn}[1]{\eqref{Eqn:#1}}
\newcommand{\refitm}[1]{\eqref{Itm:#1}}
\newcommand{\refdef}[1]{Definition~\ref{Def:#1}}
\newcommand{\refsec}[1]{Section~\ref{Sec:#1}}
\newcommand{\reffig}[1]{Figure~\ref{Fig:#1}}
\newcommand{\refexa}[1]{Example~\ref{Exa:#1}}
\title{Geometry of alternating links on surfaces}
\author{Joshua A. Howie}
\address{School of Mathematical Sciences, Monash University, VIC 3800, Australia}
\email{josh.howie@monash.edu}
\author{Jessica S. Purcell}
\address{School of Mathematical Sciences, Monash University, VIC 3800, Australia}
\email{jessica.purcell@monash.edu}
\begin{document}

\begin{abstract}
We consider links that are alternating on surfaces embedded in a compact 3-manifold. We show that under mild restrictions, the complement of the link decomposes into simpler pieces, generalising the polyhedral decomposition of alternating links of Menasco. We use this to prove various facts about the hyperbolic geometry of generalisations of alternating links, including weakly generalised alternating links described by the first author. We give diagrammatical properties that determine when such links are hyperbolic, find the geometry of their checkerboard surfaces, bound volume, and exclude exceptional Dehn fillings. 
\end{abstract}

\maketitle

%% Section 1
\section{Introduction}\label{Sec:Intro}

In 1982, Thurston proved that all knots in the 3-sphere are either satellite knots, torus knots, or hyperbolic \cite{thu82}. Of all classes of knots, alternating knots (and links) have been the most amenable to the study of hyperbolic geometry in the ensuing years. Menasco proved that aside from $(2,q)$-torus knots and links, all prime alternating links are hyperbolic \cite{men84}. Lackenby bounded their volumes in terms of a reduced alternating diagram \cite{lac04}. Adams showed that checkerboard surfaces in hyperbolic alternating links are quasifuchsian \cite{ada07}; see also \cite{fkp14}. Lackenby and Purcell bounded cusp geometry \cite{lp16}. Additionally, there are several open conjectures on relationships between their geometry and an alternating diagram that arise from computer calculation, for example that of Thistlethwaite and Tsvietkova \cite{tt14}.

Because hyperbolic geometric properties of alternating knots can be read off of an alternating diagram, it makes sense to try to generalise these knots, and to try to distill the properties of such knots that lead to geometric information. One important property is the existence of a pair of essential checkerboard surfaces, which are known to characterise an alternating link \cite{gre17, how17}. These lead to a decomposition of the link complement into polyhedra, which can be given an angle structure \cite{lac00}. This gives tools to study surfaces embedded in the link complement. Angled polyhedra were generalised to ``angled blocks'' by Futer and Gu\'eritaud to study arborescent and Montesinos links \cite{fg09}. However, their angled blocks do not allow certain combinatorial behaviours, such as bigon faces, that arise in practice. Here, we generalise further, to angled decompositions we call angled chunks. We also allow decompositions of manifolds with boundary.

We apply these techniques to broad generalisations of alternating knots in compact 3-manifolds, including generalisations due to the first author \cite{how15t}. An alternating knot has a diagram that is alternating on a plane of projection $S^2\subset S^3$. We may also consider alternating projections of knots onto higher genus surfaces in more general 3-manifolds. Adams was one of the first to consider such knots; in 1994 he studied alternating projections on a Heegaard torus in $S^3$ and lens spaces, and their geometry \cite{ada94}. The case of higher genus surfaces in $S^3$ has been studied by Hayashi \cite{hay95} and Ozawa \cite{oza06}. By generalising further, Howie \cite{how15t} and Howie and Rubinstein \cite{hr16} obtain a more general class of alternating diagrams on surfaces in $S^3$ for which the checkerboard surfaces are guaranteed to be essential. Here, we obtain similar results without restricting to $S^3$. 

In this paper, we utilise essential surfaces and angled decompositions to prove a large number of results on the geometry of classes of generalisations of alternating knots in compact 3-manifolds. We identify from a diagram conditions that guarantee such links are hyperbolic, satellite, or torus links, generalising \cite{men84, hay95}. We identify the geometry of checkerboard surfaces, either accidental, quasifuchsian, or a virtual fiber, generalising \cite{fkp14, ada07}. We also bound the volumes of such links from below, generalising \cite{lac04}, and we determine conditions that guarantee their Dehn fillings are hyperbolic, generalising \cite{lac00}. We re-frame all these disparate results as consequences of the existence of essential surfaces and an angled decomposition. It is likely that much of this work will apply to additional classes of link complements and 3-manifolds with such a decomposition, but we focus here on alternating links. 

To state our results carefully, we must describe the alternating links on surfaces carefully, and that requires ruling out certain trivial diagrams and generalisations of connected sums. Additionally, the link must project to the surface of the diagram in a substantial way, best described by a condition on representativity $r(\pi(L),F)$, which is adapted from a notion in graph theory. These conditions are very natural, described in detail in \refsec{Gen}, where we define a \emph{weakly generalised alternating link}, \refdef{WeaklyGeneralisedAlternating}. One consequence is the following.

\begin{theorem}\label{Thm:Intro}
Let $\pi(L)$ be a weakly generalised alternating projection of a link $L$ onto a generalised projection surface $F$ in a 3-manifold $Y$. Suppose $Y$ is compact, orientable, irreducible, and if $\bdy Y \neq \emptyset$, then $\bdy Y$ is incompressible in $Y\setminus N(F)$. 
Finally, suppose $Y\setminus N(F)$ is atoroidal and contains no essential annuli with both boundary components on $\bdy Y$.
If $F$ has genus at least one, and the regions in the complement of $\pi(L)$ on $F$ are disks, and the representativity $r(\pi(L),F)>4$, then
\begin{enumerate}
\item $Y\setminus L$ is hyperbolic.
\item $Y\setminus L$ admits two checkerboard surfaces that are essential and quasifuchsian.
\item The hyperbolic volume of $Y\setminus L$ is bounded below by a function of the twist number of $\pi(L)$ and the Euler characteristic of $F$:
\[ \vol(Y\setminus L)\geq\frac{v_8}{2}(\tw(\pi(L))-\chi(F)-\chi(\bdy Y)).\]
Here, in the case that $Y\setminus N(L)$ has boundary components of genus greater than one, we take $\vol(Y\setminus L)$ to mean the volume of the unique hyperbolic manifold with interior homeomorphic to $Y\setminus L$, and with higher genus boundary components that are totally geodesic.
%\marginjess{Added note in infinite volume case}
\item Further, if $L$ is a knot with twist number greater than eight, or the genus of $F$ is at least five, then all non-trivial Dehn fillings of $Y\setminus L$ are hyperbolic.
\end{enumerate}
\end{theorem}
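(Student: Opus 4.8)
The plan is to reduce all four conclusions to two ingredients: that the checkerboard surfaces $\Sigma_1,\Sigma_2$ of $\pi(L)$ are essential in $Y\setminus L$, and that cutting $Y\setminus L$ along $\Sigma_1\cup\Sigma_2$ produces an angled chunk decomposition. First I would establish essentiality of $\Sigma_1$ and $\Sigma_2$. This is where the hypotheses are used: irreducibility of $Y$, atoroidality of $Y\setminus N(F)$, and incompressibility of $\bdy Y$ obstruct the compressions, boundary compressions, and essential annuli that would otherwise make $\Sigma_1$ or $\Sigma_2$ inessential, while the assumption that the complementary regions of $\pi(L)$ on $F$ are disks, together with representativity $r(\pi(L),F)>4$, rules out the remaining compressions --- those that would produce essential curves on $F$ meeting $\pi(L)$ too few times. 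This is the analogue, for a surface in a general $Y$, of the essentiality theorems of the first author and of Howie--Rubinstein. Then, generalising Menasco's polyhedral decomposition, Lackenby's angled polyhedra, and Futer--Gu\'eritaud's angled blocks, I would cut $Y\setminus L$ along $\Sigma_1\cup\Sigma_2$ and check that the resulting pieces --- pieces of $Y\cut F$ with parabolic locus along $\bdy N(L)$ --- assemble into an angled chunk decomposition: assign dihedral angle $\pi/2$ at each crossing edge, verify that the angles around each interior edge sum to $2\pi$, and verify the combinatorial Gauss--Bonnet condition. The hypothesis that $Y\setminus N(F)$ contains no essential annulus with both ends on $\bdy Y$ is precisely what lets the chunks meeting $\bdy Y$ be angled.

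Granting the decomposition, conclusion (1) is the standard consequence that a $3$-manifold admitting an angle structure is irreducible, atoroidal, and anannular: an essential sphere, torus, or annulus could be normalised with respect to the chunks, and the angle sums would then force it to have nonpositive combinatorial area, a contradiction. With the remaining hypotheses on $Y$, geometrization yields a complete finite-volume hyperbolic structure on $Y\setminus L$; components of $\bdy Y$ of genus greater than one persist as totally geodesic boundary, which is the convention under which $\vol(Y\setminus L)$ is read in (3).

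Conclusion (2) combines the essentiality established above with the standard trichotomy: a properly embedded essential surface in a finite-volume hyperbolic $3$-manifold, if it is not an essential annulus, is quasifuchsian unless it carries an accidental parabolic or is a virtual fibre. Following Adams and Futer--Kalfagianni--Purcell, I would exclude the latter two possibilities by putting $\Sigma_i$ in normal form relative to the angled chunks and estimating combinatorial area: an accidental annulus would yield a normal annulus of nonpositive combinatorial area, and a fibre would force a spiralling family of normal curves of nonpositive total combinatorial area, each contradicting the angle structure. Hence $\Sigma_1$ and $\Sigma_2$ are quasifuchsian.

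For (3) I would apply the guts estimate of Agol--Storm--Thurston, as used by Futer--Kalfagianni--Purcell, namely $\vol(Y\setminus L)\ge -v_8\,\chi\bigl(\guts((Y\setminus L)\cut\Sigma_i)\bigr)$, and then read the guts off the chunk decomposition: the characteristic $I$-bundle of the cut manifold is carried by the twist regions of $\pi(L)$ together with collar annuli of $\bdy F$ and of $\bdy Y$, so that excising it changes the Euler characteristic in a controlled way and gives $-\chi(\guts)\ge\tfrac12\bigl(\tw(\pi(L))-\chi(F)-\chi(\bdy Y)\bigr)$; for $Y=S^3$ and $F=S^2$ this recovers Lackenby's bound $\tfrac{v_8}{2}(\tw-2)$. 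For (4), the same chunk decomposition bounds from below the combinatorial length of each slope on a cusp of $Y\setminus L$ in terms of the number of twist regions meeting that component and of $\chi(F)$; the numerical hypothesis --- twist number greater than eight for a knot, or $g(F)\ge 5$ --- is calibrated so that every non-meridional slope exceeds the threshold in Lackenby's Dehn-surgery theorem for angled decompositions, whence all non-trivial fillings remain hyperbolic. The step I expect to be the main obstacle is the first one: verifying that the pieces of $Y\cut F$ genuinely assemble into an angled chunk decomposition under the weakened hypotheses on $Y$ rather than $Y=S^3$; a close second is excluding the virtual fibre case in (2), the most delicate point of the combinatorial-area analysis.
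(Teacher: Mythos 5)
Your plan correctly identifies the two central tools (essentiality of the checkerboard surfaces and an angled chunk decomposition), and the outlines of (3) and (4) are on the right track. But there is a genuine gap in how you propose to get (1), and it propagates into (2).

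You write that conclusion (1) ``is the standard consequence that a $3$-manifold admitting an angle structure is irreducible, atoroidal, and anannular: an essential sphere, torus, or annulus could be normalised\ldots and the angle sums would then force it to have nonpositive combinatorial area, a contradiction.'' This is exactly what fails in the setting of this paper, and the paper flags it explicitly. For Futer--Gu\'eritaud's angled \emph{blocks} the claim is true, because their blocks forbid bigon faces and essential tori inside a block. The angled \emph{chunks} here are deliberately weaker. The only thing the angle structure gives for free is irreducibility and boundary irreducibility (Theorem~\ref{Thm:IrredBdyIrred}). For a normal torus or annulus, Gauss--Bonnet gives combinatorial area \emph{exactly} zero, and Proposition~\ref{Prop:NonnegArea} says $a\ge 0$ with equality allowed precisely for such surfaces (area-zero annuli with boundary in non-contractible faces, incompressible tori entirely inside a chunk). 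Zero is nonnegative, so there is no contradiction from the angle sums alone; essential tori and annuli genuinely \emph{can} survive an angled chunk decomposition (Hayashi's example is cited for this). Ruling them out is the substance of Section~4 and uses the additional hypotheses you have not yet deployed at this point: that all regions of $F\setminus\pi(L)$ are disks (so area-zero normal annuli with boundary in non-contractible faces cannot occur), that $r(\pi(L),F)>4$ (to force area-zero normal squares to be parallel into $F$ and then run them into the weakly-prime condition), and that $Y\setminus N(F)$ is atoroidal and $\bdy$-anannular (to handle tori/annuli disjoint from $N(F)$). Those hypotheses feed into Lemmas~\ref{Lem:NormalSquare}, \ref{Lem:Annulus}, Theorem~\ref{Thm:AnAnnularLink}, and Propositions~\ref{Prop:Toroidal}, \ref{Prop:HypToroidal}, none of which is an area estimate. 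Relatedly, your remark that the $\bdy$-anannular hypothesis on $Y\setminus N(F)$ ``is precisely what lets the chunks meeting $\bdy Y$ be angled'' is misplaced: the angle structure (Proposition~\ref{Prop:AngledChunkDecomp}) needs only the representativity and weakly-prime conditions, and the $\bdy$-anannular hypothesis enters later, to exclude essential annuli of $Y\setminus L$ running between components of $\bdy Y$.

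The same issue recurs in (2). An accidental annulus also has combinatorial area exactly zero (this is Lemma~\ref{Lem:squares}), so ``nonpositive combinatorial area, contradicting the angle structure'' is not available. What actually produces the contradiction is a combinatorial analysis of how the area-zero normal squares must glue (Lemma~\ref{Lem:consecsquare} and Theorem~\ref{Thm:linkaccid}), again driven by the representativity and weakly-prime hypotheses. Likewise, the exclusion of the semi-fiber case in the paper (Theorem~\ref{Thm:Fibered}, Lemma~\ref{Lem:ProductRectangles}) is not a combinatorial-area argument at all: it shows that an $I$-bundle must meet each white disk face in a product rectangle, forcing every white face to be a bigon and hence a string of bigons, which is excluded. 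Your intuition that normal form in the chunks is the right tool is correct, but the mechanism you propose (area estimates giving strict negativity) cannot close these cases, and you would need to replace it with the square-by-square gluing analysis.
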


\refthm{Intro} follows from Theorems~\ref{Thm:hress}, \ref{Thm:Hyperbolic}, \ref{Thm:volguts}, and~\ref{Thm:DehnFilling} and Corollaries~\ref{Cor:Quasifuchsian} and~\ref{Cor:DehnGenusBound} in this paper. More general results also hold, stated below.

In the classical setting, $Y=S^3$. 
The results of \refthm{Intro} immediately apply to large classes of the generalisations of alternating knots in $S^3$ studied in \cite{ada94, oza06, hay95, how15t, hr16}. However, since we allow more general $Y$, \refthm{Intro} also applies more broadly, for example to all cellular alternating links in the thickened torus $T^2\times I$, as in \cite{ckp}. Geometric properties of alternating links in $T^2\times I$ arising from Euclidean tilings have also been studied recently in \cite{acm17} and \cite{ckp}; see \refexa{T2xI}.

\subsection{Organisation of results}

In \refsec{Gen}, we define our generalisations of alternating knots, particularly conditions that ensure our generalisations are nontrivial with reduced diagrams. \refsec{Chunks} introduces angled chunks, and proves that complements of these links have an angled chunk decomposition. This gives us tools to discuss the hyperbolicity of their complements in \refsec{Hyperbolic}. The techniques can be applied to identify geometry of surfaces in \refsec{Accidental}, and to bound volumes in \refsec{Volume}. Finally, we restrict their exceptional Dehn fillings in \refsec{Filling}. 

\subsection{Acknowledgements}
Both authors were partially supported by grants from the Australian Research Council.
%% ARC grant DP160103085. 
The first author was also supported by a Lift-off Fellowship from the Australian Mathematical Society. 
We thank Colin Adams, Abhijit Champanerkar, Effie Kalfagianni, Ilya Kofman, Makoto Ozawa, and Hyam Rubinstein for helpful conversations. We also thank the anonymous referee for their suggestions.

% 2
%%%%%%%%%%%%%%%%%%%%%%%%%%%%%%%%%%%%%%%%%%%%%%%%%%%%%%%%%%%%%%%%%
\section{Generalisations of alternating knots and links}\label{Sec:Gen}
An alternating knot has a diagram that is alternating on a plane of projection, $S^2$ embedded in $S^3$. One may generalise to an alternating diagram on another surface embedded in a different 3-manifold. A more general definition is the following.

\begin{definition}\label{Def:ProjSfce}
Let $F_i$ be a closed orientable surface for $i=1,\ldots,p$, and let $Y$ be a compact orientable irreducible 3-manifold. A \emph{generalised projection surface} $F$ is a piecewise linear embedding,
$F\from \bigsqcup_{i=1}^{p}F_i\hookrightarrow Y$
such that $F$ is non-split in $Y$.
(Recall a collection of surfaces $F$ is \emph{non-split} if every embedded $2$-sphere in $Y\setminus F$ bounds a $3$-ball in $Y\setminus F$.)
Since $F$ is an embedding we will also denote the image of $F$ in $Y$ by $F$.
\end{definition}

Note that our definitions ensure that if $F$ is a generalised projection surface and some component $F_i$ is a 2-sphere, then $F$ is homeomorphic to $S^2$, and $Y$ is homeomorphic to $S^3$. 

\begin{definition}\label{Def:GenDiagram}
For $F$ a generalised projection surface, a link $L\subset F\times I\subset Y$ can be projected onto $F$ by $\pi \from F\times I \to F$.
We call $\pi(L)$ a \emph{generalised diagram}.
\end{definition}

Note that every knot has a trivial generalised diagram on the torus boundary of a regular neighbourhood of the knot. Such diagrams are not useful. To ensure nontriviality, we require conditions expressed in terms of representativity, adapted from graph theory:

\begin{definition}\label{Def:Representativity}
  Let $F$ be a closed orientable (possibly disconnected) surface embedded in a compact orientable irreducible 3-manifold $Y$. A regular neighbourhood $N(F)$ of $F$ in $Y$ has the form $F\times (-1,1)$. Because every component $F_i$ of $F$ is 2-sided, $Y\setminus N(F)$ has two boundary components homeomorphic to $F_i$, namely $F_i\times\{-1\}$ and $F_i\times\{1\}$. Let $F_i^-$ denote the boundary component coming from $F_i\times\{-1\}$ and $F_i^+$ the one from $F_i\times\{1\}$.
Let $\pi(L)$ be a link projection onto $F$. If $\ell$ is an essential curve on $F$, isotope $\ell$ so that it meets $\pi(L)$ transversely in edges, not crossings.

\begin{itemize}
\item The \emph{edge-representativity} $e(\pi(L),F)$ is the minimum number of intersections between $\pi(L)$ and any essential curve $\ell\subset F$.

\item Define $r^-(\pi(L),F_i)$ to be the minimum number of intersections between the projection of $\pi(L)$ onto $F_i^-$ and the boundary of any compressing disk for $F_i^-$ in $Y\setminus F$. If there are no compressing disks for $F_i^-$ in $Y\setminus F$, then set $r^-(\pi(L),F_i)=\infty$. Define $r^+(\pi(L),F_i)$ similarly, using $F_i^+$.

\item The \emph{representativity} $r(\pi(L),F)$ is the minimum of \[\bigcup_i (r^-(\pi(L),F_i)\cup r^+(\pi(L),F_i)).\] 

\item Also, define $\hat{r}(\pi(L),F)$ to be the minimum of \[\bigcup_i \max (r^-(\pi(L),F_i), r^+(\pi(L),F_i)).\]
\end{itemize}
\end{definition}

Note that if $F$ has just one component, the representativity $r(\pi(L),F)$ counts the minimum number of times the boundary of any compression disk for $F$ meets the diagram $\pi(L)$. As for $\hat{r}(\pi(L),F)$, there may be a compression disk on one side of $F$ whose boundary meets $\pi(L)$ less than $\hat{r}(\pi(L),F)$ times, but all compression disks on the opposite side have boundary meeting the diagram at least $\hat{r}(\pi(L),F)$ times. 
We will require diagrams to have representativity at least $2$, $4$, or more, depending on the result.

\begin{example}
  Let $Y$ be the thickened torus $Y=T^2\times [-1,1]$, and let $F$ be the torus $T^2\times\{0\}$. Consider the generalised diagram $\pi(L)$ shown in \reffig{CheckColourable}. The edge-representativity $e(\pi(L), F)$ is zero for this example, since the curve of slope $0/1$ (the horizontal edge of the rectangle shown in the figure) does not meet $\pi(L)$. However, since there are no compressing disks for $F$ in $Y$, both $r^-(\pi(L),F)$ and $r^+(\pi(L),F)$ are infinite, and thus so are $r(\pi(L),F)$ and $\hat{r}(\pi(L),F)$. 
\end{example}

\begin{figure}
\includegraphics{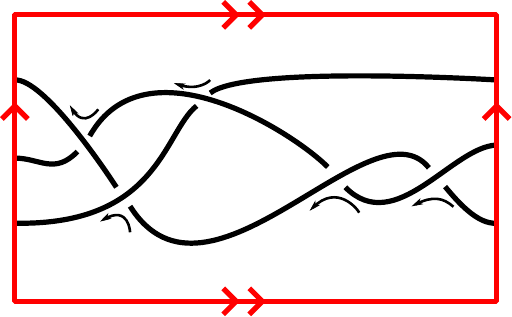}
  \caption{An example of an alternating diagram on a torus that is not checkerboard colourable.}
  \label{Fig:CheckColourable}
\end{figure}

We would also like to consider diagrams that are appropriately reduced; for example, there should be no nugatory crossings on $F$. In the alternating case on $S^2\subset S^3$, the condition is that the diagram be \emph{prime}: if an essential curve intersects the diagram exactly twice then it bounds a region of the diagram containing a single embedded arc. 

There are two natural generalisations of this condition. First, we can define a generalised diagram to be \emph{strongly prime} if whenever a loop $\ell\subset F$ intersects $\pi(L)$ exactly twice, then $\ell$ bounds a disk $D\subset F$ such that $\pi(L)\cap D$ is a single embedded arc. This notion was considered by Ozawa~\cite{oza06}. However, we will consider a weaker notion:

\begin{definition}\label{Def:WeaklyPrime}
A generalised diagram $\pi(L)$ on generalised projection surface $F$ is \emph{weakly prime} if whenever $D\subset F_i\subset F$ is a disk in a component $F_i$ of $F$ with $\bdy D$ intersecting $\pi(L)$ transversely exactly twice, the following holds:
\begin{itemize}
\item If $F_i$ has positive genus, then the intersection $\pi(L)\cap D$ is a single embedded arc.
\item If $F_i \cong S^2$, then $\pi(L)$ has at least two crossings on $F_i$ and either the intersection $\pi(L)\cap D$ is a single embedded arc, or $\pi(L)\cap (F_i\setminus D)$ is a single embedded arc.
\end{itemize}
\end{definition}

If $F\cong S^2$, so $\pi(L)$ is a diagram in the usual sense, then a weakly prime diagram is equivalent to a reduced prime diagram.
Additionaly, strongly prime implies weakly prime, but the converse is not true: If an essential curve meets $\pi(L)$ exactly twice but does not bound a disk in $F$, the generalised diagram is not strongly prime, but could be weakly prime. 

The generalised diagrams we consider will be alternating. Because the diagrams might be disconnected, we need to define carefully an alternating diagram in this case. 

\begin{definition}\label{Def:Alternating}
  A generalised diagram $\pi(L)$ is said to be \emph{alternating} if for each region of $F\setminus\pi(L)$, each boundary component of the region is alternating. That is, each boundary component of each region of $F\setminus\pi(L)$ can be given an orientation such that crossings run from under to over in the direction of orientation. 
\end{definition}

\reffig{CheckColourable} shows an example of an alternating diagram on a torus. In this diagram, there is exactly one region that is not a disk; it is an annulus. Each boundary component of the annulus can be oriented to be alternating, so this diagram satisfies \refdef{Alternating}. However, note that there is no way to orient the annulus region consistently to ensure that the induced orientations on the boundary are alternating (in the same direction: under to over).

\begin{definition}\label{Def:CheckColourable}
  A generalised diagram $\pi(L)$ is said to be \emph{checkerboard colourable} if each region of $F\setminus\pi(L)$ can be oriented such that the induced orientation on each boundary component is alternating: crossings run from under to over in the direction of orientation. Given a checkerboard colourable diagram, regions on opposite sides of an edge of $\pi(L)$ will have opposite orientations. We colour all regions with one orientation white and the other shaded; this gives the checkerboard colouring of the definition.
\end{definition}

The diagram of \reffig{CheckColourable} is not checkerboard colourable. 

Typically, we will consider the following general class of links. 

\begin{definition}[Reduced alternating on $F$]\label{Def:AltKnots}
Let $Y$ be a compact, orientable, irreducible 3-manifold with generalised projection surface $F$ such that if $\bdy Y\neq \emptyset$, then $\bdy Y$ is incompressible in $Y\setminus N(F)$.
A generalised diagram $\pi(L)$ on $F$ of a knot or link $L$ is \emph{reduced alternating} if 
  \begin{enumerate}
  \item\label{Itm:Alternating} $\pi(L)$ is alternating on $F$,
  \item $\pi(L)$ is weakly prime,
  \item\label{Itm:Connected} $\pi(L)\cap F_i \neq \emptyset$ for each $i=1, \dots, p$, and
  \item\label{Itm:slope} each component of $L$ projects to at least one crossing in $\pi(L)$.
  \end{enumerate}
\end{definition}

Knots and links that satisfy \refdef{AltKnots} have been studied in various places. Such links on a surface $F$ in $S^3$ that are checkerboard colourable with representativity $r(\pi(L),F) \geq 4$ are called \emph{weakly generalised alternating links}. They were introduced by Howie and Rubinstein~\cite{hr16}. Other knots satisfying \refdef{AltKnots} include generalised alternating links in $S^3$ considered by Ozawa~\cite{oza06}, which are required to be strongly prime, giving restrictions on edge representativity and forcing regions of $F\setminus\pi(L)$ to be disks~\cite{how15t}. 
Knots satisfying properties \eqref{Itm:Alternating}, \eqref{Itm:Connected}, and \eqref{Itm:slope}, include the toroidally alternating knots considered by Adams~\cite{ada94}, required to lie on a Heegaard torus, with disk regions of $F\setminus \pi(L)$. They also include the alternating knots on a Heegaard surface $F$ for $S^3$ considered by Hayashi \cite{hay95}, which are required again to have disk regions of $F\setminus\pi(L)$. Additionally they include the alternating projections of links onto their Turaev surfaces~\cite{dfk08}, which also have disk regions.
By contrast, weakly generalised alternating links and the links of \refdef{AltKnots} are not required to lie on a Heegaard surface, and regions of $F\setminus\pi(L)$ are not required to be disks.

\begin{definition}\label{Def:WeaklyGeneralisedAlternating}
  Let $\pi(L)$ be a reduced alternating diagram on $F$ in $Y$. If further, 
  \begin{enumerate}
  \item[(5)]\label{Itm:CheckCol} $\pi(L)$ is checkerboard colourable, and
  \item[(6)]\label{Itm:Rep} the representativity $r(\pi(L),F)\geq 4$,
  \end{enumerate}
  we say that $\pi(L)$ is a \emph{weakly generalised alternating link diagram} on $F$ in $Y$, and $L$ is a \emph{weakly generalised alternating link}. 
\end{definition}

These conditions are sufficient to show the following.

\begin{theorem}[Howie~\cite{how15t}]\label{Thm:wgaprime}
Let $\pi(L)$ be a weakly generalised alternating projection of a link $L$ in $S^3$. Then $L$ is a nontrivial, nonsplit, prime link. 
\end{theorem}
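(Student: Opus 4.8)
The plan is to establish the three conclusions — nontrivial, nonsplit, prime — by showing that a violation of any of them produces an essential sphere or compressing disk that intersects $\pi(L)$ too few times, contradicting either weak primeness or the representativity hypothesis $r(\pi(L),F)\geq 4$. Throughout I work in $Y=S^3$, where $F$ is a generalised projection surface, and I may assume $F$ is connected (if not, non-splitness of $F$ together with condition~\eqref{Itm:Connected} of \refdef{AltKnots} handles the reduction). Since $F$ is a closed orientable surface in $S^3$, it bounds a handlebody on at least one side when $F$ has positive genus, or $F\cong S^2$; in the latter case the classical argument of Menasco applies and gives the result directly, so the substantive case is genus at least one.

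First I would treat nonsplitness. Suppose $L=L_1\sqcup L_2$ is split by a sphere $S\subset S^3\setminus L$. I would isotope $S$ to intersect $F$ transversely and minimally, and study the curves $S\cap F$ on $F$. A standard innermost-disk argument shows each such curve must be essential on $F$ (an inessential one could be removed, using irreducibility of $S^3$ and that $L$ is split, to reduce $|S\cap F|$), and since $S$ is a sphere the pieces of $S$ cut along $F$ are planar surfaces, each of whose boundary curves bounds a compressing disk for $F$ in one of the two complementary pieces of $S^3\setminus F$. Pushing such a disk off $F$, its boundary — after isotoping into the diagram complement — gives a compression disk for $F^\pm$ meeting $\pi(L)$ in fewer than $4$ points (indeed, for an innermost piece of $S$ it meets $\pi(L)$ not at all, since that piece is disjoint from $L$), contradicting $r(\pi(L),F)\geq 4$; alternatively, if $S\cap F=\emptyset$ then $S$ lies in a product region $F\times I$ or in a complementary piece, and non-splitness of $F$ plus alternation forces $S$ to bound a ball missing $L$. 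The same skeleton handles nontriviality: an unknotting disk or a sphere exhibiting $L$ as the unknot would, after minimal-position isotopy with $F$, again yield a low-intersection compressing disk or, if it misses $F$, would live inside a single thickened-surface or complementary region where the alternating condition on each region of $F\setminus\pi(L)$ prevents the component from being trivial (this is where condition~\eqref{Itm:slope}, that each component has a crossing, is used).

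For primeness, suppose $S$ is a sphere meeting $L$ transversely in two points, decomposing $L$ nontrivially as a connected sum. Put $S$ in minimal position with respect to $F$. The two points $S\cap L$ lie in the thickened surface $F\times I$, so the annulus $S\setminus N(L)$ meets $F$, and again $S\cap F$ consists of curves essential on $F$ (or is empty). An innermost such curve on $S$ bounds a subdisk of $S$ disjoint from $L$, hence a compressing disk for $F$ meeting $\pi(L)$ in $0$ points — again contradicting $r(\pi(L),F)\geq 4$ — unless $S\cap F=\emptyset$, in which case $S$ lies in $F\times I$ and cuts off a ball; here the swallow-follow and innermost-bigon arguments, together with weak primeness (\refdef{WeaklyPrime}), force the summand on one side to be trivial, contradicting that the decomposition was nontrivial. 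The arithmetic is arranged so that essential curves on $F$ cost at least $r(\pi(L),F)\geq 4$ intersections with $\pi(L)$ while the spheres we build supply curves with at most $2$ (the points on $L$) or $0$.

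The main obstacle I anticipate is the bookkeeping of the case $S\cap F=\emptyset$, i.e. when the obstructing sphere can be isotoped entirely off the projection surface: here one must genuinely use the alternating structure on the individual regions of $F\setminus\pi(L)$ (\refdef{Alternating}) rather than just representativity, and argue — region by region, via innermost bigons between $S\cap(F\times I)$ and the diagram, and via non-splitness of $F$ in $Y=S^3$ — that such a sphere bounds a trivial ball. This is the part that genuinely distinguishes the weakly-prime setting from Menasco's $S^2$ argument and requires care; the representativity hypothesis does the rest of the work almost mechanically once curves on $F$ are shown to be essential.
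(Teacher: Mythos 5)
The paper does not actually give a proof of \refthm{wgaprime}; it is cited as a result from Howie's thesis \cite{how15t}. The closest internal substitutes are \refcor{IrredBdryIrred}, which reproves nontriviality and nonsplitness via the angled chunk decomposition (a normal essential sphere or disk would have negative combinatorial area by \refprop{GaussBonnet}, impossible by \refprop{NonnegArea}), and \refcor{PrimeLink}, which proves primeness but only under the strictly stronger hypothesis $\hat r(\pi(L),F)>4$. So what you are proposing is a third, more elementary route, and it should be judged on its own terms.

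There is a genuine gap at the central step, for both nonsplitness and primeness. You claim that an innermost piece $D\subset S$ lying on one side of $F$ ``meets $\pi(L)$ not at all, since that piece is disjoint from $L$,'' and more generally that the boundary curves of pieces of $S\cut F$ meet $\pi(L)$ in fewer than $4$ points. This does not follow. The link $L$ lies in the thickened surface $F\times I$, whereas $\pi(L)$ is a $4$-valent graph drawn on $F$ itself. A compressing disk for $F$ on one side can easily be disjoint from $L$ while its boundary, as a curve on $F$, crosses $\pi(L)$ arbitrarily many times. In fact the content of the hypothesis $r(\pi(L),F)\geq 4$ is precisely that \emph{every} compressing disk for $F^{\pm}$ in $Y\setminus F$ --- all of which are automatically disjoint from $L$ --- has boundary meeting $\pi(L)$ at least four times; so ``disjoint from $L$'' and ``boundary meets $\pi(L)$ fewer than four times'' cannot be deduced one from the other, they are exactly the two things that the hypothesis says cannot coexist. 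To make your argument work you would need to first isotope $S$ into a controlled position (e.g.\ Menasco-style, with $S\cap(F\times I)$ consisting of vertical annuli and crossing bubbles) so that the curves $S\cap F$ genuinely meet $\pi(L)$ only in a bounded, understood way; that step is where the real work lies, and it is also where the minimality-of-intersection argument becomes delicate, since pushing a subdisk of $F$ off $F$ to remove an inessential curve of $S\cap F$ may fail to avoid $L$ when that subdisk meets $\pi(L)$. As written, the cited contradiction with representativity is not obtained, and the primeness case (where $S$ actually meets $L$ twice, so cannot even be isotoped off $F\times I$) inherits the same issue with additional bookkeeping that the ``main obstacle'' paragraph acknowledges but does not resolve.
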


Theorem~\ref{Thm:wgaprime} generalises a similar theorem of Menasco for prime alternating links~\cite{men84}. For generalised alternating links this was known by Hayashi~\cite{hay95} and Ozawa~\cite{oza06}.

% 3
%%%%%%%%%%%%%%%%%%%%%%%%%%%%%%%%%%%%%%%%%%%%%%%%%%%%%%%%%%%%%%%%%
\section{Angled decompositions}\label{Sec:Chunks}
The usual alternating knot and link complements in $S^3$ have a well-known polyhedral decomposition, described by Menasco \cite{men83} (see also \cite{lac00}). The decomposition can be generalised for knots that are reduced alternating on a generalised projection surface $F$, but the pieces are more complicated than polyhedra. This decomposition is similar to a decomposition into angled blocks defined by Futer and Gu{\'e}ritaud \cite{fg09}, but is again more general. A similar decomposition was done for a particular link in \cite{ckp16}, and more generally for links in the manifold $T^2\times I$ in \cite{ckp}. In this section, we describe the decomposition in full generality. 

\subsection{A decomposition of alternating links on $F$}
A \emph{crossing arc} is a simple arc in the link complement running from an overcrossing to its associated undercrossing. 
The polyhedral decomposition of Menasco \cite{men83} can be generalised as follows. 

\begin{proposition}\label{Prop:AltChunkDecomp}
Let $Y$ be a compact, orientable, irreducible 3-manifold containing a generalised projection surface $F$ such that $\bdy Y$ is incompressible in $Y\setminus N(F)$ whenever $\bdy Y\not=\emptyset$.
Let $L$ be a link with a generalised diagram $\pi(L)$ on $F$ satisfying properties \eqref{Itm:Alternating}, \eqref{Itm:Connected}, and \eqref{Itm:slope} of \refdef{AltKnots}. 
Then $Y\setminus L$ can be decomposed into pieces such that:
  \begin{itemize}
  \item Pieces are homeomorphic to components of $Y\setminus N(F)$, where $N(F)$ denotes a regular open neighbourhood of $F$, except each piece has a finite set of points removed from $\bdy (Y\setminus N(F))$ (namely the ideal vertices below). 
  \item On each copy of each component $F_i$ of $F$, there is an embedded graph with vertices, edges, and regions identified with the diagram graph $\pi(L)\cap F_i$. All vertices are ideal and 4-valent.
  \item To obtain $Y\setminus L$, glue pieces as follows. Each region of $F_i\setminus\pi(L)$ is glued to the corresponding region on the opposite copy of $F_i$ by a homeomorphism that is the identity composed with a rotation along the boundary. The rotation takes an edge of the boundary to the nearest edge in the direction of that boundary component's orientation, with orientation as in \refdef{Alternating}. 
  \item Edges correspond to crossing arcs, and are glued in fours. At each ideal vertex, two opposite edges are glued together.
  \end{itemize}
\end{proposition}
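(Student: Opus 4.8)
The plan is to generalise Menasco's construction, using the checkerboard-type surfaces directly (even when the diagram is not checkerboard colourable) and building the pieces ``by hand'' near each crossing. Here is the strategy in outline.

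\medskip

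\textbf{Step 1: Set-up and the two copies of $F$.} Since $L \subset F \times I \subset Y$, the complement $Y \setminus L$ naturally contains $F$, and pushing $L$ slightly off $F$, we may view $\pi(L)$ as a 4-valent graph on $F$. I would first thicken $F$ to $N(F) \cong F \times (-1,1)$ chosen so that $L$ meets $N(F)$ only in small neighbourhoods of the crossings: away from crossings, the two strands of $L$ lie on $F\times\{1/2\}$ and $F\times\{-1/2\}$ say, and at a crossing the overstrand and understrand are pushed to the two sides. Removing $N(L)$, the intersection $(Y \setminus L) \cap N(F)$ consists of a collar on each side of $F$ with ``bubbles'' at crossings, exactly as in Menasco's picture. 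The complement $Y \setminus L$ is then the union of $\overline{(Y\setminus N(F))\setminus N(L)}$ (one piece for each component of $Y\setminus N(F)$, with the link's crossing arcs removed as ideal vertices) together with the collar-with-bubbles region, and the whole construction reduces to understanding how to collapse the collar region back onto $F$.

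\medskip

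\textbf{Step 2: Collapsing the collar and bubbles.} Following Menasco, I would collapse each collar $F_i^{\pm} \times [0,1]$ back to $F_i$. The regions of $F_i \setminus \pi(L)$ become the faces; the arcs of $\pi(L)$ between crossings become edges of the face graph; and the crossings become 4-valent ideal vertices. The subtle point is the bubble at each crossing: the boundary of the removed $N(L)$ near a crossing contributes a square, and the two strands passing through force the standard ``rotate by one edge'' identification. Concretely, a region $R$ on the $+$ side is glued to the region $R$ on the $-$ side, but because the overstrand blocks direct identification, the gluing map is the identity composed with a rotation that shifts each boundary edge to the next one around $\partial R$; the direction of this shift is exactly the orientation making $\partial R$ alternating (this is where property \eqref{Itm:Alternating} is used — it guarantees such a consistent orientation exists on every boundary component of every region). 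This gives the third bullet. For the crossing arcs: each crossing arc runs from the overstrand to the understrand through a bubble, and it appears on the boundary of four regions (two on each side of $F$), hence is glued in fours; at the 4-valent ideal vertex, the two strands of $L$ separate the four incident edge-ends into two opposite pairs, and the bubble identification glues each pair — this is the fourth bullet.

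\medskip

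\textbf{Step 3: Identifying the pieces and verifying the homeomorphism type.} I would then check that, after the collapse, each ``chunk'' is homeomorphic to the corresponding component of $Y \setminus N(F)$ with the crossing arcs' endpoints removed as ideal vertices — the collapse is a deformation retract on the collar, so it does not change homeomorphism type, and the only points genuinely removed are the ideal vertices (crossings) and the ideal boundary coming from $\partial N(L)$ away from crossings, which becomes the faces. The hypothesis that $\partial Y$ is incompressible in $Y \setminus N(F)$ is not needed for the decomposition itself but ensures the pieces are the ``right'' ones (no trivial compressions to undo); I would remark on where it enters if one wants the chunks to be useful, but strictly for the statement as written it mainly guarantees the pieces are genuine (not further reducible) manifolds-with-boundary. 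Finally I would note that properties \eqref{Itm:Connected} and \eqref{Itm:slope} ensure every component $F_i$ actually carries part of the diagram (so no piece is trivial) and that each component of $L$ is genuinely removed (so there are no ``phantom'' link components left embedded in a chunk).

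\medskip

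\textbf{Main obstacle.} The genuinely delicate part is Step 2 — verifying that the local model at each crossing glues up correctly and globally consistently, i.e.\ that the ``rotate to the nearest edge in the direction of the boundary orientation'' rule is well-defined on every region and compatible across all four regions meeting at each edge and all regions meeting at each ideal vertex. In Menasco's $S^2$ setting this is a familiar picture, but here $F$ may have higher genus, several components, and non-disk regions (the regions of $F_i\setminus\pi(L)$ need not be disks), so I must be careful that the collapse of the collar is still a product collapse away from bubbles and that non-disk regions cause no trouble: the gluing is defined edgewise on $\partial R$, so the topology of $R$ itself is irrelevant, which is the key observation that makes the generalisation go through. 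I would spell out the local crossing model with a figure and then argue the global consistency follows because the alternating condition is exactly a coherent-orientation condition on the link of each region and each vertex.
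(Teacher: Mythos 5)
Your proposal is correct and takes essentially the same route as the paper: both generalise Menasco's bubble decomposition, using the alternating condition to supply the coherent rotation direction on each region's boundary and the observation that the gluing is defined edgewise on $\partial R$ to make non-disk regions harmless. One minor slip in Step~1: away from crossings each point of $\pi(L)$ lifts to a single strand of $L$, not two on $F\times\{\pm 1/2\}$; only at a crossing do the over- and understrand separate to the two sides of $F$, but this does not affect the argument.
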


\begin{proof}
Consider a crossing arc of the diagram. Sketch its image four times in the diagram at a crossing, as on the left of \reffig{EdgeIdentifications}.
On each side of a component $F_i$, the link runs through overstrands and understrands in an alternating pattern. Pull the crossing arcs flat to lie on $F_i$, with the overstrand running between two pairs of crossing arcs as shown in \reffig{EdgeIdentifications}, left. Note the pattern of overstrands and understrands will look exactly opposite on the two sides of $F_i$. On each side of $F_i$, identify each pair of crossing arcs that are now parallel on $F_i$. These are the edges of the decomposition. Viewed from the opposite side of $F_i$, overcrossings become undercrossings, and exactly the opposite edges are identified. See \reffig{EdgeIdentifications}, right.

\begin{figure}
  \import{figures/}{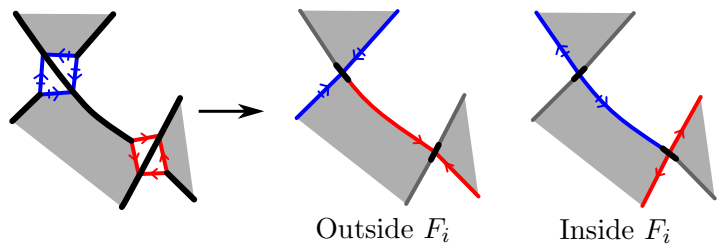}
  \caption{Left: Crossing arcs are split into four edges. Middle and right: edges are identified in pairs, with opposite pairs identified on either side of $F_i$.}
  \label{Fig:EdgeIdentifications}
\end{figure}

Now slice along $F_i$. This cuts $Y\setminus L$ into pieces homeomorphic to $Y\setminus N(F)$ (here, we use condition \eqref{Itm:Connected} of \refdef{AltKnots} to conclude that each $F_i$ appears). Each side of $F_i$ is marked with overstrands of $\pi(L)$ and a pair of crossing arcs adjacent to each overcrossing. 

On each side of $F_i$, shrink each overstrand of the diagram to the crossing vertex of $\pi(L)$ corresponding to its overcrossing. Each overstrand thus becomes a single ideal vertex of the decomposition. (And by \eqref{Itm:slope} of \refdef{WeaklyGeneralisedAlternating}, each strand of the link meets a crossing and so is divided up into ideal vertices.)
Note that the edges are pulled to run between vertices following the edges of the diagram graph.

Faces are regions of the diagram graph, and their gluing matches an edge on a region on one side of $F_i$ to an adjacent edge on the same region on the other, in the direction determined by the alternating orientation. 
This gluing is the same as for regular alternating links, and has been likened to a gear rotation \cite{thu79}. Our requirement that regions of a diagram be alternating ensures this gear rotation gluing holds even for faces with multiple boundary components.

All edges coming from a crossing arc are identified together. Thus there are four edges identified to one. At each ideal vertex, a pair of opposite edges are glued together. 
\end{proof}

When our knot or link is checkerboard colourable, we obtain two checkerboard surfaces. 

\begin{definition}\label{Def:CheckerboardSurfaces}
Let $Y$ be a compact, irreducible, orientable 3-manifold with generalised projection surface $F$. Let $\pi(L)$ be a knot or link diagram that is reduced alternating on $F$ and that is also checkerboard colourable. Give $F\setminus\pi(L)$ the checkerboard colouring into white and shaded regions. The resulting coloured regions correspond to white and shaded surfaces with boundary that can be embedded in $(F\times I) \setminus L \subset Y\setminus L$. Complete shaded regions into a spanning surface for $L$ by joining two shaded regions adjacent across a crossing of $\pi(L)$ by a twisted band in $(F\times I)\setminus L$. Similarly for white regions. The two surfaces that arise are the \emph{checkerboard surfaces} of $\pi(L)$, white and shaded. They intersect in crossing arcs. 
\end{definition}

The alternating property and condition \refitm{slope} in \refdef{AltKnots} ensures that the two checkerboard surfaces have distinct slopes on each component of $L$.

\begin{proposition}\label{Prop:AltChunkDecompCheck}
Let $L$ be a link with a generalised diagram $\pi(L)$ on $F$ in a compact, orientable, irreducible 3-manifold $Y$ satisfying properties \eqref{Itm:Alternating}, \eqref{Itm:Connected}, and \eqref{Itm:slope} of \refdef{AltKnots}, and suppose that $\pi(L)$ is checkerboard colourable.
Then the regions of the decomposition of \refprop{AltChunkDecomp} are coloured, shaded and white, and the gluing of faces rotates each boundary component once, in the clockwise direction for white faces, in the counterclockise direction for shaded faces. 
\end{proposition}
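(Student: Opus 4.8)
The plan is to derive this as essentially a bookkeeping consequence of the gluing described in \refprop{AltChunkDecomp} together with \refdef{CheckColourable}; no new cut-and-paste is needed.

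First I would recall the face-pairing from \refprop{AltChunkDecomp}: each face is a region $R$ of $F_i\setminus\pi(L)$, and it is glued to the corresponding region on the opposite copy of $F_i$ by the identity composed with a rotation that carries each boundary edge to the next edge along that boundary component, in the direction of the component's alternating orientation (\refdef{Alternating}). The one thing that can fail in general is that the alternating orientations chosen on the distinct boundary components of a single region need not fit together into an orientation of $R$, so the rotations on different boundary components of $R$ can a priori go different ways. The second step is to feed in checkerboard colourability: by \refdef{CheckColourable}, each region $R$ now carries an actual orientation whose induced orientation on every boundary component is the alternating one, and regions meeting across an edge are oriented oppositely. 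Plugging this orientation into \refprop{AltChunkDecomp}, the face pairing becomes: rotate all of $\bdy R$ by one edge, in the single direction determined by the orientation of $R$. The two orientation classes are, by definition, the white and shaded regions.

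It then remains to identify the direction as clockwise versus counterclockwise and to confirm the rotation is by exactly one edge (``once''). For the direction, I would fix the orientation of $F_i$ inherited from $Y$; this induces boundary orientations on the regions, and the checkerboard orientation of each region either agrees with it or is its reverse --- these two cases are exactly the two colours, so one colour rotates clockwise and the other counterclockwise in the standard picture of $F_i$, the choice being forced once the orientation conventions are fixed. For ``once'': at each crossing exactly four regions meet, so the two edges of $\bdy R$ incident to a given ideal vertex are consecutive among the four edges there, and the rotation sends one to the other; travelling around $\bdy R$, these local one-step moves patch together into a rotation of $\bdy R$ by a single edge. I expect the only real care here to be keeping the orientation conventions straight and verifying that ``rotate once'' is genuinely consistent around a boundary component even when $R$ is not a disk --- this is the same gear rotation as in the classical polyhedral decomposition \cite{thu79}, so no difficulty beyond drawing the picture is anticipated.
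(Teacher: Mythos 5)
Your argument is correct and is essentially the paper's own, which simply says the claim ``follows from the relationship between orientation and alternating boundary components on white and shaded faces.'' You have spelled out what that means: \refdef{CheckColourable} gives each region an orientation inducing the alternating orientation on every boundary component (so the \refprop{AltChunkDecomp} rotation direction is uniform across each region), and the two orientation classes are exactly the two colours; the ``rotate by one edge'' part was already built into the gluing of \refprop{AltChunkDecomp}, so your extra verification there, while harmless, is not needed.
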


\begin{proof}
This follows from the relationship between orientation and alternating boundary components on white and shaded faces.
\end{proof}

%%%%%%%%%%%%%%%%%%%%%%%%%%%%%%%%%%%%%%%%%%%%%%%%%%%%%%%%%%%%%%%%%
\subsection{Defining angled chunks} We now define a decomposition of a 3-manifold for which \refprop{AltChunkDecomp} is an example. 

\begin{definition}\label{Def:Chunk}
  A \emph{chunk} $C$ is a compact, oriented, irreducible 3-manifold with boundary $\bdy C$ containing an embedded (possibly disconnected) non-empty graph $\Gamma$ with all vertices having valence at least 3. We allow components of $\bdy C$ to be disjoint from $\Gamma$.

Regions of $\bdy C\setminus \Gamma$ are called \emph{faces}, despite not necessarily being simply connected.
A face that arises from a component of $\bdy C$ that is disjoint from $\Gamma$ is called an \emph{exterior face}, and we require any exterior face to be incompressible in $C$. Other faces are called \emph{interior faces}.

  A \emph{truncated chunk} is a chunk for which a regular neighbourhood of each vertex of $\Gamma$ has been removed.
This produces new faces, called \emph{boundary faces}, and new edges bordering boundary faces called \emph{boundary edges}. Note boundary faces are homeomorphic to disks.
  
  A \emph{chunk decomposition} of a 3--manifold $M$ is a decomposition of $M$ into chunks, such that $M$ is obtained by gluing chunks by homeomorphisms of (non-boundary, non-exterior) faces, with edges mapping to edges homeomorphically.
\end{definition}

Note we do not require faces to be contractible, we allow bigon faces, and we allow incompressible tori to be embedded in chunks. These are all more general than Futer and Gu{\'e}ritaud's blocks \cite{fg09}.

\refprop{AltChunkDecomp} implies that for a reduced alternating knot or link $L$ on $F$ in $Y$, the complement $Y\setminus L$ has a chunk decomposition.
Note that for any chunk in the decomposition, the graph $\Gamma$ is the diagram graph of the link, and so each vertex has valence four. Thus for any truncated chunk arising from a reduced alternating link $L$ on $F$ in $Y$, all boundary faces are squares. We will use this fact frequently in our applications below. 

\begin{remark}
We add a word on notation. The manifold $Y\setminus L$ is not a compact manifold; the chunks in its decomposition glue to form $Y\setminus L$ with ideal vertices on $L$. We also frequently need to consider the compact manifold $Y\setminus N(L)$, where $N(\cdot)$ denotes a regular open neighbourhood. We will denote this manifold by $X(L):=Y\setminus N(L)$, or more simply by $X$. It is the \emph{exterior} of the link $L$ in $Y$. Truncated chunks glue to form this compact manifold.
\end{remark}

In the case that $\pi(L)$ is checkerboard colourable, we will also be interested in the manifold obtained by leaving the shaded faces of the chunk deomposition of $Y\setminus L$ unglued. This is homeomorphic to $(Y\setminus L)\setminus N(\Sigma)$, where $\Sigma$ is the shaded checkerboard surface. More accurately, we will leave shaded faces of \emph{truncated} chunks unglued, and so the result is homeomorphic to $X\setminus N(\Sigma)$. We denote this compact manifold by $X\cut\Sigma$. Its boundary consists of $\bdy N(\Sigma) =\widetilde{\Sigma}$, the double cover of $\Sigma$, and remnants of $\bdy N(L)$ coming from boundary faces, as well as exterior faces coming from $\bdy Y$. 
The portion of the boundary $\bdy N(L) \setminus N(\Sigma)$ is called the \emph{parabolic locus}.

\begin{definition}\label{Def:BoundedChunk}
Let $M$ be a compact orientable 3-manifold containing a properly embedded surface $\Sigma$. 
A \emph{bounded chunk decomposition} of the manifold $M\cut\Sigma := M\setminus N(\Sigma)$ is a decomposition of $M\cut\Sigma$ into truncated chunks, with
boundary faces, exterior faces, and interior faces as before, only now faces lying on $\widetilde{\Sigma}\subset \bdy(M\cut\Sigma)$ are left unglued.

The faces that are glued are still called \emph{interior faces}. Those left unglued, lying on $\widetilde{\Sigma}$, are called \emph{surface faces}.

Edges are defined to be \emph{boundary edges} if they lie between boundary faces and other faces, \emph{interior edges} if they lie between two interior faces, and \emph{surface edges} if they lie between a surface face and an interior face. Surface faces are not allowed to be adjacent along an edge.

Finally, the restriction of the gluing to surface faces identifies each surface edge to exactly one other surface edge, and produces $\widetilde{\Sigma}$.
\end{definition}

\subsection{Normal surfaces}
Although chunks are more general than blocks, we can define normal surfaces inside them. The following definition is modified slightly from Futer--Gu{\'e}ritaud.

\begin{definition}\label{Def:NormalSurface}
For $C$ a truncated chunk, and $(S,\bdy S)\subset (C, \bdy C)$ a properly embedded surface, we say $S$ is \emph{normal} if it satisfies:
  \begin{enumerate}
  \item\label{Itm:IncomprNorm} Each closed component of $S$ is incompressible in $C$.
  \item $S$ and $\bdy S$ are transverse to all faces, boundary faces, and edges of $C$.
  \item\label{Itm:BdryNoDisk} If a component of $\bdy S$ lies entirely in a face (or boundary face) of $C$, then it does not bound a disk in that face (or boundary face).
  \item\label{Itm:NoArcEndptsEdge} If an arc $\gamma$ of $\bdy S$ in a face (or boundary face) of $C$ has both endpoints on the same edge, then the arc $\gamma$ along with an arc of the edge cannot bound a disk in that face (or boundary face). 
  \item\label{Itm:NoArcEndptsAdj} If an arc $\gamma$ of $\bdy S$ in a face of $C$ has one endpoint on a boundary edge and the other on an adjacent interior or surface edge, then $\gamma$ cannot cut off a disk in that face. 
  \end{enumerate}
  Given a chunk decomposition of $M$, a surface $(S,\bdy S)\subset (M,\bdy M)$ is called \emph{normal} if for every chunk $C$, the intersection $S\cap C$ is a (possibly disconnected) normal surface in $C$.
\end{definition}

We have made modifications to Futer--Gu{\'e}ritaud's definition in items \eqref{Itm:BdryNoDisk}, \eqref{Itm:NoArcEndptsEdge}, and \eqref{Itm:NoArcEndptsAdj}: for item \eqref{Itm:BdryNoDisk}, we may in fact have components of $\bdy S$ that lie in a single face; however in such a case, the face must not be contractible and $\bdy S$ must be a nontrivial curve in that face. Similarly for \eqref{Itm:NoArcEndptsEdge}, and \eqref{Itm:NoArcEndptsAdj}.
Note also that since boundary faces are already disks, items \eqref{Itm:NoArcEndptsEdge} and \eqref{Itm:NoArcEndptsAdj} agree with Futer--Gu{\'e}ritaud's definition for boundary faces. 

Also, note that we consider a 2-sphere to be incompressible if it does not bound a ball, hence the fact that a chunk is irreducible by definition, along with item \eqref{Itm:IncomprNorm} of \refdef{NormalSurface} implies that the intersection of a normal surface with a chunk has no spherical components.

\begin{theorem}\label{Thm:NormalForm}
  Let $M$ be a manifold with a chunk or bounded chunk decomposition. 
  \begin{enumerate}
  \item If $M$ is reducible, then $M$ contains a normal 2-sphere.
  \item If $M$ is irreducible and boundary reducible, then $M$ contains a normal disk.
  \item If $M$ is irreducible and boundary irreducible, then any essential surface in $M$ can be isotoped into normal form.
  \end{enumerate}
\end{theorem}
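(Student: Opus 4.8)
The plan is to adapt the standard normalization arguments for polyhedral and block decompositions (as in Menasco--Thistlethwaite, Lackenby, and Futer--Gu\'eritaud) to the more general setting of chunks, handling the three cases in parallel. Throughout, fix a chunk (or bounded chunk) decomposition of $M$, and let $S$ be a surface which is either an essential surface (case 3), a reducing sphere (case 1), or a boundary-reducing disk (case 2). In every case I would isotope $S$ to be transverse to all faces, edges, and vertex neighborhoods, and then argue that a suitable complexity can be reduced until $S$ is normal. The complexity I would use is the lexicographic pair $(w,\,c)$, where $w$ counts the intersections of $\bdy S$ (and of $S$) with the $1$--skeleton, i.e.\ the edges and the boundary edges, and $c$ counts the number of components of $S$ intersected with the $2$--skeleton (the faces). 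A minimizing position exists since these are nonnegative integers.

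The heart of the argument is to show that if $S$ is not normal, some move strictly decreases $(w,c)$. I would run through the five defining conditions of \refdef{NormalSurface} in turn. Condition \eqref{Itm:IncomprNorm}: if a closed component of $S\cap C$ compresses in the chunk $C$, then since $C$ is irreducible, in case 3 we may compress and discard a trivial piece (using that $S$ is essential to rule out destroying $S$ entirely, or that the compressed surface is still essential/isotopic), and in cases 1 and 2 a compression of a sphere or disk component either yields a new sphere/disk of the required type or a sphere bounding a ball which we remove; either way $c$ drops. Conditions \eqref{Itm:BdryNoDisk}, \eqref{Itm:NoArcEndptsEdge}, \eqref{Itm:NoArcEndptsAdj}: each describes a disk $\Delta$ in a face (or boundary face) cut off by a trivial closed curve or a trivial arc of $\bdy S$. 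Using the disk $\Delta$ as a guide, I would isotope $S$ across $\Delta$ (an innermost/outermost choice of $\Delta$ ensures the isotopy is supported near a single face and is embedded). For a trivial closed curve component this removes that curve, dropping $c$ or removing intersections; for a trivial arc with both endpoints on one edge, pushing across $\Delta$ removes two intersection points with that edge, dropping $w$; for the mixed boundary-edge/interior-edge case of \eqref{Itm:NoArcEndptsAdj}, pushing across $\Delta$ removes one intersection with a boundary edge and one with an interior edge, again dropping $w$. The point that requires care, and which I expect to be the main obstacle, is condition \eqref{Itm:NoArcEndptsAdj} together with the fact that faces need not be disks: one must check that the isotopies genuinely reduce the global complexity without being obstructed by the non-simply-connected topology of a face, by surface faces in the bounded case (which are left unglued, so arcs there behave differently), or by exterior faces coming from $\bdy Y$ (which are incompressible, so no closed-curve compressions into them are possible, which is exactly why incompressibility of exterior faces was built into \refdef{Chunk}). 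I would also need to verify that the moves interact correctly across the gluings — an isotopy in one chunk's interior face induces a matching isotopy in the adjacent chunk — so that normality can be achieved simultaneously in all chunks.

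Finally, I would address the existence statements in cases 1 and 2: after the above process terminates, $S$ is normal; in case 1 it is a normal sphere (nonempty, since a reducing sphere cannot be isotoped away), and in case 2 it is a normal disk with boundary on $\bdy M$. One subtlety is to ensure the reduction process does not terminate by making $S$ empty: in case 1, reducibility of $M$ guarantees the existence of an essential sphere, and each move either preserves the essential-sphere property or replaces $S$ by another essential sphere of smaller complexity; similarly in case 2 with boundary-essential disks, using irreducibility of $M$ to rule out that a compression turns the disk into something inessential. In case 3 one argues that the normal surface obtained is isotopic to the original essential surface because every move performed was an isotopy of $S$ (the compressions in \eqref{Itm:IncomprNorm} only remove components that were already inessential, e.g.\ boundary-parallel spheres or trivial disks split off from $S$). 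I would organize the writing so that the bookkeeping of $(w,c)$ under each of the five types of moves is stated once and then applied uniformly.
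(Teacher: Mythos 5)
Your proposal follows essentially the same route as the paper: the paper defers the chunk case to Futer--Gu\'eritaud's Theorem~2.8 (a standard innermost-disk, outermost-arc complexity-reduction argument like your $(w,c)$-minimization) and then treats the bounded case by modifying the moves that would otherwise push $S$ through a surface face. The one step you flag as ``the main obstacle'' but do not resolve is exactly where the paper supplies the extra ingredient: for an outermost arc in an interior face with endpoints on a surface edge (or on a boundary edge and an adjacent surface edge), the cut-off disk is a boundary-compression disk for $S$, and it is the \emph{boundary incompressibility} of the essential surface $S$ --- not an isotopy through the (unglued) surface face --- that lets one push the arc off and drop the complexity.
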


\begin{proof}
  The proof is a standard innermost disk, outermost arc argument. In the case of a chunk decomposition, it follows nearly word for word the proof of \cite[Theorem~2.8]{fg09}; we leave the details to the reader.

  In the case of a bounded chunk decomposition, an essential surface $S$ can no longer be isotoped through surface faces. We modify the proof where required to avoid such moves. First, if a component of $\bdy S$ lies entirely in a surface face and bounds a disk in that face, consider an innermost such curve. Since $S$ is incompressible, that curve bounds a disk in $S$ as well, hence $S$ has a disk component, parallel into a surface face, contradicting the fact that it is essential.

  If an arc of intersection of $S$ with a face has both its endpoints on the same surface edge, and the arc lies in an interior face, then an outermost such arc and the edge bound a disk $D$ with one arc of $\bdy D$ on $S$ and one arc on a surface face. Because $S$ is essential, it is boundary incompressible; it follows that the arc of intersection can be pushed off. A similar argument implies that an arc of intersection of $S$ with an interior face that has one endpoint on a boundary edge and one on a surface edge can be pushed off. For all other arcs of intersection with endpoints on one edge, or an edge and adjacent boundary edges, the argument follows just as before. 
\end{proof}

%%%%%%%%%%%%%%%%%%%%%%%%%%%%%%%%%%%%%%%%%%%%%%%%%%%%%%%%%%%%%%%%%
\subsection{Angled chunks and combinatorial area}

\begin{definition}\label{Def:AngledChunk}
  An \emph{angled chunk} is a truncated chunk $C$ such that each edge $e$ of $C$ has an associated interior angle $\alpha(e)$ and exterior angle $\epsilon(e) = \pi-\alpha(e)$ satisfying:
  \begin{enumerate}
  \item\label{Itm:0toPi} $\alpha(e) \in (0,\pi)$ if $e$ is not a boundary edge; $\alpha(e) = \pi/2$ if $e$ is a boundary edge. 
  \item\label{Itm:VertexSum} For each boundary face, let $e_1, \dots, e_n$ denote the non-boundary edges with an endpoint on that boundary face. Then $\sum_{i=1}^n \epsilon(e_i) = 2\pi.$
  \item\label{Itm:NormalDiskSum} For each normal disk in $C$ whose boundary meets edges $e_1, \dots, e_p$,  $\sum_{i=1}^p \epsilon(e_i) \geq 2\pi.$
  \end{enumerate}

  An \emph{angled chunk decomposition} of a 3--manifold $M$ is a subdivision of $M$ into angled chunks, glued along interior faces, such that $\sum \alpha(e_i)=2\pi$, where the sum is over the interior edges $e_i$ that are identified under the gluing.

  A \emph{bounded angled chunk decomposition} satisfies all the above, and in addition $\sum \alpha(e_i)=\pi$ for edges identified to any surface edge under the gluing. That is, if edges $e_1, \dots, e_n$ are identified and all the $e_i$ are interior edges, then $\sum\alpha(e_i)=2\pi$. If two of the edges are surface edges, then $\sum\alpha(e_i)=\pi$. 
\end{definition}

Again our definition is weaker than that of Futer--Gu{\'e}ritaud: in item \refitm{NormalDiskSum}, they require the sum to be strictly greater than $2\pi$ unless the disk is parallel to a boundary face.

\begin{definition}\label{Def:CombinatorialArea}
  Let $C$ be an angled chunk. 
  Let $(S,\bdy S)$ be a normal surface in $(C,\bdy C)$, and let $e_1, \dots, e_n$ be edges of the truncated chunk $C$ met by $\bdy S$, listed with multiplicity. The \emph{combinatorial area} of $S$ is defined to be
  \[ a(S) = \sum_{i=1}^n \epsilon(e_i) - 2\pi\chi(S). \]
  Given a chunk decomposition of $M$ and a normal surface $(S',\bdy S')\subset (M,\bdy M)$, write $S'=\bigcup_{j=1}^m S_j$ where each $S_j$ is a normal surface embedded in a chunk. Define $a(S') = \sum_{j=1}^m a(S_j)$.
\end{definition}

\begin{proposition}\label{Prop:NonnegArea}
  Let $S$ be a connected orientable normal surface in an angled chunk $C$. Then $a(S)\geq 0$. Moreover, if $a(S)=0$, then $S$ is either:
  \begin{enumerate}
  \item[(a)] a disk with $\sum \epsilon(e_i)=2\pi$,
  \item[(b)] an annulus with $\bdy S$ meeting no edges of $\Gamma$, hence the two components of $\bdy S$ lie in non-contractible faces of $C$, or 
  \item[(c)] an incompressible torus disjoint from $\bdy C$. 
  \end{enumerate}
\end{proposition}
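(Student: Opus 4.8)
The plan is to run a Gauss–Bonnet style argument on the combinatorial area. The key observation is that $a(S)$ is built from two pieces: a sum of exterior angles $\sum \epsilon(e_i)$ over the edges met by $\bdy S$ (counted with multiplicity), and the term $-2\pi\chi(S)$. I would first dispose of the closed case: if $\bdy S = \emptyset$, then $S$ is a closed normal surface, hence incompressible in $C$ by item~\eqref{Itm:IncomprNorm} of \refdef{NormalSurface}; since $C$ is irreducible, $S$ is not a sphere, so $\chi(S)\le 0$ and $a(S) = -2\pi\chi(S)\ge 0$, with equality exactly when $\chi(S)=0$, i.e. $S$ is an incompressible torus (case (c)). It cannot meet $\bdy C$ since it is closed and properly embedded, so this is consistent with being disjoint from $\bdy C$.

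For the case $\bdy S \neq \emptyset$, the strategy is to decompose $S$ into normal disks by cutting along the arcs and curves of intersection with the faces of $C$. More precisely, each component of $S \cap (\text{face})$ is an arc or a closed curve; cutting $S$ along all of these, normality conditions \eqref{Itm:BdryNoDisk}--\eqref{Itm:NoArcEndptsAdj} should force the resulting pieces to be disks, each of which is a normal disk in the sense of \refdef{NormalSurface} (here one uses that closed curve components in faces are non-trivial, and arc components are non-trivial relative to the edges, so no piece can be a ``trivial'' bigon or monogon one could compress away). Then one writes $\chi(S) = \sum (\text{number of disk pieces}) - (\text{corrections from cutting})$; the standard bookkeeping is to note that each normal disk $D$ contributes $a(D) = \sum_{e \subset \bdy D}\epsilon(e) - 2\pi$, and summing $a(D)$ over all disk pieces recovers $\sum \epsilon(e_i) - 2\pi\chi(S) = a(S)$, because every edge-intersection point of $\bdy S$ is counted once and the number of disks equals $\chi(S)$ plus the number of cutting arcs/curves, which is exactly what is needed for the Euler characteristic arithmetic to balance. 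This reduces the inequality $a(S)\ge 0$ to: every normal disk $D$ in $C$ has $a(D) = \sum \epsilon(e_i) - 2\pi \ge 0$, which is precisely item~\eqref{Itm:NormalDiskSum} of \refdef{AngledChunk}.

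Finally I would analyze the equality case. If $a(S) = 0$ with $\bdy S\neq\emptyset$, then since $a(S)$ is a sum of non-negative terms $a(D)$ over the disk pieces, every piece has $a(D) = 0$, i.e. $\sum_{e\subset \bdy D}\epsilon(e) = 2\pi$ for each. In particular there is at least one disk piece. If $S$ itself has no interior cutting curves or arcs then $S = D$ is a single disk with $\sum\epsilon(e_i) = 2\pi$ (case (a)). Otherwise, gluing disk pieces of zero area back together, one checks that the only way to assemble several zero-area disks into a connected surface without creating a compression (which is forbidden by normality) or violating the angle conditions is an annulus; moreover each $a(D)=0$ disk that is not the whole of $S$ must be glued along curves meeting no edges of $\Gamma$, which forces $\bdy S$ to avoid the edges of $\Gamma$ entirely, so the two boundary components of the annulus lie in non-contractible faces (case (b)) — noting that $\bdy S$ cannot bound a disk in a face by \eqref{Itm:BdryNoDisk}, so the faces containing $\bdy S$ are non-contractible. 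The main obstacle I anticipate is making the cutting-into-disks step genuinely airtight: one must verify that no cut piece is a sphere or a disk that fails normality, that the Euler-characteristic bookkeeping across the cuts is exactly right (closed curve cuts versus arc cuts contribute differently), and that in the equality case the reassembly really does rule out higher-genus-with-boundary or multi-holed possibilities — this is where a careful combinatorial case analysis, rather than any single slick estimate, is required.
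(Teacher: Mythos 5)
Your closed case is fine and essentially matches the paper. But your argument for the bounded case rests on a step that has no content in this setting, and this is precisely the step you flag as your ``main obstacle.'' The proposition concerns a normal surface $S$ properly embedded in a \emph{single} chunk $C$, so $S\cap\bdy C = \bdy S$ and the interior of $S$ is disjoint from $\bdy C$. The faces of $C$ lie on $\bdy C$, so ``the arcs and curves of intersection with the faces of $C$'' are just pieces of $\bdy S$; cutting $S$ along parts of its own boundary does not decompose it. The decomposition-and-reassembly bookkeeping you describe is the machinery of the paper's \emph{next} result, Gauss--Bonnet (\refprop{GaussBonnet}), which applies to a surface passing through several chunks of a chunk decomposition of a manifold $M$ and which in fact \emph{uses} \refprop{NonnegArea} as input for each per-chunk piece. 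Trying to prove \refprop{NonnegArea} by that route is therefore either vacuous or circular. Even if one invented some auxiliary interior cuts, the resulting pieces would have boundary partly in the interior of $C$ and so would not be normal disks, and item~\eqref{Itm:NormalDiskSum} of \refdef{AngledChunk} would not apply to them.

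The actual proof is much more direct and you already have the key fact in your hands without using it: the term $\sum\epsilon(e_i)$ is always nonnegative, so $\chi(S)<0$ immediately gives $a(S)>0$, and for a connected orientable surface the only cases with $\chi(S)\geq 0$ are sphere, disk, annulus, and torus. The sphere is excluded by irreducibility (as you note); the torus is closed, hence disjoint from $\bdy C$ and incompressible by normality; the disk meets $\bdy C$, cannot lie on an exterior face (exterior faces are incompressible), and then item~\eqref{Itm:NormalDiskSum} gives $\sum\epsilon(e_i)\geq 2\pi$, hence $a(S)\geq 0$ with equality exactly in case (a); the annulus has $\chi=0$ so $a(S)=\sum\epsilon(e_i)\geq 0$, with equality forcing $\bdy S$ to miss all edges, and then normality item~\eqref{Itm:BdryNoDisk} forces the faces containing $\bdy S$ to be non-contractible. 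This eliminates the need for any decomposition bookkeeping, and it also closes the gap in your equality analysis: you never explain why reassembling zero-area disks cannot produce a higher-genus surface with boundary, whereas the $\chi(S)\geq 0$ observation rules this out before the analysis even begins.
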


\begin{proof}
By definition of combinatorial area, if $\chi(S)<0$, then $a(S)>0$. So we need only check cases in which $\chi(S)\geq 0$.

Note first that $S$ cannot be a sphere because $C$ is required to be irreducible. 
If $S$ is a torus, then it does not meet $\bdy C$ and must be incompressible in $C$. This is item (c).

Suppose now that $S$ is a disk. Then $\bdy S$ meets $\bdy C$,
and $\bdy S$ does not lie completely on an exterior face because such faces are required to be incompressible in $C$. Then because $S$ is in normal form, \refitm{NormalDiskSum} of \refdef{AngledChunk} implies that the sum of exterior angles of edges meeting $\bdy S$ is at least $2\pi$. Thus the combinatorial area of the disk is at least $0$. If the combinatorial area equals zero, then the sum of exterior angles meeting $\bdy S$ must be exactly $2\pi$, as required.

Finally suppose $S$ is an annulus. Then $\chi(S)=0$, so the combinatorial area $a(S)\geq 0$. If it equals zero, then the sum of exterior angles of edges meeting $\bdy S$ is also zero, hence $\bdy S$ meets no edges. Let $\gamma_1$ and $\gamma_2$ denote the two components of $\bdy S$. Then each $\gamma_i$ lies entirely in a single face. Because $\gamma_i$ cannot bound a disk in that face, the face is not contractible. 
\end{proof}

\begin{proposition}[Gauss--Bonnet]\label{Prop:GaussBonnet}
  Let $(S,\bdy S)\subset(M,\bdy M)$ be a surface in normal form with respect to an angled chunk decomposition of $M$. Then
  \[ a(S) = -2\pi\chi(S).\]
  Similarly, let $(S,\bdy S)\subset (M\cut\Sigma,\bdy (M\cut\Sigma))$ 
  be a surface in normal form with respect to a bounded angle chunk decomposition of $M\cut\Sigma$. Let $p$ denote the number of times $\bdy S$ intersects a boundary edge adjacent to a surface face. Then
  \[ a(S) = -2\pi\chi(S) + \frac{\pi}{2}\,p. \]
\end{proposition}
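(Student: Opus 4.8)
The plan is to compute $a(S)$ in two ways: once using the definition in terms of exterior angles and Euler characteristic, and once by summing local contributions chunk by chunk and carefully tracking what happens at each edge identification. The first formula (ordinary angled chunk decomposition) is the model case, so I would establish it first and then see what extra term the surface faces force.

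First I would decompose $S$ into its pieces $S_j = S\cap C_j$, one in each chunk, and write $a(S) = \sum_j a(S_j) = \sum_j\big(\sum_i \epsilon(e_i^{(j)}) - 2\pi\chi(S_j)\big)$ by \refdef{CombinatorialArea}. Since $\chi$ is additive over the pieces glued along arcs and curves of $\bdy S$ (the gluing loci are $1$-complexes, contributing $0$ to Euler characteristic in the inclusion-exclusion), we have $\sum_j \chi(S_j) = \chi(S) + (\text{correction from the gluing graph})$; in fact the standard bookkeeping gives $\sum_j\chi(S_j)=\chi(S)$ once one accounts for the vertices and edges of the pattern that $\bdy S$ cuts on the faces — I would phrase this as: each arc of $\bdy S$ in an interior face glues two pieces along an arc, each closed curve along a curve, and in either case $\chi$ adds correctly. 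Then I would regroup the exterior-angle sum by the edges $e$ of $M$: each interior edge $e$ is hit by $\bdy S$ some number of times, and each hit contributes $\epsilon$ of whichever chunk-edge maps to $e$. Here the key input is the gluing condition: for an ordinary angled chunk decomposition $\sum_{e_k \mapsto e}\alpha(e_k) = 2\pi$, so $\sum_{e_k\mapsto e}\epsilon(e_k) = 2\pi - \sum\alpha(e_k)$... wait, more precisely if $n_e$ edges glue to $e$ then $\sum \epsilon(e_k) = n_e\pi - 2\pi$; combined with how $\bdy S$ crosses the gluing this telescopes. The upshot, exactly as in Futer--Gu\'eritaud, is $a(S) = -2\pi\chi(S)$: every interior edge crossing is cancelled by the $2\pi$ in the area formula distributed via the normal-disk-style accounting, and the boundary-edge terms ($\alpha = \pi/2$) combine with the vertex-sum condition \refitm{VertexSum} to vanish as well.

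For the bounded case the only new phenomenon is the surface faces on $\widetilde\Sigma$, which are not glued. So when I regroup the exterior-angle sum by edges of $M\cut\Sigma$, the surface edges are each identified to exactly one other surface edge, and by \refdef{AngledChunk} the sum of the two interior angles there is $\pi$, not $2\pi$; equivalently the two exterior angles sum to $\pi$. This is precisely a "half" gluing, so relative to the closed-up computation each arc of $\bdy S$ that would have crossed such an identified surface edge now contributes a deficit of $\pi$ rather than $0$ — but $\bdy S$ does not cross surface faces (it lies in chunks, and surface faces are unglued, left on the boundary), so the relevant crossings are of $\bdy S$ with the \emph{boundary} edges adjacent to surface faces. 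Each such crossing is a point where $\bdy S$ exits one chunk's picture along $\widetilde\Sigma$; carrying through the bookkeeping, each of the $p$ such crossings leaves an uncancelled $\pi/2$ (half of the missing $\pi$, since the boundary edge carries angle $\pi/2$ and only one side of the would-be identification is present). Summing these gives the extra $\frac{\pi}{2}p$.

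I expect the main obstacle to be the correct accounting of $\chi$ and of edge-crossing multiplicities simultaneously: one must be careful that a single arc of $\bdy S$ running through a face contributes to two chunk pieces' exterior-angle sums at its two endpoints (on edges) and also affects the Euler-characteristic tally, and these two effects must be disentangled so that the interior-edge contributions cancel cleanly and only the surface-adjacent boundary edges survive. The cleanest route is probably to first prove the closed formula $a(S)=-2\pi\chi(S)$ verbatim from \cite[Prop.~2.10 or similar]{fg09} applied to $M$ with surface faces temporarily glued by \emph{some} auxiliary homeomorphism giving angle sum $2\pi$, and then observe that passing to the actual bounded decomposition changes each of the $p$ surface-boundary-edge angle contributions from its glued value to $\pi/2$, and track exactly the resulting net change. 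I would double-check the sign and the factor $\tfrac12$ against the basic example of a normal square in a truncated chunk with two edges on $\widetilde\Sigma$, where $\chi = 1$, $p = 2$, and $a(S)$ should be $0$ by \refprop{NonnegArea}(a): indeed $-2\pi + \tfrac{\pi}{2}\cdot 2 \cdot$ ... this forces the coefficient to be as stated once the edge data of the standard square is plugged in, which is a good sanity check to include.
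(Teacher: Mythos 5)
Your proposal aims in the right direction and cites the correct model argument (Futer--Gu\'eritaud), but as written it has a genuine gap precisely where you flag uncertainty. The claim that ``$\sum_j\chi(S_j)=\chi(S)$'' is false: cutting a surface along an arc \emph{increases} $\chi$ by $1$ (a disk cut along a chord becomes two disks, $\chi$ goes from $1$ to $2$), so if the pieces $S_j$ are glued along $k$ arcs and some circles one actually has $\sum_j\chi(S_j)=\chi(S)+k$, and the surplus $2\pi k$ must be cancelled by angle contributions at the $2k$ arc endpoints on edges. This is exactly the bookkeeping you call the ``main obstacle,'' and the paper resolves it not by regrouping the angle sum over edges of $M$ (your plan) but by inducting on partial gluings $S'$ and tracking the invariant $\sum_{v\in\bdy S'}\epsilon(v)-2\pi\chi(S')$, showing each gluing move preserves it; the paper also needs one case beyond Futer--Gu\'eritaud, namely gluing along closed curves lying in a single (non-disk) face. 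Your ``temporary gluing'' alternative also does not go through: a surface $S\subset M\cut\Sigma$ with boundary on $\widetilde\Sigma$ generally does \emph{not} become a properly embedded normal surface in $M$ after the surface faces are glued (its boundary arcs on $\widetilde\Sigma$ would sit in the interior of $M$), so the closed formula cannot simply be applied to $S$ in $M$ and then corrected.

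Your sanity check is also miscounted. A normal square with two sides on surface faces necessarily has those two sides \emph{opposite} (surface faces cannot be adjacent along an edge by the definition of a bounded chunk decomposition), its other two sides lie on boundary faces, and therefore all four corners lie on boundary edges adjacent to surface faces, giving $p=4$, not $p=2$. The formula then reads $-2\pi\cdot 1+\tfrac{\pi}{2}\cdot 4=0$, matching the square's directly computed combinatorial area $4\cdot\tfrac{\pi}{2}-2\pi\cdot 1=0$. I would encourage you to carry through the paper's induction rather than the edge-regrouping route: the invariant $\sum_v\epsilon(v)-2\pi\chi(S')$ is chosen so that neither the $\chi$-correction nor the interior-edge angle sum has to be computed globally, and in the end only boundary edges (contributing $\epsilon(v)=\pi-\pi=0$), surface edges (again $0$), and the $p$ boundary edges adjacent to surface faces (contributing $\pi/2$ each) survive.
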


\begin{proof}
The proof is basically that of Futer and Gu{\'e}ritaud \cite[Prop.~2.11]{fg09}, except we need to consider an additional case for angled chunks, and surface faces for bounded angled chunks. We briefly work through their proof, and check that it holds in our more general setting.

As in \cite[Prop.~2.11]{fg09}, consider components of intersection $\{S_1, \dots, S_n\}$ of $S$ with chunks, and let $S'$ be obtained by gluing some of the $S_i$ along some of their edges, so $S'$ is a manifold with polygonal boundary. At a vertex on the boundary of $S'$, one or more of the $S_i$ meet, glued along faces of chunks. Define the interior angle $\alpha(v)$ of $S'$ at such a point $v$ to be the sum of the interior angles of the adjacent $S_i$, and define the exterior angle to be $\epsilon(v) = \pi-\alpha(v)$ (note $\epsilon(v)$ can be negative). We prove, by induction on the number of edges glued, that
\begin{equation}\label{Eqn:AngleSum}
  a(S') = \sum a(S_{i_k}) = \sum_{v\in\bdy S'}\epsilon(v) - 2\pi\chi(S').
\end{equation}

As a base case, if no edges are glued, then \refeqn{AngleSum} follows by \refdef{CombinatorialArea}.

Let $\nu$ be the number of vertices $v$ on $S'$, let $\theta$ be the sum of all interior angles along $\bdy S'$, and let $\chi$ be the Euler characteristic of $S'$, so the right hand side of \refeqn{AngleSum} is $\nu\pi -\theta - 2\pi\chi$. Futer and Gu{\'e}ritaud work through several cases: two edges are glued with distinct vertices; two edges are glued that share a vertex; edges are glued to close a bigon; and a monogon component is glued. None of these moves change \refeqn{AngleSum}.
We have an additional case, namely when $\bdy S_1'$ is glued to $\bdy S_2'$ along simple closed curves, each contained in a single face of a chunk. In this case, $\nu$, $\theta$, and $\chi$ will be unchanged. Thus \refeqn{AngleSum} holds.

When $S$ is a normal surface in a (regular) angled chunk decomposition, let $S'=S$ in \refeqn{AngleSum}. Then all $\epsilon(v)$ come from boundary edges, and equal $\pi-(\pi/2+\pi/2)=0$; this comes from the fact that exterior angles on boundary edges are always $\pi/2$ in \refdef{AngledChunk}.

In the bounded angled chunk case, as above, on boundary edges meeting interior faces we have $\epsilon(v)=\pi-(\pi/2+\pi/2)=0$. On surface edges the sum of interior angles is $\pi$, hence $\epsilon(v)=\pi-\pi=0$. On the $p$ boundary faces meeting surface faces, $\epsilon(v)=\pi-\pi/2=\pi/2$. 
\end{proof}

\begin{theorem}\label{Thm:IrredBdyIrred}
  Let $(M,\bdy M)$ be a compact orientable 3-manifold with an angled chunk decomposition.
  Then $\bdy M$ consists of exterior faces and components obtained by gluing boundary faces; those components coming from boundary faces are homeomorphic to tori. Finally, $M$ is irreducible and boundary irreducible.
\end{theorem}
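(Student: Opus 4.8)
The plan is to prove the three assertions in turn, using the combinatorial area machinery of \refprop{GaussBonnet} and \refprop{NonnegArea} throughout. First I would record the structure of $\bdy M$. In a chunk decomposition the only faces that are not glued are the exterior faces and the boundary faces. An exterior face is an entire component of $\bdy C$ that is disjoint from $\Gamma$ (so untouched by truncation), hence it survives as a whole component of $\bdy M$. Every boundary edge lies between a boundary face and an interior face (it cannot meet an exterior face, which is disjoint from $\Gamma$), so the boundary faces are disjoint from all exterior faces and, after the gluing, they assemble along boundary edges into closed subsurfaces of $\bdy M$. This is the claimed splitting of $\bdy M$.

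Next I would show that each such closed subsurface $T$ is a torus. Push $T$ slightly off $\bdy M$ into the interior of $M$. Then $T$ meets each chunk $C$ in a disjoint union of disks, one parallel to each boundary face $b$ of $C$ lying in $T$; the boundary of such a disk is transverse to all faces and edges and meets precisely the non-boundary edges with an endpoint on $b$, so it is a normal disk in $C$ in the sense of \refdef{NormalSurface}, and by \refitm{VertexSum} of \refdef{AngledChunk} its combinatorial area is $\sum_i \epsilon(e_i) - 2\pi = 0$. Summing over all pieces gives $a(T) = 0$, and since $T$ is now a closed normal surface, \refprop{GaussBonnet} yields $a(T) = -2\pi\chi(T)$, so $\chi(T) = 0$; being a closed orientable surface, $T$ is a torus.

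For the final statement I would argue by contradiction. If $M$ were reducible, \refthm{NormalForm}(1) would provide a normal $2$-sphere $S$; then \refprop{GaussBonnet} gives $a(S) = -2\pi\chi(S) = -4\pi$, while writing $S = \bigcup_j S_j$ as a union of connected normal surfaces in chunks and applying \refprop{NonnegArea} to each piece gives $a(S) = \sum_j a(S_j) \geq 0$, a contradiction; hence $M$ is irreducible. Using this, if $M$ were boundary reducible, \refthm{NormalForm}(2) would provide a normal disk $D$, and then \refprop{GaussBonnet} gives $a(D) = -2\pi\chi(D) = -2\pi$, whereas additivity of combinatorial area together with \refprop{NonnegArea} forces $a(D) \geq 0$, again a contradiction; hence $M$ is boundary irreducible.

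The bookkeeping for $\bdy M$ and the two sign comparisons are routine; the step that needs the most care is the torus argument, where one must check that the interior push-off of $T$ really does meet every chunk in boundary-face-parallel disks and that these are normal in the sense of \refdef{NormalSurface} (so that \refprop{GaussBonnet} applies), after which $a(T)=0$ is read directly off \refitm{VertexSum}.
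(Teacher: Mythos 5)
Your proposal is correct and follows essentially the same approach as the paper: push the boundary-face components inward to a closed normal surface tiled by boundary-face-parallel disks of combinatorial area zero (by \refitm{VertexSum} of \refdef{AngledChunk}), then apply \refprop{GaussBonnet} to conclude $\chi=0$ and hence torus; and rule out essential spheres and disks by combining \refthm{NormalForm} with \refprop{GaussBonnet} and \refprop{NonnegArea}. The only difference from the paper's write-up is presentational — you spell out the bookkeeping of which faces survive to $\bdy M$ and verify normality of the push-off in more detail, and you prove the torus statement before irreducibility, neither of which changes the logic.
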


\begin{proof}
If $M$ is reducible or boundary reducible, it contains an essential 2-sphere or disk. \refthm{NormalForm} implies it contains one in normal form. \refprop{GaussBonnet} implies such a surface has negative combinatorial area. This is impossible by \refprop{NonnegArea}.

Besides exterior faces, each component of $\bdy M$ is tiled by boundary faces. A normal disk parallel to a boundary face has combinatorial area $0$. These disks glue to a surface of combinatorial area $0$ parallel to $\bdy M$. By \refprop{GaussBonnet}, $\chi(\bdy M)=0$. Since $M$ is orientable, each component of $\bdy M$ is a torus.
\end{proof}

%%%%%%%%%%%%%%%%%%%%%%%%%%%%%%%%%%%%%%%%%%%%%%%%%%%%%%%%%%%%%%%%%

\subsection{Chunk decomposition for generalisations of alternating links}

\begin{proposition}\label{Prop:AngledChunkDecomp}
Let $\pi(L)$ be a reduced alternating diagram of a link $L$ on $F$ in $Y$.
Suppose that the representativity satisfies $r(\pi(L),F)\geq 4$. Label each edge of the chunk decomposition of \refprop{AltChunkDecomp} with interior angle $\pi/2$ (and exterior angle $\pi/2$). Then the chunk decomposition is an angled chunk decomposition.

Suppose further that $\pi(L)$ is checkerboard colourable on $F$, so $\pi(L)$ is weakly generalised alternating on $F$. Let $\Sigma$ be one of the checkerboard surfaces associated to $\pi(L)$. Then $X\cut\Sigma$ admits a bounded angled chunk decomposition, with the same chunks as in \refprop{AltChunkDecomp}, but with faces corresponding to $\Sigma$ (white or shaded) left unglued. 
\end{proposition}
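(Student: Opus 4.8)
The plan is to verify the three conditions of \refdef{AngledChunk}, together with the gluing condition, for the decomposition of \refprop{AltChunkDecomp} with every interior and exterior angle equal to $\pi/2$. Most of this is immediate from the structural description in \refprop{AltChunkDecomp}: every vertex of the diagram graph is $4$-valent, so each boundary face is a square meeting exactly four non-boundary edges at its corners, whence $\sum_{i=1}^{4}\epsilon(e_i)=4(\pi/2)=2\pi$, giving condition \refitm{VertexSum}; condition \refitm{0toPi} holds trivially; and since the edges are identified in fours, each glued family of interior edges has angle sum $4(\pi/2)=2\pi$, which is the gluing condition. (The chunks are irreducible with incompressible exterior faces by \refprop{AltChunkDecomp}, using that $Y$ is irreducible, $F$ is non-split, and $\bdy Y$ is incompressible in $Y\setminus N(F)$ when $\bdy Y\ne\emptyset$.) Everything therefore reduces to condition \refitm{NormalDiskSum}, which, since every exterior angle equals $\pi/2$, says exactly that the boundary of any normal disk $D$ in a chunk $C$ meets the edges of $C$ at least four times, counted with multiplicity.

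To establish this key combinatorial lemma I would argue as follows. The interior faces of $C$ lie on the copies $F_i^\pm$ of $F$ in $\bdy(Y\setminus N(F))$, the exterior faces are entire boundary components coming from $\bdy Y$, and (by the construction in \refprop{AltChunkDecomp}) the interior edges of $C$ run parallel to edges of the diagram graph. Since an exterior face is incompressible and meets no edge, $\bdy D$ cannot touch one: it would then lie entirely in it and either bound a disk there, contradicting \refitm{BdryNoDisk} of \refdef{NormalSurface}, or be essential there, contradicting incompressibility. Next, an arc of $\bdy D$ lying in a boundary square must run between two distinct sides, since an arc with both endpoints on one side cuts off a disk in the square, violating \refitm{NoArcEndptsEdge} of \refdef{NormalSurface}. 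Hence, collapsing each boundary square back to its crossing of $\pi(L)$ and pushing $\bdy D$ slightly off the crossings converts $\bdy D$ into a closed curve $\gamma$ on a single $F_i^\pm$ whose number of transverse intersections with $\pi(L)$ is at most the number of edges $\bdy D$ meets. Now split into cases. If $\gamma$ is essential on $F_i^\pm$, then $D$ is isotopic to a compressing disk for $F_i^\pm$ in $Y\setminus N(F)$, so $|\gamma\cap\pi(L)|\ge r(\pi(L),F)\ge 4$ and we are done. If $\gamma$ bounds a disk $D'$ on $F_i^\pm$, then $\pi(L)\cap D'$ is a graph with all interior vertices $4$-valent and with leaves only on $\gamma$, so a handshake count shows $|\gamma\cap\pi(L)|$ is even; and when it equals $0$ or $2$, the possibility that $D'$ meets $\pi(L)$ nontrivially is controlled by weak primeness (in the positive-genus case this forces $\pi(L)\cap D'$ to be a single embedded arc, so $D$ is isotopic into a face, contradicting normality; the $S^2$ case is excluded by reducedness, which rules out nugatory crossings), while if $D'\cap\pi(L)=\emptyset$ then $\gamma$ bounds a disk in an interior face, again contradicting \refitm{BdryNoDisk} of \refdef{NormalSurface}. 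In every case $\bdy D$ meets at least four edges. I expect this lemma --- and within it the bookkeeping relating edges of $C$ met by $\bdy D$ to intersections $\gamma\cap\pi(L)$ across boundary squares, together with the case analysis for inessential $\gamma$ --- to be the main obstacle.

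For the second statement the crucial point is that the chunks of the decomposition of $X\cut\Sigma$ are literally the same truncated chunks as in \refprop{AltChunkDecomp}; by the first part each is already an angled chunk, and condition \refitm{NormalDiskSum} carries over unchanged (the normality conditions of \refdef{NormalSurface} in a bounded chunk decomposition are only more restrictive, so no new normal disks appear). It then remains to match the gluing to \refdef{BoundedChunk} and to the bounded angled chunk decomposition conditions of \refdef{AngledChunk}. By \refprop{AltChunkDecompCheck} the faces are two-coloured, with white faces glued by a one-step clockwise rotation and shaded faces by a one-step counterclockwise rotation, and leaving the $\Sigma$-coloured faces unglued produces $X\cut\Sigma$, with $\widetilde\Sigma$ assembled from the two copies of each $\Sigma$-region. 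Because on a checkerboard-colourable diagram the two regions on opposite sides of an edge of $\pi(L)$ have opposite colours, no two surface faces are adjacent along an edge; and around each crossing the cyclic family of four identified edges, once the two identifications coming from the unglued $\Sigma$-coloured faces at that crossing are dropped, breaks into two pairs, each consisting of two surface edges identified to one another --- exactly the surface-edge gluing required by \refdef{BoundedChunk}, assembling the surface faces into $\widetilde\Sigma$. Finally, with every angle equal to $\pi/2$, any interior edges that remain identified in fours have angle sum $2\pi$, while each identified pair of surface edges has angle sum $\pi/2+\pi/2=\pi$; these are precisely the identities defining a bounded angled chunk decomposition, so the verification is complete.
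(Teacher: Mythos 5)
Your proposal follows essentially the same route as the paper: verify conditions \refitm{0toPi} and \refitm{VertexSum} and the gluing condition directly from the $4$-valence of the diagram graph and the four-to-one edge identifications, and reduce everything to showing that a normal disk in a chunk meets at least four edges, which is done by translating $\bdy D$ into a curve $\gamma$ on $F_i^\pm$, applying representativity when $\gamma$ is essential and weak primeness when $\gamma$ bounds a disk. The treatment of the bounded angled chunk decomposition is likewise the same as the paper's (and is elaborated more than the paper bothers to). So this is the correct approach.

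There is, however, a genuine gap in the disk-parallel subcase. You organise it as: $|\gamma\cap\pi(L)|$ is even; if it equals $0$ or $2$ and $D'$ meets $\pi(L)$ nontrivially, then ``weak primeness forces $\pi(L)\cap D'$ to be a single embedded arc.'' But the definition of \emph{weakly prime} (\refdef{WeaklyPrime}) only applies to disks whose \emph{boundary} meets $\pi(L)$ transversely in exactly two points. When $|\gamma\cap\pi(L)|=0$ the hypothesis of weak primeness is simply not met, so the conclusion you quote does not follow. This is precisely the case where $\gamma$ lies entirely in a non-simply-connected interior face and bounds a disk $D'$ in $F$ that nonetheless contains edges and crossings of $\pi(L)$. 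The paper handles it by an extra step: isotope $\gamma$ slightly inside $D'$ so that it crosses one edge of $\pi(L)$ twice, and \emph{then} invoke weak primeness to contradict the presence of crossings on both sides. Without that isotopy, the zero-intersection case is unaddressed. Related to this, the conclusion ``$D$ is isotopic into a face'' in your $|\gamma\cap\pi(L)|=2$ subcase is not literally what you want (since $\bdy D$ does cross an interior edge); the correct contradiction, as in the paper, is that both endpoints of each arc of $\bdy D$ in a face lie on the same edge, violating item \refitm{NoArcEndptsEdge} of \refdef{NormalSurface}. Finally, the handling of the $S^2$ case via ``reducedness rules out nugatory crossings'' is too vague; what you need is the explicit $S^2$ clause of \refdef{WeaklyPrime} (at least two crossings, and one of $\pi(L)\cap D'$ or $\pi(L)\cap(F_i\setminus D')$ is a single embedded arc), applied as in the positive-genus case to violate normality. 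These gaps are local and do not indicate a wrong strategy, but they do need to be filled.
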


\begin{proof}
We check the conditions of the definition of an angled chunk, \refdef{AngledChunk}.
The first two conditions are easy: $\pi/2\in(0,\pi)$, and each ideal vertex of a chunk is 4-valent, so the sum of the exterior angles of the edges meeting a boundary face corresponding to that vertex is $4\cdot \pi/2 = 2\pi$, as required. For the third condition, we need to show that if a curve $\gamma$ bounds a normal disk $D$ in the truncated chunk, meeting edges $e_1, \dots, e_n$, then $\sum_i \epsilon(e_i)\geq2\pi$.

Suppose first that $D$ is not a compressing disk for $F$, so it is parallel into $F$. Then boundary $\gamma$ of $D$ must meet an even number of edges.
If $\gamma$ meets zero edges, then by \refitm{BdryNoDisk} of the definition of normal, \refdef{NormalSurface}, it lies in a face that is not simply connected, and thus $\gamma$ bounds a disk in $F$ that contains edges and crossings of $\pi(L)$, with edges and crossings exerior to the disk as well. Isotope $\gamma$ slightly in this disk so that it crosses exactly one edge of $\pi(L)$ twice. Then we have a disk in $F$ whose boundary meets only two edges of $\pi(L)$ but with crossings contained within (and without) the disk. This contradicts the fact that $\pi(L)$ is weakly prime.

If $\gamma$ meets only two edges, then there are two cases. First, if neither edge is a boundary edge, then $\gamma$ defines a curve on $F$ bounding a disk in $F$ meeting only two edges of $\pi(L)$. Because the diagram is weakly prime, there must be no crossings within that disk, or if $F$ is a 2-sphere, there must be a disk on the opposite side containing no crossings. But then its boundary violates condition \refitm{NoArcEndptsEdge} of the definition of normal. In the second case, one of the edges and hence both edges meeting $\gamma$ are boundary edges. Then $\gamma$ defines a curve on $F$ meeting $\pi(L)$ exactly once in a single crossing. Push the disk $D$ slightly off this crossing, so $\bdy D$ meets $\pi(L)$ in exactly two edges with a single crossing lying inside $D$. If $F$ is not a 2-sphere, this immediately contradicts the fact that the diagram is weakly prime. If $F$ is a 2-sphere, because $\pi(L)$ must have more than one crossing, again this contradicts the fact that the diagram is weakly prime. 
Thus $\gamma$ must meet at least four edges, and $\sum_i\epsilon(e_i) \geq 4\cdot \pi/2 = 2\pi$.

Now suppose that $D$ is a compressing disk for $F$. Then again $\gamma$ determines a curve on $F$ meeting $\pi(L)$, bounding a compressing disk for $F$. If $\gamma$ meets no boundary faces, the fact that $r(\pi(L),F)\geq 4$ implies that $\gamma$ meets at least four edges of the chunk, so $\sum_i\epsilon(e_i)\geq 4\cdot \pi/2 = 2\pi$.  If $\gamma$ meets a boundary face, then it meets two boundary edges on that boundary face. Isotope $\bdy D$ through the boundary face and slightly outside; let $\beta$ be the result after isotopy. Note the isotopy replaces the two intersections of $\gamma$ with boundary edges by one or two intersections of $\beta$ with edges whose endpoints lie on the boundary face, so $\beta$ meets at most as many edges as $\gamma$. But then $\beta$ defines a curve on $F$ meeting $\pi(L)$, bounding a compressing disk for $F$. Again $r(\pi(L),F)\geq 4$ implies $\beta$ meets at least four edges. It follows that $\gamma$ meets at least four edges. So again $\sum\epsilon(e_i)\geq 2\pi$. 

Finally, for a chunk decomposition of $Y\setminus L$, because edges are glued in fours, the sum of all interior angles glued to an edge class is $4\cdot \pi/2 = 2\pi$, as required.

For the checkerboard colourable case, white or shaded faces left unglued, each non-boundary edge is a surface edge. The sum of interior angles at each such edge is $\pi/2+\pi/2=\pi$. 
\end{proof}

\begin{corollary}\label{Cor:IrredBdryIrred}
If $\pi(L)$ is a reduced alternating diagram of a link $L$ on $F$ in $Y$, and $r(\pi(L),F)\geq 4$, then $Y\setminus L$ is irreducible and boundary irreducible.\qed
\end{corollary}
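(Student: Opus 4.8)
The plan is to chain together the three results just established, with only a small remark needed about compactness. First, since $\pi(L)$ is reduced alternating, it satisfies conditions \refitm{Alternating}, \refitm{Connected}, and \refitm{slope} of \refdef{AltKnots}, and $\bdy Y$ (if nonempty) is incompressible in $Y\setminus N(F)$; thus the hypotheses of \refprop{AltChunkDecomp} are met, and $Y\setminus L$ admits a chunk decomposition. More precisely, the truncated chunks of that decomposition glue to form the compact exterior $X = Y\setminus N(L)$, with the boundary faces assembling into the torus cusps of $X$ and the exterior faces coming from $\bdy Y$.

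Second, I would invoke \refprop{AngledChunkDecomp}: because the representativity satisfies $r(\pi(L),F)\geq 4$, assigning interior angle $\pi/2$ (hence exterior angle $\pi/2$) to every edge of the chunk decomposition upgrades it to an \emph{angled} chunk decomposition of the compact manifold $X$.

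Third, \refthm{IrredBdyIrred} states precisely that a compact orientable $3$-manifold carrying an angled chunk decomposition is irreducible and boundary irreducible. Applying it to $X$ yields the conclusion. Since any essential sphere or compressing disk for $Y\setminus L$ can be isotoped off the removed link $N(L)$, irreducibility and boundary irreducibility of $X$ are equivalent to the corresponding statements for $Y\setminus L$, completing the argument.

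The only point requiring a word of care — the ``main obstacle,'' such as it is — is the passage between the ideal object $Y\setminus L$ (on which the decomposition is naturally described, with $4$-valent ideal vertices on $L$) and the compact manifold $X = Y\setminus N(L)$ to which \refthm{IrredBdyIrred} literally applies via \emph{truncated} chunks. This is routine: truncating at the ideal vertices produces square boundary faces that glue up to the torus components of $\bdy X$ exactly as in the classical Menasco decomposition, and this operation neither creates nor destroys essential spheres or disks.
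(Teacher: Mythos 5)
Your proof is correct and follows exactly the chain the paper intends (the bare \verb|\qed| signals it is immediate from \refprop{AngledChunkDecomp} and \refthm{IrredBdyIrred}). The closing remark about truncation is fine but slightly redundant: by \refdef{AngledChunk} an angled chunk is already a \emph{truncated} chunk, so \refprop{AngledChunkDecomp} directly produces an angled chunk decomposition of the compact exterior $X$, and \refthm{IrredBdyIrred} applies without further comment.
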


\refcor{IrredBdryIrred} shows that weakly generalised alternating links in $S^3$ are nontrivial and nonsplit, giving a different proof of this fact than in \cite{how15t}.

\begin{corollary}\label{Cor:NoNormalBigons}
If $\pi(L)$ is a reduced alternating diagram of a link $L$ on $F$ in $Y$ with $r(\pi(L),F)\geq 4$, then the chunks in the decomposition of $Y\setminus L$ contain no normal bigons, i.e.\ no normal disks meeting exactly two interior edges. 
\end{corollary}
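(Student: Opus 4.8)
The plan is to read this off directly from the two facts already established: by \refprop{AngledChunkDecomp}, since $r(\pi(L),F)\geq 4$, the chunk decomposition of $Y\setminus L$ from \refprop{AltChunkDecomp} — with every edge assigned interior and exterior angle $\pi/2$ — is an angled chunk decomposition; and by \refprop{NonnegArea}, every connected orientable normal surface in an angled chunk has nonnegative combinatorial area. The whole argument is then a one-line area count, the substantive work having already been done inside the proof of \refprop{AngledChunkDecomp}.

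In detail, suppose toward a contradiction that some chunk $C$ in the decomposition contains a normal bigon $D$, that is, a normal disk whose boundary crosses the $1$-skeleton of $C$ in exactly two points, lying on interior edges $e_1$ and $e_2$. Since $D$ is a disk, $\chi(D)=1$, and since every edge of $C$ carries exterior angle $\pi/2$ by \refprop{AngledChunkDecomp}, \refdef{CombinatorialArea} gives
\[ a(D) \;=\; \epsilon(e_1)+\epsilon(e_2)-2\pi\chi(D) \;=\; \tfrac{\pi}{2}+\tfrac{\pi}{2}-2\pi \;=\; -\pi \;<\; 0, \]
contradicting \refprop{NonnegArea}. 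Hence no chunk contains a normal bigon, i.e.\ no chunk contains a normal disk meeting exactly two interior edges. The same computation also rules out a normal disk crossing the $1$-skeleton in exactly two points of any other type: two crossings on boundary edges is impossible because distinct boundary faces are never adjacent along an edge, while one interior and one boundary crossing again yields combinatorial area $-\pi<0$.

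I do not expect any genuine obstacle at this stage; the corollary is a formal consequence of the two preceding propositions. All of the combinatorial input — weak primeness together with $r(\pi(L),F)\geq 4$, which forces the boundary of every normal compressing or boundary-parallel disk to meet at least four edges — was already consumed in verifying condition \refitm{NormalDiskSum} of \refdef{AngledChunk} in the proof of \refprop{AngledChunkDecomp}. The only point to keep straight is the bookkeeping convention that the boundary edges produced by truncating the $4$-valent ideal vertices also carry exterior angle $\pi/2$, so that the area count is uniform across interior and boundary edges.
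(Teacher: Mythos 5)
Your main argument is correct and is essentially the paper's proof: a normal disk crossing exactly two interior edges would have $\sum\epsilon(e_i)=\pi<2\pi$, violating condition \refitm{NormalDiskSum} of \refdef{AngledChunk}; you route this through \refprop{NonnegArea} (whose disk case is itself just that inequality), while the paper cites the angled-chunk condition directly, but the content is identical. One small aside: your last paragraph's claim that ``two crossings on boundary edges is impossible because distinct boundary faces are never adjacent along an edge'' is not quite right (the two boundary edges could belong to the \emph{same} boundary square), but that case is still excluded by the same $-\pi<0$ area count, and in any event the corollary as stated concerns only disks meeting two \emph{interior} edges, so the tangent is harmless.
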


\begin{proof}
A normal disk meeting exactly two interior edges would have $\sum \epsilon(e_i) = \pi/2+\pi/2=\pi < 2\pi$, contradicting the definition of an angled chunk decomposition. 
\end{proof}

\begin{definition}\label{Def:Pi1Essential}
A properly embedded surface $S$ in a 3-manifold $M$ is \emph{$\pi_1$-essential} if
  \begin{enumerate}
  \item $\pi_1(S)\to\pi_1(M)$ is injective,
  \item $\pi_1(S,\bdy S)\to\pi_1(M,\bdy M)$ is injective, and
  \item $S$ is not parallel into $\bdy M$.
  \end{enumerate}
\end{definition}

When $Y=S^3$, Howie and Rubinstein proved that the checkerboard surfaces of a weakly generalised alternating link in $S^3$ are essential \cite{hr16}. Using the machinery of angled chunk decompositions, we can extend this result to weakly generalised alternating links in any compact, orientable, irreducible 3-manifold $Y$. 

\begin{theorem}\label{Thm:hress}
Let $\pi(L)$ be a weakly generalised alternating diagram of a link $L$ on a generalised projection surface $F$ in $Y$.
Then both checkerboard surfaces associated to $\pi(L)$ are $\pi_1$-essential in $X(L)$.
\end{theorem}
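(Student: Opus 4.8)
The plan is to use the bounded angled chunk decomposition of $X \cut \Sigma$ from \refprop{AngledChunkDecomp} and a combinatorial-area argument. Suppose $\Sigma$ fails to be $\pi_1$-essential in $X = X(L)$. Then there is a compressing disk or a boundary-compressing disk for $\Sigma$, or $\Sigma$ is boundary-parallel. In the first two cases, capping off or boundary-compressing produces an essential surface in $X \cut \Sigma$: a sphere, a disk, or (for a boundary compression) a disk whose boundary runs over the parabolic locus. The strategy is to put such a surface into normal form with respect to the bounded angled chunk decomposition (using \refthm{NormalForm}, since $X \cut \Sigma$ inherits reducibility/boundary-reducibility from the failure of essentiality of $\Sigma$), and then derive a contradiction from \refprop{GaussBonnet} together with \refprop{NonnegArea}. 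For a normal sphere or disk $S$ met by $p$ boundary edges adjacent to surface faces, \refprop{GaussBonnet} gives $a(S) = -2\pi\chi(S) + \tfrac{\pi}{2}p$; when $S$ is a sphere this is $-4\pi + \tfrac{\pi}{2}p < 0$ provided $p < 8$, and when $S$ is a disk it is $-2\pi + \tfrac{\pi}{2}p$, negative provided $p < 4$. The heart of the matter is showing these small-$p$ cases are forced, or else ruling out the large-$p$ cases directly; once $a(S) < 0$ we contradict \refprop{NonnegArea}.

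First I would handle reducibility: if $X \cut \Sigma$ is reducible, \refthm{NormalForm} gives a normal sphere, which has $a(S) = -4\pi + \tfrac{\pi}{2}p$. A normal sphere meets no boundary faces unless its boundary does, but a sphere is closed, so $p = 0$ and $a(S) = -4\pi < 0$, contradicting \refprop{NonnegArea}. Hence $X \cut \Sigma$ is irreducible; since $Y$ is irreducible this also shows $\Sigma$ is incompressible in $X$ (a compressing disk would make $X \cut \Sigma$ reducible after capping, or directly produce a normal disk). Next, suppose $X \cut \Sigma$ is boundary-reducible via a disk $D$. Here I must be careful about which part of $\bdy(X \cut \Sigma)$ the disk meets: if $\bdy D$ lies on the lift $\widetilde{\Sigma}$, the disk descends to a compressing disk for $\Sigma$ in $X$, already excluded; if $\bdy D$ meets the parabolic locus or an exterior face, it is a normal disk in the bounded decomposition, and \refprop{GaussBonnet} gives $a(D) = -2\pi + \tfrac{\pi}{2}p$. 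So I need to show $p \le 3$ is impossible unless $a(D) \ge 0$, i.e. that a normal disk meeting at most three boundary edges adjacent to surface faces cannot occur — this should follow from the weakly prime hypothesis and $r(\pi(L),F) \ge 4$, arguing as in the proof of \refprop{AngledChunkDecomp} that such a disk would project to a short curve on $F$ violating weak primeness or the representativity bound. The boundary-parallel case is handled similarly: if $\Sigma$ were parallel into $\bdy X$, then one component of $\widetilde{\Sigma}$ cobounds a product region with a subsurface of $\bdy X$, and chasing this through the chunk decomposition contradicts condition \refitm{slope} of \refdef{AltKnots} (the two checkerboard surfaces have distinct slopes, so $\Sigma$ cannot be isotopic into $\bdy N(L)$) together with the incompressibility of $\bdy Y$ in $Y \setminus N(F)$.

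The main obstacle I anticipate is the boundary-compression / $\pi_1(S,\bdy S)$-injectivity part: ruling out normal disks $D$ with $1 \le p \le 3$, and more generally showing that a boundary compression of $\Sigma$ is always inessential. The area bookkeeping alone does not immediately kill $p \in \{4,5,6,7\}$ for spheres or $p \in \{1,2,3\}$ for disks, so these must be excluded by a finer combinatorial analysis on the surface $F$: tracking how $\bdy D$ meets crossing arcs (edges of the chunks) versus boundary edges, pushing $D$ off the truncation to get a curve on $F$, and invoking weak primeness (\refdef{WeaklyPrime}) — including the $S^2$ alternative that $\pi(L) \cap (F_i \setminus D)$ is a single arc — and the representativity bound $r(\pi(L),F) \ge 4$. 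I would organise this as a case analysis on whether $D$ is parallel into $F$ or is a compressing disk for $F$, exactly mirroring the structure of the proof of \refprop{AngledChunkDecomp}, and then conclude with \refprop{NonnegArea}. Finally, note that $\pi_1(\Sigma) \to \pi_1(X)$ injectivity follows from incompressibility plus irreducibility of $X$ by the loop theorem, and $\pi_1(\Sigma,\bdy\Sigma) \to \pi_1(X,\bdy X)$ injectivity follows from the absence of essential boundary-compressions, so the three conditions of \refdef{Pi1Essential} are all established.
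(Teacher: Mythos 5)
You have the right framework (bounded angled chunk decomposition of $X\cut\Sigma$, normal form, Gauss--Bonnet, nonnegativity of combinatorial area in each chunk), but you stopped short of the observation that makes this strategy actually close, and the ``main obstacle'' you anticipate is not really there.

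Look at what the quantity $p$ in \refprop{GaussBonnet} must be for the disks under consideration. Recall $p$ counts intersections of $\bdy S$ with boundary edges adjacent to surface faces, that is, transitions of $\bdy S$ between $\widetilde{\Sigma}$ and the parabolic locus $P$. For the compressing disk $D$ produced by the loop theorem when $\pi_1(\Sigma)\to\pi_1(X)$ fails to be injective, $\bdy D$ lies \emph{entirely} on $\widetilde{\Sigma}$, so it never transitions to $P$ and $p=0$; then $a(D)=-2\pi<0$, contradicting \refprop{NonnegArea}. For the boundary-compressing disk $E$ produced when $\pi_1(\Sigma,\bdy\Sigma)\to\pi_1(X,\bdy N(L))$ fails to be injective, $\bdy E$ consists of exactly one arc on $\widetilde{\Sigma}$ and one arc on $P$, which share exactly two endpoints, so $p=2$; then $a(E)=-2\pi+\pi=-\pi<0$, again a contradiction. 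In each case $p$ is pinned down exactly by the structure of the disk's boundary, so there is no range of $p$ values to exclude.

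Consequently the ``finer combinatorial analysis on $F$, tracking crossing arcs, invoking weak primeness and representativity'' that you propose for the cases $p\in\{1,2,3\}$ is not needed and is not what the paper does: those hypotheses have already done their work in \refprop{AngledChunkDecomp}, where they establish that the chunk decomposition is angled; here they enter only through \refprop{NonnegArea}. There is also a small logical slip in your boundary-reducibility paragraph: you dismiss the case $\bdy D\subset\widetilde{\Sigma}$ as ``already excluded,'' but that is precisely the incompressibility you are trying to prove, and it is disposed of by the $p=0$ observation rather than by a prior step.
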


Ozawa~\cite{oza06} proved a similar theorem for generalised alternating projections in $S^3$. Similarly, Ozawa~\cite{oza11} and Ozawa and Rubinstein~\cite{or12} showed that related classes of links admit essential surfaces, some of which can be viewed as checkerboard surfaces for a projection of the link onto a generalised projection surface (e.g.\ the Turaev surface; see also \cite{fkp13}). The original proof that checkerboard surfaces are essential, for reduced prime alternating planar projections in $S^3$, was due to Aumann in 1956~\cite{aum56}.

\begin{proof}[Proof of \refthm{hress}]
Let $\Sigma$ be a checkerboard surface. Recall $X$ denotes the link exterior, $X=Y\setminus N(L)$. If $\pi_1(\Sigma)\to\pi_1(X)$ is not injective, then $\pi_1(\widetilde{\Sigma})\to\pi_1(X)$ is not injective. 
Because $\widetilde{\Sigma} = \bdy N(\Sigma)$ is 2-sided, by the loop theorem there exists a properly embedded essential disk $D$ in $X\cut\Sigma$ with boundary on $\widetilde{\Sigma}$. We may put $D$ into normal form with respect to the bounded chunk decomposition of $X\cut \Sigma$. Note $\bdy D$ does not meet the parabolic locus $P$ of $X\cut \Sigma$, which consists of the boundary components $\bdy N(L)\cap\bdy(X\cut \Sigma)$. Thus \refprop{GaussBonnet} implies that $a(D) = -2\pi$. On the other hand, any normal component of $D$ in a chunk has combinatorial area at least $0$, by definition of combinatorial area, \refdef{CombinatorialArea}, and definition of an angled chunk, \refdef{AngledChunk}. This is a contradiction.

Now suppose $\pi_1(\Sigma,\bdy\Sigma)\to\pi_1(X, \bdy N(L))$ is not injective. Then again the loop theorem implies there exists an embedded essential disk $E$ in $X\cut \Sigma$ with $\bdy E$ consisting of an arc on $\widetilde{\Sigma}$ and an arc on $P$. Put $E$ into normal form with respect to the chunk decomposition. \refprop{GaussBonnet} implies $a(E) = -2\pi\chi(E) + \pi = -\pi$. Again this is a contradiction.
\end{proof}

% 4
%%%%%%%%%%%%%%%%%%%%%%%%%%%%%%%%%%%%%%%%%%%%%%%%%%%%%%%%%%%%%%%%%
\section{Detecting hyperbolicity}\label{Sec:Hyperbolic}
Our next application of the chunk decomposition is to determine conditions that guarantee that a reduced alternating link on a generalised projection surface is hyperbolic. Thurston~\cite{thu82} proved that a 3-manifold
has hyperbolic interior whenever it is irreducible, boundary irreducible, atoroidal, and anannular. Using this result, Futer and Gu{\'e}ritaud show that a 3-manifold with an angled block decomposition is hyperbolic~\cite{fg09}. But when we allow a more general angled chunk decomposition, the manifold may contain essential tori and annuli. The main result of this section, \refthm{Hyperbolic}, restricts these.

\begin{definition}\label{Def:BdyAnannular}
The manifold $Y\setminus N(F)$ is \emph{$\bdy$-annular} if it contains a properly embedded essential annulus with both boundary components in $\bdy Y$. Otherwise, it is \emph{$\bdy$-anannular}.
\end{definition}

\begin{theorem}\label{Thm:Hyperbolic}
Let $\pi(L)$ be a weakly generalised alternating diagram of a link $L$ on a generalised projection surface $F$ in a 3-manifold $Y$.
Suppose $F$ has genus at least one, all regions of $F\setminus\pi(L)$ are disks, and $\hat{r}(\pi(L),F)>4$. Then:
\begin{enumerate}
\item $Y\setminus N(L)$ is toroidal if and only if $Y\setminus N(F)$ is toroidal.
\item $Y\setminus N(L)$ is annular if and only if $Y\setminus N(F)$ is $\bdy$-annular.
\item Otherwise, the interior of $Y\setminus N(L)$ admits a hyperbolic structure.
\end{enumerate}
\end{theorem}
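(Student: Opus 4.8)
The plan is to derive part (3) formally from parts (1) and (2), and to prove (1) and (2) by normal surface theory in the angled chunk decomposition of $X(L)=Y\setminus N(L)$ furnished by \refprop{AngledChunkDecomp} (available since a weakly generalised alternating diagram has $r(\pi(L),F)\geq 4$). For (3): if $Y\setminus N(F)$ is atoroidal and $\bdy$-anannular, then by (1) $X(L)$ is atoroidal, by (2) it is anannular, and it is irreducible and boundary irreducible by \refcor{IrredBdryIrred}; since it is Haken (it contains the $\pi_1$-essential checkerboard surfaces of \refthm{hress}), Thurston's hyperbolisation theorem gives the hyperbolic structure on its interior, totally geodesic along any higher-genus boundary. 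So everything reduces to (1) and (2).

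For the ``$\Leftarrow$'' directions I would first note that $L$ may be isotoped into the interior of $N(F)$, so that $Y\setminus N(F)$ sits inside $X(L)$ as a union of chunk interiors. An essential torus $T$ in $Y\setminus N(F)$ (respectively an essential annulus $A$ with $\bdy A\subset\bdy Y$) can be isotoped off the diagram graph and then lies in the interior of a single chunk, hence is automatically a normal surface with $a(T)=0$ (respectively $a(A)=0$), using that $\bdy Y$ has no sphere components. To see it remains essential in $X(L)$, I would cut $X(L)$ along it — legitimate because it is incompressible in $Y\setminus N(F)$, so the two copies become incompressible exterior faces — and observe that a compressing or boundary-compressing disk would normalise, by \refthm{NormalForm}, to a normal disk of negative combinatorial area, contradicting \refprop{NonnegArea} and \refprop{GaussBonnet}; moreover it cannot become boundary parallel in $X(L)$ without already being so in $Y\setminus N(F)$.

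The content of the theorem is the ``$\Rightarrow$'' directions. Given an essential torus $T$, or essential annulus $A$, in $X(L)$, I would put it in minimal-weight normal form via \refthm{NormalForm} and \refcor{IrredBdryIrred}; \refprop{GaussBonnet} then gives combinatorial area $-2\pi\chi=0$, so by \refprop{NonnegArea} each intersection with a chunk is a normal square (meeting exactly four edges), an annulus whose boundary meets no edges, or an incompressible torus disjoint from the chunk boundary. A torus piece is all of $T$ (and cannot occur for $A$), so that case already lands in a single chunk. Since every region of $F\setminus\pi(L)$ is a disk, an annular piece cannot have a boundary curve on an interior face, nor on a boundary face (these are disks), so — as exterior faces are not glued — both its boundary curves lie on $\bdy Y$, and it again lies in a single chunk, giving a $\bdy$-annulus in $Y\setminus N(F)$. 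Apart from these chunk-contained cases, $T$ (respectively $A$) is tiled entirely by normal squares.

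Ruling this last case out is the main obstacle, and it is where the strengthened hypothesis $\hat r(\pi(L),F)>4$ is used. A normal square has connected boundary, so it lies on a single component of the chunk boundary, namely one copy $F_i^{\pm}$ of some $F_i$ (it cannot lie on a component of $\bdy Y$, which is incompressible, nor, in a minimal configuration, can it be parallel to a boundary face). Either it is parallel into $F_i^{\pm}$ — hence removable by a weight-reducing isotopy, contradicting minimality — or it is a compressing disk for $F_i^{\pm}$ in $Y\setminus N(F)$, so its boundary, seen as a curve on $F_i$, meets $\pi(L)$ at most four times, forcing $r^{\pm}(\pi(L),F_i)\leq 4$. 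Since $\hat r>4$, for each $i$ at least one side, say $F_i^{+}$, has $r^{+}(\pi(L),F_i)\geq 5$, so no normal square lies on $F_i^{+}$. But the face gluing of \refprop{AltChunkDecomp} matches each region of $F_i\setminus\pi(L)$ on the $F_i^{-}$ copy with the same region on the $F_i^{+}$ copy, so a square with a side on such a region on $F_i^{-}$ must be glued to a square with a side on $F_i^{+}$, which is impossible; hence there are no squares at all, and $T$ (respectively $A$) lies in a single chunk, i.e.\ in $Y\setminus N(F)$, where it is essential because a compression there would descend to one in $X(L)$. In particular $X(L)$ carries no essential annulus meeting $\bdy N(L)$. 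The one place needing separate care is the genus-one case, where a chunk-interior torus can be parallel to a copy of $F_i$; here the representativity hypothesis shows that the relevant side of $F_i$ is incompressible in $Y\setminus N(F)$, so $Y\setminus N(F)$ is toroidal in the appropriate sense. The detailed enumeration of square types and the weight-reducing isotopies would follow the pattern of the arguments of Menasco and of Hayashi for classical and toroidally alternating links.
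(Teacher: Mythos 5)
Your high-level plan matches the paper's: derive~(3) from~(1) and~(2), put a closed torus or annulus into normal form in the angled chunk decomposition, classify the intersections with chunks via \refprop{NonnegArea}, and use $\hat r(\pi(L),F)>4$ to rule out a tiling entirely by compressing disks for $F$, since squares must alternate between the two sides of a component of $F$ and at least one side admits no compressing disk meeting $\pi(L)$ at most four times. The reduction of~(3) to~(1) and~(2) together with \refcor{IrredBdryIrred} and Thurston's theorem is fine, and your $\Leftarrow$ direction is in the right spirit (the paper instead isotopes compressing disks off the essential checkerboard surfaces in \refprop{HypToroidal}, but both routes work).

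The real gap is your dispatch of normal squares parallel into $F$. You assert such a square is ``removable by a weight-reducing isotopy, contradicting minimality,'' but this is false. A square $S_i$ parallel into $F_i$ has boundary bounding a disk $D'\subset F_i$ which in general contains crossings, edges and faces of the diagram graph (the weakly prime hypothesis forbids this only when $\bdy S_i$ meets fewer than four edges); the ball between $S_i$ and $D'$ is not a product region avoiding the skeleton, so there is no weight-reducing normal move that removes $S_i$. The paper's actual argument here is much longer: \reflem{Menasco} shows that a closed normal surface with a square parallel into $F$ is meridionally compressible; surgering the torus along such a meridian gives an annulus, and \refthm{AnAnnularLink} --- a substantial combinatorial theorem resting on \reflem{NormalSquare}'s three-way classification of squares meeting boundary faces, the gear-rotation face gluings of \refprop{AltChunkDecomp}, and a string-of-bigons analysis --- shows that if this annulus were essential its boundary slope would be integral rather than meridional, forcing the annulus (hence the torus) to be boundary parallel. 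Likewise, part~(2) when $\bdy A\subset\bdy N(L)$ is not settled by representativity alone: the paper invokes \refthm{AnAnnularLink} to show any such annulus would force a string of bigons in $\pi(L)$, impossible when $F$ has positive genus and all regions are disks. Without the Menasco argument and the bigon-chain analysis, your sketch of the $\Rightarrow$ directions does not close.
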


A few remarks on the theorem. 
Hayashi~\cite{hay95} showed that if $F$ is a connected generalised projection surface in a closed $3$-manifold $Y$, the regions of $F\setminus\pi(L)$ are disks, $\pi(L)$ is weakly prime, and in addition, if $e(\pi(L),F)>4$, then $Y\setminus L$ contains no essential tori. 
\refthm{Hyperbolic} generalises Hayashi's theorem even in the case that $F$ is a Heegaard surface for $S^3$, for if $\hat{r}(\pi(L),F)>4$, the edge-representativity could still be $2$ or $4$.

Also, we cannot expect to do better than \refthm{Hyperbolic}, for Hayashi~\cite{hay95} gives an example of an 8-component link in $S^3$ that is reduced alternating on a Heegaard torus $F$, with all regions of $F\setminus\pi(L)$ disks and $r(\pi(L),F)=\hat{r}(\pi(L),F)=4$, such that the exterior of $L$ admits an embedded essential torus. However, in the results below, we do give restrictions on the forms of reduced alternating links on $F$ that are annular or toroidal. 

\begin{example}\label{Exa:T2xI}
Consider the case $Y=T^2\times[-1,1]$, a thickened torus, with generalised projection surface $F=T^2\times\{0\}$. Because $F$ has no compressing disks in $Y$, the representativity of any alternating link on $F$ is infinite. Thus provided an alternating diagram on $F$ has at least one crossing, is weakly prime, and is checkerboard colourable, it is weakly generalised alternating. If all regions of the diagram are disks, then \refthm{Hyperbolic} immediately implies that the link is hyperbolic. 

Recently, others have studied the hyperbolic geometry of restricted classes of alternating links on $T^2\times\{0\}$ in $T^2\times[-1,1]$, for example coming from uniform tilings~\cite{acm17, ckp}, and diagrams without bigons~\cite{ckp}. \refthm{Hyperbolic} immediately proves the existence of a hyperbolic structure of such links. However, the results in \cite{acm17, ckp} give more details on their geometry.

Similarly, \refthm{Hyperbolic} implies that a weakly prime, alternating, checkerboard colourable diagram on a surface $S\times\{0\}$ in $S\times[-1,1]$ is hyperbolic, for more general $S$. The hyperbolicity of such links has also been considered by Adams \emph{et al}~\cite{Adams:ThickenedSfces}. 
\end{example}

%%%%%%%%%%%%%%%%%%%%%%%%%%%%%%%%%%%%%%%%%%%%%%%%%%%%%%%%%%%%%%%%%%%%

\subsection{Essential annuli}

We first consider essential annuli. Suppose $A$ is an essential annulus embedded in the complement of a link that is reduced alternating on a generalised projection surface $F$, as in \refdef{AltKnots}. Then \refthm{NormalForm} and \refprop{GaussBonnet} imply that $A$ can be put into normal form, with $a(A)=0$. Then \refprop{NonnegArea} implies it is decomposed into disks with area $0$. If $A$ meets $\bdy N(L)$, it must meet boundary faces and edges. Recall that all boundary faces are squares. 

\begin{lemma}\label{Lem:NormalSquare}
Let $\pi(L)$ be a reduced alternating diagram of a link $L$ on a generalised projection surface $F$ in $Y$, as in \refdef{AltKnots}. Suppose further that the representativity satisfies $r(\pi(L),F)\geq 4$.
In the angled chunk decomposition of $X(L)$, a normal disk with combinatorial area zero that meets a boundary face has one of three forms:
\begin{enumerate}
\item either it meets a single boundary face and two non-boundary edges, and runs through opposite boundary edges of the boundary face, or
\item it meets two boundary faces, intersecting two adjacent boundary edges in each, and encircles a single non-boundary edge of the chunk, or
\item it meets two boundary faces in opposite boundary edges of each boundary face. 
\end{enumerate}
If the link is checkerboard colourable, in the second form it runs through faces of opposite colour, and in the third it runs through faces of the same colour. 
\end{lemma}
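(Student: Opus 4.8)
The plan is to analyze a combinatorial-area-zero normal disk $D$ that meets a boundary (square) face, and to carry out a case analysis based on how $\bdy D$ runs through that face. First I would invoke \refprop{NonnegArea}: since $a(D)=0$ and $D$ is a disk, the sum of exterior angles of edges met by $\bdy D$ is exactly $2\pi$; since every edge has exterior angle $\pi/2$ by \refprop{AngledChunkDecomp}, $\bdy D$ meets \emph{exactly four} edges (boundary or non-boundary), counted with multiplicity. Now, $D$ meets a boundary face $B$, which is a square with four boundary edges. An arc of $\bdy D$ in $B$ either runs between two adjacent boundary edges or between two opposite boundary edges; the normality conditions \refitm{NoArcEndptsEdge} and \refitm{NoArcEndptsAdj} of \refdef{NormalSurface} rule out an arc with both endpoints on the same boundary edge, and rule out an arc going from a boundary edge to the adjacent interior/surface edge.

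The core of the argument is then a short enumeration. If $\bdy D$ meets $B$ in a single arc $\gamma$: if $\gamma$ is an ``opposite'' arc, it traverses $B$ and exits $B$ along the two interior edges incident to the two boundary edges $\gamma$ meets — but those are opposite edges of the chunk near that vertex, so $\bdy D$ meets exactly two \emph{non-boundary} edges in addition to passing through $B$; counting ($2$ from the non-boundary edges, and the two boundary edges of $B$ — wait, an opposite arc in a square meets two boundary edges, so that is $2+2=4$ total), giving case (1). If $\gamma$ is an ``adjacent'' arc in $B$, then it must continue outside $B$; here I track $\bdy D$ as it leaves $B$ and follow it around. Because $\bdy D$ meets only four edges total and two of those are the boundary edges of $B$ hit by $\gamma$, $\bdy D$ can meet at most two more edges; since adjacent-to-adjacent arcs force the disk to ``turn the corner'' at the vertex of $B$ and encircle the single non-boundary edge incident to that corner, and by normality it must then re-enter a boundary face, one shows $\bdy D$ enters a second boundary face $B'$ again via an adjacent arc, and closes up encircling exactly one non-boundary edge — this is case (2). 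Finally, if $\bdy D$ meets two boundary faces each in an opposite arc, then near each it exits along interior edges, but the total edge count of four is exhausted by the four boundary edges of the two squares, so $\bdy D$ meets no non-boundary edges — this is case (3). Throughout, I would use that $D$ is parallel into $F$ or is a compressing disk, but for this lemma only the local square-face combinatorics plus the edge count are needed.

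For the last sentence (the checkerboard-colourable refinement), I would use \refprop{AltChunkDecompCheck}: at each ideal $4$-valent vertex the four edges around it alternate in the two colours of the incident faces, and opposite edges bound faces of opposite colour across the gluing. In case (2), the disk $\bdy D$ turns a corner of $B$ and the matching corner of $B'$; following which colour of face it passes through between the two boundary faces, the two corners sit on faces of opposite colour, so $\bdy D$ runs through faces of opposite colour. In case (3), $\bdy D$ runs straight through $B$ and straight through $B'$ via opposite boundary edges, staying in faces on two sides of a single edge-chain that all carry the same colour; hence faces of the same colour.

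The main obstacle I expect is the bookkeeping in case (2): making precise that an adjacent arc in a square boundary face, together with the global constraint that $\bdy D$ hits only four edges and cannot have an arc from a boundary edge to an adjacent non-boundary edge, forces $\bdy D$ to encircle exactly one non-boundary edge and return to a second boundary face — rather than, say, wandering through several interior faces. This is where one must carefully use the fact that boundary faces are squares (all $4$-valent, from \refprop{AltChunkDecomp}) and that there are no normal bigons (\refcor{NoNormalBigons}) to exclude the degenerate possibilities.
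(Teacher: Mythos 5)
Your outline starts the same way the paper does (combinatorial area zero plus all angles $\pi/2$ forces $\bdy D$ to meet exactly four edges; then case analysis by how $\bdy D$ crosses boundary faces), and your counts for forms (1) and (3) are fine. But there is a genuine gap, and it is exactly where you declare that none is present. You write that ``for this lemma only the local square-face combinatorics plus the edge count are needed'' and that one only needs ``no normal bigons'' to exclude degenerate cases. That is false: the representativity hypothesis $r(\pi(L),F)\geq 4$ and the weakly prime condition are both used essentially.

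Two concrete places where the local argument breaks down. First, you never rule out the case where $\bdy D$ meets a single boundary face in an \emph{adjacent} arc together with two interior edges. Normality only forbids an arc from a boundary edge to an \emph{adjacent} interior edge; it does not forbid $\bdy D$ from leaving the boundary face and crossing a face to some non-adjacent interior edge, so the disk has no obligation to ``turn the corner'' and re-enter a second boundary face. The paper kills this case by sliding $\bdy D$ slightly off the boundary face, producing a curve on $F$ that meets $\pi(L)$ exactly three times; then $r(\pi(L),F)\geq 4$ forces this curve to bound a disk parallel into $F$, and parity (the diagram is a union of immersed closed curves on $F$) gives a contradiction with the odd count. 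Nothing local substitutes for this. Second, in form (2) you simply assert that $\bdy D$ ``encircles exactly one non-boundary edge.'' Locally, all you know is that after sliding off the two boundary faces you get a curve on $F$ meeting $\pi(L)$ exactly twice. You need $r\geq 4$ to force this curve to be parallel into $F$, and then weak primeness to force the disk it bounds on $F$ to contain no crossings, hence exactly one edge of the diagram. Without these hypotheses the disk could cut off a more complicated tangle.

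So the plan is right-shaped but underpowered: once you are in the adjacent-arc cases, you must pass from the chunk to the curve it traces on $F$, invoke representativity to rule out compressing disks, and invoke weak primeness (together with parity) to pin down what the parallel-into-$F$ disk looks like. Those are precisely the hypotheses of the lemma, and they are not decorative.
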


\begin{proof}
Because all edges have exterior angle $\pi/2$, a disk of combinatorial area zero meets exactly four edges. Hence if it meets a boundary face, it either meets four boundary edges or two boundary edges and two interior edges. 
 
There are various cases to consider. If it meets two boundary edges and two non-boundary edges, then it might meet adjacent boundary edges on the boundary face. In this case, slide the disk slightly off the boundary face so that its two intersections with boundary faces are replaced by an intersection with a single non-boundary edge. This gives a disk meeting three edges of the diagram. The representativity condition ensures the disk is parallel into the surface $F$. But the diagram consists of immersed closed curves on the surface, so they cannot meet the boundary of a disk an odd number of times. Thus in the first case, the disk meets opposite boundary edges of a boundary face.

Now suppose that the disk meets exactly two boundary faces. If it meets both faces in opposite boundary edges, we are in the third case. If it meets one face in opposite boundary edges and one in adjacent boundary edges, again push the disk slightly off the boundary faces to obtain a curve bounding a disk that meets three edges of the diagram. As before, this gives a contradiction. So the only other possibility is that the disk meets two boundary faces and runs through adjacent boundary edges in both. This is the second case. 

In the second case, we may slide the boundary of the normal disk slightly off the boundary face in a way that minimises intersections with non-boundary edges. Thus it will meet non-boundary edges exactly twice; this gives a curve $\gamma$ on $F$ meeting the link diagram exactly twice. The representativity condition implies that $\gamma$ bounds a disk parallel into $F$. Then the fact that the diagram is weakly prime implies that it has no crossings in the interior of the disk, so the curve encircles a single edge as claimed. 
\end{proof}

As in the previous lemma, we will prove many results by considering the boundaries of normal disks on the chunk decomposition. The combinatorics of the gluing of faces in \refprop{AltChunkDecomp} and the fact that the chunk decomposition comes from an alternating link will allow us to obtain restrictions on the diagram of our original link.

Next we determine the form of a normal annulus when one of its normal disks has the first form of \reflem{NormalSquare} and is parallel into $F$.

\begin{lemma}\label{Lem:Annulus}
Let $\pi(L)$ be a reduced alternating diagram of a link $L$ on a generalised projection surface $F$ in a 3-manifold $Y$, and suppose $r(\pi(L),F)\geq 4$. 
Suppose $A$ is a normal annulus in the angled chunk decomposition of the link exterior $X:=Y\setminus N(L)$. 
Suppose $A$ made up of normal squares such that one square $A_i\subset A$ has boundary meeting exactly one boundary face and two non-boundary edges, and $A_i$ is parallel into the surface $F$. Then
  \begin{itemize}
  \item two components of $L$ form a 2-component link with a checkerboard colourable diagram which consists of a string of bigons arranged end to end on some component $F_j$ of $F$, 
  \item a checkerboard surface $\Sigma$ associated to these components of $\pi(L)$ is an annulus, and
  \item a sub-annulus of $A$ has one boundary component running through the core of $\Sigma$, and the other parallel to $\bdy \Sigma$ on $\bdy N(L)$.
  \end{itemize}
\end{lemma}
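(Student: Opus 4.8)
The plan is to run Menasco's argument for classical alternating links \cite{men84} in this setting: show that the normal squares assembling $A$ march along a chain of bigons of $\pi(L)$, which must close up because $A$ is an annulus. I would begin by pinning down $\bdy A_i$. Let $c$ be the crossing whose boundary face $A_i$ meets, lying on the component $F_j$ of $F$. Since $A_i$ is parallel into $F_j$, pushing $\bdy A_i$ off that boundary face gives a curve on $F_j$ bounding a disk $D$; because $A_i$ runs through \emph{opposite} boundary edges at $c$ and meets exactly two other (interior) edges, $\bdy D$ meets $\pi(L)$ in four points, two on the edge of $\pi(L)$ leaving $c$ along its overstrand and two on the edge leaving $c$ along its understrand. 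Weak primeness (which in particular rules out monogons in the diagram graph) forces $D$ to contain no crossing, so $A_i$ is parallel to a sub-disk of a single region $Q$ of $F_j\setminus\pi(L)$ incident to $c$, and locally at $c$ the square $A_i$ separates an overstrand from an understrand.

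Next I would propagate this picture around $A$. Of the four arcs of $\bdy A_i$, one lies in the boundary face at $c$ (hence on $\bdy A$) and three lie in interior faces at $c$ and are glued to neighbouring squares of $A$. Using the gear-rotation gluing of \refprop{AltChunkDecomp}, which sends an edge of a face to the adjacent edge in the alternating direction, I would trace that the square of $A$ glued on across $Q$ again meets a single boundary face, at a crossing $c'$ sharing an edge of $\pi(L)$ with $c$, runs through opposite boundary edges there, and is again parallel into $F_j$, and moreover that $c$ and $c'$ cobound a bigon of $\pi(L)$. The alternating hypothesis is essential here, and weak primeness, $r(\pi(L),F)\geq 4$, and \refcor{NoNormalBigons} are used to eliminate the remaining a priori local configurations. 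Inductively, every square of $A$ is of the first form of \reflem{NormalSquare} and parallel into $F_j$; the boundary faces they meet lie at a cyclic sequence of crossings $c_1,c_2,\dots$ with $c_k,c_{k+1}$ cobounding a bigon; and only the two link components whose strands run through $c$ can appear, since at each stage the square separates an overstrand of one of these two strands from an understrand of the other, leaving no room for a third strand.

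Finally I would close up and read off the conclusion. Since $A$ is a compact annulus the sequence is finite and closes up: $c_m$ and $c_1$ cobound a bigon. Hence the two components of $L$ running through $c_1,\dots,c_m$ form a sublink $L'$ whose diagram on $F_j$ is a closed string of $m$ bigons, and the fact that $A_i$ is parallel into $F$ (rather than a compressing disk) together with the two boundary circles of $A$ gives that $L'$ has two components. A string of bigons is checkerboard colourable, and the checkerboard surface $\Sigma$ built from the $m$ bigon disks and the $m$ twisted bands at $c_1,\dots,c_m$ has $\chi(\Sigma)=m-m=0$; being connected, two-sided, and a spanning surface of the two-component link $L'$ with two boundary circles, it is an annulus. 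Reassembling the squares of $A$ in $Y\setminus L$, their arcs in interior faces glue to a curve passing once through each bigon disk and separating overstrand from understrand there, i.e.\ the core of $\Sigma$, while their arcs in the boundary faces at $c_1,\dots,c_m$ glue to a boundary circle of $A$ lying on $\bdy N(L)$ parallel to $\bdy\Sigma$; the piece of $A$ between this core and this boundary circle is the required sub-annulus.

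The main obstacle is the propagation step: verifying that the gear-rotation gluing forces the neighbour of a first-form square that is parallel into $F$ to again be a first-form square parallel into $F$, at a crossing cobounding a bigon with the original. That is where the alternating structure does its work, and where the various possible local configurations at the glued edge must be ruled out using weak primeness, the representativity bound, and the absence of normal bigons.
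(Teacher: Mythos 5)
Your high-level strategy agrees with the paper's: start from the square $A_i$, show that the adjacent squares of $A$ march along a chain of bigons, close up using the annulus structure, and then read off the two-component sublink and the annular checkerboard surface. The closing-up and Euler-characteristic bookkeeping at the end are fine.

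However, there is a genuine gap, and you yourself flag it: the propagation step, which you defer as ``the main obstacle,'' is the actual content of the lemma, and the proposal does not carry it out. Asserting that ``the gear-rotation gluing forces the neighbour of a first-form square parallel into $F$ to again be a first-form square parallel into $F$, at a crossing cobounding a bigon with the original'' is exactly what needs proof, and it is not automatic. The paper's argument here is delicate: one superimposes $\bdy A_{i-1}$ and $\bdy A_{i+1}$ on the same chunk as $A_i$, uses the fact that the gluing rotates in the \emph{same} direction on the two faces $W_{i\pm 1}$ (because $\bdy A_i$ crosses opposite boundary edges), shows via weak primeness that $\bdy A_{i-1}$ cannot cross $\bdy A_i$ in the third face $B_i$, uses disjointness of $A_{i-1}$ and $A_{i+1}$ to pin down where $\bdy A_{i-1}$ must exit the disk $D_i$, and only then produces a larger disk $E$ to which the representativity bound $r(\pi(L),F)\geq 4$ and weak primeness apply to yield a single bigon in $D_{i-1}\setminus D_i$. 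One also has to handle the degenerate case $A_{i-1}=A_{i+1}$. None of this is in your proposal, so as written it does not establish the lemma.

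There is also an error in your set-up of $A_i$ that will mislead the propagation. Since $A_i$ is parallel into $F$ and $\bdy A_i$ runs through opposite boundary edges of the boundary face at $c$, the disk $D_i\subset F$ that $A_i$ is parallel to \emph{contains the crossing} $c$ together with parts of two edges of $\pi(L)$; it is not a subdisk of a single region of $F\setminus\pi(L)$, and weak primeness (which constrains curves meeting $\pi(L)$ exactly twice) does not force $D_i$ to be crossing-free. The correct picture is the one the paper uses in \reffig{GluingSquares}: $\bdy A_i$ cuts off a small disk around $c$ meeting the two white faces $W_{i-1},W_{i+1}$ and the shaded face $B_i$, and this is what you need in order to run the argument with $\bdy A_{i\pm 1}$. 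If you start from the (incorrect) claim that $A_i$ lies over a single region, the subsequent combinatorics do not match what the gear-rotation gluing actually produces.

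Finally, the reason there must be an \emph{even} number of bigons (hence two link components, not a single component with a M\"obius band of bigons) is that the squares of $A$ alternate between the two chunks as you go around; an odd number would force two of the superimposed squares to overlap in a single chunk, contradicting embeddedness of $A$. Your proposal attributes this to ``$A_i$ being parallel into $F$ together with the two boundary circles of $A$,'' which is not the right mechanism; parallelism into $F$ is a hypothesis already in force, and the two-boundary-circles observation must be tied to the alternating-chunk count to get the parity.
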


\begin{proof}
Let $A_i$ be as in the statement of the lemma, so $\bdy A_i$ bounds a disk on the surface of a chunk and meets one boundary face and two non-boundary edges. By \reflem{NormalSquare}, it must intersect opposite boundary edges of the boundary face that it meets.

Now $A_i$ is glued to squares $A_{i+1}$ and $A_{i-1}$ along its sides adjacent to the boundary face. Say $A_i$ is glued to $A_{i-1}$ in a face that we denote $W_{i-1}$, and $A_i$ is glued to $A_{i+1}$ in a face denoted $W_{i+1}$. The boundary $\bdy A_i$ runs through one more non-boundary face, which we denote by $B_i$.

By \refprop{AltChunkDecomp}, the gluing of $A_i$ to $A_{i-1}$ and $A_{i+1}$ will be by rotation in the faces $W_{i-1}$ and $W_{i+1}$, respectively. Since $\bdy A_i$ runs through opposite edges of a boundary face, the faces $W_{i-1}$ and $W_{i+1}$ are opposite across a crossing of $\pi(L)$. Thus the gluing must be by rotation in the same direction (i.e.\ following the same orientation) in these two faces. 

Superimpose $\bdy A_{i-1}$ and $\bdy A_{i+1}$ onto the same chunk as $A_i$. Arcs are as shown in \reffig{GluingSquares}, left. In that figure, faces $W_{i-1}$ and $W_{i+1}$ are coloured white, and faces adjacent to these are shaded. The colouring is for ease of visualisation only; the argument applies even if the link is not checkerboard colourable. 

\begin{figure}
\import{figures/}{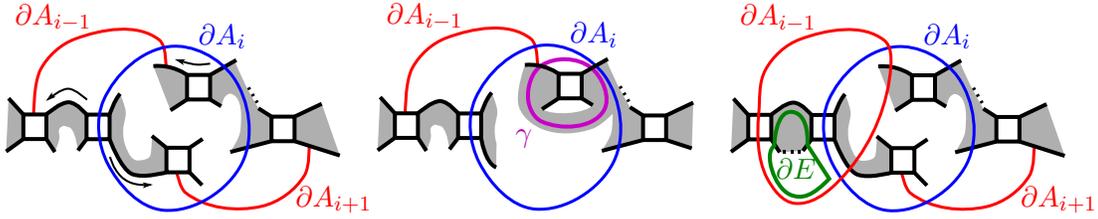}
  \caption{On the left, arcs of $\bdy A_{i+1}$ and $\bdy A_{i-1}$ are as shown. Middle, if $\bdy A_i$ meets $\bdy A_{i-1}$ in the face $B_i$ (shaded). On the right, $\bdy A_{i-1}$ must close up as shown.}
  \label{Fig:GluingSquares}
\end{figure}

Two additional comments on the figure: Because $A_i$ is normal, $\bdy A_i$ cannot run from a boundary face to an adjacent edge. This forces arcs of $\bdy A_{i-1}$ and $\bdy A_{i+1}$ to intersect $\bdy A_i$ in faces $W_{i-1}$ and $W_{i+1}$, respectively, as shown. Second, the gluing map is a homeomorphism of these faces. Since $\bdy A_i$ bounds a disk in $F$, the parts of the white face bounded by the arcs of $A_{i-1}$ and $A_{i+1}$ are simply connected.

Note $\bdy A_{i-1}$ has points inside and outside the disk bounded by $\bdy A_i$, so it must cross $\bdy A_i$ at least twice. One crossing of $\bdy A_{i-1}$ is known: it lies in $W_{i-1}$. We claim that $\bdy A_{i-1}$ cannot cross $\bdy A_i$ in the face $B_i$. For suppose by way of contradiction that $\bdy A_{i-1}$ does intersect $\bdy A_i$ in $B_i$. Then $\bdy A_{i-1}$ must run from $W_{i-1}$ across an interior edge into the face $B_i$, and similarly for $\bdy A_i$. Thus the face $B_i$ lies on opposite sides of a crossing. Draw an arc in $B_i$ from one side of the crossing to the other, and connect to an arc in $W_{i-1}$ to form a closed curve $\gamma$; see \reffig{GluingSquares}, middle. 
Note since $\gamma$ is a simple closed curve lying in the disk in $F$ with boundary $\bdy A_i$, $\gamma$ bounds a disk. But $\gamma$ gives a curve meeting the diagram exactly twice with a single crossing inside the disk; because there are also crossings outside the disk, this contradicts the fact that $\pi(L)$ is weakly prime.

Since $A_{i-1}$ and $A_{i+1}$ lie in the same chunk of the decomposition, either they coincide or they are disjoint. If they coincide, and one of $W_{i-1}$ or $W_{i+1}$ is simply-connected, then $A_{i-1}$ contradicts the definition of normal, since an arc of $\bdy A_{i-1}$ runs from an interior edge to an adjacent boundary edge. If neither region is simply-connected, then two components of $\pi(L)$ form a chain of two bigons on some component of $F$, as required. So now assume $A_{i-1}$ and $A_{i+1}$ are distinct and disjoint, so that the images of $\bdy A_{i-1}$ and $\bdy A_{i+1}$ superimposed on the chunk are also disjoint.

Since $\bdy A_{i-1}$ intersects $\bdy A_i$ in the face $W_{i+1}$ but $\bdy A_{i-1}$ is disjoint from $\bdy A_{i+1}$ in this face, it follows that all intersections of $\bdy A_{i-1}$ with $\bdy A_i$ must occur within the disk in $W_{i+1}$ bounded by $\bdy A_i$ outside the arc $\bdy A_{i+1}$. Because this is a disk, we may isotope $\bdy A_{i-1}$ in the disk to meet $\bdy A_i$ exactly once.
Thus $\bdy A_{i-1}$ must be as shown in \reffig{GluingSquares}, right. 

Now consider the dotted line lying within $\bdy A_{i-1}$ in \reffig{GluingSquares}, right. Because $\bdy A_i$ bounds a disk on $F$, and because the portion of the face $W_{i-1}$ bounded by $\bdy A_{i-1}$ is also a disk, we may obtain a new disk $E$ by isotoping $A_{i-1}$ through these two disks and slightly past the interior edges and boundary faces. The boundary of the disk $E$ is shown in \reffig{GluingSquares}, right. Because $r(\pi(L),F)\geq 4$, $E$ must be parallel into $F$. It follows that $A_{i-1}$ is also a disk parallel into $F$. Finally, because $\pi(L)$ is weakly prime, $E\cap\pi(L)$ must be a single arc with no crossings. Let $D_{i-1}$ and $D_i$ be the subdisks of $F$ bounded by $\bdy A_{i-1}$ and $\bdy A_i$ respectively, which are parallel in $C$ to $A_{i-1}$ and $A_i$ respectively. It follows that $D_{i-1}\setminus D_i$ bounds a single bigon region.

Repeat the above argument with $A_{i-1}$ replacing $A_i$, and $A_{i-2}$ replacing $A_{i-1}$. It follows that $\bdy A_{i-2}$ bounds a subdisk $D_{i-2}$ of $F$ parallel to $A_{i-2}$, with $D_{i-2}\setminus D_{i-1}$ bounding a single bigon region. Note also that $\bdy A_{i-2}$ must run through the bigon face bounded by $D_{i-1}\setminus D_i$, as it is disjoint from $A_i$. Repeat again, and so on. It follows that the diagram $\pi(L)$ contains a string of bigons arranged end to end, and the squares $A_j$ each encircle one bigon. The bigons can be shaded, forming a checkerboard colouring. An arc of each $\bdy A_j$ lies on a shaded bigon.

Now boundary faces are arranged in a circle, with bigons between them. Note in the direction of the circle, the boundary faces alternate meeting disks $\{A_j\}$ in one chunk then the other (shown red and blue in \reffig{GluingSquares}). If there are an odd number of bigons, then the disks $\{A_j\}$ overlap in a chunk, contradicting the fact that $A$ is embedded. So there are an even number of bigons. Then this portion of the diagram is a two component link, and the shaded surface $\Sigma$ is an annulus between link components, with arcs of $\bdy A_j$ in the shaded faces gluing to form the core of $\Sigma$. Finally, note the component of $\bdy A$ on the boundary faces never meets the shaded annulus. Hence it runs parallel to the boundary of the annulus $\Sigma$ on $\bdy N(L)$. 
\end{proof}

\begin{theorem}\label{Thm:AnAnnularLink}
Let $\pi(L)$ be a reduced alternating diagram of a link $L$ on a generalised projection surface $F$ in a 3-manifold $Y$. Suppose that $r(\pi(L),F)\geq 4$ and $\hat{r}(\pi(L),F)>4$. If the link exterior $X(L)$ contains an essential annulus $A$ with at least one boundary component on $\bdy N(L)$, then $\pi(L)$ contains a string of bigons on $F$. If $S$ denotes the annulus or M\"obius band formed between the string of bigons, then a component of $\bdy A$ on $\bdy N(L)$ is parallel to the boundary slope of a component of $\bdy S$ on $\bdy N(L)$. 
\end{theorem}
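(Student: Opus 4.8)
The plan is to normalise $A$ and then feed it into \reflem{Annulus}, using the extra hypothesis $\hat r(\pi(L),F)>4$ only to dispose of the cases not covered by that lemma. Since $r(\pi(L),F)\ge 4$, \refprop{AngledChunkDecomp} gives an angled chunk decomposition of $X(L)$, and by \refthm{NormalForm} I may isotope $A$ into normal form with respect to it. As $\chi(A)=0$, \refprop{GaussBonnet} gives $a(A)=-2\pi\chi(A)=0$, so by \refprop{NonnegArea} every normal component of $A$ inside a chunk has combinatorial area zero; since all exterior angles are $\pi/2$, each such component is an incompressible torus (impossible as a subsurface of an annulus), an edge-free annulus with both boundary curves in non-contractible faces, or a normal square meeting exactly four edges. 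As $\bdy A$ has a component on $\bdy N(L)$ and $\bdy N(L)$ is tiled by boundary faces, at least one normal square of $A$ meets a boundary face, so by \reflem{NormalSquare} it has one of the three forms listed there.

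Next I would argue that $A$ must contain a normal square of the \emph{first} form of \reflem{NormalSquare} that is parallel into $F$; that is precisely the hypothesis of \reflem{Annulus}. A first-form square that is already parallel into $F$ needs no work. For a second-form square, and for a third-form square whose boundary curve bounds a disk in $F$, I would push $\bdy A_i$ off its boundary faces to get a curve on $F$ meeting $\pi(L)$ transversely twice, respectively twice at each of two crossings; weak primeness, exactly as in the proof of \reflem{NormalSquare}, forces the bounded disk to contain a single embedded arc, respectively a single bigon face. In both cases the adjacent normal squares of $A$ together with weak primeness force $\pi(L)$ to contain a bigon and let one slide $A$ across this disk to obtain a first-form square parallel into $F$.

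The remaining possibility is that every normal square of $A$ meeting a boundary face is in fact a compressing disk for $F$ — a first-form square not parallel into $F$, or a third-form square with boundary curve essential in $F$. Here I would push $\bdy A_i$ off its boundary face(s) into $F$ to obtain a compressing disk for one side, say $F_j^{\varepsilon}$, whose boundary meets $\pi(L)$ in at most four edges; since $r(\pi(L),F)\ge 4$, it meets $\pi(L)$ in exactly four edges, so $r^{\varepsilon}(\pi(L),F_j)=4$. Then $\hat r(\pi(L),F)>4$ forces $r^{-\varepsilon}(\pi(L),F_j)>4$, so $F_j$ has no compressing disk on the opposite side meeting $\pi(L)$ at most four times. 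Tracing $A$ from $A_i$ across the copies of $F$ and using that $A$ is compact, so that it must cross $F_j$ and eventually reuse a chunk on the $F_j^{-\varepsilon}$ side, I would show the normal squares of $A$ there are compressing disks for $F_j$ whose boundaries meet $\pi(L)$ at most four times, contradicting $r^{-\varepsilon}(\pi(L),F_j)>4$. Hence this case does not occur, and we are always in the hypothesis of \reflem{Annulus}.

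Applying \reflem{Annulus} then produces a string of bigons on a component $F_j$, a checkerboard surface $\Sigma$ for the associated sublink that is an annulus (a M\"obius band in the odd case), and a sub-annulus of $A$ one of whose boundary circles follows the core of $\Sigma$ while the other is parallel to $\bdy\Sigma$ on $\bdy N(L)$; taking $S=\Sigma$ gives the theorem. The step I expect to be the main obstacle is the compressing-disk case of the third paragraph: one must keep careful track of how the normal squares of $A$ glue across the two copies of $F_j$, and of exactly how many edges $\bdy A_i$ meets once it is pushed off its boundary faces, in order to convert the asymmetry $r^{\varepsilon}=4<r^{-\varepsilon}$ supplied by $\hat r>4$ into a genuine contradiction with $A$ being compact and embedded.
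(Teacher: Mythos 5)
Your overall strategy—put $A$ into normal form, invoke \reflem{NormalSquare} to classify the normal squares, and feed the result into \reflem{Annulus}—does not match what actually happens, and the central reduction you propose does not work.

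The gap is the claim that you can always produce a first-form square (one boundary face, two interior edges) parallel into $F$, and then apply \reflem{Annulus}. First, this would not even prove the theorem: \reflem{Annulus} produces a sub-annulus one of whose boundary components lies on the \emph{core} of the checkerboard annulus $\Sigma$, which is in the interior of $X(L)$, whereas the essential annulus $A$ in this theorem has both boundary components on $\bdy N(L)$ after normalization (the only alternative is an exterior face, which cannot carry a curve on the boundary of a normal disk of zero area). Second, and more seriously, the paper shows that first-form squares cannot occur in a properly embedded essential annulus at all: the side of a first-form square $A_i$ lying in the bigon face $B_i$ must glue to some normal piece $A'_i$ that is forced (by disjointness from $A_{i\pm 1}$ and the disk structure on $F$) to meet only interior edges, so the sequence of normal pieces can never reach the other boundary component of $A$. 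So \reflem{Annulus} is used in the paper only to rule this case out, not to produce the conclusion. Third, your proposed conversions of second- and third-form squares into first-form squares by ``sliding across a disk'' are not legitimate isotopies of the normal annulus: a second- or third-form square has two arcs on $\bdy N(L)$ while a first-form square has one, and that count is preserved under isotopy of $A$ rel $\bdy A$. The paper does use an isotopy across such a disk, but it strictly reduces the number of third-form squares to second-form ones (not to first-form), and is used only to collapse the mixed case to the all-second-form case, which is then inessential.

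The positive conclusion of the theorem actually comes from the case where all normal squares have the third form (opposite boundary edges on each of two boundary faces). There the role of $\hat r(\pi(L),F)>4$ is much simpler than the chunk-tracing argument you sketch: third-form squares alternate between the two sides of $F$, and $\hat r>4$ guarantees that on one side of the relevant $F_j$ every compressing disk meets $\pi(L)$ more than four times, so the squares on that side cannot be compressing disks and are therefore parallel into $F$; weak primeness then forces them to bound bigons, and $\bdy A$ runs parallel to the boundary of the annulus or M\"obius band formed by those bigons, never touching them. Re-organize the proof around this case analysis (form 1 impossible, all form 2 inessential, all form 3 gives the conclusion, mixed 2/3 reduces to all form 2) rather than trying to funnel everything through \reflem{Annulus}.
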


\begin{proof}
Put $A$ into normal form with respect to the angled chunk decomposition of $X(L)$. \refprop{GaussBonnet} implies that $A$ meets chunks in components with combinatorial area zero. Because $\bdy A$ meets $\bdy N(L)$, one such component meets a boundary face. 
Then \refprop{NonnegArea} implies that the chunk decomposition must divide $A$ into disks. It follows that the other boundary component of $A$ cannot lie on an exterior face and so it must also lie on $\bdy N(L)$. 
Let $A_i$ be a normal disk in $A$. \reflem{NormalSquare} implies it has one of three forms.

If $A_i$ has the first form, then it is glued to another disk $A_{i+1}$ of the first form. Because $\hat{r}(\pi(L),F)>4$, one of $A_i$ or $A_{i+1}$ is not a compression disk for $F$. Thus it is parallel into $F$. Then by \reflem{Annulus}, the diagram $\pi(L)$ is as claimed. 
However, no such annulus $A$ actually exists. In the proof of \reflem{Annulus}, we only glued up the normal squares along some of their edges. One edge of $A_i$ lies in a bigon face $B$. Glue this edge to some normal square $A'_i$. Since $A'_i$ is disjoint from $A_{i-1}$ and $A_{i+1}$, it follows that $\bdy A'_i$ lies inside the subdisk of $F$ bounded by either $\bdy A_{i-1}$ or $\bdy A_{i+1}$. But then the only possibility which allows $A'_i$ to be a normal square is if $\bdy A'_i$ meets only interior edges. Hence it is impossible to glue up normal squares to form a properly embedded annulus when one such square has the first form, since we will never arrive at the other boundary component of $A$.

So suppose $\bdy A_i$ meets two boundary faces.
If all normal disks of $A$ are of the second form of \reflem{NormalSquare}, they encircle a single crossing arc, and $A$ is not essential.

If all normal disks of $A$ are of the third form, then $\hat{r}(\pi(L),F)>4$ implies one, say $A_i$, is not a compressing disk so is parallel into $F$. Superimpose three adjacent squares $\bdy A_i$, $\bdy A_{i+1}$, and $\bdy A_{i-1}$ on the boundary of the same chunk. There are two cases: $\bdy A_i$ and $\bdy A_{i-1}$ may be disjoint, or they may intersect.

If all normal disks are of the third form and $\bdy A_i$ and $\bdy A_{i-1}$ are disjoint, then the fact that the diagram is weakly prime implies $\bdy A_i$ must bound a bigon face (as in the proof of \reflem{Annulus}). But then similar arguments, using weakly prime and $r(\pi(L),F)\geq 4$, show $\bdy A_{i-1}$ and $\bdy A_{i+1}$ must also bound bigons. Repeating for these disks, it follows that all $\bdy A_j$ bound bigons, and $\pi(L)$ is a string of bigons. Since $\bdy A_i$ avoids the bigon faces, the boundary components of $\bdy A$ run parallel to the surface made up of the bigons. If there are an odd number of bigons, each bigon is encircled by $\bdy A_j$ for normal disks on both sides of $F$, and the boundary of $A$ runs parallel to the boundary of a regular neighbourhood of the M\"obius band made up of these bigons. If there are an even number, then the bigons form an annulus, with this portion of $\pi(L)$ forming a two component link, and $\bdy A$ running along the link parallel to the annulus. Either case gives the desired result.

Suppose that all normal disks are of the third form and $\bdy A_i$ and $\bdy A_{i-1}$ intersect. Since either $\bdy A_{i-1}$ and $\bdy A_{i+1}$ are disjoint or $A_{i-1}$ and $A_{i+1}$ coincide, the weakly prime and representativity conditions show that $\bdy A_i$ bounds two bigon faces. If $A_{i-1}$ and $A_{i+1}$ coincide, then $\pi(L)$ contains a string of exactly two bigons. If not, using similar ideas to the proof of \reflem{Annulus}, it follows that $\bdy A_{i-1}$ and $\bdy A_{i+1}$ also bound two bigon faces, and so does each $\bdy A_j$. Since $\bdy A_{j-1}$ and $\bdy A_{j+1}$ do not intersect for any $j$, there must be an even number of bigons, and $\pi(L)$ forms a two component link, with $\bdy A$ running along the link parallel to the annulus formed by the bigons, as required.

So suppose there are normal disks of $A$ of both the second and third forms. There must be a disk $S_0$ of the third form glued to one $S_1$ of the second form. Superimpose the boundaries of $S_0$ and $S_1$ on $F$. The rotation of the gluing map on chunks implies that the boundary faces met by $\bdy S_0$ are adjacent to a single edge. Since $r(\pi(L),F)\geq 4$ and the diagram is weakly prime, $\bdy S_0$ must bound a single bigon of $\pi(L)$ as in \reffig{Form23}. (The full argument is nearly identical to the argument that $\bdy A_{i-1}$ bounds a bigon in the proof of \reflem{Annulus}.)

\begin{figure}
\import{figures/}{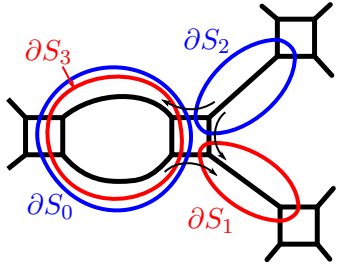}
  \caption{Shows how disks of the second and third form must lie in $F$. (Thin black arrows show the direction of the rotation for the gluing map from the blue side to the red.)}
  \label{Fig:Form23}
\end{figure}

Now consider $S_2$ glued to $S_1$ along its edge in the other face met by $S_1$. The disk $S_2$ is disjoint from $S_0$, so $\bdy S_2$ cannot run through opposite sides of the boundary face it shares with $S_0$. Thus $S_2$ must be of the second form, encircling a single interior edge just as $S_1$ does. Finally, $S_2$ glues to some $S_3$ along an edge in its other face. We claim $S_3$ cannot be of the second form, encircling an interior edge, since if it did, it would glue to some $S_4$ disjoint from $S_0$, requiring $S_4$ to be of the second form. Then $S_1$, $S_2$, $S_3$, and $S_4$ would all encircle the same crossing arc in the diagram, but $S_4$ would not glue to $S_1$. Hence additional normal squares would spiral around this arc, never closing up to form the annulus $A$. This is impossible.

So $S_3$ is of the third form, and when superimposed on $F$, $\bdy S_3$ is parallel to $\bdy S_0$. Recall, however, that $S_0$ and $S_3$ lie in different chunks.

\begin{figure}
\import{figures/}{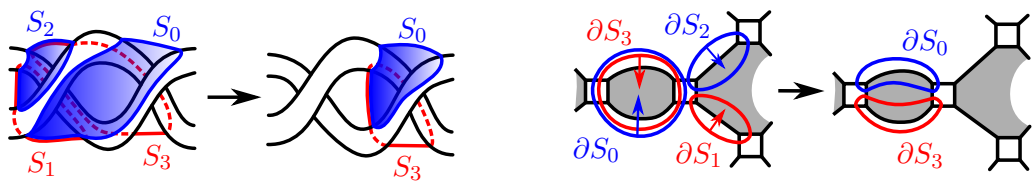}
  \caption{An isotopy replaces $S_0, S_1, S_2, S_3$ with $S_0$ glued to $S_3$. Shown left in three dimensions, right in two.}
  \label{Fig:Isotopy3D}
\end{figure}

We can isotope $A$, removing squares $S_1$ and $S_2$, replacing $S_0$ and $S_3$ with normal squares of the second form. The isotopy is shown in three dimensions on the left of \reffig{Isotopy3D}; the image of the boundaries of the normal squares in the two chunks under the isotopy is shown on the right.
The isotopy strictly reduces the number of normal squares of $A$ of the third form. Thus repeating a finite number of times, we find that $A$ has no normal squares of the third form, and we are in a previous case. 
\end{proof}

\begin{corollary}\label{Cor:PrimeLink}
Let $\pi(L)$ be a reduced alternating diagram of a link $L$ on a generalised projection surface $F$ in a 3-manifold $Y$. Suppose that $r(\pi(L),F)\geq 4$ and $\hat{r}(\pi(L),F)>4$. Then the link is prime. That is, the exterior $X(L)$ contains no essential meridional annulus.
\end{corollary}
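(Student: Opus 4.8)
The plan is to get a contradiction from the existence of an essential meridional annulus by feeding it into \refthm{AnAnnularLink}, whose hypotheses ($r(\pi(L),F)\geq 4$ and $\hat r(\pi(L),F)>4$) are exactly those of the corollary. So I would begin by supposing that $A\subset X(L)$ is an essential annulus both of whose boundary components are meridians of $L$ on $\bdy N(L)$. In particular $\bdy A$ meets $\bdy N(L)$, so \refthm{AnAnnularLink} applies and yields two things: $\pi(L)$ contains a string of bigons on $F$, and some boundary component $\gamma$ of $A$ on $\bdy N(L)$ is parallel, on $\bdy N(L)$, to a boundary component $\gamma'$ of the annulus or M\"obius band $S$ formed by that string of bigons.

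The substantive step is then to show that $\gamma'$ is not a meridian of $L$. Let $K$ be the component of $L$ carrying $\gamma'$. By the description in \reflem{Annulus} and \refthm{AnAnnularLink}, $S$ is assembled from bigon faces of $\pi(L)$ together with twisted bands at the crossings between them, so near $K$ it is a union of bands attached along $K$; it is a spanning surface whose boundary runs longitudinally along the strands. Hence under $H_1(\bdy N(K))\to H_1(N(K))\cong \ZZ\langle [K]\rangle$ the class $[\gamma']$ maps to $n[K]$ with $n\neq 0$ (one gets $n=1$ when $S$ is an annulus and $n=2$ when $S$ is a M\"obius band). A meridian, by contrast, maps to $0$. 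Since isotopic simple closed curves on the torus $\bdy N(K)$ carry the same homology class, $\gamma$ (a meridian) cannot be parallel to $\gamma'$ on $\bdy N(K)$. This contradicts \refthm{AnAnnularLink}, so no essential meridional annulus exists and $L$ is prime.

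The only point that needs any care — and the nearest thing to an obstacle — is the assertion that $\gamma'$ has nonzero longitudinal winding, i.e.\ that the boundary slope of a bigon-string spanning surface is never the meridian; but this is immediate once one records that $S$ is a checkerboard surface of that string of bigons, so its boundary necessarily follows the link strands, with the annulus/M\"obius band distinction accounting for the value of $n$. Everything else in the argument is a direct invocation of \refthm{AnAnnularLink}.
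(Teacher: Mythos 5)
Your proposal is correct and follows exactly the paper's approach: invoke \refthm{AnAnnularLink} to reduce to a bigon-chain spanning surface $S$, then observe that the boundary slope of $S$ on $\bdy N(L)$ is not meridional; the paper states this last fact in one line, while you supply the homological justification. One small slip: the boundary of any spanning surface, Möbius band or annulus, winds exactly once longitudinally around each component it meets (so $n=1$ in both of your cases, not $n=2$ for the Möbius band — you may be conflating winding around $K$ with winding around the core of $S$), but since all you use is $n\neq 0$ this does not affect the argument.
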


\begin{proof}
By \refthm{AnAnnularLink}, any essential annulus has slope parallel to the boundary slope of an annulus or M\"obius band formed between a string of bigons; this is not a meridian. 
\end{proof}

%%%%%%%%%%%%%%%%%%%%%%%%%%%%%%%%%%%%%%%%%%%%%%%%%%%%%%%%%%%%%%%%%%%%%%%%%%%%%%%

\subsection{Toroidal links}
Now we consider essential tori in link complements. As opposed to the case of regular alternating knots, essential tori do appear in our more general setting: Hayashi gives an example~\cite{hay95}. 
However, we can rule out essential tori under stronger hypotheses. The main results of this subsection are \refprop{Toroidal} and \refprop{HypToroidal}.

\begin{definition}\label{Def:MeridCompressible}
Let $Y$ be a compact orientable irreducible 3-manifold, and $L$ a link in $Y$. A closed incompressible surface $S$ embedded in the link exterior $X$ is \emph{meridionally compressible} if there is a disk $D$ embedded in $Y$ such that $D\cap S=\bdy D$ and the interior of $D$ intersects $L$ exactly once transversely. Otherwise $S$ is \emph{meridionally incompressible}.
\end{definition}

If an incompressible torus $T$ is parallel to a component of $L$ in $Y$, then $T$ is meridionally compressible. Hence to show that a torus $T$ is essential in $X$, it is sufficient to show that $T$ is incompressible and meridionally incompressible.

The following is proved using an argument due to Menasco in the alternating case~\cite{men84}. 

\begin{lemma}\label{Lem:Menasco}
Suppose $S$ is a closed normal surface in an angled chunk decomposition of a link $L$ that is reduced alternating on a generalised projection surface $F$ in $Y$ with $r(\pi(L),F)\geq 4$. Suppose further that one of the normal components of $S$ is a disk that is parallel into $F$. Then $S$ is meridionally compressible, and a meridional compressing disk meets $L$ at a crossing.
\end{lemma}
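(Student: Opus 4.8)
The plan is to adapt Menasco's argument for closed surfaces in alternating link complements~\cite{men84} to the chunk decomposition. Write $C$ for the chunk containing the normal disk $D_0$ and $F_i$ for the component of $F$ into which $D_0$ is parallel, so that $\partial D_0$ bounds a disk $\Delta_0$ on the copy of $F_i$ in $\partial C$ and $D_0\cup\Delta_0$ bounds a ball $V$ in $C$. The controlling quantity is the number of crossings of $\pi(L)$ in the interior of $\Delta_0$; I would choose $D_0$, among all normal disks of $S$ in all chunks that are parallel into some component of $F$, so that this number is minimal, after first isotoping $S$ through normal surfaces to minimise $|S\cap F|$. Note that when $F_i\cong S^2$ the manifold $Y$ is an $S^3$ and $C$ is a ball, so $D_0$ is then parallel into \emph{both} complementary disks of $\partial D_0$ in $F_i$.

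The first step is to show that $\Delta_0$ contains a crossing. If not, then $\pi(L)\cap\Delta_0$ is a disjoint union of arcs, each an interior sub-arc of a single edge of the diagram graph. If there are none, then $\partial D_0$ lies in a single face of $C$ and bounds a disk there, contradicting item~(3) of \refdef{NormalSurface}. If there is at least one, an outermost one cuts off a subdisk of $\Delta_0$ whose boundary consists of an arc of $\partial D_0$ lying in a single face together with an arc of a diagram edge, and this subdisk contradicts item~(4) of \refdef{NormalSurface}. (When $F_i\cong S^2$ the same argument applied to whichever complementary disk of $\partial D_0$ might carry no crossing shows in fact that both complementary disks contain a crossing, consistent with a weakly prime diagram on $S^2$ having at least two crossings.)

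The second step constructs the meridional disk. Choose an innermost crossing $c$ in $\Delta_0$: a small disk $\Delta_c\subset\Delta_0$ whose interior contains only $c$ and whose boundary meets $\pi(L)$ once on each of the four diagram edges incident to $c$. The subdisk $D_c\subset D_0\subset S$ parallel to $\Delta_c$ then caps off the crossing $c$ away from the rest of $C$, and, following Menasco, $D_c$ may be tubed down through the crossing region at $c$ (across $F_i$ and past one of the two strands of the crossing) to produce a disk $D$ embedded in $Y$ with $\partial D$ on $S$, with $D\cap S=\partial D$, and with the interior of $D$ meeting $L$ in exactly one point, lying on a strand of the crossing $c$. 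Thus $S$ is meridionally compressible via a disk meeting $L$ at the crossing $c$.

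The step I expect to be the main obstacle is verifying that the tubed disk $D$ is embedded and meets $S$ only along $\partial D$, i.e.\ that no other sheet of $S$ blocks the passage of $D_c$ through the crossing region; this is exactly the point handled in Menasco's original argument by innermost-disk and minimality bookkeeping, and is the reason for the minimal choices of $S$ and of $\Delta_0$ above. A secondary point is that the present setting is more general than $S^2\subset S^3$: $F$ may be disconnected and of higher genus, faces of chunks need not be simply connected, and $Y$ may have nonempty boundary. The last of these is harmless, since $\Delta_0\subset F_i$ lies in the interior of $Y$ and so $\partial D_0$ never meets an exterior face; the higher-genus and disconnectedness features enter only through the appeal to weak primeness in Step~1.
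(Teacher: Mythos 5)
Your Step~1 is fine and matches what the paper needs implicitly: if $\Delta_0$ contained no crossing, the boundary of $D_0$ would lie in a single face of the chunk and violate item~(3) or item~(4) of \refdef{NormalSurface}. However, your proof does not close the gap it flags, and the way it sets up the argument means the gap cannot be closed without a substantive change.

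The decisive problem is the choice of minimality. You pick $D_0$ among all normal disks of $S$ parallel into $F$ so that $\Delta_0$ contains the fewest crossings. This does not ensure that the parallelism ball $V$ between $D_0$ and $\Delta_0$ is free of other sheets of $S$. Indeed, there can be another normal disk $D_1\subset S\cap C$ parallel into $F$ with $\Delta_1\subsetneq\Delta_0$, where $\Delta_0\setminus\Delta_1$ is a crossing-free annulus; then $\Delta_1$ has exactly as many crossings as $\Delta_0$, so minimality does not exclude it, yet $D_1$ sits between $D_0$ and $F$ and contains every crossing of $\Delta_0$ in $\Delta_1$. Any attempt to tube $D_c$ down through a crossing $c\in\Delta_0\subset\Delta_1$ must pass through $D_1$, so the tubed disk is not disjoint from $S$ in its interior. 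What is needed is not ``fewest crossings'' but ``innermost'': choose $D_0$ so that the disk $\Delta_0$ is innermost among disks of $F$ bounded by normal disks of $S\cap C$ parallel into $F$. Only then is $V$ guaranteed to contain no other sheet of $S$, and only then does the tubed disk $D = (\text{vertical annulus from }\partial D_c\text{ to }\partial\Delta_c)\cup(\text{cap over the overstrand})$ satisfy $D\cap S=\partial D$ and $|D\cap L|=1$. You explicitly flag this as ``the main obstacle'' and then defer it to ``Menasco's original bookkeeping,'' but the obstacle is precisely the content of the lemma, and the bookkeeping does not go through with the minimality you chose.

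The paper's proof actually avoids the tubing construction altogether. It takes the innermost normal disk $S_j$ of $S$ contained in the ball between $S_i$ and $F$, and then uses the specific edge identifications of \refprop{AltChunkDecomp}: at an over-crossing on the side of $F_i$ facing the chunk, the two crossing-arc edges opposite each other across the overstrand are identified, so when $\partial S_j$ runs adjacent to an over-crossing, $S$ must also meet the opposite edge. Because $S_j$ is innermost, that opposite arc of $S\cap\partial C$ must already be part of $\partial S_j$, so $\partial S_j$ itself encircles a meridian at that crossing. (The alternating hypothesis, applied carefully to each boundary component of each face, is what forces $\partial S_j$ to run adjacent to over-crossings on alternating sides, including in the non-checkerboard-colourable case.) This is cleaner and more in the spirit of the chunk decomposition: no auxiliary disk needs to be shown embedded. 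If you want to salvage a tubing-style proof, you must replace ``minimize crossings'' with ``take $D_0$ innermost in the ball,'' and then actually verify the embeddedness claim rather than deferring it; but the cleaner route is the one the paper takes.
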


\begin{proof}
Let $S_i$ be a normal disk of $S$ that is parallel into $F$. Then $\bdy S_i$ bounds a disk in $F$, and there is an innermost normal disk $S_j$ of $S$ contained in the ball between $S_i$ and $F$ (using irreducibility of $Y$). The normal disk $S_j$ meets (interior) edges of the chunk decomposition, and these correspond to $\bdy S_j$ running adjacent to over-crossings of $\pi(L)$ on the side of $F$ containing the chunk.

In the case that $\pi(L)$ is checkerboard colourable, the boundary components of any face meeting $\pi(L)$ will be alternating under--over in a manner consistent with the orientation on a face. Then the curve $\bdy S_j$ must enter and exit the face by running adjacent to over-crossings on alternating sides; see \reffig{MenascoArg}, left. 
Crossing arcs opposite over-crossings are glued, hence $S$ intersects the opposite crossing arc as shown. Since $S_j$ is innermost, one of the arcs of $S\cap \bdy C$ opposite a crossing met by $\bdy S_j$ must be part of $\bdy S_j$.
Then $S$ encircles a meridian of $L$ at that crossing; see
\reffig{MenascoArg}, right.

\begin{figure}
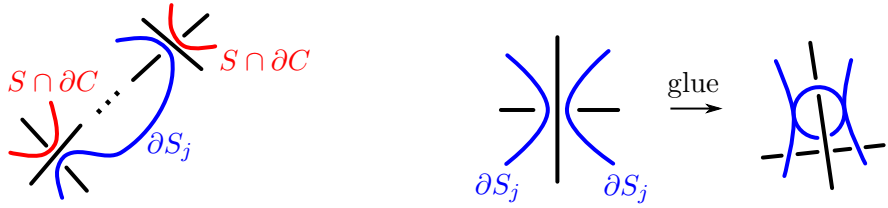

\import{figures/}{OppositeSides.pdf_tex}
\hspace{1in}
\import{figures/}{OppSidesGlue.pdf_tex}
\caption{On the left, $\bdy S_j$ runs adjacent to crossings on opposite sides. The gluing of edges implies there are components of $S\cap \bdy C$ meeting the opposite crossing arc. Because $S_j$ is innermost, one of those must be on $\bdy S_j$, as shown on the right, which implies $S$ is meridionally compressible.}
\label{Fig:MenascoArg}
\end{figure}

Even in the case that $\pi(L)$ is not checkerboard colourable, we claim that $\bdy S_j$ must enter and exit a face in consistently alternating boundary components, meaning that $\bdy S_j$ enters and exits the face adjacent to crossings on opposite sides. Provided we can show this, the same argument as above will apply. If this does not hold, then $\bdy S_j$ must enter and exit each face in distinct boundary components, and the boundary components are all inconsistently oriented. But $\bdy S_j$ bounds a disk in $F$, so it meets each boundary component of each region an even number of times. There must be some outermost arc of intersection of $\bdy S_j$ with a boundary component of a face of the chunk. For this arc, $\bdy S_j$ must enter and exit the same boundary component of the same region, hence it enters and exits the face in a consistently alternating boundary component. 
\end{proof}

\begin{proposition}\label{Prop:Toroidal}
Let $\pi(L)$ be a reduced alternating diagram of a link $L$ on a generalised projection surface $F$ in a 3-manifold $Y$. 
Suppose further that $r(\pi(L),F)\geq 4$ and $\hat{r}(\pi(L),F)>4$.

If $Y\setminus N(F)$ is atoroidal but $X:=X(L)$ is toroidal, then any essential torus in $X$ is divided into normal annuli in the angled chunk decomposition of $X$, with each boundary component of a normal annulus completely contained in a single face of the chunk decomposition.
\end{proposition}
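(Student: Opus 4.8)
The plan is to put an essential torus $T$ into normal form, use Gauss--Bonnet to force every normal piece to have zero combinatorial area, and then read the conclusion off the classification of zero--area pieces in \refprop{NonnegArea}. First I would invoke \refcor{IrredBdryIrred}, which (since $r(\pi(L),F)\geq 4$) says $X(L)$ is irreducible and boundary irreducible, so by \refthm{NormalForm} the torus $T$ may be isotoped into normal form with respect to the angled chunk decomposition of $X(L)$ produced in \refprop{AngledChunkDecomp}. No faces are left unglued in $X(L)$, so \refprop{GaussBonnet} gives $a(T)=-2\pi\chi(T)=0$; since $a(T)=\sum_j a(S_j)$ over the pieces $S_j$ of $T$ in individual chunks and each $a(S_j)\geq 0$ by \refprop{NonnegArea}, every piece has combinatorial area zero. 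By \refprop{NonnegArea} each piece is then: (a) a disk meeting exactly four edges (each of exterior angle $\pi/2$); (b) an annulus meeting no edges, so that each of its boundary circles is a non-contractible curve inside a single face; or (c) a closed incompressible torus disjoint from the boundary of its chunk. The conclusion follows once cases (a) and (c) are excluded.

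For case (c): a closed torus piece forces $T$ itself, being connected, to lie in a single chunk $C$, which is homeomorphic to a component of $Y\setminus N(F)$ with finitely many boundary points removed; being normal, $T$ is incompressible in $C$, hence gives an incompressible torus in $Y\setminus N(F)$, which by hypothesis must be boundary parallel there. If it is parallel to a component of $\bdy Y$, the parallelism region is disjoint from $L\subset F\times I$ and persists in $X(L)$, so $T$ is boundary parallel in $X(L)$, contradicting essentiality. If it is parallel to a copy $F_i^{\pm}$ of a component $F_i$ of $F$, then $F_i$ is a torus and $F_i^{\pm}$ is incompressible in the adjacent component of $Y\setminus N(F)$, and here a case analysis using $r(\pi(L),F)\geq 4$ --- which controls compressing disks for $F_i^{\pm}$ meeting $\pi(L)$ --- is needed to see that the pushed--off copy of $F_i$ is compressible or boundary parallel in $X(L)$, again contradicting essentiality of $T$.

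Case (a) is the heart of the argument. Let $D$ be a disk piece of $T$, meeting exactly four edges. If $\bdy D$ meets a boundary face, \reflem{NormalSquare} restricts $D$ to one of three forms and pairs it with a companion disk of the same form glued to it across a copy of $F$; the hypothesis $\hat r(\pi(L),F)>4$ forces one of the two to be parallel into $F$, and then weak primeness together with $r(\pi(L),F)\geq 4$ runs the string--of--bigons analysis of \reflem{Annulus} and \refthm{AnAnnularLink} --- there, however, the normal squares close up to form an annulus rather than extending to a closed torus, contradicting that $T$ is a torus. If $\bdy D$ meets only interior edges, push it onto the relevant $F_i$ to get a curve crossing $\pi(L)$ four times: if it is essential on $F_i$ then $D$ is a compressing disk for $F_i^{\pm}$ meeting $\pi(L)$ exactly four times, and a Menasco--type innermost--disk argument produces a matching compressing disk on the other side of $F_i$, forcing $\max(r^-(\pi(L),F_i),r^+(\pi(L),F_i))\leq 4$ and contradicting $\hat r(\pi(L),F)>4$; if it is inessential on $F_i$ then $D$ is parallel into $F$, so by \reflem{Menasco} the torus $T$ is meridionally compressible along a disk meeting $L$ at a crossing, which by irreducibility of $Y$ and weak primeness forces $T$ to be boundary parallel, again impossible. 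With cases (a) and (c) ruled out, $T$ is assembled entirely from pieces of type (b), and each boundary component of each such normal annulus lies in a single face.

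I expect the main obstacle to be the bookkeeping in case (a): the arguments of \reflem{Annulus} and \refthm{AnAnnularLink} are written for a surface that is itself an annulus, and adapting the superimposed--boundary and string--of--bigons analysis to rule out disk pieces of a closed torus --- together with the delicate deductions of ``boundary parallel in $X(L)$'' from ``meridionally compressible at a crossing'' and from ``parallel to a copy of $F_i$'' in case (c) --- is where the real work lies.
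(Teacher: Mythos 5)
Your overall strategy matches the paper's: put $T$ into normal form, apply Gauss--Bonnet to force zero combinatorial area on every piece, invoke \refprop{NonnegArea} to classify the pieces into disks, annuli, and tori, and then eliminate the disk and torus cases. The torus case is handled in the paper simply by saying that atoroidality of $Y\setminus N(F)$ forbids a normal torus contained in a single chunk; your more elaborate parallelism analysis there is unnecessary. The real divergence, and the real gap, is in your case~(a).

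You miss the crucial observation that makes the disk case tractable: a closed torus $T$ lies in the interior of $X(L)$ and is \emph{disjoint from $\bdy N(L)$}, so none of its normal disk pieces can meet a boundary face. Your first subcase, invoking \reflem{NormalSquare} for disk pieces meeting boundary faces, is therefore vacuous, and the attempt to run the \reflem{Annulus}/\refthm{AnAnnularLink} string-of-bigons machinery there is misplaced. Once you know every disk piece meets exactly four \emph{interior} edges, the paper's argument splits into two clean subcases. If some piece is parallel into $F$, the paper applies \reflem{Menasco} to get a meridional compressing disk, then \emph{surgers} $T$ along it to obtain an annulus $A$; if $A$ were essential, \refthm{AnAnnularLink} would force its boundary slope to be that of a string of bigons (an integral slope), contradicting that $A$ has a meridional boundary, so $A$ is boundary parallel and hence so is $T$. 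Your claim that ``irreducibility of $Y$ and weak primeness forces $T$ to be boundary parallel'' skips this entire step; that implication is not automatic and is precisely what \refthm{AnAnnularLink} is used to establish. If instead every disk piece is a compressing disk for $F$, the decisive point is that because $T$ is closed and its pieces are glued across copies of $F_i$, the disks necessarily \emph{alternate} between the two sides of $F_i$, so there are compressing disks meeting $\pi(L)$ exactly four times on both sides simultaneously, contradicting $\hat r(\pi(L),F)>4$ directly. No ``Menasco-type innermost disk argument'' is needed (or available) to produce a disk on the other side --- the alternation is forced by the gluing, and your appeal to such an argument does not hold up. You flagged this case as where the real work lies, and you were right: the two ingredients you are missing are the disjointness of $T$ from $\bdy N(L)$ and the alternation-of-sides observation, together with the annulus-surgery/\refthm{AnAnnularLink} step for the parallel-into-$F$ subcase.
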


\begin{proof}
Suppose $T$ is an incompressible torus in $X$. Put $T$ into normal form with respect to the chunk decomposition. \refprop{GaussBonnet} implies that $T$ meets chunks in components of combinatorial area zero. \refprop{NonnegArea} implies each component has one of three forms. Since $Y\setminus N(F)$ is atoroidal, there are no incompressible tori in a chunk $C$ disjoint from $\bdy C$, so $T$ meets $\bdy C$. We will rule out the case that the torus is split into normal disks, and the only remaining case will be the desired conclusion.

Suppose a component of $T\cap C$ is a normal disk. Then it is glued to normal disks along its sides. Since $T$ is disjoint from $L$, all normal disks of $T\cap C$ are disjoint from boundary faces. By our assignment of exterior angles to interior edges, it follows that each normal disk of $T\cap C$ meets exactly four interior edges.

If any component of $T\cap C$ is parallel into $F$, then it bounds a disk $D$ in $F$. Then \reflem{Menasco} shows $T$ is meridionally compressible. Surger along a meridian compressing disk to obtain an annulus $A$. If the annulus is essential, it can be put into normal form and one of its boundary components is a meridian. But this contradicts \refthm{AnAnnularLink}: if there is an essential annulus, then $\pi(L)$ bounds a string of bigons and the slope of each boundary component of the annulus is integral, and hence cannot be meridional. So the annulus $A$ is not essential. But then $A$ must be parallel to a component of $\bdy N(L)$, which implies that $T$ is parallel to a component of $\bdy N(L)$, and therefore $T$ is not essential. 

So assume all disks are compressing disks for $F$ with nontrivial boundary on $F$. The disks alternate lying on either side of $F$. Each gives a compressing disk for $F$ with boundary meeting four edges, or meeting $\pi(L)$ four times. This is impossible, since $\hat{r}> 4$.
\end{proof}

By contrast to the previous result, if $Y\setminus N(F)$ is toroidal, then in the checkerboard colourable case, $X(L)$ will be toroidal as well.

\begin{proposition}\label{Prop:HypToroidal}
Let $\pi(L)$ be a weakly generalised alternating diagram of a link $L$ on a generalised projection surface $F$ in a 3-manifold $Y$. If $Y\setminus N(F)$ admits an embedded incompressible torus that is not parallel to $\bdy Y$ (but may be parallel to $\bdy N(F)$ in $Y\setminus N(F)$), then $X(L)$ is toroidal.
\end{proposition}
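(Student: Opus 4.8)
The plan is to push $T$ into $X(L)$ and show that it stays essential. Since $L\subset F\times I$ we have $N(L)\subset N(F)$, so $Y\setminus N(F)\subset X(L)$ and $T$ is already an embedded torus in $X(L)$ (no isotopy is needed to get it off the link). It then suffices to check that $T$ is incompressible and not boundary parallel in $X(L)$.

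\emph{Incompressibility.} I would argue by contradiction. If $T$ compressed in $X(L)$, then, since $X(L)$ is irreducible by \refcor{IrredBdryIrred}, compressing $T$ and capping off the resulting $2$-sphere by the ball it bounds would exhibit $T$ (after a small isotopy within the chunk of the decomposition containing it) as the boundary of a solid torus $V\subset X(L)$, with $V\cap L=\emptyset$ and $\bdy V$ disjoint from $N(F)$. Then each component $N(F_i)$ of $N(F)$, being connected and disjoint from $\bdy V$, lies either inside $V$ or in its complement. If some $N(F_i)\subset V$, then $L\cap N(F_i)\subset L\cap V=\emptyset$, contradicting condition \refitm{Connected} of \refdef{AltKnots}; so $N(F)\cap V=\emptyset$, whence $V$ lies in the component $W$ of $Y\setminus N(F)$ containing $T$, and a meridian disk of $V$ compresses $T$ in $W\subset Y\setminus N(F)$ --- contrary to hypothesis.

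\emph{Not boundary parallel.} The components of $\bdy X(L)$ are tori bounding neighborhoods of components of $L$, together with the components of $\bdy Y$. Suppose $T$ cobounds a product region $R\cong T^2\times[0,1]$ in $X(L)$ with such a component $P$. If $P\subset\bdy Y$: since $T$ and $P$ are both disjoint from $N(F)$, the same component-counting as above shows $N(F)\cap R=\emptyset$ (a component of $N(F)$ inside $R$ would again violate condition \refitm{Connected}), so $R$ lies in one component of $Y\setminus N(F)$ and displays $T$ as parallel to $P\subset\bdy Y$ there, contradicting the hypothesis that $T$ is not parallel to $\bdy Y$. If $P=\bdy N(K)$ for a component $K$ of $L$: then $V':=R\cup N(K)$ is a solid torus in $Y$ with core $K$, with $V'\cap L=K$ and $\bdy V'=T$ disjoint from $N(F)$; the component $N(F_{i_0})$ of $N(F)$ meeting $K$ must then lie in $V'$, and all other components of $N(F)$ are disjoint from $V'$. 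Analysing how the collar $N(F_{i_0})$ sits inside the solid torus $V'$ --- the tori $F_{i_0}^{\pm}$ are compressible in $V'$ and so bound nested sub-solid-tori, and $K$ must be carried by the core of $V'$ --- one finds that $K$ is isotopic in $N(F_{i_0})$ to a level curve of $F_{i_0}\times I$, so $\pi(K)$ admits a crossingless diagram, contradicting condition \refitm{slope} of \refdef{AltKnots}. (Here the weakly prime hypothesis is used to guarantee that a crossing of $\pi(K)$ genuinely obstructs isotoping $K$ onto $F_{i_0}$.)

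With incompressibility and non-parallelism established, $T$ is essential in $X(L)$, so $X(L)$ is toroidal. The main obstacle is the final case --- ruling out parallelism of $T$ to the boundary of a link-component neighborhood. The delicate point there is locating the collar $N(F_{i_0})$ precisely inside the solid torus $V'$ bounded by $T$, and combining condition \refitm{Connected} (that $L$ meets every $N(F_i)$) with condition \refitm{slope} (that each component of $L$ has a crossing in its projection) to reach a contradiction. The incompressibility step and the $\bdy Y$-parallel step, by contrast, are short and use only \refcor{IrredBdryIrred} together with the component-counting trick.
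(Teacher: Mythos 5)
Your argument has a genuine gap in the incompressibility step. You assert that because $X(L)$ is irreducible, compressing $T$ and capping off the resulting sphere by the ball it bounds exhibits $T$ as the boundary of a solid torus $V\subset X(L)$. This is not true in general: in an irreducible $3$-manifold a compressible torus need not bound a solid torus. When you compress $T$ along a disk $D$ to obtain a sphere $S$, and $S$ bounds a ball $B$, there are two cases. If $B$ lies on the side of $S$ away from $D$, then indeed $B\cup N(D)$ is a solid torus with boundary $T$. But if $B$ lies on the side containing $D$ (so that $T\subset B$), then $T$ may bound a nontrivial knot exterior inside $B$ on one side, with the other side being $B$ minus a knotted $1$-handle, and neither side is a solid torus. (Concretely: take a knotted solid torus $A'\subset S^3$, remove a ball from its interior, and call the result $M$; then $T=\bdy A'$ is compressible in the ball $M$ but bounds a solid torus on neither side in $M$.) Your component-counting trick --- that no $N(F_i)$ can lie inside $V$ because $L\cap V=\emptyset$ --- is fine once you know $V$ is a solid torus disjoint from $N(F)$ on its interior, but it does not handle the case $T\subset B$, and you would then need to argue separately that the ball $B$ (whose boundary sphere was built using the disk $D$, which may pass through $N(F)$) can be isotoped off $N(F)$. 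That is precisely the work you are trying to avoid, and it is not a small omission: the paper's proof exists in order to handle exactly that difficulty.

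The paper takes a shorter route. It takes an arbitrary compressing disk $D$ for $T$ in $Y\setminus L$ and intersects $D$ with the checkerboard surfaces $\Sigma$ and $\Sigma'$; by \refthm{hress} these are $\pi_1$-essential, so innermost loops of intersection can be removed and $D$ can be isotoped to be disjoint from $\Sigma\cup\Sigma'$, hence from $N(F)$. This directly produces a compressing disk for $T$ in $Y\setminus N(F)$, a contradiction, with no case analysis about solid tori or balls. The paper then rules out parallelism to $\bdy N(K)$ by showing $T$ is meridionally incompressible (again using the checkerboard surfaces: a meridional compressing disk would give an annulus whose boundary has odd intersection number with $\bdy\Sigma$, impossible), which is much cleaner than your proposed analysis of how $N(F_{i_0})$ sits inside the solid torus $V'$. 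That analysis, which you yourself flag as the ``main obstacle,'' is left at the level of a sketch and would require real work to make rigorous (compressibility of $F_{i_0}^{\pm}$ in $V'$ alone does not immediately force $K$ to be a level curve, especially without some control over the compressing disks, which is exactly what the representativity hypothesis supplies via the checkerboard surfaces). I would recommend following the paper's strategy: the essentiality of the checkerboard surfaces, already established in \refthm{hress}, is the right tool here and circumvents both of the difficulties in your draft.
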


\begin{proof}
An embedded incompressible torus $T$ in $Y\setminus N(F)$ is also embedded in $Y\setminus L$. If it is not parallel into $\bdy Y$, we prove it is essential in the link exterior. For suppose $D$ is a compressing disk for $T$ in $Y\setminus L$. Consider the intersection of $D$ with the checkerboard surfaces $\Sigma$ and $\Sigma'$. Any innermost loop of intersection can be removed using \refthm{hress}. Then $D$ can be isotoped to be disjoint from both $\Sigma$ and $\Sigma'$, hence disjoint from $N(F)$. This contradicts the fact that $T$ is incompressible in $Y\setminus N(F)$.

If $E$ is a meridional compressing disk for $T$, then $E'=E\cap X$ is an annulus embedded in $X$ with one boundary component on $T$ and the other on $\bdy X$. Since $T$ is disjoint from $N(F)$, $T$ is disjoint from the checkerboard surfaces. But $E'$ can be isotoped so that the other boundary component has intersection number one with $\bdy\Sigma$, a contradiction since $E'\cap\Sigma$ consists of properly embedded arcs and loops.
\end{proof}

The proof of \refthm{Hyperbolic} now follows directly from previous results.

\begin{proof}[Proof of \refthm{Hyperbolic}]
Suppose first that $Y\setminus N(L)$ contains an essential annulus. If any component of $\bdy A$ lies on $\bdy N(L)$, then \refthm{AnAnnularLink} implies the diagram is a string of bigons on $F$. But because $F$ is not a 2-sphere and all regions are disks, this is impossible. 

So suppose that $Y\setminus N(L)$ contains an essential annulus $A$ with boundary components on $\bdy Y$. Put $A$ into normal form with respect to the angled chunk decomposition. If $A$ intersects $N(F)$, then an outermost sub-annulus must intersect the boundary of a chunk in a face that is not simply-connected by \refprop{NonnegArea}. But this is impossible since all the regions of $F\setminus N(L)$ are disks. Thus $A$ is disjoint from $N(F)$.

Conversely, an essential annulus $A\subset Y\setminus N(F)$ with $\bdy A\subset \bdy Y$ remains essential in $Y\setminus N(L)$: as in the proof of \refprop{HypToroidal} any compressing disk or boundary compressing disk can be isotoped to be disjoint from the checkerboard surfaces.

Now consider essential tori. If $Y\setminus N(F)$ is atoroidal, then 
by \refprop{Toroidal}, any essential torus in $X$ would have normal form meeting non-disk faces of the angled chunk decomposition of $X$. Since there are no such faces, $X(L)$ is atoroidal. Conversely, if $Y\setminus N(F)$ is toroidal, then \refprop{HypToroidal} implies $X(L)$ is toroidal. 

If $Y\setminus N(F)$ is atoroidal and $\bdy$-anannular, then $X(L)$ is irreducible, boundary irreducible, atoroidal and anannular. By work of Thurston, any manifold with torus boundary components that is irreducible, boundary irreducible, atoroidal and anannular has interior admitting a complete, finite volume hyperbolic structure; this is Thurston's hyperbolisation theorem~\cite{thu82}. It follows that $Y\setminus L$ is hyperbolic when $\bdy Y= \emptyset$ or when $\bdy Y$ consists of tori.

%\marginjess{Word about infinite volume}
When $Y$ has boundary with genus higher than one, Thurston's hyperbolisation theorem still applies, as follows. Double $Y\setminus N(L)$ along components of $\bdy Y$ with genus greater than one, and denote the resulting manifold by $DY$. Then if $DY$ admits an essential sphere or disk, there must be an essential sphere or disk in $Y\setminus N(L)$ because of incompressibility of higher genus components of $\bdy Y$ in $Y\setminus N(L)$. Similarly, any essential torus or annulus in $DY$ gives rise to an essential torus or annulus in $Y\setminus N(L)$. But these have been ruled out. Thus $DY$ admits a complete, finite volume hyperbolic structure. By Mostow-Prasad rigidity, the involution of $DY$ that fixes the higher genus components of $\bdy Y$ must be realised by an isometry of $DY$ given by reflection in a totally geodesic surface. Cutting along the totally geodesic surface and discarding the reflection yields $Y\setminus N(L)$, now with a hyperbolic structure in which higher genus componets of $\bdy Y$ are totally geodesic. Thus there is a hyperbolic structure in this case as well. 
\end{proof}

In the case of knots, not links, we do not believe that $\hat{r}(\pi(L),F)>4$ should be required to rule out essential tori and annuli when all regions of $F\setminus\pi(L)$ are disks. Some restriction on representativity will be necessary, for Adams \emph{et al}~\cite{abb92} showed that the Whitehead double of the trefoil, which is a satellite knot and hence toroidal, has an alternating projection $\pi(K)$ onto a genus-2 Heegaard surface for $S^3$ with $r(\pi(K),F)=2$ and all regions disks. Even so, we conjecture the following in the case of knots.

\begin{conj}
Let $\pi(K)$ be a weakly generalised alternating diagram of a knot $K$ on a generalised projection surface $F$ in $Y$. Suppose $F$ is a Heegaard surface for $Y$ and all regions of $F\setminus\pi(K)$ are disks, and $Y\setminus N(F)$ is atoroidal and $\bdy$-anannular. Then $X(K)$ is hyperbolic.
\end{conj}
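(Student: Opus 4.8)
The approach is to verify Thurston's hyperbolisation criterion for $X(K)$: that $X(K)$ is irreducible, boundary irreducible, atoroidal, and anannular, and then invoke~\cite{thu82} (together with the doubling argument for higher genus boundary components) exactly as in the proof of~\refthm{Hyperbolic}. Irreducibility and boundary irreducibility are already supplied by~\refcor{IrredBdryIrred}, since $r(\pi(K),F)\geq 4$. So the entire task is to rule out essential tori and essential annuli assuming only $r(\pi(K),F)\geq 4$ rather than the stronger $\hat r(\pi(K),F)>4$ used in~\refthm{Hyperbolic}, exploiting that $K$ is connected, $F$ is Heegaard, and all regions of $F\setminus\pi(K)$ are disks.

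The first step is to localise where $\hat r>4$ is really needed. Throughout the proofs of~\refprop{Toroidal} and~\refthm{AnAnnularLink}, the hypothesis $\hat r>4$ is used in only one situation: to rule out a normal torus, or a normal annulus with a boundary component on $\bdy N(L)$, whose normal pieces are \emph{all} compressing disks for $F$ meeting $\pi(L)$ in exactly four edges, consecutive pieces lying on opposite sides of $F$. In every other case some normal piece is parallel into $F$, and then \reflem{NormalSquare}, \reflem{Annulus}, \reflem{Menasco} and~\refthm{AnAnnularLink} produce a \emph{closed necklace of bigons} on some component $F_j$ of $F$ (this is what ``string of bigons arranged end to end'' means); but a closed bigon necklace on a positive genus component has a non-simply-connected complementary region, contradicting the hypothesis that all regions of $F\setminus\pi(K)$ are disks — and this holds whether the two-component or the M\"obius-band alternative occurs, so for a knot it is genuinely excluded. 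Hence the conjecture reduces to: \emph{if $F$ is a Heegaard surface for $Y$ and $\pi(K)$ is a weakly generalised alternating knot diagram with all regions disks, then $X(K)$ contains no essential torus and no essential $\bdy N(K)$--annulus that, in normal form, is tiled entirely by four-edge compressing disks for $F$.}

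To attack this residual case I would use the handlebody structure: $F$ Heegaard means $Y\setminus N(F)=H_1\sqcup H_2$ with each $H_i$ a handlebody (or a compression body if $\bdy Y\neq\emptyset$), so the normal pieces of such a torus or annulus $T$ form, on the two sides of $F$, disjoint systems of \emph{essential} disks $\{D_j^1\}\subset H_1$ and $\{D_j^2\}\subset H_2$ whose boundaries are essential curves on $F$ each meeting $\pi(K)$ in four points, glued across disk faces of $F\setminus\pi(K)$ by the gear rotation of~\refprop{AltChunkDecomp}. The plan is to track this gluing: because $K$ is connected, the cyclic sequence of faces and boundary faces traversed by $\bdy T$ must close up \emph{globally} on $F$, and one should show that no globally consistent such configuration exists. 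I would expect the analysis to branch into: (i) an innermost boundary curve $\bdy D_j^i$ bounds compressing disks on \emph{both} sides of $F$ in $Y\setminus N(K)$, so the Heegaard splitting is reducible and one reduces the genus of $F$ (or passes to the classical $S^2$ case); (ii) $T$ is meridionally compressible at a crossing as in~\reflem{Menasco}, and surgering it yields an essential $\bdy N(K)$--annulus of integral, hence non-meridional, slope, contradicting~\refcor{PrimeLink}; or (iii) the disk system is, after isotopy, inessential in the handlebodies, so $T$ compresses. With essential tori and $\bdy N(K)$--annuli excluded, $X(K)$ is irreducible, boundary irreducible, atoroidal and anannular, and Thurston's theorem completes the proof just as for~\refthm{Hyperbolic}, including the totally geodesic higher genus boundary case.

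The main obstacle is exactly this residual case, namely controlling four-edge compressing disks for $F$. When $r(\pi(K),F)=4$ such disks may occur in profusion on \emph{both} sides of $F$ — this is precisely why Hayashi's toroidal $8$--component example~\cite{hay95} has $r=\hat r=4$, and why the toroidal Whitehead double of the trefoil on a genus--two Heegaard surface~\cite{abb92} needs $r=2$ — so any proof must use the connectedness of $K$ and the Heegaard structure in an essential way. It is not clear that the angled chunk bookkeeping by itself can do this; a Haken-style finiteness argument, or a bound on the Hempel distance of the splitting, may be required. This is why the statement is presented only as a conjecture.
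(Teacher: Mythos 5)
This statement is presented in the paper as an open \emph{conjecture}, not a theorem; the paper provides no proof, and your proposal — quite properly — does not claim to provide one either. Your analysis is essentially the correct reading of the situation. You accurately localise where the hypothesis $\hat r(\pi(K),F)>4$ is invoked in the proofs of \refthm{AnAnnularLink} and \refprop{Toroidal}: it serves only to guarantee that in a normal torus or annulus made of four-edge disks, at least one disk on some side of $F$ is parallel into $F$ rather than a compressing disk, after which \reflem{NormalSquare}, \reflem{Annulus}, and \reflem{Menasco} force a closed necklace of bigons, contradicted by the all-regions-are-disks hypothesis for positive genus $F$ (and, for a knot, also by the two-component conclusion of \reflem{Annulus} when the first form occurs). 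So the residual obstruction is exactly as you say: a torus or $\bdy N(K)$--annulus tiled entirely by compressing disks for $F$ meeting $\pi(K)$ four times, alternating sides of $F$.

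Where you correctly stop short is the residual case. Your suggested strategies — exploiting the handlebody structure of the Heegaard splitting, tracking the gear-rotation gluing of four-edge disk boundaries across disk faces, meridional surgery via \reflem{Menasco} and \refcor{PrimeLink}, or a Haken/Hempel-distance finiteness argument — are all plausible directions, and the appeal to the known counterexamples (Hayashi's $8$-component link with $r=\hat r=4$ and the toroidally alternating Whitehead double with $r=2$) correctly explains why connectedness of $K$ and the Heegaard hypothesis must both be used. But none of these is carried out, and there is a genuine gap: you have reduced the conjecture, not proved it. Since the paper itself offers no proof, your assessment that this reduction is where the difficulty lies, and that the conjecture remains open, is the right one. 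One small note: \refthm{AnAnnularLink} itself carries the hypothesis $\hat r>4$, so in your reduction you should cite its internal lemmas (\reflem{NormalSquare}, \reflem{Annulus}, \reflem{Menasco}) rather than the theorem wholesale, but you clearly understand this since the point of your analysis is to bypass the $\hat r$ assumption.
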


%%%%%%%%%%%%%%%%%%%%%%%%%%%%%%%%%%%%%%%%%%%%%%%%%%%%%%%%%%%%%%%%%

\subsection{The case of knots on a torus in the 3-sphere}
We can improve our results if we restrict to knots in $Y=S^3$ on $F$ a torus. Recall that a nontrivial knot $K$ in $S^3$ is either a satellite knot (the complement is toroidal), a torus knot (the complement is atoroidal but annular), or hyperbolic~\cite{thu82}. For a knot with a reduced alternating diagram $\pi(K)$ on $S^2$, Menasco~\cite{men84} proved $K$ is nontrivial, $K$ is a satellite knot if and only if $\pi(K)$ is not prime, and $K$ is a torus knot if and only if it has the obvious diagram of the $(p,2)$-torus knot. Otherwise $K$ is hyperbolic. 
Hyperbolicity has been studied for several other classes of knots and links in $S^3$; see for example \cite{ada03}, \cite{fkp15}.
Here, we completely classify the geometry of weakly generalised alternating knots on a torus in $S^3$.

\begin{theorem}[W.G.A. knots on a torus]\label{Thm:WGAKnotOnTorus}
Let $Y=S^3$, and let $F$ be a torus. Let $\pi(K)$ be a weakly generalised alternating projection of a knot $K\subset S^3$ onto $F$.
\begin{enumerate}
\item\label{Itm:NonHeegaard} If $F$ is not a Heegaard torus, then $K$ is a satellite knot.
\item\label{Itm:HeegaardAnnulus} If $F$ is a Heegaard torus and a region of $F\setminus\pi(K)$ is an annulus $A$,
  \begin{enumerate}
  \item\label{Itm:CoreNontriv} if the core of $A$ forms a nontrivial knot in $S^3$, then $K$ is a satellite knot;
  \item\label{Itm:CoreTriv} if the core of $A$ forms an unknot in $S^3$, then $K$ is hyperbolic.
  \end{enumerate}
\item\label{Itm:Disk} If $F$ is a Heegaard torus and all regions of $F\setminus\pi(K)$ are disks, then $K$ is hyperbolic.
\end{enumerate}
Moreover, items \refitm{NonHeegaard} and \refitm{CoreNontriv} also hold when $K$ is a link.
\end{theorem}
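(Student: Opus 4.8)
The plan is to combine the topology of tori in $S^3$ with the angled chunk machinery. Since $\pi_1(S^3)=1$, $F$ is compressible in $S^3$, so by Alexander's theorem it bounds a solid torus on at least one side; thus either $F$ is a Heegaard torus (a solid torus on each side), or $F=\partial V$ with $V=N(J)$ a solid torus neighbourhood of a nontrivial knot $J$ and $W:=X(J)=S^3\setminus V$. By \refthm{wgaprime} the knot or link $K$ is nontrivial, nonsplit and prime, and by \refcor{IrredBdryIrred} the exterior $X(K)$ is irreducible and boundary irreducible. Since $K$ is nontrivial, Thurston's trichotomy \cite{thu82} makes it a satellite, a torus link, or hyperbolic, and each item of the theorem is proved by exhibiting, or excluding, an essential (incompressible, not boundary parallel) torus in $X(K)$, finishing with \refthm{Hyperbolic} in the hyperbolic cases.

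\textbf{The satellite cases \refitm{NonHeegaard} and \refitm{CoreNontriv}.} In both I would build a solid torus $U$ containing $K$ with $S^3\setminus U$ a nontrivial knot exterior, and show $\partial U$ is essential in $X(K)$. For \refitm{NonHeegaard}, let $U$ be $V$ enlarged by a collar of $F$ pushed slightly into $W$ and disjoint from $K$, so $K\subset U$, $\partial U$ is parallel to $F$, and $S^3\setminus U\cong X(J)$. For \refitm{CoreNontriv}, let $c$ be the core of the annulus region $A$; since $c$ is disjoint from $\pi(K)$ it cannot bound a compressing disk for $F$ in $S^3\setminus F$ (that would give $r(\pi(K),F)=0$), so $c$ is a $(p,q)$-curve, and $U:=(F\setminus N(c))\times[-\tfrac34,\tfrac34]\subset N(F)$ is a solid torus with core isotopic to $c$ containing $K$, whose complement is $V_1$ and $V_2$ glued along an annulus neighbourhood of $c$, hence Seifert fibered over a disk with exceptional fibers of orders $|p|,|q|$; as the core of $A$ is a nontrivial knot, $|p|,|q|\ge2$ and $S^3\setminus U\cong X(T(p,q))$ has incompressible boundary. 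In either case a compressing disk for $\partial U$ in $X(K)$ is (after an innermost-circle isotopy, using irreducibility) a meridian disk $D$ of $U$ disjoint from $K$: for \refitm{CoreNontriv} this puts $\pi(K)$ inside a disk of $F$, forcing $r(\pi(K),F)=0$, a contradiction; for \refitm{NonHeegaard} one first removes $D\cap(\Sigma\cup\Sigma')$ by an innermost-disk argument using \refthm{hress} (note $\partial U$ is then disjoint from the checkerboard surfaces), placing $D$ inside the chunk $\cong X(J)$ and compressing its incompressible boundary, again a contradiction. Finally $\partial U$ is not boundary parallel unless $K$ is the core of $U$ (for links of at least two components this is automatic); so $\partial U$ is essential and $K$ is a satellite.

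\textbf{The hyperbolic cases \refitm{CoreTriv} and \refitm{Disk}.} For \refitm{Disk}, $F$ is a Heegaard torus with all complementary regions disks, so $S^3\setminus N(F)$ is two solid tori, which is atoroidal and $\partial$-anannular (as $\partial Y=\emptyset$); I want to invoke \refthm{Hyperbolic}, for which I must upgrade the standing hypothesis $r(\pi(K),F)\ge4$ to $\hat r(\pi(K),F)>4$. Here the single-component hypothesis enters: for a knot on a Heegaard torus with all disk regions, a representativity-$4$ compressing disk for $F$ on a given side, together with the alternating and weakly prime combinatorics of \refprop{AltChunkDecomp} and the analyses of \reflem{Menasco} and \refthm{AnAnnularLink}, forces a string of bigons or a meridionally compressible configuration, each incompatible with all regions being disks; so in this setting the toroidal and annular conclusions of \refprop{Toroidal} and \refthm{AnAnnularLink} already hold with $r\ge4$, and \refthm{Hyperbolic} applies to give that $X(K)$ is hyperbolic. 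For \refitm{CoreTriv}, one of $|p|,|q|$ equals $1$, so $c$ is isotopic in some $V_i$ to its core, $S^3\setminus U$ is a solid torus, and the annulus region is inessential; I would isotope $F$ across $U$ to a new Heegaard torus $F'$ carrying a weakly generalised alternating diagram of $K$ with the annulus region absorbed, so all regions of $F'\setminus\pi(K)$ are disks and we reduce to \refitm{Disk}.

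\textbf{Main obstacle.} The crux is the two knot-specific statements flagged above: that $K$ is never the core of the solid torus $U$ (equivalently, a nontrivial knot, in particular a torus knot, has no weakly generalised alternating diagram with at least one crossing on a torus of representativity $\ge4$), and the upgrade of $r\ge4$ to $\hat r>4$ when all regions are disks. These are precisely the inputs that general \refthm{Hyperbolic} and the conjecture following it do not supply. I expect to handle the first using the adequacy / crossing-number rigidity of weakly prime alternating diagrams, namely that no such diagram with a crossing is equivalent, under Reidemeister moves on $F$, to a crossingless one (cf.\ \cite{how15t}): if $K$ were the core of $U=\widehat A\times I$ it would be isotopic in $\widehat A\times I$ to a level curve, making $\pi(K)$ reducible to no crossings. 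The second I expect to handle by careful normal-surface bookkeeping in the two truncated solid-torus chunks, where the only essential surfaces are meridian disks and boundary-parallel annuli, re-running the arguments of \reflem{NormalSquare}, \reflem{Annulus}, \reflem{Menasco} and \refprop{Toroidal} to cover the borderline $\hat r=4$ configurations using that $\pi(K)$ is connected and all regions are disks.
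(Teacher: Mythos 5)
Your overall strategy (Thurston trichotomy, exhibit or exclude an essential torus) matches the paper in spirit, and your treatment of items \refitm{NonHeegaard} and \refitm{CoreNontriv} is broadly the same as Lemmas~\ref{Lem:NonHeegaardTorus} and~\ref{Lem:CoreNontriv} (the paper takes a neighbourhood of the complementary annulus $A'=F\setminus A$ rather than your $U=(F\setminus N(c))\times I$, but the resulting tori and the innermost-disk arguments using \refthm{hress} are essentially the same). However, the hyperbolic cases \refitm{CoreTriv} and \refitm{Disk} have genuine gaps, and you never actually rule out torus knots.

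For \refitm{Disk}, you try to upgrade $r(\pi(K),F)\ge4$ to $\hat r(\pi(K),F)>4$ so that \refthm{Hyperbolic} applies, but you only sketch this as ``careful normal-surface bookkeeping'' and the specific claim is not correct as stated: bigon faces \emph{are} disks, so ``a string of bigons'' is not incompatible with the all-disk-regions hypothesis, and the $\hat r=4$ configurations are not automatically excluded. The paper sidesteps this entirely with \reflem{AdamsNonsatellite}: since $K$ is a knot on a Heegaard torus with disk regions, $\pi(K)$ is a toroidally alternating diagram, and Adams~\cite{ada94} already proved such a diagram of a nontrivial prime knot is not a satellite. Your route would need a worked-out argument where none is given.

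For \refitm{CoreTriv}, your reduction to \refitm{Disk} by ``isotoping $F$ across $U$ to absorb the annulus region'' is not a valid move. The annulus region is an intrinsic feature of the diagram graph $\pi(K)\subset F$; no isotopy of $F$ in $S^3$ removes it, and you cannot produce a new weakly generalised alternating diagram on a new Heegaard torus $F'$ with all disk regions without a substantive argument that the alternating, weakly prime, checkerboard-colourable, and representativity conditions survive --- none is supplied. The paper's \reflem{CoreTriv} is a direct normal-surface argument: a normal essential torus decomposes into area-zero pieces; closed pieces are excluded because the chunks are solid tori; annular pieces with boundary in the annular face $A$ are eliminated precisely because the unknotted core of $A$ meets some compressing disk for $F$ exactly once, allowing intersections to be isotoped away; and disk pieces lead either (if parallel into $F$) to a meridional compression and, via primeness, a contradiction (Menasco's argument), or (if all compressing disks for $F$) to an $I$-bundle over a Klein bottle in $S^3$, a contradiction (Adams' argument).

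Finally, you gesture at excluding torus knots via ``adequacy / crossing-number rigidity,'' but the paper's exclusion is \refthm{WGAKnotAnannular}: one checkerboard surface is non-orientable and $\pi_1$-essential by \refthm{hress}, and Moser's classification of essential surfaces in torus knot exteriors forces the only compatible case to be a $(p,2)$-torus knot, whose weakly generalised alternating projection can only live on $S^2$, not a genus-$\ge1$ surface. Without this (or an equivalent), the trichotomy is not completed and neither hyperbolic conclusion follows.
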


\begin{remark}
Items \refitm{NonHeegaard}, \refitm{CoreNontriv}, and \refitm{Disk} first appeared in \cite{how15t}. Item \refitm{CoreTriv} is new in this paper. We include the full proof for completeness. Note that \refthm{WGAKnotOnTorus} addresses all cases, since a diagram with multiple annular regions is the diagram of a multi-component link, and a diagram with a region which is a punctured torus does not satisfy the weakly prime or representativity conditions. 
\end{remark}

We will prove the theorem in a sequence of lemmas. First, we restrict essential annuli. 
The following theorem first appeared in \cite{how15t}.

\begin{theorem}\label{Thm:WGAKnotAnannular}
Let $Y=S^3$, and let $F$ be a generalised projection surface of positive genus. Let $\pi(K)$ be a weakly generalised alternating projection of a knot $K$ onto $F$. Then $K$ is not a torus knot.
\end{theorem}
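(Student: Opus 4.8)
The plan is to assume, for contradiction, that $K$ is a torus knot, and to run the argument of \refthm{AnAnnularLink}, using that its only possible conclusion --- a string of bigons --- is vacuous when $L$ is a knot. Since $r(\pi(K),F)\ge 4$, \refcor{IrredBdryIrred} gives that $S^3\setminus N(K)$ is irreducible and boundary irreducible, so $K$ is nontrivial; hence $K$ would be a nontrivial $(p,q)$--torus knot, and $X(K)$ is Seifert fibered over a disc with two exceptional fibers. Such a manifold contains an essential annulus $A$, the cabling annulus, with both components of $\bdy A$ on $\bdy N(K)$ of slope $pq$, and with $A$ neither boundary parallel nor boundary compressible. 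I would fix such an $A$ and show it cannot exist.

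First I would put $A$ into normal form with respect to the angled chunk decomposition of $X(K)$ given by \refprop{AngledChunkDecomp}, using \refthm{NormalForm}. Gauss--Bonnet (\refprop{GaussBonnet}) gives $a(A) = -2\pi\chi(A) = 0$, so by \refprop{NonnegArea} every piece of $A$ in a chunk is either a combinatorial-area-zero disc meeting exactly four edges (a normal square, since all exterior angles are $\pi/2$) or an area-zero annulus with both boundary circles in non-contractible faces; no torus pieces occur, as $A$ is an annulus. Square pieces glue only to square pieces, and annulus pieces only to annulus pieces; and since $\bdy A\subset\bdy N(K)$ is nonempty while $S^3$ has no other boundary, $A$ meets boundary faces, which annulus pieces do not. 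Hence $A$ is built entirely from normal squares --- exactly the situation at the start of the proof of \refthm{AnAnnularLink}.

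From here I would run the case analysis of \refthm{AnAnnularLink}: each square of $A$ meeting a boundary face has one of the three forms of \reflem{NormalSquare}, and the gluings are analysed as in the proofs of \reflem{Annulus} and \refthm{AnAnnularLink}. Those proofs invoke the stronger hypothesis $\hat r(\pi(K),F) > 4$ only to force certain normal squares to be parallel into $F$ rather than compressing discs for $F$; but for a knot this follows already from $r(\pi(K),F)\ge 4$. Indeed, pushing a square of form (1) slightly off its boundary face, or one of form (3) off each of its two boundary faces, yields a loop on $F$ meeting $\pi(K)$ in fewer than four points, which by $r(\pi(K),F)\ge 4$ bounds no compressing disc for $F$ (compare the proof of \reflem{NormalSquare}); so the square is parallel into $F$. (If $A$ consists only of form-(2) squares, each encircles a single crossing arc and $A$ is inessential.) Then \reflem{Annulus} applies to a square of form (1), and the bigon-chasing of \refthm{AnAnnularLink} goes through, forcing $\pi(K)$ to contain a string of bigons arranged end to end whose underlying sublink has two components. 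As $K$ is a knot, this is a contradiction, so no such annulus $A$ exists and $K$ is not a torus knot.

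The main obstacle is the last step: verifying that each appeal to $\hat r > 4$ in the proofs of \reflem{Annulus} and \refthm{AnAnnularLink} can, for a knot, be replaced by $r\ge 4$ together with the push-off observation above, and confirming that the ``string of bigons'' outcome really does exhibit a two-component sublink. The input that a torus knot exterior is a small Seifert fibered space containing the cabling annulus is standard; the essential point is simply that the conclusion of \refthm{AnAnnularLink} is empty when the link has a single component.
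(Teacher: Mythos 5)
Your proposal takes a genuinely different route from the paper: you try to rule out the cabling annulus directly via the normal surface machinery of \refthm{AnAnnularLink}, whereas the paper uses Moser's classification of essential surfaces in torus knot exteriors. Unfortunately, your route has a gap at the crucial step.

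You assert that pushing a form-(1) square off its boundary face, or a form-(3) square off its two boundary faces, yields a loop on $F$ meeting $\pi(K)$ in \emph{fewer than four} points, so that $r(\pi(K),F)\ge 4$ already forces those squares to be parallel into $F$. That count is wrong. A form-(1) square meets a boundary face at \emph{opposite} boundary edges, together with two interior edges; sliding it off the boundary face replaces the two boundary-edge intersections by \emph{two} interior-edge intersections (exactly as in the proof of \refprop{AngledChunkDecomp}), so the pushed-off loop still meets $\pi(K)$ four times. A form-(3) square meets two boundary faces at opposite boundary edges each and no interior edges; pushing off both boundary faces again yields four interior-edge intersections. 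Only a form-(2) square, which meets adjacent boundary edges, drops to two intersections when pushed off. With $r(\pi(K),F)\ge 4$, a loop meeting $\pi(K)$ exactly four times is perfectly allowed to bound a compressing disc, so you have not shown the squares are parallel into $F$, and neither \reflem{Annulus} nor the bigon-chasing in \refthm{AnAnnularLink} gets started. This is precisely why that theorem assumes the strictly stronger $\hat r(\pi(L),F)>4$ (so that, of two squares glued across $F$, at least one side has no compressing disc meeting $\pi(L)$ only four times); being a knot rather than a link does not supply a substitute. A weakly generalised alternating knot is only required to have $r\ge 4$, so the hypotheses of \refthm{AnAnnularLink} need not hold.

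The paper's proof avoids all of this: for a knot in $S^3$ only one spanning surface can have boundary slope zero, so at least one checkerboard surface $\Sigma$ is non-orientable; by \refthm{hress} it is $\pi_1$-essential, so its boundary double $\widetilde\Sigma$ is a two-sided essential surface in the exterior. By Moser's theorem the only such surfaces in a nontrivial torus knot exterior are the Seifert surface and the winding annulus, and the winding annulus double-covers a spanning surface only when $q=2$, giving a M\"obius band at slope $pq$. Hence $K$ would have to be a $(p,2)$-torus knot, whose only weakly generalised alternating projection is onto $S^2$, contradicting the positive genus of $F$. If you want to keep a normal-surface-style approach, you would need a new argument to handle the case $r(\pi(K),F)=\hat r(\pi(K),F)=4$, which is allowed for a weakly generalised alternating knot.
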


\begin{proof}
One of the checkerboard surfaces is non-orientable, and from \refthm{hress} it is $\pi_1$-essential in $X$. But Moser~\cite{mos71} proved that the only 2-sided essential surfaces in a torus knot exterior are the Seifert surface of genus $\frac{1}{2}(p-1)(q-1)$ and the winding annulus at slope $pq$. The winding annulus covers a spanning surface if and only if $q=2$, in which case it covers a Mobius band at slope $pq$. The only way to obtain an alternating projection from these is a $(p,2)$-torus knot, which has a weakly generalised alternating projection onto $S^2$ only.
\end{proof}

\begin{lemma}\label{Lem:NonHeegaardTorus}
Let $\pi(L)$ be a weakly generalised alternating projection of a link $L$ onto a non-Heegaard torus $F$ in $S^3$. Then $L$ is a satellite link.
\end{lemma}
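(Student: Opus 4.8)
The plan is to produce an essential torus in the link exterior $X(L)=S^3\setminus N(L)$, so that $L$ is a satellite by definition. First I would record the topology of a non-Heegaard torus: by Alexander's theorem $F$ bounds a solid torus on at least one side of $S^3$, and since $F$ is not a Heegaard torus, the complementary side $W$ is the exterior $X(k)$ of the core $k$ of the solid torus $V$ on the other side, where $k$ is a \emph{nontrivial} knot; in particular $\partial W = F$ is incompressible in $W$. In the notation of \refdef{Representativity}, let $F^-$ be the copy of $F$ bounding the knot-exterior component of $S^3\setminus N(F)$. Then $F^-$ is a torus properly embedded in the interior of $X(L)$, separating $X(L)$ into a copy of $W=X(k)$ and a remaining piece $V^{\ast}$, which is a solid torus (isotopic to $V$) with $N(L)$ removed.

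Next I would show $F^-$ is incompressible in $X(L)$. Given a compressing disk $D$, note $\partial D\subset F^-$ is disjoint from the two checkerboard surfaces $\Sigma,\Sigma'$ (which lie in the interior of the collar $N(F)$), so $D\cap(\Sigma\cup\Sigma')$ consists of circles; using $\pi_1$-essentiality of the checkerboard surfaces (\refthm{hress}), irreducibility of $X(L)$ (\refcor{IrredBdryIrred}), and the innermost-disk exchange exactly as in the proof of \refprop{HypToroidal}, $D$ may be isotoped to be disjoint from $\Sigma\cup\Sigma'$. By \refprop{AltChunkDecomp}, leaving every interior face unglued exhibits $X(L)\setminus N(\Sigma\cup\Sigma')$ as the disjoint union of two pieces, one on the $W$-side of $F^-$ and one on the $V'$-side; the piece meeting $F^-$ deformation retracts onto $W=X(k)$, inside which $F^-$ is isotopic to $\partial X(k)$. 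Pushing $D$ through the collar into $W$ then yields a compressing disk for $\partial X(k)$, contradicting that $k$ is nontrivial. Hence $F^-$ is incompressible in $X(L)$.

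It remains to show $F^-$ is not boundary-parallel in $X(L)$; this is the step I expect to be the main obstacle, since a priori $L$ could be a knot isotopic to the core $k$, in which case $X(L)=X(k)$ and $F^-$ \emph{would} be boundary-parallel. Suppose $F^-$ is parallel to a component $T_0$ of $\partial N(L)$. Since $T_0$ lies in $V^{\ast}$ and $F^-$ separates $X(L)$ into $W$ and $V^{\ast}$, the parallelism region must be all of $V^{\ast}$; thus $\partial V^{\ast}=F^-\sqcup T_0$ and $V^{\ast}\cong T^2\times I$, which forces $L$ to be a single-component knot isotopic to the core of the ambient solid torus. But then any checkerboard surface $\Sigma$ of $\pi(L)$ is a properly embedded surface in $V^{\ast}\cong T^2\times I$ whose boundary lies entirely on the single component $T_0$, and $\Sigma$ is $\pi_1$-essential in $V^{\ast}$: it is $\pi_1$-essential in $X(L)$ by \refthm{hress}, hence incompressible in the submanifold $V^{\ast}$, and a $\partial$-compression or a parallelism into $\partial V^{\ast}$ would (using that $\partial V^{\ast}$ is disconnected, so any such disk or product region meets only $T_0\subset\partial X(L)$) already occur in $X(L)$, contradicting $\pi_1$-essentiality there. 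This contradicts the classification of essential surfaces in $T^2\times I$, where every incompressible, $\partial$-incompressible, non-$\partial$-parallel surface is a horizontal torus or a vertical annulus, neither of which has boundary on a single boundary component.

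Therefore $F^-$ is incompressible and not boundary-parallel, i.e.\ essential, in $X(L)$, and so $L$ is a satellite link. The parts of this argument I would be most careful to justify are the identification of the complementary piece $V^{\ast}$ as $T^2\times I$ in the boundary-parallel case, and the claim — read off from \refprop{AltChunkDecomp} — that cutting along both checkerboard surfaces separates $X(L)$ into a piece retracting onto $X(k)$ and a piece retracting onto a solid torus.
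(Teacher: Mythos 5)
Your proof is correct, but it takes a genuinely different route from the paper. The paper dispatches this lemma in one line by invoking \refprop{HypToroidal}, taking $T=F^{-}$: that proposition shows $T$ is incompressible in $Y\setminus L$ by pushing a compressing disk off $\Sigma\cup\Sigma'$, and then rules out boundary-parallelism by showing $T$ is \emph{meridionally} incompressible (a meridional compressing disk would yield an annulus whose boundary meets $\bdy\Sigma$ an odd number of times, but intersections come in pairs). You reproduce the incompressibility step essentially verbatim — including, nicely, the explicit identification of the obstruction as the incompressibility of $\bdy X(k)$ in the nontrivial knot exterior $W$ — but you replace the meridional-incompressibility step with a direct analysis of the $\bdy$-parallel case: you show the parallelism region would have to be all of $V^{\ast}$, forcing $V^{\ast}\cong T^2\times I$ and $L$ a knot, and then the checkerboard surface $\Sigma$ becomes an essential surface in $T^2\times I$ with boundary on a single boundary torus, which is impossible. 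The tradeoff: the paper's approach is shorter and holds for arbitrary $F$ and $Y$ (it is just a special case of a general proposition), whereas yours exploits the torus-specific structure of $T^2\times I$. One small caution in your final step: the classification ``horizontal torus or vertical annulus'' is for two-sided surfaces, and a checkerboard surface may be one-sided; it is cleaner to finish with the homology observation that $\bdy[\Sigma]$ (with $\ZZ/2$ coefficients if $\Sigma$ is non-orientable) must vanish in $H_1(T_0)$ because $H_1(T_0)\to H_1(T^2\times I)$ is an isomorphism and $\bdy\Sigma$ bounds $\Sigma$, yet $\bdy\Sigma$ is a single essential curve on $T_0$ — a contradiction that avoids any appeal to the Waldhausen/Hatcher classification.
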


\begin{proof}
This follows by \refprop{HypToroidal}. Because $F$ is a non-Heegaard torus, there exists a torus $T$ which is parallel to $F$ and incompressible in $S^3\setminus N(F)$. 
\end{proof}

\begin{lemma}\label{Lem:CoreNontriv}
Let $\pi(L)$ be a weakly generalised alternating projection of a link $L$ onto a Heegaard torus $F$ in $S^3$, such that one region of $F\setminus\pi(L)$ is homeomorphic to an annulus $A$. If the core of $A$ forms a nontrivial knot in $S^3$, i.e.\ a $(p,q)$-torus knot, then $L$ is a satellite link on that $(p,q)$-torus knot.
\end{lemma}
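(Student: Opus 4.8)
The plan is to exhibit an essential torus in the link exterior $X:=X(L)$ that bounds a solid-torus regular neighbourhood of $c$, the core of the annular region $A$; since that solid torus will have core $T(p,q)$ and contain $L$ in its interior, $L$ is then a satellite on $T(p,q)$. To build it: because $c$ is a nontrivial knot it is essential on the Heegaard torus $F$, so $A$ is an essential annulus and its complementary region $B:=F\setminus\operatorname{int} A$ is also an annulus, which carries all of $\pi(L)$. Pushing $\bdy A$ slightly into $\operatorname{int} A$ inside a region of $F\setminus\pi(L)$ gives an annulus $B^+\supset B$ with $\pi(L)$ in $\operatorname{int} B^+$ and $\bdy B^+$ disjoint from $\pi(L)$. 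Taking a closed collar $F\times[-1,1]$ of $F$ (with $F=F\times\{0\}$) lying between the two Heegaard solid tori, set $V:=B^+\times[-1,1]$. Then $V$ is a solid torus whose core is isotopic on $F$ to $c$; its complement $S^3\setminus\operatorname{int} V$ is the exterior of the torus knot $c=T(p,q)$, so $V$ is a regular neighbourhood of that knot, and $L\subset\operatorname{int} V$ since $\pi(L)\subset\operatorname{int} B^+$. Write $T:=\bdy V$.

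Essentiality of $T$ in $X$ reduces to three claims. First, $T$ is incompressible from the outside, because $S^3\setminus\operatorname{int} V$ is a nontrivial torus knot exterior, hence irreducible with incompressible boundary torus. Second, $T$ is not boundary-parallel in $X$: otherwise $L$ would be isotopic to the core of $V$, i.e.\ to $T(p,q)$, which is impossible for a knot by \refthm{WGAKnotAnannular} and impossible for trivial reasons if $L$ has more than one component. Third, and this is the heart of the matter, $T$ is incompressible from the inside, i.e.\ $L$ is not contained in a $3$-ball in $V$; equivalently, no meridian disk of $V$ is disjoint from $L$.

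For the third claim, suppose $D$ were a meridian disk of $V$ with $D\cap L=\emptyset$, regarded as a compressing disk for $T$ in $V\setminus N(L)$. I would use: (i) $\pi(L)$ is not contained in a disk in $F$ — if it were, the core $c$ of $A$ would lie in a disk in $F$ and hence be unknotted, contrary to hypothesis; (ii) since the checkerboard surfaces $\Sigma,\Sigma'$ are essential by \refthm{hress} and lie in $\operatorname{int} V$, and $X$ is irreducible by \refcor{IrredBdryIrred}, a standard innermost-circle argument isotopes $D$ within $V$ to be disjoint from $\Sigma\cup\Sigma'$, and hence from $\pi(L)$ (which lies in any regular neighbourhood of $\Sigma\cup\Sigma'$); (iii) one then simplifies $D\cap F$ by innermost-disk and outermost-arc moves — a circle or inessential arc of $D\cap F$ bounds a disk in $F$ whose interior, by weak primeness (\refdef{WeaklyPrime}) together with (i), must be disjoint from $\pi(L)$, so the move can be performed keeping $D$ disjoint from $L$ — leaving $D\cap F$ a single essential spanning arc $\beta$ of the annulus $B^+\times\{0\}=F\cap V$, disjoint from $\pi(L)$. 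Cutting $V$ along $D$ then yields a $3$-ball in which $F$ appears as the disk obtained by cutting $B^+\times\{0\}$ along $\beta$, and $\pi(L)$ lies inside this ball, hence in a disk in $F$, contradicting (i). So $T$ is essential and $L$ is a satellite on $T(p,q)$; it also bounds the solid torus $V$ with core $T(p,q)$, so the companion is exactly the stated torus knot.

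The main obstacle is step (iii): arranging $D$ to miss $\pi(L)$ and then controlling $D\cap F$ precisely enough to force $\pi(L)$ into a disk — this is where weak primeness and the knottedness of $c$ enter, and it must be executed while keeping $D$ a compressing disk disjoint from $L$. (Alternatively, incompressibility of $T$ can be argued through the chunk decomposition, viewing $T$ as a normal torus split into two combinatorial-area-zero annuli with boundary in the face $A$, in the spirit of \refprop{Toroidal}; but the direct argument above seems cleanest.) The construction of $V$ and the outside and boundary-parallel cases are routine.
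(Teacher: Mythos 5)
Your construction of the solid torus $V$ is essentially the same as the paper's (they take a regular neighbourhood of the complementary annulus $A'=F\setminus A$; your $V=B^+\times[-1,1]$ is a concrete model of that), and your argument that $T$ is incompressible from the outside (nontrivial torus knot exterior) is the one in the paper. However, your step (ii) contains a genuine error that breaks the proof.

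You assert that both checkerboard surfaces $\Sigma,\Sigma'$ lie in $\operatorname{int} V$, and from this deduce that a meridian disk $D$ can be isotoped off $\Sigma\cup\Sigma'$ and hence off $\pi(L)$. But the annular region $A$ of $F\setminus\pi(L)$ is itself one of the coloured regions and therefore is a subsurface of one of the two checkerboard surfaces; call that surface $\Sigma$. Since by construction the core of $A$ lies on the far side of $T=\bdy V$, the annulus $A$, and hence $\Sigma$, crosses $T$. Moreover $\Sigma\cap T$ consists of two circles parallel to $\bdy A$, which are longitudes of $V$, while $\bdy D$ is a meridian of $V$; so $\bdy D$ must meet $\Sigma\cap T$ in at least two points, forcing $D\cap\Sigma$ to contain arcs with endpoints on $T$. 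Those arcs cannot be removed by any isotopy, so $D$ cannot be made disjoint from $\Sigma$, and the deduction that $D$ misses $\pi(L)$ collapses. Everything in step (iii) depends on having $D$ disjoint from $\pi(L)$, so the contradiction you aim for is never reached.

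The paper turns this unavoidable intersection to its advantage rather than trying to remove it: after killing circle components of $D\cap\Sigma$ and $D\cap\Sigma'$ using \refthm{hress}, a meridian disk $D$ meets $\Sigma$ in exactly one essential arc $\beta$ (and is disjoint from $\Sigma'$, which, unlike $\Sigma$, does lie in $V$). That arc joins the two boundary circles of $A$ inside $V$, so $A\cup N(\beta)$ is a once-punctured torus (or Klein bottle) sitting inside $\Sigma$, which contradicts the hypothesis that $A$ is the only non-disk region. Meridional incompressibility of $T$ is then handled by an intersection-number-parity argument with $\Sigma'$, which covers the link case uniformly; by contrast your appeal to \refthm{WGAKnotAnannular} to rule out boundary-parallelism only applies once you have reduced to a single-component $L$. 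If you want to repair your argument, the right move is not to try to push $D$ off $\Sigma$, but to work with the arc $\beta=D\cap\Sigma$ directly as the paper does.
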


\begin{proof}
Say the annulus $A$ is a subset of the checkerboard surface $\Sigma$. Consider the complementary annulus $A' = F\setminus A$ on $F$. The core of $A'$ is parallel to that of $A$, hence it forms a nontrivial knot in $S^3$. 
Let $T$ be the boundary of a neighbourhood of $A'$, chosen such that a solid torus $V$ bounded by $T$ contains $A'$ and contains $L$, and such that the core of $A$ lies on the opposite side of $T$. We claim $T$ is essential in $X$. It will follow that $L$ is a satellite of the core of $A'$, which is some $(p,q)$-torus knot isotopic to the core of $A$. 

If $D$ is a compressing disk for $T$, then $D$ cannot lie on the side of $T$ containing the core of $A$, since this side is a nontrivial knot exterior. So $D \subset V$. Then consider $D\cap\Sigma$ and $D\cap\Sigma'$, where $\Sigma'$ is the other checkerboard surface. Since $D$ is meridional in $V$, it is possible to isotope $D$ so that it intersects $\Sigma$ in exactly one essential arc $\beta$. All loops of intersection between $D$ and $\Sigma$ or $\Sigma'$ can be isotoped away using \refthm{hress}.
Since $\beta$ is disjoint from $\Sigma'$ and $\beta$ runs between the two distinct boundary components of $A$, it follows that $A\cup N(\beta\cap\Sigma)$ is homeomorphic to a once-punctured torus, contradicting the fact that $\pi(L)$ contains exactly one annular region. Thus $T$ is incompressible in $X$.

If $E$ is a meridional compressing disk for $T$, then $E'=E\cap X$ is an annulus embedded in $X$ with one boundary component on $T$ and the other on $\bdy X$. The boundary component on $T$ does not intersect $\Sigma'$, but the other boundary component has odd intersection number with $\Sigma'$ since $\Sigma'$ is a spanning surface. This is a contradiction.
\end{proof}

Continuing to restrict to the case that $F$ is a torus, if we further restrict to knots, we can rule out satellite knots in the remaining cases.

\begin{lemma}\label{Lem:AdamsNonsatellite}
Let $\pi(K)$ be a weakly generalised alternating diagram of a knot $K$ on a Heegaard torus $F$ in $S^3$ with all regions of $F\setminus \pi(K)$ disks. Then $K$ is not a satellite knot.
\end{lemma}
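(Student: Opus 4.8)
The plan is to argue by contradiction: suppose $K$ is a satellite knot, and show that the link exterior $X(K)$ cannot then contain an essential torus of the kind a satellite structure forces. So assume $K$ is a satellite. By the standard structure theory of satellite knots (the same input Menasco uses in the alternating case~\cite{men84}), $X(K)$ contains an essential torus that is moreover \emph{meridionally incompressible}; fix such a torus $T$. Since $r(\pi(K),F)\geq 4$, \refcor{IrredBdryIrred} gives that $X(K)$ is irreducible and boundary irreducible, so by \refthm{NormalForm} we may isotope $T$ into normal form with respect to the angled chunk decomposition provided by \refprop{AltChunkDecomp} and \refprop{AngledChunkDecomp}.

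By the Gauss--Bonnet formula \refprop{GaussBonnet}, $a(T)=-2\pi\chi(T)=0$, so by \refprop{NonnegArea} each component of $T$ inside a chunk is either a disk with $\sum\epsilon(e_i)=2\pi$, an annulus whose boundary lies in non-contractible faces, or an incompressible torus disjoint from the boundary of its chunk. Here the last two options are impossible: every region of $F\setminus\pi(K)$ is a disk, so every face is a disk; and each chunk is one of the two solid tori $V_1,V_2$ into which the Heegaard torus $F$ splits $S^3$, and a solid torus contains no incompressible torus. Hence every component of $T$ in a chunk is a normal disk meeting exactly four edges, and since $T$ is disjoint from $K$ it misses all boundary faces, so each such disk meets exactly four interior edges.

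Each such normal disk is a properly embedded disk in a solid torus whose boundary lies on $F$, so it is either parallel into $F$ or a meridian disk of that solid torus (a compressing disk for $F$). If some normal disk of $T$ is parallel into $F$, then \reflem{Menasco} shows that $T$ is meridionally compressible, contradicting the choice of $T$. Otherwise every normal disk of $T$ is a meridian disk of its solid torus chunk; unfolding the chunk decomposition, this says that $T$ is transverse to $F$ and each component of $T\cap F$ bounds a meridian disk of $V_1$ on one side and a meridian disk of $V_2$ on the other. But the meridians of $V_1$ and $V_2$ are distinct slopes on the Heegaard torus $F$ (they meet once), so no curve on $F$ bounds a disk on both sides --- a contradiction. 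Therefore $X(K)$ admits no essential torus, so $K$ is not a satellite knot.

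I expect the main obstacle to be the very first reduction: extracting a \emph{meridionally incompressible} essential torus from the satellite hypothesis. This is what lets \reflem{Menasco} deliver an immediate contradiction; without it we would be forced to meridionally surger $T$ into an annulus with meridional boundary and then argue that no such annulus is essential, which is exactly the kind of statement (as in \refcor{PrimeLink}) that requires the hypothesis $\hat r(\pi(K),F)>4$ not available here. A secondary, but routine, point is the identification of the two chunks with the solid tori of the Heegaard splitting and the resulting dichotomy for normal disks, together with the elementary fact that a curve on a Heegaard torus cannot compress to both sides.
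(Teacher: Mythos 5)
Your proof takes a genuinely different route from the paper's. The paper disposes of Lemma~\ref{Lem:AdamsNonsatellite} in three lines by observing that $\pi(K)$ is a toroidally alternating diagram of a knot that is nontrivial (Corollary~\ref{Cor:IrredBdryIrred}) and prime (Theorem~\ref{Thm:wgaprime}), and then simply citing Adams's theorem \cite{ada94} that such a knot cannot be a satellite. Your version instead runs a self-contained normal-surface argument inside the angled chunk decomposition, which is exactly the strategy the paper itself deploys in the very next statement, Lemma~\ref{Lem:CoreTriv}. The two cases you split into (some normal square is parallel into $F$, versus every normal square is a compressing disk for $F$) are the same two cases the paper's Lemma~\ref{Lem:CoreTriv} proof handles, so your instincts about the shape of the argument are right.

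You have correctly located the weak link in your argument --- the reduction to a \emph{meridionally incompressible} essential torus --- but you have misdiagnosed how to repair it. You worry that without $\hat r(\pi(K),F)>4$ (needed for Corollary~\ref{Cor:PrimeLink}) you cannot rule out an essential meridional annulus. But this is not the route the paper takes in the analogous case. In the proof of Lemma~\ref{Lem:CoreTriv}, when a normal square of $T$ is parallel into $F$, Lemma~\ref{Lem:Menasco} produces a meridional compression of $T$; surgering along that disk yields a \emph{sphere} meeting $K$ transversely in two points. One then invokes primeness of $K$ (Theorem~\ref{Thm:wgaprime}), not $\hat r>4$, to conclude that one of the two balls this sphere bounds contains a trivial arc of $K$, whence $T$ was either compressible or boundary-parallel. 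So the correct input is primeness, which is available to you, and you should not pre-select a meridionally incompressible $T$ by fiat but rather run the primeness argument once Lemma~\ref{Lem:Menasco} hands you the meridional compression.

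There is a second, less clear-cut issue: your disposal of the ``every square is a compressing disk'' case via the observation that each curve of $T\cap F$ would bound a meridian disk of $V_1$ on one side and a meridian disk of $V_2$ on the other. This is an attractively short contradiction, but it glosses over the fact that the chunk decomposition glues the two copies of each face by a rotation (the ``gear'' gluing of Proposition~\ref{Prop:AltChunkDecomp}) rather than the identity, and that interior edges are identified four-to-one; so the boundary circles of the $V_1$-side normal quads and the $V_2$-side normal quads are not directly in correspondence as curves on $F$, and the naive picture of $T\cap F$ as circles each bounding a quad on each side does not immediately hold. The paper's proof of Lemma~\ref{Lem:CoreTriv} handles exactly this case with Adams's more intricate argument: the compressing disks cut the two Heegaard solid tori into balls, the balls are coloured by which side of the separating torus $T$ they lie on, and the balls with $K$ outside assemble (respecting the product structure) into an $I$-bundle with boundary the single torus $T$, forcing a twisted $I$-bundle over a Klein bottle --- impossible in $S^3$. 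If you want a self-contained proof, I would either adopt that $I$-bundle argument wholesale or spell out carefully why the face rotations and four-fold edge identifications cannot spoil your slope-counting contradiction; as written, the claim is asserted rather than proved.
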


\begin{proof}
When $K$ is a knot, projected onto a Heegaard torus $F$, it is an example of a toroidally alternating knot, as in \cite{ada94}. Adams showed that a toroidally alternating diagram of a nontrivial prime knot is not a satellite knot \cite{ada94}; $K$ is nontrivial by \refcor{IrredBdryIrred} and prime by \refthm{wgaprime}.
\end{proof}

\begin{lemma}\label{Lem:CoreTriv}
Let $\pi(K)$ be a weakly generalised alternating diagram of a knot $K$ onto a Heegaard torus $F$ in $S^3$ such that a region of $F\setminus\pi(K)$ is homeomorphic to an annulus $A$. If the core of $A$ forms a trivial knot in $S^3$, then $K$ is not a satellite knot.
\end{lemma}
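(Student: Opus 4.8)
The plan is to suppose $K$ is a satellite knot and obtain a contradiction by putting an essential torus into normal form with respect to the angled chunk decomposition of $X(K)$ from \refprop{AngledChunkDecomp}. So assume $X(K)$ contains an essential torus $T$. Since $K$ is a knot in $S^3$, one side of $T$ is a solid torus $V_T$ containing $K$, the other side $S^3\setminus V_T$ is a nontrivial knot exterior, and $K$ is not isotopic to the core of $V_T$. The starting observation is that $\pi(K)\cap A=\emptyset$ forces $K\cap(A\times I)=\emptyset$, so $K$ lies in $\bar A'\times I$ where $\bar A'=\overline{F\setminus A}$ is the complementary annulus; its core is isotopic in $F$ to the core $c$ of $A$, hence is unknotted. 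By \refcor{IrredBdryIrred}, $X(K)$ is irreducible and $\partial$-irreducible, so by \refthm{NormalForm} I may isotope $T$ into normal form. The two chunks are the truncated solid tori $V_1',V_2'$ cut off by the Heegaard torus $F$, so they are atoroidal; hence by \refprop{GaussBonnet} and \refprop{NonnegArea} every component of $T\cap V_i'$ is a disk or a $\partial$-parallel annulus whose two boundary curves lie in the unique non-simply-connected face $A$ and are therefore parallel to $c$. As $T$ misses $K$ it misses all boundary faces, so each disk component meets exactly four interior edges, and its boundary, bounding a disk in a solid-torus chunk, is either trivial on $F$ (the disk is isotopic into $F$) or parallel to a meridian $\mu_i$.

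First I would dispose of the case that some disk component is isotopic into $F$. Then \reflem{Menasco} gives a meridional compressing disk $E$ for $T$ with $\mathrm{int}(E)$ meeting $K$ once; $\partial E$ is essential on $T$ (otherwise cap it off on $T$ to get a $2$-sphere meeting $K$ once, which is impossible). Since $S^3\setminus V_T$ is $\partial$-irreducible, $\mathrm{int}(E)\subset V_T$, so $\partial E$ bounds a disk in the solid torus $V_T$, whence $\partial E=\mu_{V_T}$ and $E$ is a meridian disk of $V_T$ meeting $K$ once. Cutting $V_T$ along $E$ presents $K$ as a connected sum of the core of $V_T$ with a local knot; as $K$ is prime by \refthm{wgaprime}, the local knot is trivial, $K$ is isotopic to the core of $V_T$, and $T$ is boundary parallel -- contradicting essentiality.

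In the remaining case $T\cap F$ is a nonempty family of parallel essential curves on $F$ of a single slope $\sigma$, and every component of $T\cap V_i'$ is a meridian disk of a chunk or a $\partial$-parallel annulus in the face $A$. I would follow $T$ through the two chunks, as in the proof of \reflem{Annulus}: each $\partial$-parallel annulus component cuts its solid-torus chunk into a product region and a smaller solid torus, and each meridian-disk component cuts its chunk along a meridian disk. Assembling the pieces lying on one fixed side of $T$ and regluing them along $T\cap F$ should identify that side as a solid torus $S$ whose core is isotopic on $F$ to $\sigma$. Since $\sigma$ is represented either by a curve bounding a disk in one of the handlebodies or by $c$, and $c$ is unknotted, the core of $S$ is an unknot, so $S$ and $S^3\setminus S$ are both solid tori. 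As $K$ is connected and disjoint from $T$, it lies in exactly one of them, and the meridian disk of the other is a compressing disk for $T$ disjoint from $K$ -- so $T$ compresses in $X(K)$, a contradiction. Hence $X(K)$ has no essential torus and $K$ is not a satellite.

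The hard part is the last case: organising the $\partial$-parallel annulus and meridian-disk components to pin down the solid torus $S$ that $T$ bounds while tracking which side of $T$ contains $K$ -- the same delicate bookkeeping carried out in \reflem{Annulus} and \refthm{AnAnnularLink}. One must also treat separately the possibility that $c$ is itself a meridian of one of the two handlebodies, since then meridian-disk and annulus components may be adjacent along $F$; but in that situation $c$ bounds a disk on one side disjoint from $K$, and compressing $F$ along it turns $\pi(K)$ into a reduced prime alternating diagram on a Heegaard $2$-sphere, so Menasco's theorem~\cite{men84} applies and $K$ is hyperbolic or a $(2,n)$-torus knot -- in particular not a satellite.
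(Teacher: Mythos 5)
Your proof follows the same outline as the paper: put $T$ in normal form with respect to the angled chunk decomposition and split into cases according to the types of the normal pieces. Your handling of the case of a disk piece parallel into $F$ is correct but takes a different route from the paper: you observe that the meridional compressing disk from \reflem{Menasco} must lie in $V_T$, hence be a meridian disk of $V_T$ meeting $K$ once, and deduce from primeness of $K$ (\refthm{wgaprime}) that $T$ is boundary parallel; the paper instead surgers $T$ along that disk to produce a sphere meeting $K$ in two meridians and runs Menasco's original primeness argument. Both are fine.

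The genuine gap is in the remaining case, exactly where you flag ``the hard part.'' The assertion that assembling the $\partial$-parallel annuli and meridian disks shows one complementary region of $T$ is a solid torus with core of slope $\sigma$ is not proved, and the cited \reflem{Annulus} and \refthm{AnAnnularLink} do not cover it: those results concern normal squares that meet boundary faces, whereas here $T$ is disjoint from $\partial N(K)$. The paper avoids the assembly altogether: since the core $\alpha$ of $A$ is unknotted, choose a compressing disk $D$ for $F$ with $\partial D$ meeting $\alpha$ once; outermost arcs of $D\cap T$ drive isotopies of $T$ that strictly reduce $|T\cap A|$, so $T$ can be made disjoint from $A$, removing all annular pieces, and the all-disk case is then eliminated via Adams' $I$-bundle-over-a-Klein-bottle argument. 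Also, your separate case ``$c$ is a meridian of one of the handlebodies'' cannot arise: $c$ lies in the region $A$, which is disjoint from $\pi(K)$, so a disk in a handlebody bounded by $c$ would be a compressing disk for $F$ meeting $\pi(K)$ zero times, contradicting $r(\pi(K),F)\ge 4$; moreover, the fallback you propose (compress $F$ along it and apply Menasco to the resulting diagram on $S^2$) would require verifying that the compressed diagram is reduced and prime, which you have not done.
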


\begin{proof}
Suppose $T$ is an essential torus in $S^3\setminus K$. Isotope $T$ into normal form with respect to the chunk decomposition of $S^3\setminus K$. \refprop{GaussBonnet} (Gauss--Bonnet) implies that the combinatorial area of $T$ is zero. By \refprop{NonnegArea}, $T$ meets the chunks either in incompressible tori, or annuli, or disks.

If $T$ meets a chunk in an incompressible torus, then $T$ is completely contained on one side of $F$. But $F$ is a Heegaard torus, so each chunk is a solid torus. There are no incompressible tori embedded in a solid torus.

So suppose that a normal component of $T$ is an annulus $S$ meeting no edges of a chunk, with $\bdy S$ lying in non-contractible faces of the chunk. Because $F$ is a torus, $\bdy S$ lies in an annular face. Because $\pi(K)$ is connected ($K$ is a knot), there is only one annular face, namely $A$. Thus $\bdy S$ forms parallel essential curves in $A$.
Let $\alpha$ be the core of $A$. By hypothesis, $\alpha$ is the unknot, hence it is either a $(p,1)$ or $(1,q)$ torus knot, and we can choose a compressing disk $D$ for the projection torus $F$ such that $\bdy D$ meets $\alpha$ exactly once. Consider $D\cap T$. By incompressibility of $T$, any innermost loop of $D\cap T$ can be isotoped away. So $D\cap T$ consists of arcs with endpoints on $A\cap\bdy D$. Consider an outermost arc on $D$. This bounds a disk. We may use the disk to isotope $T$ through the chunk, removing the arc of intersection, and changing two adjacent curves of intersection of $T\cap A$ into a single loop of intersection, bounding a disk on $A$, which can then be isotoped away. Repeating, $T$ can be isotoped to remove all intersections with $A$, so $T$ can be isotoped to be disjoint from $F\times I$. Thus the annulus case does not occur.

Finally, suppose that $T$ is cut into normal disks meeting the chunks. Each has combinatorial area $0$. By definition of combinatorial area and the fact that each edge of the chunk is assigned angle $\pi/2$, each normal disk meets exactly four edges. 

First suppose a normal square $S$ of $T$ is parallel into $F$.
\reflem{Menasco} implies that $S$ is meridionally compressible with a meridional compressing disk meeting $K$ at a crossing. Surger along the meridional compressing disk to obtain a sphere meeting $K$ twice in exactly two meridians. 
This sphere bounds 3-balls on both sides in $S^3$, one of which contains only a trivial arc of $K$, since $K$ is prime by \refthm{wgaprime}. Depending on which $3$-ball contains the trivial arc, either $T$ is boundary parallel, or $T$ is compressible, both of which are contradictions. This is exactly Menasco's argument in \cite{men84}.

So each normal disk of $T$ is a compression disk for $F$. These normal disks cut the Heegaard tori into ball regions, each ball with boundary consisting of an annulus on $F$ and two disks of $T$. Because $T$ is separating, these can be coloured according to whether $K$ is inside or outside the ball. Consider the regions with $K$ outside. These are $I$-bundles over a disk. They are glued along regions of the chunk decomposition lying between a pair of edges running parallel to the fiber. Thus the regions are fibered, with the gluing preserving the fibering, and we obtain an $I$-bundle that is a submanifold of $S^3$ with boundary a single torus $T$. The only possibility is that the submanifold is an $I$-bundle over a Klein bottle, but no Klein bottle embeds in $S^3$. This contradiction is exactly Adams' contradiction in \cite{ada94}.
\end{proof}

% \marginjess{Comment below gives info on lens spaces, in case we want to extend}
%
%% (By work of Bredon and Wood~\cite{bw69}, a Lens space $L(p,q)$ with $0<q\leq\frac{p}{2}$ contains an embedded Klein bottle if and only if $p=4k$ and $q=2k-1$ for $k\in\NN$. It is also possible to embed a Klein bottle in $S^2\times S^1$.
%% Furthermore, $L(2,1)$ is the only Lens space which contains an embedded projective plane.)
%% \end{proof}

\begin{proof}[Proof of \refthm{WGAKnotOnTorus}]
When $F$ is not a Heegaard torus, or when $F$ is a Heegaard torus but a region of $F\setminus\pi(K)$ is an annulus with knotted core, then Lemmas~\ref{Lem:NonHeegaardTorus} and~\ref{Lem:CoreNontriv}, respectively, imply that $K$ is a satellite knot.
In the remaining two cases, Lemmas~\ref{Lem:CoreTriv} and~\ref{Lem:AdamsNonsatellite} imply that $S^3\setminus K$ is atoroidal. Subsequently, \refthm{WGAKnotAnannular} implies that $S^3\setminus K$ is anannular. Because $S^3\setminus K$ is irreducible and boundary irreducible, it must be hyperbolic in these cases. 
\end{proof}

\begin{corollary}\label{Cor:GAKnotOnTorus}
  Let $\pi(K)$ be a generalised alternating diagram of a knot $K$ onto a torus $F$, as defined by Ozawa \cite{oza06}. Then $K$ is hyperbolic if and only if $F$ is Heegaard.
\end{corollary}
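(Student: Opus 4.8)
The plan is to derive this as an immediate consequence of \refthm{WGAKnotOnTorus}; the only genuine step is to check that a generalised alternating diagram $\pi(K)$ on a torus $F$ in the sense of Ozawa~\cite{oza06} is a weakly generalised alternating diagram with all complementary regions disks. To do this I would unpack Ozawa's hypotheses: by definition~\cite{oza06}, $\pi(K)$ is alternating and checkerboard colourable, is strongly prime, and satisfies Ozawa's edge-representativity bound $e(\pi(K),F)\geq 4$. Strong primeness implies weak primeness (as noted after \refdef{WeaklyPrime}), and by~\cite{how15t} it forces every region of $F\setminus\pi(K)$ to be a disk; in particular the annular-region case \refitm{HeegaardAnnulus} of \refthm{WGAKnotOnTorus} cannot occur here. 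Moreover, the boundary of any compressing disk for $F$ is an essential curve on $F$, hence meets $\pi(K)$ at least $e(\pi(K),F)\geq 4$ times, so $r(\pi(K),F)\geq 4$. Thus $\pi(K)$ is a weakly generalised alternating diagram on $F$ with all regions disks, and \refthm{WGAKnotOnTorus} applies.

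With that in hand, both directions are immediate. If $F$ is a Heegaard torus, then since all regions of $F\setminus\pi(K)$ are disks, case \refitm{Disk} of \refthm{WGAKnotOnTorus} gives that $K$ is hyperbolic. If $F$ is not a Heegaard torus, then case \refitm{NonHeegaard} of \refthm{WGAKnotOnTorus} (equivalently \reflem{NonHeegaardTorus}) shows $K$ is a satellite knot, so its exterior is toroidal and $K$ is not hyperbolic. Note that ``satellite'' here genuinely certifies non-hyperbolicity because a generalised alternating knot is nontrivial and prime, by \refcor{IrredBdryIrred} and \refthm{wgaprime}. Combining the two directions yields the stated equivalence.

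The main, and essentially the only, obstacle is the bookkeeping of the first paragraph: one must confirm that Ozawa's definition really delivers checkerboard colourability together with the representativity bound $r(\pi(K),F)\geq 4$, rather than merely the weaker conditions \refitm{Alternating}, \refitm{Connected}, \refitm{slope} of \refdef{AltKnots}. Once this identification is made nothing further is required: the corollary is a direct specialisation of \refthm{WGAKnotOnTorus} to Ozawa's class of diagrams on a torus.
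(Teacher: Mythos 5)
Your proof is correct and follows essentially the same route as the paper: deduce the corollary from cases \refitm{NonHeegaard} and \refitm{Disk} of \refthm{WGAKnotOnTorus} after noting that Ozawa's generalised alternating diagrams are weakly generalised alternating with all complementary regions disks. The paper's proof is terser, but you have merely filled in the bookkeeping (strong primeness implies weakly prime, edge-representativity $\geq 4$ implies representativity $\geq 4$ since compressing disk boundaries are essential curves, and Ozawa's conditions force disk regions), which the paper leaves implicit by citing \cite{how15t}.
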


\begin{proof}
In Ozawa's definition of generalised alternating diagrams on $F$, the regions are disks, so the result follows from (1) and (3) of \refthm{WGAKnotOnTorus}.
\end{proof}

% 5
%%%%%%%%%%%%%%%%%%%%%%%%%%%%%%%%%%%%%%%%%%%%%%%%%%%%%%%%%%%%%%%%%
\section{Accidental, virtual fibered, and quasifuchsian surfaces}\label{Sec:Accidental}
We now switch to links with checkerboard colourable diagrams, and consider again the checkerboard surfaces $\Sigma$ and $\Sigma'$ of a weakly generalised alternating link diagram, which are essential by \refthm{hress}. Using the bounded angled chunk decompositions of the link exterior cut along $\Sigma$, we give further information about the surface $\Sigma$. In particular, we determine when it is accidental, quasifuchsian, or a virtual fiber.

We fix some notation. As before, we let $Y$ be a compact orientable irreducible 3-manifold with generalised projection surface $F$, such that if $\bdy Y\neq \emptyset$, then $\bdy Y$ consists of tori and $\bdy Y$ is incompressible in $Y\setminus N(F)$. Let $L$ be a link with a weakly generalised alternating diagram on $F$, $\Sigma$ a $\pi_1$-essential spanning surface, $\widetilde{\Sigma} = \bdy N(\Sigma)$, $M_{\Sigma} =  (Y\setminus L)\cut\Sigma$, and $P=\bdy M_{\Sigma} \cap \partial N(L)$ the parabolic locus.

\subsection{Accidental surfaces}

An \emph{accidental parabolic} is a non-trivial loop in $\Sigma$ which is freely homotopic into $\partial N(L)$ through $X:=Y\setminus N(L)$, but not freely homotopic into $\partial\Sigma$ through $\Sigma$.

Define an \emph{accidental annulus} for $\Sigma$ to be an essential annulus $A$ properly embedded in $M_{\Sigma}$ such that $\partial A=\alpha\cup\beta$ where $\beta\subset P$ and $\alpha\subset\widetilde{\Sigma}$ is an accidental parabolic. It is well-known that if spanning surface $\Sigma$ contains an accidental parabolic, then it admits an accidental annulus in $M_{\Sigma}$. See, for example Ozawa-Tsutsumi~\cite{ot03}, or \cite[Lemma~2.2]{fkp14}.

A surface $\Sigma$ is \emph{accidental} if $M_{\Sigma}$ contains an accidental annulus. 
If $\Sigma$ is accidental, then the slope of $\Sigma$ is the same as the slope of $\beta$ on $P$.

\begin{lemma}\label{Lem:squares}
Let $\pi(L)$ be a weakly generalised alternating link projection onto a generalised projection surface $F$ in $Y$, with shaded checkerboard surface $\Sigma$.
Let $A$ be an accidental annulus for $\Sigma$. Then $A$ decomposes as the union of an even number of normal squares, each with one side on a shaded face, one side on a boundary face, and two opposite sides on white faces. 
\end{lemma}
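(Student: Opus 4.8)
The plan is to put the accidental annulus $A$ into normal form with respect to the bounded angled chunk decomposition of $M_\Sigma$ from \refprop{AngledChunkDecomp}, and then use the combinatorial area estimate of \refprop{GaussBonnet} together with the classification of zero-area disks in \reflem{NormalSquare} to pin down the pieces. First I would recall that $A$ is essential in $M_\Sigma$ and that one boundary component $\alpha$ lies on $\widetilde\Sigma$ (a surface face) while the other, $\beta$, lies on the parabolic locus $P$, i.e.\ on boundary faces. Since $A$ meets a boundary face, $\beta$ must cross boundary edges; let $p$ be the number of times $\bdy A$ meets a boundary edge adjacent to a surface face. By \refprop{GaussBonnet} for the bounded chunk decomposition, $a(A) = -2\pi\chi(A) + \tfrac{\pi}{2}p = \tfrac{\pi}{2}p$ since $\chi(A)=0$. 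On the other hand each normal component $A_i$ of $A$ in a chunk has $a(A_i)\geq 0$ by \refprop{NonnegArea} (with the bounded-chunk version of the definition of angled chunk, the relevant normal-disk-sum condition still gives nonnegativity). The key point is that a zero-area normal disk in the decomposition meets exactly four edges counted with exterior angle $\pi/2$, so $\sum_i a(A_i) = \tfrac{\pi}{2}p$ forces each component that actually contributes to $p$ to be a disk with nonnegative but small area, and in fact one shows every $A_i$ is a normal square (area zero): any component of positive area would already exceed the total, since $\chi(A)=0$ and $A$ has only two boundary curves.

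Next I would analyze the combinatorics of these normal squares. Each $A_i$ is a quadrilateral meeting four edges of a truncated chunk. Since $\alpha$ is an accidental parabolic — freely homotopic into $\bdy N(L)$ but not into $\bdy\Sigma$ — the annulus $A$ runs from $\widetilde\Sigma$ to $P$, so each square has exactly one side on a surface (shaded) face and one side on a boundary face, and these must be on "opposite" sides of the square: if both the shaded side and the boundary side shared a vertex, the corresponding arc of $\bdy A$ would run from a surface edge to an adjacent boundary edge, which is excluded by item \refitm{NoArcEndptsAdj} of the definition of normal, \refdef{NormalSurface}. This leaves the remaining two opposite sides of the square; I claim each lies on a white face. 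They cannot lie on boundary faces (else the square would be of the second or third form of \reflem{NormalSquare} and could not also have a side on a surface face separating it from the boundary locus in the required way), and they cannot lie on shaded faces since surface faces are not allowed to be adjacent along an edge and the square already has one shaded side. Running through the gluing of \refprop{AltChunkDecompCheck}, consecutive squares $A_i, A_{i+1}$ are glued along their white sides, so the squares chain up around the annulus with white faces between them and shaded/boundary faces alternating on the two ends.

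Finally I would count the squares to get "even". The boundary faces met by the squares are arranged in a cyclic pattern; because the decomposition has two copies of each component $F_j$ of $F$ and the squares alternate lying in the two chunks (the two sides of $F$) as one goes around $A$ — exactly as in the parity argument at the end of the proof of \reflem{Annulus} — an odd number of squares would force two squares in the same chunk to overlap, contradicting that $A$ is embedded. Hence the number of normal squares is even, completing the proof.

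The main obstacle I expect is the step ruling out that the two "free" opposite sides of a normal square lie on boundary faces rather than white faces: one has to carefully combine the classification in \reflem{NormalSquare}, the adjacency constraints on surface faces from \refdef{BoundedChunk}, and the normality condition \refitm{NoArcEndptsAdj}, and confirm that a square with a genuine shaded side and a genuine boundary side is forced into the "first form" (two opposite sides on non-boundary — here white — faces). The parity/embeddedness count at the end is routine given the analogous argument already carried out for \reflem{Annulus}, and the area bookkeeping is immediate from \refprop{GaussBonnet} and \refprop{NonnegArea}.
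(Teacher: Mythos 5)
Your overall strategy — normal form, Gauss--Bonnet, zero combinatorial area forces normal squares, then combinatorial analysis of the square sides — is the same as the paper's, but there are two genuine gaps in the execution.

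First, the area bookkeeping. You write $a(A)=\tfrac{\pi}{2}p$ with $p$ the number of intersections of $\partial A$ with boundary edges adjacent to surface faces, and then try to argue that ``any component of positive area would already exceed the total.'' But with $p$ possibly positive this reasoning collapses: a positive total area would have to be distributed among the pieces, and nothing forces each $A_i$ to have area zero. The missing observation — which the paper makes immediately — is that $p=0$: the component $\alpha$ of $\partial A$ lies entirely in $\widetilde{\Sigma}$ (surface faces) and the component $\beta$ lies entirely in the parabolic locus $P$ (boundary faces), so $\partial A$ never crosses a boundary edge that separates a boundary face from a surface face. Hence $a(A)=0$, and only then does \refprop{NonnegArea} force every normal piece to have area zero, and the four-edge count makes each a normal square. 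Without $p=0$, the conclusion that the pieces are all squares does not follow.

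Second, the identification of the square's sides is asserted rather than established. You state that ``the annulus $A$ runs from $\widetilde\Sigma$ to $P$, so each square has exactly one side on a shaded face and one side on a boundary face,'' but a priori a zero-area square could, for instance, have two boundary sides or two shaded sides. The paper proves the required structure by analyzing the arcs of $A\cap W$ for white faces $W$: an arc of $A\cap W$ running from $\beta$ back to $\beta$ cuts off a disk that can be pushed off $W$ using boundary $\pi_1$-injectivity of the white surface $\Sigma'$ (\refthm{hress}), and an arc running from $\alpha$ back to $\alpha$ cuts off a normal bigon, contradicting \refcor{NoNormalBigons}. That is the step that forces each arc of $A\cap W$ to run from $\alpha$ to $\beta$, and hence each square to have the claimed sides with the shaded and boundary sides opposite. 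Your alternative appeal to item \refitm{NoArcEndptsAdj} of \refdef{NormalSurface} does not do this job (that condition only forbids certain arcs from cutting off disks in a face, and doesn't by itself constrain which faces the four sides lie in), and your invocation of \reflem{NormalSquare} is misplaced: that lemma classifies zero-area disks in the chunk decomposition of $X(L)$, not in the bounded chunk decomposition of $X\cut\Sigma$, where shaded faces are surface faces and the combinatorics are different. The parity/evenness argument at the end is fine and matches the paper.
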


\begin{proof}
The annulus $A$ is essential, so it can be isotoped into normal form by \refthm{NormalForm}. Then by Gauss--Bonnet, \refprop{GaussBonnet}, the combinatorial area of $A$ is $a(A) = -2\pi\chi(A) + \pi\,p = 0+\pi\,p$, where $p$ is the number of times a boundary curve of $A$ runs from $\bdy N(\Sigma)$ to the parabolic locus and then back to $\bdy N(\Sigma)$. Since one component of $\bdy A$ is completely contained in $\bdy N(\Sigma)$ and the other is completely contained in the parabolic locus, it follows that $p=0$. Thus $a(A)=0$. Then $A$ is subdivided into normal pieces, each with combinatorial area $0$ within a chunk. By \refprop{NonnegArea}, each piece can be a disk or an annulus; the torus case does not arise because $A$ is an annulus. Since $\bdy A$ meets boundary faces of the chunk, there cannot be a normal component of $A$ meeting no edges. It follows that all normal components of $A$ are disks with combinatorial area $0$. Because each edge of the chunk has angle $\pi/2$, it follows that each piece meets exactly four edges, so it is a normal square.

Because $A$ runs through two distinct chunks, alternating between chunks, the total number of squares forming $A$ must be even.
Consider intersections of $A$ with white faces. We claim each arc of intersection runs from the component of $\bdy A$ on the parabolic locus $P$ to a shaded face. For if an arc runs from $\beta=\bdy A\cap P$ back to $\beta$, an innermost such arc along with an arc on $P$ bounds a disk in $A$. Because the white checkerboard surface $\Sigma'$ is boundary $\pi_1$-injective, we may push away this intersection. Similarly, if an innermost arc of intersection of $A$ with the white faces runs from $\alpha = \bdy A\cap \widetilde{\Sigma}$ back to $\alpha$ then it cuts off a bigon region of $A$ in normal form. This contradicts \refcor{NoNormalBigons}. So each arc of intersection of $A$ with a white face runs from $\beta$ to $\alpha$ on $A$. Thus each square is as described in the lemma. 
\end{proof}

\begin{lemma}\label{Lem:consecsquare}
Let $A$ be an accidental annulus for a checkerboard surface $\Sigma$ associated to a weakly generalised alternating diagram $\pi(L)$ on a generalised projection surface $F$ in a 3-manifold $Y$. Let $A_1, \dots, A_n$ be the decomposition of $A$ into normal squares, as in \reflem{squares}. Then no square $A_i$ can be parallel into $F$.
\end{lemma}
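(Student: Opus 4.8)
The plan is to argue by contradiction, reducing to the bigon--string configuration already analysed in \reflem{Annulus} and then observing that the boundary curve of the accidental annulus would in that case be boundary--parallel in $\Sigma$. Colour the regions of $\Sigma$ ``shaded'' as in \reflem{squares}, and suppose some square $A_i$ of the decomposition is parallel into $F$. By \reflem{squares}, $A_i$ has one side on a shaded face, one on a boundary face, and two opposite sides on white faces; since opposite regions at a crossing share a colour, $\bdy A_i$ must run through \emph{opposite} boundary edges of the boundary face, so $A_i$ is a square of the first form of \reflem{NormalSquare}, with the ``middle'' non--boundary face being the shaded one. The two white faces it meets are interior faces of the bounded chunk decomposition of $M_\Sigma$, glued exactly as the corresponding faces are glued in the angled chunk decomposition of $X$; hence the local configuration of $A_i$ together with its neighbours $A_{i-1},A_{i+1}$ in $A$ is exactly the one treated in the proof of \reflem{Annulus}.

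Next I would rerun that proof. Because $\bdy A_i$ uses opposite boundary edges of the boundary face, the two white faces it meets are opposite across a crossing of $\pi(L)$, so the gluings of $A_i$ to $A_{i\pm1}$ are rotations in the same direction; superimposing $\bdy A_{i-1}$ and $\bdy A_{i+1}$ on the chunk containing $A_i$ and invoking weak primeness together with $r(\pi(L),F)\ge 4$ forces $\bdy A_{i\pm1}$ to bound subdisks of $F$ each cutting off a single bigon region and to thread through the bigon cut off by $A_i$. Inducting around the cyclic sequence $A_1,\dots,A_n$ yields that the relevant portion of $\pi(L)$ is a string of shaded bigons arranged end to end, each $A_j$ encircling one bigon; embeddedness of $A$ forces an even number of bigons (an odd number would make two of the $A_j$ overlap in a single chunk, as in \reflem{Annulus}), so this portion of $L$ is a two--component sublink whose shaded checkerboard surface is an annulus $\Sigma_0\subseteq\Sigma$. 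Moreover the arcs that the $\bdy A_j$ cut out of the shaded faces glue up to give exactly the curve $\alpha=\bdy A\cap\widetilde\Sigma$, which lies in $\widetilde\Sigma_0=\bdy N(\Sigma_0)$. The only place this differs from the proof of \reflem{Annulus} is that here the shaded faces are \emph{surface} faces of the bounded decomposition, hence never glued --- which is precisely why the sequence $A_1,\dots,A_n$ now closes up to give $A$ rather than running into the obstruction that proves nonexistence in \refthm{AnAnnularLink}. The weak--primeness and representativity steps concern only curves on $F$ and are unaffected by whether the shaded faces are glued.

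Finally, under the deformation retraction $N(\Sigma)\simeq\Sigma$ the curve $\alpha$ maps into $\Sigma_0$, where it is freely homotopic, staying inside $\Sigma_0\subseteq\Sigma$, to a boundary circle of $\Sigma_0$; that circle is a component of $\bdy\Sigma$ lying on $\bdy N(L)$. Thus $\alpha$ is freely homotopic into $\bdy\Sigma$ through $\Sigma$, contradicting the hypothesis that $\alpha$ is an accidental parabolic and hence that $A$ is an accidental annulus. Therefore no $A_i$ can be parallel into $F$.

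I expect the main obstacle to be careful bookkeeping rather than a new idea: one must check that every step of the (somewhat intricate) proof of \reflem{Annulus} survives the passage from $X$ to $M_\Sigma$ --- the key point being that leaving the shaded faces unglued removes the device by which \refthm{AnAnnularLink} obtained its contradiction, so that the bigon--string conclusion of \reflem{Annulus} genuinely occurs in the present setting --- and then to identify the assembled boundary curve $\alpha$ with the core of the annular sub--surface $\Sigma_0$, so that it is $\bdy$--parallel in $\Sigma$ and therefore cannot be an accidental parabolic.
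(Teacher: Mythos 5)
Your proof is correct and follows essentially the same route as the paper: reduce via the bigon--string configuration of \reflem{Annulus} to the conclusion that $\alpha$ is parallel to the core of an annular piece of $\Sigma$, hence homotopic into $\bdy\Sigma$ through $\Sigma$, contradicting the definition of accidental parabolic. The paper simply invokes \reflem{Annulus} directly (and rather tersely phrases the final contradiction as ``$\bdy A$ is freely homotopic into $\bdy N(L)$''), whereas you take extra care to justify that the argument transports from the chunk decomposition of $X$ to the bounded chunk decomposition of $M_\Sigma$ and to explain why the closure obstruction exploited in \refthm{AnAnnularLink} disappears once the shaded faces are left unglued; those are genuine details the paper leaves implicit, and your treatment of them is sound.
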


\begin{proof}
  Suppose by way of contradiction that $A_i$ is parallel into $F$. Then \reflem{Annulus} implies $L$ is a 2-component link, $\pi(L)$ is a string of bigons, $\Sigma$ is an annulus, and $\bdy A$ has a component parallel to the core of $\Sigma$. But then $\bdy A$ is freely homotopic into $\bdy N(L)$, contradicting the definition of an accidental annulus.
\end{proof}

Futer, Kalfagianni, and Purcell~\cite{fkp14} proved that a state surface associated to a $\sigma$-adequate $\sigma$-homogeneous link diagram has no accidental parabolics. This implies that the checkerboard surfaces associated to a reduced prime alternating diagram are not accidental in $X$. \reflem{consecsquare} leads to a new proof in that case.

\begin{corol}
Let $\pi(L)$ be a reduced prime alternating link projection onto $S^2$ in $S^3$; i.e.\ $L$ is alternating in the usual sense. Then neither checkerboard surface is accidental in $X$.
\end{corol}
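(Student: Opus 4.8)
The plan is to derive this as the special case $F\cong S^2$, $Y=S^3$ of \reflem{consecsquare}, which already excludes accidental annuli once it is known that no square of such an annulus can be parallel into the projection surface.

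\textbf{Step 1: a classical reduced prime alternating diagram is weakly generalised alternating.} First I would check that $\pi(L)$ satisfies \refdef{WeaklyGeneralisedAlternating}. For $F\cong S^2$ the weakly prime condition of \refdef{WeaklyPrime} coincides with ``reduced and prime'' in the classical sense, as noted just after that definition; an alternating diagram on a sphere is automatically checkerboard colourable; and $S^2$ carries no essential simple closed curve, so it has no compressing disks in $S^3$, whence $r(\pi(L),S^2)=\infty\geq 4$. The remaining conditions \refitm{Connected} and \refitm{slope} of \refdef{AltKnots} are immediate for a link diagram with at least one crossing. Hence $\pi(L)$ is weakly generalised alternating, and \refthm{hress} shows that both checkerboard surfaces are $\pi_1$-essential in $X=S^3\setminus N(L)$.

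\textbf{Step 2: squares of an accidental annulus would be parallel into $F$.} Next I would argue by contradiction: suppose a checkerboard surface is accidental, and colour $F\setminus\pi(L)$ so that it is the shaded surface $\Sigma$. Then $M_{\Sigma}$ contains an accidental annulus $A$, which by \reflem{squares} can be put into normal form as a union of normal squares $A_1,\dots,A_n$, each a properly embedded disk in a chunk of the decomposition of $S^3\setminus L$ from \refprop{AltChunkDecomp}. Since $Y=S^3$ and $F\cong S^2$, every chunk is a $3$-ball whose boundary is a copy of $S^2$, and $F$ has no compressing disks at all; so by the standard dichotomy that a normal disk in a chunk is either a compressing disk for $F$ or is parallel into $F$ (as used, e.g., in the proof of \refprop{AngledChunkDecomp}, or simply because a properly embedded disk in a $3$-ball cobounds a ball with a subdisk of the boundary sphere), each $A_i$ is parallel into $F$. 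This contradicts \reflem{consecsquare}, so neither checkerboard surface of $\pi(L)$ is accidental in $X$.

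\textbf{Expected difficulty.} I expect no genuine obstacle: all the substance lies in \refthm{hress}, \reflem{squares}, and \reflem{consecsquare}, and the only things to verify are the two routine points above --- that ``reduced prime alternating on $S^2$'' is an instance of \refdef{WeaklyGeneralisedAlternating} (with $r=\infty$, in particular $\geq 4$), and that a normal square in a ball chunk is parallel into the boundary sphere. The value of the corollary is expository: it records that on $S^2\subset S^3$ the general chunk machinery recovers the Futer--Kalfagianni--Purcell fact that checkerboard surfaces of reduced prime alternating links carry no accidental parabolics.
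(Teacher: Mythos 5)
Your proof is correct and follows the same route as the paper: assume an accidental annulus exists, normalise it via \reflem{squares}, observe that on $S^2\subset S^3$ every normal square must be parallel into $F$ since $S^2$ has no compressing disks, and derive a contradiction from \reflem{consecsquare}. Your Step~1 (checking that a reduced prime alternating diagram on $S^2$ is weakly generalised alternating, with $r(\pi(L),S^2)=\infty$) is the hypothesis-verification the paper leaves implicit, but is the correct justification for invoking those lemmas.
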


\begin{proof}
Suppose $A$ is an accidental annulus for a checkerboard surface. Put $A$ into normal form. Every square making up $A$ must be parallel into $S^2$. This contradicts~\reflem{consecsquare}.
\end{proof}

\begin{theorem}\label{Thm:linkaccid}
Let $\pi(L)$ be a weakly generalised alternating diagram of a link $L$ on a generalised projection surface $F$ in a 3-manifold $Y$.  
Suppose $\hat{r}(\pi(L),F)> 4$. If $\Sigma$ is a checkerboard surface of $\pi(L)$, then $\Sigma$ is not accidental in $X$.
\end{theorem}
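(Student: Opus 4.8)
The plan is to argue by contradiction, using Lemmas~\ref{Lem:squares} and~\ref{Lem:consecsquare} together with the hypothesis $\hat r(\pi(L),F)>4$. Suppose $\Sigma$ (say the shaded surface) is accidental, and let $A\subset M_\Sigma$ be an accidental annulus. By \reflem{squares}, $A$ is a union of an even number $n$ of normal squares $A_1,\dots,A_n$, where each $A_i$ has one side on a shaded face, one side on a single boundary face, and its two remaining (opposite) sides on white faces. Since $A_i$ meets exactly one boundary face, it is of the first type in \reflem{NormalSquare}: it runs through two \emph{opposite} boundary edges of that boundary face and otherwise meets exactly two non-boundary edges (the two surface edges bordering its shaded side). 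Moreover, by \reflem{consecsquare}, no $A_i$ is parallel into $F$.

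The next step is to convert each $A_i$ into a compressing disk for $F$ whose boundary meets $\pi(L)$ a bounded number of times. Let $C$ be the chunk containing $A_i$; it is a truncation of a component of $Y\setminus N(F)$, and on its boundary it carries the copy $F_j^{+}$ or $F_j^{-}$ of some component $F_j$ of $F$. Filling in the truncated vertices turns $\bdy A_i$ into a closed curve on that copy of $F_j$, and pushing it slightly off the boundary face it meets (exactly as in the proof of \reflem{NormalSquare}) replaces the two boundary-edge intersections by a short arc passing beside the corresponding crossing of $\pi(L)$. Together with the two surface edges this produces a simple closed curve $\gamma_i$ on $F_j$ meeting $\pi(L)$ in at most four points, isotopic on $F_j$ to $\bdy A_i$. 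Since $A_i$ is not parallel into $F$, the curve $\gamma_i$ is essential on $F_j$, so $A_i$ is a compressing disk for the relevant copy $F_j^{\pm}$ in $Y\setminus F$ whose boundary meets $\pi(L)$ at most four times; hence $r^{+}(\pi(L),F_j)\le 4$ or $r^{-}(\pi(L),F_j)\le 4$, according to which copy of $F_j$ lies on $\bdy C$.

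Finally I would check that both signs occur, which forces $\hat r(\pi(L),F)\le 4$. Consecutive squares $A_i$ and $A_{i+1}$ are glued along an edge lying in a white (interior) face, and by \refprop{AltChunkDecomp} the gluing of a white face identifies a region of $F_j^{+}$ with the corresponding region of $F_j^{-}$; thus $A_{i+1}$ meets the copy of $F_j$ on the side of $F$ opposite to the one met by $A_i$. Because $A$ is connected, all of its squares lie over the same component $F_j$, and because $n$ is even there is at least one square meeting $F_j^{+}$ and at least one meeting $F_j^{-}$. Applying the previous paragraph to both sides gives $\max\bigl(r^{+}(\pi(L),F_j),r^{-}(\pi(L),F_j)\bigr)\le 4$, hence $\hat r(\pi(L),F)\le 4$, contradicting the hypothesis. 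Exchanging the roles of the shaded and white faces gives the same conclusion for the white checkerboard surface, which completes the argument.

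I expect the delicate part to be the middle paragraph: carefully re-running the combinatorial analysis behind \reflem{NormalSquare} for a first-type square to confirm that pushing $\bdy A_i$ off its boundary face yields a curve meeting $\pi(L)$ at most four (and not more) times, and confirming that this curve genuinely bounds a compressing disk for the correct copy of $F_j$ in $Y\setminus F$. Once that bookkeeping is in place, the contradiction with $\hat r(\pi(L),F)>4$ is immediate; everything else — the decomposition into squares, the even count, and the alternation of sides across the white-face gluing — is supplied directly by \reflem{squares}, \reflem{consecsquare}, and \refprop{AltChunkDecomp}.
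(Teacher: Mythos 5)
Your argument is correct and follows essentially the same route as the paper's proof: put the accidental annulus $A$ into normal form, invoke \reflem{squares} to decompose it into normal squares of a single type, push their boundaries off the boundary face to get curves meeting $\pi(L)$ four times, and derive a contradiction from \reflem{consecsquare} together with the hypothesis $\hat r(\pi(L),F)>4$. The paper phrases the contradiction by noting that on the side where $r>4$ no square can be a compressing disk, forcing squares there to be parallel into $F$ contrary to \reflem{consecsquare}; you run the same ingredients in the contrapositive direction (no square is parallel into $F$ $\Rightarrow$ every square is a compressing disk on its side $\Rightarrow$ $\hat r\leq 4$), and you are more explicit about the bookkeeping that consecutive squares alternate sides of the same component $F_j$ -- a point the paper leaves implicit. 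No gap.
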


\begin{proof}
Suppose $A$ is an accidental annulus. The boundary of any normal square making up $A$ can be isotoped to meet $\pi(L)$ exactly four times. Since the representativity is strictly greater than four on one side of $F$, no square of $A$ on that side can be a compressing disk for $F$. Hence every square on one side of $F$ is parallel into $F$, contradicting~\reflem{consecsquare}.
\end{proof}

\reffig{linkaccideg} shows an example of a weakly generalised alternating link in $S^3$ where one of the checkerboard surfaces contains an accidental parabolic. This shows that we need the condition on representativity for links. However for knots, the condition is not necessary. The proof of the following is similar to \cite[Theorem~2.6]{fkp14}. 

\begin{figure}
    \centering
    \includegraphics{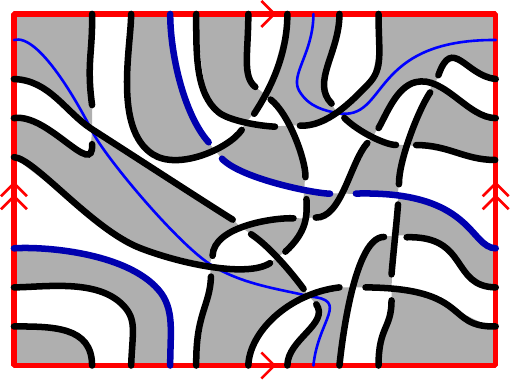}
    \caption{An accidental parabolic (thin line, in blue) on the shaded surface of a weakly generalised alternating link diagram. It is freely homotopic to the link component in darker blue. Note that the torus must be embedded in $S^3$ such that the identified edges each bound compressing disks for the torus.}
    \label{Fig:linkaccideg}
\end{figure}

\begin{theorem}\label{Thm:knotaccid}
Let $\pi(K)$ be a weakly generalised alternating knot projection. If $\Sigma$ is a checkerboard surface associated to $\pi(K)$ and $\Sigma'$ contains at least one disk region, then $\Sigma$ is not accidental in $X$.
\end{theorem}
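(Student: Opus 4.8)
Suppose for contradiction that $\Sigma$ admits an accidental annulus $A$ in $M_{\Sigma}$; without loss of generality $\Sigma$ is the shaded surface and $\Sigma'$ the white one, and $\partial A=\alpha\cup\beta$ with $\alpha\subset\widetilde\Sigma$ an accidental parabolic and $\beta\subset P$. By \reflem{squares}, $A$ is a union of normal squares $A_1,\dots,A_n$, each with one side on a shaded face, one on a boundary face, and two opposite sides on white faces; by \reflem{consecsquare}, none of the $A_i$ is parallel into $F$. Since $a(A)=0$ forces each $A_i$ to have combinatorial area $0$ and hence to meet exactly four edges, and since a normal disk that is not parallel into $F$ is a compressing disk for $F$, pushing $\partial A_i$ slightly off its boundary face as in the proof of \reflem{NormalSquare} yields an essential curve on $F$ that bounds a compressing disk for $F$ in $Y\setminus F$ and meets $\pi(K)$ in exactly four points---the minimum allowed by $r(\pi(K),F)\ge 4$. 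In contrast to \refthm{linkaccid}, there is now no representativity slack with which to discard the squares directly.

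Next I would read off the gluing pattern. White faces are interior faces of the bounded angled chunk decomposition of $X\cut\Sigma$ and so are glued, while shaded (surface) faces and boundary faces are not; hence each $A_i$ is glued to $A_{i-1}$ and $A_{i+1}$ exactly along its two white sides. Writing $R_i$ for the white region of $F\setminus\pi(K)$ in which $A_i$ and $A_{i+1}$ are glued, and using that the boundary side of $A_i$ runs across its boundary face through two \emph{opposite} boundary edges, one finds that the two white regions incident to the crossing $c_i$ carrying that boundary face are exactly $R_{i-1}$ and $R_i$. Thus $A$ determines a closed edge-loop in the white graph of $\pi(K)$ (vertices the white regions, edges the crossings) that alternately visits the white regions $R_1,\dots,R_n$ and the crossings $c_1,\dots,c_n$; this graph is connected because $K$ is a knot, and meanwhile the shaded sides of the $A_i$ reassemble to $\alpha$ and the boundary sides to $\beta$, so the loop together with the framing data of the squares records how $\alpha$ sits on $\Sigma$.

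Finally I would invoke the hypothesis that $\Sigma'$ contains a disk region $R_0$. Following the template of \reflem{Annulus} and \cite[Theorem~2.6]{fkp14}, I would isotope $A$ so as to minimise its number of squares and analyse how the closed loop above can meet $R_0$. Every properly embedded arc in a disk face cuts off a subdisk, so an arc of some $\partial A_j$ lying in $R_0$ with both endpoints on one edge contradicts condition \refitm{NoArcEndptsEdge} of \refdef{NormalSurface}, and one with an endpoint on a boundary edge and the other on an adjacent surface or interior edge contradicts \refitm{NoArcEndptsAdj}; weak primeness continues to constrain how these curves lie on $F$, as in the proofs of \reflem{NormalSquare} and \reflem{Annulus}. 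The goal is to show that, in a connected weakly prime diagram with a disk white region, the closed loop cannot be carried by an embedded normal annulus at all: the reduction either terminates at a configuration excluded by normality or weak primeness, or it trivialises $A$, forcing $\alpha$ to be freely homotopic through $\Sigma$ into $\partial\Sigma$. In either case $\alpha$ fails to be an accidental parabolic, a contradiction.

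The heart of the argument, and the step I expect to be the main obstacle, is this last one: translating the purely local square-by-square combinatorics---the closed walk in the white graph together with the framings carried by the $A_i$---into a global statement about $\alpha$ on $\Sigma$. The disk region must play the role that a strong representativity bound played for links in \refthm{linkaccid}, and one has to verify carefully that a single disk region really does obstruct \emph{every} closed walk that can arise. This is precisely the mechanism that is absent, and necessarily absent, in the weakly generalised alternating \emph{link} of \reffig{linkaccideg}, where the accidental parabolic survives.
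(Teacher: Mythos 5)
Your setup is sound and uses the right preparatory lemmas (\reflem{squares} and \reflem{consecsquare}), but the proposal does not actually prove the theorem: you acknowledge yourself that the crucial step---showing that a single disk region of $\Sigma'$ obstructs every possible closed walk of squares---is left open. That is the entire content of the theorem, so the proposal as written is incomplete, not merely sketchy.

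The paper's own proof runs along a different and much more direct line, and the lever it uses is exactly what your write-up is missing. You track $\alpha = \partial A\cap\widetilde\Sigma$ and try to turn the square-by-square combinatorics into a statement about how $\alpha$ sits on $\Sigma$. The paper instead looks at $\beta = \partial A\cap P$ and uses the constraint on its slope: since $\beta$ runs on the parabolic locus and $A$ is an accidental annulus, $\beta$ must have the same boundary slope as $\partial\Sigma$ on $\partial N(K)$. For a knot, that slope condition forces $\beta$ to cross over each segment of $\pi(K)$ in $\partial R'$, so there must be an arc of $A\cap R'$ beginning at each such segment. Because $R'$ is a disk, two of these arcs are then forced to run between non-adjacent crossings of $\partial R'$, which is impossible without intersecting in the interior of $R'$---contradicting embeddedness of $A$. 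Note that this step uses the knot hypothesis in a way that is invisible in your framework: for a link, a disk white region need not have any boundary segments on the particular component carrying $\beta$, which is precisely why \reffig{linkaccideg} can exist; for a knot, every segment of $\partial R'$ is on $K$, so the slope constraint bites. Without identifying the slope of $\beta$ as the operative constraint, the "closed walk in the white graph" bookkeeping has no mechanism to rule out the walk, and indeed you correctly flag that you do not see how to close the argument. So the gap is real, and the missing idea is to shift attention from $\alpha$ to $\beta$ and exploit that $\beta$ is forced to carry the checkerboard boundary slope.
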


\begin{proof}
Let $R'$ be a disk region of $\Sigma'$. Suppose that $A$ is an accidental annulus for $\Sigma$. Since $\bdy A\cap P$ must have the same slope as $\Sigma$, there must be one arc of $A\cap R'$ beginning on each segment of $\pi(L)$ in $\partial R'$. But it is impossible for these arcs to run to non-adjacent crossings without intersecting in the interior of $R'$. Hence there can be no accidental annulus for $\Sigma$.
\end{proof}

%%%%%%%%%%%%%%%%%%%%%%%%%%%%%%%%%%%%%%%%%%%%%%%%%%%%%%%%%%%%%%%%%%%%%

\subsection{Semi-fibers}

We say an essential surface $\Sigma$ in a compact, orientable 3-manifold $M$ is a \emph{semi-fiber} if either there is a fibration of $M$ over $S^1$ with fiber $\Sigma$, or if $\widetilde{\Sigma}$ is the common frontier of two twisted $I$-bundles over $\Sigma$ whose union is $M$.
Note that if $\Sigma$ is a semi-fiber, then $M\cut\Sigma$ is an $I$-bundle. In this section, we determine when a checkerboard surface is a semi-fiber in a weakly generalised alternating link complement. 

\begin{theorem}\label{Thm:Fibered}
Let $\pi(L)$ be a weakly generalised alternating diagram of a link $L$ on a generalised projection surface $F$ in a 3-manifold $Y$.
Let $\Sigma$ denote the shaded checkerboard surface, and suppose that the white regions of $F\setminus\pi(L)$ are disks. If the link exterior $X:=Y\setminus N(L)$ is hyperbolic, then $\Sigma$ is not a semi-fiber for $X$. Conversely, if $\Sigma$ is a semi-fiber for $X$, then the white surface $\Sigma'$ is an annulus or M\"obius band, and $\pi(L)$ is a string of white bigons bounding $\Sigma'$. 
\end{theorem}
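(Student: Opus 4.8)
The plan is to establish the converse statement directly and then deduce the first assertion from it. So suppose $\Sigma$ (the shaded checkerboard surface) is a semi-fiber for $X$. As noted above, $M_\Sigma := X\cut\Sigma$ is then an $I$-bundle $p\colon M_\Sigma\to S_0$ over a compact surface $S_0$; I would first pin down that this presentation can be taken so that the horizontal boundary is $\bdy_h M_\Sigma=\widetilde\Sigma$ and the vertical boundary $\bdy_v M_\Sigma$ is the parabolic locus $P$ (with any exterior faces coming from $\bdy Y$ forced to be vertical). This bookkeeping --- matching the $I$-bundle's horizontal and vertical boundary to $\widetilde\Sigma$ and $P$ --- is a preliminary point that needs a line of care.

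Next I identify the white surface inside $M_\Sigma$. Since $\Sigma\cap\Sigma'$ is precisely the union of the crossing arcs of $\pi(L)$, cutting $\Sigma'$ along these arcs disconnects it into the closures of the white regions of $F\setminus\pi(L)$, which are disks by hypothesis. Hence $\Sigma'\cap M_\Sigma$ is a disjoint union of properly embedded disks, one disk $D_{R'}$ for each white region $R'$; concretely $D_{R'}$ is the corresponding glued (interior) face of the bounded angled chunk decomposition of \refprop{AngledChunkDecomp}, and $\bdy D_{R'}$ consists of arcs lying alternately on $\widetilde\Sigma=\bdy_h M_\Sigma$ (one for each crossing of $\bdy R'$) and on $P=\bdy_v M_\Sigma$ (one for each edge of $\pi(L)$ along $\bdy R'$). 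If $R'$ is an $n$-gon region there are $n$ arcs of each kind, and reducedness excludes $n=1$.

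The heart of the argument is to analyse these disks inside the $I$-bundle. I would first verify that each $D_{R'}$ is essential (boundary-incompressible and not boundary-parallel in $M_\Sigma$): a boundary compression or a boundary parallelism, pushed back into $X$ and onto $F$, would produce either a violation of the weakly prime condition or a compressing disk for $F$ meeting $\pi(L)$ in fewer than four points, contradicting $r(\pi(L),F)\ge 4$ --- the same type of surgery argument as in the proofs of \reflem{NormalSquare} and \refprop{AngledChunkDecomp}. Then I invoke the standard classification of essential properly embedded disks in an $I$-bundle over a compact surface: any such disk is isotopic to a vertical disk $p^{-1}(a)$ over an essential arc $a\subset S_0$ with $\bdy a\subset\bdy S_0$, and such a disk meets $\bdy_h M_\Sigma$ in exactly two arcs and $\bdy_v M_\Sigma$ in exactly two arcs. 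Comparing with the boundary description above forces $n=2$: every white region of $\pi(L)$ is a bigon, and each $D_{R'}$ is a vertical square of the $I$-bundle. I expect this step --- confirming that the white faces are essential and applying the $I$-bundle classification --- to be the main obstacle.

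Finally, with all white regions bigons, a short combinatorial argument finishes the converse: the diagram graph of $\pi(L)$ is $4$-valent, so the two white corners at each crossing are occupied by bigons, and chasing these around shows that the crossings form a single cyclic string with a bigon between consecutive ones; thus $\pi(L)$ is a string of white bigons. The white surface $\Sigma'$ is then the union of these bigons with the twisted bands at the crossings, hence a regular neighbourhood of a core circle --- an annulus or Möbius band bounded by the string of white bigons --- which is exactly how the squares $D_{R'}=p^{-1}(a)$ glue up over $S_0$. For the first assertion, suppose $X$ is hyperbolic; if $\Sigma$ were a semi-fiber then, by what we just proved, $\pi(L)$ would be a string of white bigons with $\Sigma'$ an annulus or Möbius band, so $\widetilde{\Sigma'}=\bdy N(\Sigma')$ would consist of annuli, at least one of which is incompressible and not boundary parallel in $X$ by \refthm{hress}; this essential annulus contradicts hyperbolicity. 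Hence $\Sigma$ is not a semi-fiber.
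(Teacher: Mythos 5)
Your overall architecture --- prove the converse and deduce the hyperbolicity assertion from the existence of an essential annulus coming from $\Sigma'$ --- is exactly the paper's. The paper establishes the key step (all white regions are bigons) via a dedicated lemma, \reflem{ProductRectangles}, which shows by a direct normal-form argument in the bounded angled chunk decomposition that any $I$-bundle $B\subset M_\Sigma$ with horizontal boundary on $\widetilde\Sigma$ and essential vertical boundary meets each white face $R$ in a product rectangle $\alpha\times I$; applied to $B=M_\Sigma$, this forces each $R$ to be a bigon. You instead propose to cite the classification of essential disks in an $I$-bundle.

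The gap is in the step ``comparing with the boundary description above forces $n=2$.'' The Waldhausen-type classification gives you a proper isotopy taking $D_{R'}$ to a vertical disk $p^{-1}(a)$, but proper isotopies do not preserve the number of arcs of $\bdy D_{R'}$ on $\bdy_h M_\Sigma=\widetilde\Sigma$ versus $\bdy_v M_\Sigma=P$: a wiggly meridian disk in a solid torus $I$-bundle is isotopic to a vertical disk yet can meet $\bdy_h$ in arbitrarily many arcs. So the fact that the \emph{target} vertical disk has a $2+2$ boundary pattern does not directly bound $n$. To make the comparison work you would need to know that the $n$ arcs of $\bdy D_{R'}$ on $\widetilde\Sigma$ and the $n$ arcs on $P$ are individually essential --- that none of them cuts off a disk with a piece of the corner (suture) circle and can be pushed off. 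But proving that is precisely the normal-form analysis carried out in the proof of \reflem{ProductRectangles}: it shows that each arc of $\bdy(B\cap R)$ on $\widetilde\Sigma$ must run between distinct ideal edges, that each arc in $\bdy_v B$ must run from $Q\times\{0\}$ to $Q\times\{1\}$, and then that $n>2$ would produce a disk with boundary consisting of one arc in a white face and one arc in $\widetilde\Sigma$, contradicting \refcor{NoNormalBigons}. Relatedly, your argument that $D_{R'}$ is essential via ``pushing back into $X$ and onto $F$'' and invoking weakly prime / representativity is not quite the right mechanism here either; a $\partial$-compression of $D_{R'}$ with complementary arc on a shaded (surface) face lands you directly in the setting of \refcor{NoNormalBigons}, and that is the tool the paper uses. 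So the approach is plausible in outline, but the ``invoke the classification and compare boundaries'' step is not a shortcut: to close it you end up reproving \reflem{ProductRectangles}.
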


The proof of \refthm{Fibered} will follow from the following lemma, which describes how an $I$-bundle embedded in $M_\Sigma=X\cut\Sigma$ meets white regions. It is essentially \cite[Lemma~4.17]{fkp13}.

\begin{lemma}\label{Lem:ProductRectangles}
Let $\pi(L)$ and $\Sigma$ be as in the statement of \refthm{Fibered}, so white regions of $F\setminus \pi(L)$ are disks. 
Let $B$ be an $I$-bundle embedded in $M_\Sigma = X\cut \Sigma$, with horizontal boundary on $\widetilde{\Sigma}$ and essential vertical boundary.
Let $R$ be a white region of $F\setminus\pi(L)$. Then $B\cap R$ is a product rectangle $\alpha\times I$, where $\alpha\times\{0\}$ and $\alpha\times\{1\}$ are arcs of ideal edges of $R$. 
\end{lemma}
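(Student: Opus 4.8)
The strategy follows \cite[Lemma~4.17]{fkp13}, adapted to the bounded angled chunk decomposition of $M_\Sigma$ furnished by \refprop{AngledChunkDecomp}, in which the shaded faces tile $\widetilde\Sigma$ and the white faces are the glued interior faces. Write $B$ as an $I$-bundle over a base surface $G$; its horizontal boundary $\partial_h B$ (the associated $\partial I$-bundle) lies in $\widetilde\Sigma$, and its vertical boundary $\partial_v B$ (the restriction over $\partial G$) is a disjoint union of essential annuli and M\"obius bands with $\partial(\partial_v B)\subset\partial_h B\subset\widetilde\Sigma$, hence disjoint from the parabolic locus. First I would isotope $B$ so that $\partial_v B$ is in normal form (possible by \refthm{NormalForm}, as its components are essential) and so that $B$ is transverse to all faces and meets each chunk in an $I$-subbundle with horizontal boundary on shaded surface faces. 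Since each component $V$ of $\partial_v B$ has $\chi(V)=0$ and $\partial V$ disjoint from the parabolic locus, Gauss--Bonnet (\refprop{GaussBonnet}) gives $a(V)=-2\pi\chi(V)=0$; then \refprop{NonnegArea} (applied to the orientation double cover of $V$ when $V$ is a M\"obius band) shows each normal piece of $V$ in a chunk is either a normal square -- all edges carry angle $\pi/2$, so combinatorial area zero forces exactly four edges -- or an annulus meeting no edges. In particular, any piece of $\partial_v B$ that meets a white face is a normal square, and its ``white arcs'' cross white faces from one ideal (surface) edge to another.

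Now fix a white region $R$, which is a disk by hypothesis; in the decomposition it is an interior face whose ideal edges are surface edges lying on $\widetilde\Sigma$ and whose corners are truncated to boundary faces lying on the parabolic locus, so that $\overline R\cap\widetilde\Sigma$ is precisely the disjoint union of the ideal edges of $R$. Restricting the $I$-fibration of $B$ to the disk $B\cap R$ exhibits each component as a disk fibring over an interval, hence of the form $\alpha\times I$ with fibre-ends $\alpha\times\{0\}$, $\alpha\times\{1\}$ lying in $\partial_h B\subset\widetilde\Sigma$ and the two remaining sides $\{0\}\times I$, $\{1\}\times I$ lying in $\partial_v B$ (the white arcs of normal squares) or, in degenerate cases, on an ideal edge or a boundary face of $R$. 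Because $\overline R\cap\widetilde\Sigma$ is a union of disjoint arcs, each of $\alpha\times\{0\}$, $\alpha\times\{1\}$ is a sub-arc of a single ideal edge of $R$; normality condition~\refitm{NoArcEndptsEdge} of \refdef{NormalSurface} forces those two ideal edges to be distinct, condition~\refitm{NoArcEndptsAdj} excludes the corner (boundary-face) cases, and $R$ being a disk rules out closed components of $B\cap R$. Hence every component of $B\cap R$ is a genuine product rectangle $\alpha\times I$ with $\alpha\times\{0\}$ and $\alpha\times\{1\}$ arcs of distinct ideal edges of $R$.

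It remains to see that $B\cap R$ is a \emph{single} such rectangle. If there were two, then -- $B$ being connected -- the corresponding vertical annuli of $\partial_v B$ join them through the chunks adjacent to $R$; using the gear-rotation gluing of white faces recorded in \refprop{AltChunkDecompCheck} together with the weakly prime and representativity hypotheses, one shows this either violates embeddedness of $B$ or permits an isotopy of $B$, supported near $R$ and its neighbouring chunks, that reduces the number of components of $B\cap R$. After finitely many such reductions only one rectangle remains. Organising this reduction and keeping track of the face identifications is the main obstacle; it is exactly the content of \cite[Lemma~4.17]{fkp13}, whose argument transfers once the normal-form setup above is in place. (In the application to \refthm{Fibered} one takes $B=M_\Sigma$, whose vertical boundary is the parabolic locus, and the lemma then forces each white face to be a bigon.)
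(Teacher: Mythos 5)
Your proposal does not work as written: the central step is asserted rather than proved. You claim that ``restricting the $I$-fibration of $B$ to the disk $B\cap R$ exhibits each component as a disk fibring over an interval.'' But $R$ is a two-dimensional face of the chunk decomposition and the $I$-fibers of $B$ are one-dimensional arcs running through $M_\Sigma$; there is no reason for those fibers to lie in $R$, so the $I$-fibration does not restrict to a fibration of $B\cap R$. In effect you are assuming that $B$ can be isotoped so that it meets each chunk in an $I$-subbundle whose vertical boundary lies in interior (white) faces --- but that is essentially the conclusion of the lemma, not a harmless normalisation. The paper makes no use of the fibration near $R$; instead it works directly with $\bdy(B\cap R)$, writing a component's boundary as a cyclic sequence $\alpha_1,\beta_1,\dots,\alpha_n,\beta_n$ of arcs alternately on ideal edges and on $\bdy_v B$, removes closed curves in the interior of $R$ using essentiality of $\bdy_v B$, shows each $\beta_i$ runs between distinct ideal edges and from $Q\times\{0\}$ to $Q\times\{1\}$ (using \refcor{NoNormalBigons}), and finally shows $n=2$ by an auxiliary disk argument: if $n>2$ a disk $D$ with boundary an arc in $R$ and an arc in $\widetilde\Sigma$ is produced; $D$ cannot be essential (again \refcor{NoNormalBigons}), and its inessentiality forces $\alpha_1$ and $\alpha_3$ onto the same ideal edge, contradicting the fact that $\alpha_2$ lies on a different one. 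This $n=2$ step is the real content and is absent from your argument.

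Two further points. First, your normal-form analysis of $\bdy_v B$ is correct as far as it goes (combinatorial area zero, pieces are normal squares), but it only tells you that the arcs $\bdy_v B\cap R$ are nice; it does not by itself bound the number of such arcs on the boundary of a single component of $B\cap R$. Second, you invent an extra claim --- that $B\cap R$ is a \emph{single} rectangle --- call it ``the main obstacle,'' and defer it to \cite[Lemma~4.17]{fkp13}. But that lemma (like the paper's own proof here) only asserts that each component of $B\cap R$ is a product rectangle, so it does not give uniqueness; and uniqueness is not needed for the application in \refthm{Fibered}, where $B=M_\Sigma$ so $B\cap R=R$ is already connected. Finally, your parenthetical handling of the M\"obius-band case of $\bdy_v B$ only addresses the area computation; the paper separately treats the twisted $I$-bundle case by removing the $I$-bundles over orientation-reversing curves and reattaching, which your argument would also need.
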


\begin{proof}
First suppose $B=Q\times I$ is a product $I$-bundle over an orientable base. 
Consider a component of $\bdy(B\cap R)$. If it lies entirely in the interior of $R$, then it lies in the vertical boundary $V=\bdy Q\times I$.
The intersection $V\cap R$ then contains a closed curve component; an innermost one bounds a disk in $R$. Since the vertical boundary is essential, we may isotope $B$ to remove these intersections. So assume each component of $\bdy(B\cap R)$ meets $\widetilde{\Sigma}$.

Note $R\cap \widetilde{\Sigma}$ consists of ideal edges on the boundary of the face $R$. It follows that the boundary of each component of $B\cap R$ consists of arcs $\alpha_1$, $\beta_1$, $\dots$, $\alpha_n$, $\beta_n$ with $\alpha_i$ an arc in an ideal edge of $R\cap \widetilde{\Sigma}$ and $\beta_i$ in the vertical boundary of $B$, in the interior of $R$.
We may assume that each arc $\beta_i$ runs between distinct ideal edges, else isotope $B$ through the disk bounded by $\beta_i$ and an ideal edge to remove $\beta_i$, and merge $\alpha_i$ and $\alpha_{i+1}$.

We may assume that $\beta_i$ runs from $Q\times\{0\}$ to $Q\times\{1\}$, for if not, then $\beta_i \subset R$ is an arc from $\bdy Q\times\{1\}$ to $\bdy Q\times\{1\}$, say, in an annulus component of $\bdy Q \times I$. Such an arc bounds a disk in $\bdy Q\times I$. This disk has boundary consisting of the arc $\beta_i$ in $R$ and an arc on $\bdy Q\times\{1\} \subset\widetilde{\Sigma}$. If the disk were essential, it would give a contradiction to \refcor{NoNormalBigons}. So it is inessential, and we may isotope $B$ to remove $\beta_i$, merging $\alpha_i$ and $\alpha_{i+1}$. It now follows that $n$ is even.

Finally we show that $n=2$, i.e.\ that each component of $B\cap R$ is a quadrilateral with arcs $\alpha_1, \beta_1, \alpha_2, \beta_2$. For if not, there is an arc $\gamma\subset R$ with endpoints on $\alpha_1$ and $\alpha_3$. By sliding along the disk $R$, we may isotope $B$ so $\gamma$ lies in $B\cap R$. Then note that $\gamma$ lies in $Q\times I$ with endpoints on $Q\times\{1\}$. It must be parallel vertically to an arc $\delta\subset Q\times\{1\} \subset \widetilde{\Sigma}$. This gives another disk $D$ with boundary consisting of an arc on $R$ and an arc on $\widetilde{\Sigma}$. 

The disk $D$ cannot be essential since that would contradict \refcor{NoNormalBigons}. Hence $D$ is inessential which means that $\alpha_1$ and $\alpha_3$ lie on the same ideal edge of $R$. From above we know that $\alpha_2$ must lie on a different ideal edge of $R$. But it is impossible to then connect up the endpoints of the $\alpha_i$ to enclose a subdisk of $R$.

It follows that each component of $B\cap R$ is a product rectangle $\alpha\times I$ where $\alpha\times\{1\} = \alpha_1$ is an arc of an ideal edge of $W$ and $\alpha\times\{0\} = \alpha_2$ is an arc of an ideal edge of $W$.

Next suppose $B$ is a twisted $I$-bundle $B=Q \widetilde{\times} I$ where $Q$ is non-orientable. Let $\gamma_1, \dots, \gamma_m$ be a maximal collection of orientation reversing closed curves on $Q$. Let $A_i\subset B$ be the $I$-bundle over $\gamma_i$. Each $A_i$ is a M\"obius band. The bundle $B_0 = B\setminus (\cup_i A_i)$ is then a product bundle $B_0 = Q_0\times I$ where $Q_0=Q\setminus (\cup_i \gamma_i)$ is an orientable surface. Our work above then implies that $B_0\cap R$ is a product rectangle for each white region $R$. To obtain $B\cap R$, we attach the vertical boundary of such a product rectangle to the vertical boundary of a product rectangle of $A_i$. This procedure respects the product structure of all rectangles, hence the result is a product rectangle. 
\end{proof}

\begin{proof}[Proof of Theorem~\ref{Thm:Fibered}]
If $\Sigma$ is a semi-fiber for $X=Y\setminus N(L)$, then the manifold $M_\Sigma = X\cut \Sigma$ is an $I$-bundle. Lemma~\ref{Lem:ProductRectangles} implies $M_\Sigma$ intersects each white face $R$ in a product rectangle of the form $\alpha\times I$, where $\alpha\times\{0\}$ and $\alpha\times\{1\}$ lie on ideal edges of $R$. Since $R \subset M_\Sigma$, the face $R$ is a product rectangle, with exactly two ideal edges $\alpha\times\{0\}$ and $\alpha\times\{1\}$. Thus $R$ is a bigon.

If every white face $R$ is a bigon, then $\pi(L)$ is a string of bigons lined up end to end on $F$. But then the white checkerboard surface $\Sigma'$ is made up of the interior of the bigons, hence it is a M\"obius band or annulus. Then the white surface $\Sigma'$ is $\pi_1$-essential by \refthm{hress}; it follows that $X$ contains an essential annulus and therefore is not hyperbolic.
\end{proof}

%%%%%%%%%%%%%%%%%%%%%%%%%%%%%%%%%%%%%%%%%%%%%%%%%%%%%%%%%%%%%%%%%%%%%%%

\subsection{Quasifuchsian surfaces}

A properly embedded $\pi_1$-essential surface $\Sigma$ in a hyperbolic $3$-manifold is quasifuchsian if the lift of $\Sigma$ to $\mathbb{H}^3$ is a plane whose limit set is a Jordan curve on the sphere at infinity.
The following theorem follows from work of Thurston~\cite{thu79} and  Bonahon~\cite{bon86}; see also Canary--Epstein--Green~\cite{ceg87}. 

\begin{theorem}\label{Thm:qftrichot}
Let $\Sigma$ be a properly embedded $\pi_1$-essential surface in a hyperbolic $3$-manifold of finite volume. Then $\Sigma$ is exactly one of: quasifuchsian, semi-fibered, or accidental.
\end{theorem}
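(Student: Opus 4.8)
The plan is to pass to the cover of $M$ associated to $\pi_1(\Sigma)$ and read the trichotomy off the geometry of its ends, invoking the tameness theorem and Thurston's covering theorem for the analytic input. First I would reduce to the case that $\Sigma$ is two-sided: if $\Sigma$ is one-sided, replace it by $\widetilde{\Sigma}=\bdy N(\Sigma)$; since $\pi_1(\Sigma)\to\pi_1(M)$ is injective and $\widetilde{\Sigma}\to\Sigma$ is a connected double cover, $\pi_1(\widetilde{\Sigma})\to\pi_1(M)$ is injective as well, and the three conditions for $\Sigma$ correspond to the analogous conditions for $\widetilde{\Sigma}$. So assume $\Sigma$ is two-sided, incompressible, and $\bdy$-incompressible. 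Let $p\from N\to M$ be the cover with $p_*\pi_1(N)=\pi_1(\Sigma)\le\pi_1(M)$. Because $\Sigma$ lifts to $N$, carries all of $\pi_1(N)$, and is incompressible and $\bdy$-incompressible, a standard compact-core argument shows $N$ is homeomorphic to the interior of $\Sigma\times I$, with two ends $E_+,E_-$ corresponding to the two sides of $\Sigma$, together with cusps of $N$.

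Next I would invoke the structure theory of hyperbolic $3$-manifolds with finitely generated fundamental group. By the tameness results of Thurston and Bonahon \cite{bon86} (in the form relative to cusps), $N$ is geometrically tame, and each of $E_+,E_-$ is either geometrically finite or simply degenerate. Now split into cases. (i) If $\pi_1(\Sigma)$ contains an \emph{accidental parabolic} --- an element parabolic in $\pi_1(M)$ but not conjugate into a peripheral subgroup of $\pi_1(\Sigma)$ --- then, as recalled before the statement, it is realised by an essential annulus, so $\Sigma$ is accidental. (ii) If there is no accidental parabolic and both $E_\pm$ are geometrically finite, then $N$ is geometrically finite with no cusps beyond those carried by $\bdy\Sigma$; a geometrically finite surface group without accidental parabolics is quasifuchsian (Maskit, Thurston \cite{thu79}; see also \cite{ceg87}), so the limit set is a Jordan curve and $\Sigma$ is quasifuchsian. (iii) If there is no accidental parabolic and at least one of $E_\pm$ is simply degenerate, apply Thurston's covering theorem \cite{thu79, ceg87}: since $M$ has finite volume, every end of $M$ is a rank-two cusp and $M$ has no geometrically infinite ends, so a simply degenerate end of $N$ cannot cover a neighbourhood of an end of $M$ finitely-to-one in the generic way; the exceptional conclusion of the covering theorem then forces $M$ to be virtually fibred with $\Sigma$ (or $\widetilde{\Sigma}$) as virtual fibre, so that $M\cut\Sigma=N$ cut along $\Sigma$ is an $I$-bundle and $\Sigma$ is a semi-fibre. (In this case both ends are in fact simply degenerate.) Finally, mutual exclusivity follows from the descriptions of the limit set: a quasifuchsian surface has Jordan curve limit set, which rules out both the pinch point produced by an accidental parabolic and the space-filling behaviour of a simply degenerate end; and a semi-fibre, being a virtual fibre, carries only peripheral parabolics and hence is not accidental.

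The main obstacle is marshalling the deep input --- tameness together with Thurston's covering theorem --- and the bookkeeping needed to keep the three cases genuinely disjoint and exhaustive: in particular, verifying that a simply degenerate end of $N$, combined with the finite-volume hypothesis on $M$, really does force the semi-fibre conclusion rather than an intermediate behaviour, and that accidental parabolics are correctly separated from the geometrically finite quasifuchsian case. Since the statement is classical, due to Thurston, Bonahon, and Canary--Epstein--Green, I would present the argument above as a sketch and refer to \cite{thu79, bon86, ceg87} for the analytic details.
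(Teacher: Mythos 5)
The paper does not actually prove this theorem; it records it, immediately before the statement, as following from Thurston \cite{thu79}, Bonahon \cite{bon86}, and Canary--Epstein--Green \cite{ceg87}, with no argument given. Your proposal is consistent with this, and your outline --- pass to the cover $N\to M$ with $p_*\pi_1(N)=\pi_1(\Sigma)$, use tameness to classify the ends, then split into cases according to whether there is an accidental parabolic and whether an end is simply degenerate --- is the standard one.

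There is, however, a real soft spot in your case (iii), and it is exactly the point where the \emph{embeddedness} of $\Sigma$ must be used. The covering theorem gives only that $M$ is virtually fibred with $\Sigma$ a virtual fibre; it does not by itself say that $M\cut\Sigma$ is an $I$-bundle, which is what the paper's definition of semi-fibre requires. (Note also the slip: $M\cut\Sigma$ is $M$ cut along $\Sigma$, not ``$N$ cut along $\Sigma$''.) Closing the gap uses that $\Sigma$ is embedded: once one shows both ends of $N\cong\Sigma\times\RR$ are simply degenerate (an assertion you make parenthetically but do not justify --- it needs a separate argument, e.g.\ a second application of the covering theorem or an analysis of the convex core), the preimage of $\Sigma$ in $N$ is a locally finite family of disjoint parallel embedded copies of $\Sigma$ which, because $M$ has finite volume, is coarsely dense in $N$ and hence exits both ends; the slab between two consecutive copies is then a compact manifold homotopy equivalent to $\Sigma$ with boundary two copies of $\Sigma$, hence $\Sigma\times I$, and this exhibits $M\cut\Sigma$ as an $I$-bundle over $\Sigma$. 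This is Thurston's fibration criterion / Bonahon's argument, and it is why the trichotomy says \emph{semi-fibred} rather than merely \emph{virtually fibred}. With that step supplied, the rest of your sketch (the quasifuchsian case, the accidental case, and mutual exclusivity via the shape of the limit set) goes through as you describe and matches the sources the paper cites.
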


Fenley~\cite{fen98} showed that a minimal genus Seifert surface for a non-fibered hyperbolic knot is quasifuchsian. %Checkerboard surfaces of alternating knots were considered by Adams~\cite{ada07}.
Futer, Kalfagianni, and Purcell~\cite{fkp14} showed that the state surface associated to a homogeneously adequate knot diagram is quasifuchsian whenever such a knot is hyperbolic. This includes the case of the checkerboard surfaces associated to a reduced planar alternating diagram of a hyperbolic knot in $S^3$.

\begin{corol}\label{Cor:Quasifuchsian}
Let $\pi(L)$ be a weakly generalised alternating diagram of a link $L$ on a generalised projection surface $F$ in a 3-manifold $Y$.
Suppose that $F$ has genus at least $1$, and that $Y\setminus N(F)$ is atoroidal and $\bdy$-anannular. Suppose further that $\hat{r}(\pi(L),F)>4$, and that the regions of $F\setminus \pi(L)$ are disks. If $\Sigma$ is a checkerboard surface associated to $\pi(L)$, then $\Sigma$ is quasifuchsian. 
\end{corol}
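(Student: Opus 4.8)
The plan is to assemble the statement from the structural theorems already proved, using the trichotomy of \refthm{qftrichot} as the organising principle; there is no genuinely new argument, only bookkeeping. First I would verify that $X:=X(L)$ is hyperbolic. The standing hypotheses here---$\pi(L)$ weakly generalised alternating on a genus $\geq 1$ surface $F$, all regions of $F\setminus\pi(L)$ disks, and $\hat{r}(\pi(L),F)>4$---are precisely those of \refthm{Hyperbolic}. Since $Y\setminus N(F)$ is assumed atoroidal and $\bdy$-anannular, parts (1) and (2) of that theorem say $X$ is neither toroidal nor annular, so part (3) gives that the interior of $X$ carries a complete finite-volume hyperbolic structure, with the convention (as in the proof of \refthm{Hyperbolic}) that higher-genus components of $\bdy Y$ become totally geodesic boundary.

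Next, \refthm{hress} tells us that the checkerboard surface $\Sigma$ is $\pi_1$-essential in $X$, so \refthm{qftrichot} applies: $\Sigma$ is exactly one of quasifuchsian, semi-fibered, or accidental. (In the case of nonempty totally geodesic boundary, I would first double $X$ along that boundary, note the doubled manifold is finite-volume hyperbolic, apply the trichotomy there, and observe that a quasifuchsian/semi-fibered/accidental structure for the invariant surface descends; this is the routine reduction already used implicitly in \refthm{Hyperbolic}.) It then remains only to exclude the second and third alternatives.

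To rule out \emph{accidental}: \refthm{linkaccid} applies verbatim, since $\hat{r}(\pi(L),F)>4$ is assumed, and it asserts that no checkerboard surface of $\pi(L)$ is accidental in $X$. To rule out \emph{semi-fibered}: because every region of $F\setminus\pi(L)$ is a disk, in particular the regions of the colour complementary to $\Sigma$ are disks, so \refthm{Fibered} applies; as $X$ is hyperbolic, its conclusion is that $\Sigma$ is not a semi-fiber for $X$. (Alternatively, if $\Sigma$ were a semi-fiber, \refthm{Fibered} would force $\pi(L)$ to be a string of bigons with the complementary surface an annulus or M\"obius band, which is incompatible with $F$ having positive genus and all regions being disks---a second route to the contradiction.) With both alternatives excluded, \refthm{qftrichot} leaves only that $\Sigma$ is quasifuchsian, which is the claim.

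Since the substance lives entirely in the cited theorems, the only real care required is making sure \refthm{qftrichot} is invoked over a legitimate finite-volume (possibly geodesic-boundary) hyperbolic manifold; that is where I expect the sole minor obstacle to lie, and it is handled by the doubling reduction above. Everything else is a direct application of the hypotheses, each of which has been arranged to match exactly one of the input theorems.
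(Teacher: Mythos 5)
Your proposal is correct and takes essentially the same route as the paper: invoke \refthm{Hyperbolic} for hyperbolicity, \refthm{hress} for essentiality, then exclude the semi-fibered and accidental alternatives via \refthm{Fibered} and \refthm{linkaccid} respectively, and conclude by the trichotomy of \refthm{qftrichot}. The extra parenthetical about doubling along higher-genus boundary is harmless but unnecessary here, since the standing convention in that section is that $\bdy Y$ (when nonempty) consists of tori, so $X(L)$ is a finite-volume cusped manifold and \refthm{qftrichot} applies directly.
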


\begin{proof}
\refthm{Hyperbolic} ensures that $Y\setminus L$ is hyperbolic, so the surface $\Sigma$ cannot be a virtual fiber by \refthm{Fibered}. It cannot be accidental by \refthm{linkaccid}. By \refthm{qftrichot}, the only remaining possibility is for $\Sigma$ to be quasifuchsian.
\end{proof}

% 6
%%%%%%%%%%%%%%%%%%%%%%%%%%%%%%%%%%%%%%%%%%%%%%%%%%%%%%%%%%%%%%%%%
\section{Bounds on volume}\label{Sec:Volume}
In this section, we give a lower bound on the volumes of hyperbolic weakly generalised alternating links in terms of their diagrams. Lackenby was the first to bound volumes of alternating knots in terms of a diagram \cite{lac04}. Our method of proof is similar to his, using angled chunk decompositions rather than ideal polyhedra. Much of his work goes through.

\begin{definition}\label{Def:Guts}
Let $S$ be a $\pi_1$-essential surface properly embedded in an orientable hyperbolic 3-manifold $M$. Consider $M\cut S$. This is a 3-manifold with boundary; therefore it admits a JSJ-decomposition, decomposing it along essential tori and annuli into $I$-bundles, Seifert fibered solid tori, and \emph{guts}. The \emph{guts}, denoted $\guts(M\cut S)$, is the portion of the manifold $M\cut S$ after the JSJ-decomposition that admits a hyperbolic metric with geodesic boundary. 
\end{definition}

To bound the volume, we will use the following theorem applied to checkerboard surfaces.

\begin{theorem}[Agol-Storm-Thurston \cite{ast07}]\label{Thm:agolguts}
Let $M$ be an orientable, finite volume hyperbolic $3$-manifold. We allow $M$ to have nonempty boundary, but in that case, we require that every component of $\bdy M$ is totally geodesic in the hyperbolic structure. (Note $M$ may also have cusps, but these are not part of $\bdy M$.)
Finally, let $S$ be a $\pi_1$-essential surface properly embedded in $M$ disjoint from $\bdy M$. 
Then
\[\vol(M)\geq-v_8\chi(\guts(M\cut S)),\]
where $v_8 =3.66\dots$ is the volume of a regular ideal octahedron.
\end{theorem}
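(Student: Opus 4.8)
The plan is to bound $\vol(M)$ from below by the volume of a hyperbolic piece of a doubled manifold and then invoke Miyamoto's volume inequality for hyperbolic $3$--manifolds with totally geodesic boundary. Write $N=M\cut S$ and let $\Gamma=\guts(M\cut S)$; using the convention that components of $\Gamma$ with only toroidal boundary contribute $0$ to Euler characteristic and satisfy the trivial bound $\vol\ge 0$, there is nothing to prove unless some component of $\Gamma$ has higher genus boundary, so assume this. Regard $\Gamma$ as a pared manifold whose pared locus consists of the JSJ annuli and the cusps of $N$; by Thurston's hyperbolisation theorem in the acylindrical case it carries a finite volume hyperbolic structure in which the remaining boundary $\partial_0\Gamma$, which lies on the two copies $\widetilde S$ of $S$, is totally geodesic. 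Miyamoto's theorem then gives $\vol(\Gamma)\ge -v_8\,\chi(\Gamma)$, componentwise. Since $\Gamma$ is by definition $\guts(M\cut S)$, it therefore suffices to establish $\vol(M)\ge\vol(\Gamma)$.

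To do this I would double. First realise $S$ by an embedded least-area (minimal) surface in the hyperbolic metric on $M$; cutting along it does not change volume, so $N$ carries a Riemannian metric $g$ with $\vol_g(N)=\vol(M)$, hyperbolic away from $\widetilde S$. Let $DN$ be the double of $N$ along $\widetilde S$, with doubled metric $\widehat g$, so $\vol_{\widehat g}(DN)=2\vol(M)$. Because $S$ is minimal, the shape operator along $\widetilde S$ is trace-free, so the crease of $\widehat g$ contributes non-negatively to Ricci curvature and $\widehat g$ satisfies $\mathrm{Ric}\ge -2$ in a distributional sense; the volume--simplicial volume comparison for such metrics then yields $v_3\|DN\|\le \vol_{\widehat g}(DN)=2\vol(M)$, where $v_3$ is the volume of the regular ideal tetrahedron and $\|\cdot\|$ the Gromov norm. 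On the other hand $\|\cdot\|$ is additive over the geometric decomposition and vanishes on Seifert pieces, so $v_3\|DN\|$ equals the sum of the volumes of the hyperbolic pieces of $DN$. The JSJ decomposition of $DN$ is the double of that of $N$: doubled $I$-bundles and doubled Seifert pieces stay Seifert, doubled JSJ annuli become incompressible tori, and $\Gamma$ doubles (along its totally geodesic boundary $\partial_0\Gamma$) to a \emph{smooth} finite volume hyperbolic manifold $D\Gamma$ appearing as a union of hyperbolic pieces of $DN$ with $\vol(D\Gamma)=2\vol(\Gamma)$. Chaining these,
\[ 2\vol(M)\ \ge\ v_3\|DN\|\ \ge\ \vol(D\Gamma)\ =\ 2\vol(\Gamma)\ \ge\ -2v_8\,\chi(\Gamma)\ =\ -2v_8\,\chi(\guts(M\cut S)), \]
and dividing by $2$ gives the theorem. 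If $\bdy M\neq\emptyset$, one first doubles $M$ along its totally geodesic boundary, as in the proof of \refthm{Hyperbolic}, to reduce to the case $\bdy M=\emptyset$ (alternatively one works throughout with the relative Gromov norm).

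The main obstacle is the inequality $v_3\|DN\|\le 2\vol(M)$. Doubling along a surface that is not totally geodesic produces a genuinely singular metric, and one must verify that minimality of $S$ renders the singular locus ``concave enough'' for the comparison between Riemannian volume and Gromov norm to go through --- together with the question of whether an embedded least-area representative is available and interacts correctly with the straightening of simplices. This is exactly the technical core of Agol--Storm--Thurston (and the point addressed in Dunfield's appendix to their paper), so in a self-contained treatment I would quote it rather than reprove it. A secondary point requiring care is the compatibility of the characteristic/JSJ decomposition with doubling, which follows from uniqueness of that decomposition once one checks that an essential annulus or torus in $DN$ can be isotoped to be symmetric under the reflection or to miss the crease.
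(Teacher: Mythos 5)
Your proof is correct in outline, but it takes a different route from the paper's. The paper treats \cite[Theorem~9.1]{ast07} as a black box: when $\bdy M=\emptyset$ (including the cusped case) the statement \emph{is} that theorem, and when $M$ has totally geodesic higher-genus boundary the paper simply doubles $M$ along $\bdy M$, sets $\Sigma = DS\cup\bdy M$, applies AST's Theorem~9.1 to $(DM,\Sigma)$, and observes that $DM\cut\Sigma$ is two disjoint copies of $M\cut S$. You instead unpack the internals of AST's Theorem~9.1 — least-area representative of $S$, doubling $M\cut S$ along $\widetilde S$, the scalar/Ricci curvature estimate across the crease, the simplicial-volume comparison $v_3\|DN\|\le 2\vol(M)$, additivity of the Gromov norm over the geometric decomposition, and finally Miyamoto's $\vol(\Gamma)\ge -v_8\chi(\Gamma)$ on the doubled guts. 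That chain is exactly AST's argument, and as you note yourself, the step $v_3\|DN\|\le 2\vol(M)$ (and the passage to an embedded least-area representative, treated in Dunfield's appendix) is the technical heart that one should quote, not reprove; quoting it collapses your argument back to the paper's one-line citation. What the paper actually adds to AST is precisely the boundary reduction, and your treatment of that step is too brief: you should make explicit that the cutting surface in $DM$ has to be $DS\cup\bdy M$ (not $DS$ alone), since otherwise $DM\cut DS$ is the double of $M\cut S$ along $\bdy M$ and the identity $\chi(\guts(DM\cut\Sigma))=2\chi(\guts(M\cut S))$ need not hold. With that point spelled out, your proposal is a correct but longer proof; the paper's is the economical version.
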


\begin{proof}
%\marginjess{Explain how this really is in \cite{ast07}}
This follows from \cite[Theorem~9.1]{ast07} as follows. If $\bdy M$ is empty, which includes the case that $M$ has cusps, then the statement is equivalent to that of \cite[Theorem~9.1]{ast07}.

Otherwise, consider the double $DM$ of $M$, doubling over components of $\bdy M$. Because $\bdy M$ is incompressible, and $M$ is irreducible, anannular and atoroidal (by hyperbolicity of $M$), $DM$ is a hyperbolic 3-manifold of finite volume. Let $DS$ denote the union of $S$ and its double in $DM$. Then $DS$ is $\pi_1$-essential in $DM$, using the fact that $\bdy M$ is incompressible in $M$. Finally, let $\Sigma$ denote the union of $DS$ and the components of $\bdy M$. Then $\Sigma$ is $\pi_1$-essential in $DM$. 
Thus \cite[Theorem~9.1]{ast07} implies
\[ 2\vol(M) = \vol(DM) \geq -v_8\chi(\guts(DM\cut \Sigma)). \]
Note that $DM\cut \Sigma$ is exactly two copies of $M\cut S$. Thus
\[ \chi(\guts(DM\cut\Sigma)) = 2\chi(\guts(M\cut S)). \qedhere \]
\end{proof}

\begin{definition}\label{Def:WeaklyTwistReduced}
A reduced alternating diagram $\pi(L)$ on a generalised projection surface $F$ is said to be \emph{weakly twist reduced} if the following holds. 
Suppose $D$ is a disk in $F$ with $\bdy D$ meeting $\pi(L)$ transversely in four points, adjacent to exactly two crossings. Then either $D$ contains only bigon faces of $F\setminus\pi(L)$, oriented such that the two crossings adjacent to $D$ belong to bigon faces, or $F\setminus D$ contains a disk $D'$, where $\bdy D'$ meets $\pi(L)$ adjacent to the same two crossings and bounds only bigon faces, again with bigons including the two crossings. See \reffig{WeaklyTwistReduced}.
\end{definition}

\begin{figure}
\import{figures/}{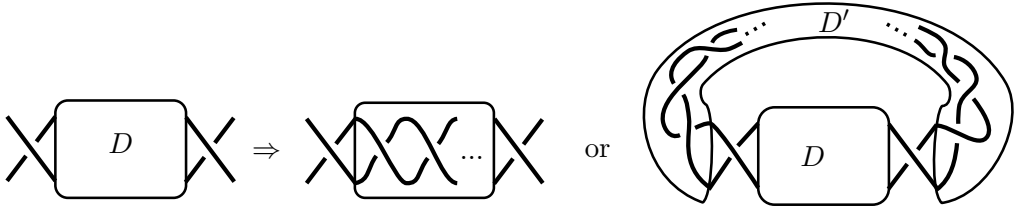}
\caption{Weakly twist reduced: If the boundary of a disk $D\subset F$ meets the diagram in four points adjacent to exactly two crossings, then either $D$ bounds a string of bigons, or there is a disk outside $D$ meeting the diagram adjacent to the same two crossings, bounding a string of bigons.}
\label{Fig:WeaklyTwistReduced}
\end{figure}

\begin{definition}\label{Def:TwistRegion}
Let $\pi(L)$ be a weakly generalised alternating diagram on a surface $F$. A \emph{twist region} of $\pi(L)$ is either
\begin{itemize}
\item a string of bigon regions of $\pi(L)$ arranged vertex to vertex that is maximal in the sense that no larger string of bigons contains it, or
\item a single crossing adjacent to no bigons.
\end{itemize}
The \emph{twist number} $\tw(\pi(L))$ is the number of twist regions in a weakly twist-reduced diagram.
\end{definition}

Any diagram of a weakly generalised alternating link on a generalised projection surface $F$ can be modified to be weakly twist reduced. For if $D$ is a disk as in the definition and neither $D$ nor $D'$ contains only bigons, then a flype in $D$ on $F$ moves the crossings met by $\bdy D$ to be adjacent, either canceling the crossings or leaving a bigon between, reducing the number of twist regions on $F$.

\begin{lemma}\label{Lem:MarcLemma7}
Let $\pi(L)$ be a reduced alternating diagram on $F$ in $Y$, such that all regions of $F\setminus\pi(L)$ are disks. 
Let $D_1$ and $D_2$ be normal disks parallel to $F$ such that $\bdy D_1$ and $\bdy D_2$ meet exactly four interior edges. Isotope $\bdy D_1$ and $\bdy D_2$ to minimise intersections $\bdy D_1\cap\bdy D_2$ in faces. If $\bdy D_1$ intersects $\bdy D_2$ in a face of the chunk, then $\bdy D_1$ intersects $\bdy D_2$ exactly twice, in faces of the same colour. 
\end{lemma}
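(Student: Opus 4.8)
The plan is to push everything down to the surface $F$. Since $D_i$ is parallel into $F$, after an isotopy inside the chunk its boundary becomes a simple closed curve $\gamma_i\subset F$ bounding a disk $\Delta_i\subset F$, and the hypothesis that $\bdy D_i$ meets exactly four interior edges says precisely that $\gamma_i$ meets $\pi(L)$ transversely in four points, each in the interior of an edge of the diagram. Normality of $D_i$ (condition \refitm{NoArcEndptsEdge} of \refdef{NormalSurface}) becomes: the edges crossed by $\gamma_i$ at consecutive crossings are distinct, and no sub-arc of $\gamma_i$ inside a region of $F\setminus\pi(L)$ cobounds a disk with a sub-arc of an edge. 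The first thing I would record is a colour observation: because $\pi(L)$ is alternating (\refitm{Alternating} of \refdef{AltKnots}), as one travels once around $\gamma_i$ the four regions of $F\setminus\pi(L)$ it passes through alternate between the two checkerboard colours — the colouring being well defined at least on a neighbourhood of $\Delta_i$, where the diagram is planar — so $\gamma_i$ runs through exactly two regions of each colour.

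Next I would determine the shape of $\Delta_i$. Using that all regions of $F\setminus\pi(L)$ are disks, that $\pi(L)$ is weakly prime, and that (after the flype described following \refdef{WeaklyTwistReduced}) the diagram may be assumed weakly twist reduced, I would show that $\pi(L)\cap\Delta_i$ consists of exactly two embedded arcs crossing each other at most once, so that $\gamma_i$ encircles either a single crossing or a maximal run of consecutive crossings of one twist region of $\pi(L)$ in the sense of \refdef{TwistRegion}. The two equally-coloured regions traversed by $\gamma_i$ are then the two ``long sides'' of that twist region and the two oppositely-coloured regions are its two ``ends''; the two arcs of $\gamma_i$ lying in the long-side regions are precisely the parts of $\gamma_i$ disjoint from $\pi(L)$, while each of the other two arcs of $\gamma_i$ crosses $\pi(L)$ exactly twice. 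I expect this structural step to be the main obstacle, since weakly prime applies directly only to curves meeting $\pi(L)$ twice, so ruling out the possibilities that $\Delta_i$ contains two parallel non-crossing arcs, or two arcs crossing twice with no bigon between them, requires a careful use of the weakly twist reduced hypothesis, and the $F_i\cong S^2$ clause of \refdef{WeaklyPrime} (in which it is the complementary region that carries a single arc) must be handled on its own.

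With this structure in hand I would finish by an innermost--outermost argument. Suppose $\gamma_1\cap\gamma_2\neq\emptyset$; since each of $\gamma_1,\gamma_2$ bounds a disk, both are separating, so the geometric intersection number is even and hence at least two. Choose an outermost arc $\alpha$ of $\gamma_1\cap\Delta_2$, cutting a disk $\Delta'\subset\Delta_2$ off along an arc $\beta\subset\gamma_2$, with the interior of $\Delta'$ disjoint from $\gamma_1\cup\gamma_2$. If $\pi(L)\cap\Delta'=\emptyset$, then $\Delta'$ is a bigon in a single region and pushing $\gamma_1$ across it reduces $|\gamma_1\cap\gamma_2|$, contradicting minimality. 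If $\bdy\Delta'$ meets $\pi(L)$ exactly twice, weakly prime gives a single arc $\tau$ of $\pi(L)$ in $\Delta'$, lying in one edge, and according to whether both endpoints of $\tau$ lie on $\alpha$, both on $\beta$, or one on each, one gets respectively a violation of normality of $D_1$, of $D_2$, or a further intersection-reducing isotopy of $\gamma_1$ across $\Delta'$ (which keeps $\gamma_1$ meeting $\pi(L)$ four times, so does not leave the class of curves under consideration). Hence $\bdy\Delta'$ meets $\pi(L)$ at least four times; since $\gamma_1$ and $\gamma_2$ each meet $\pi(L)$ only four times in total, matching this against the structural description of the previous step leaves only the possibility that $\alpha$ and $\beta$ run parallel along one common twist region, with $\gamma_1$ and $\gamma_2$ meeting in exactly two points lying in the two equally-coloured regions traversed by both curves; and a short count with the tree of arcs of $\gamma_1\cap\Delta_2$ (each outermost arc consuming two of the four crossings available on each curve, so there cannot be two of them) rules out $|\gamma_1\cap\gamma_2|>2$. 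Finally, \refcor{NoNormalBigons} guarantees there is no degeneration producing a normal bigon along the way. Transporting this conclusion back up through the chunk gives the statement: $\bdy D_1$ and $\bdy D_2$ meet in exactly two points, in faces of the same colour.
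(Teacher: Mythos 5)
Your structural step is where the argument breaks down. The lemma only assumes that $\pi(L)$ is reduced alternating with all regions disks; it does not assume weakly twist reduced, and you cannot import that hypothesis by flyping: the lemma concerns two fixed normal disks $D_1,D_2$ in a fixed chunk decomposition, and a flype would change the diagram, the chunks, and the disks. More seriously, even when the diagram \emph{is} weakly twist reduced, \refdef{WeaklyTwistReduced} only constrains curves whose four intersection points with $\pi(L)$ are ``adjacent to exactly two crossings.'' A normal square $D_i$ meets four interior edges, i.e.\ four crossing arcs, and its boundary on $F$ generically runs adjacent to four distinct crossings, so the definition simply does not apply. There is no reason a normal disk parallel into $F$ meeting four interior edges must enclose only a single crossing or a maximal run of bigons from one twist region; it can enclose an arbitrarily complicated tangle. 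Since your endgame (``matching this against the structural description of the previous step leaves only the possibility that \dots'') depends entirely on that structural claim, it collapses with it, and \refcor{NoNormalBigons} --- which concerns normal disks in a chunk, not the sub-disks of $F$ cut off by $\alpha$ and $\beta$ --- cannot patch it.

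The paper's proof sidesteps the tangle structure entirely. It shares your opening observation that the four faces through which each $\gamma_i$ passes alternate colour, and it shares your parity/minimality argument reducing to either $0$ or $2$ intersection points. But the decisive move is different: assuming the two intersection points lie in faces of \emph{opposite} colour, one chooses, on each of $\bdy D_1$ and $\bdy D_2$, the arc between the two intersection points that crosses exactly \emph{one} interior edge --- such an arc exists precisely because opposite-colour faces along a quad are adjacent across a crossing arc. The union $\alpha_1\cup\alpha_2$ of these two arcs is then a closed curve on $F$ bounding a disk and meeting $\pi(L)$ exactly twice. Weak primeness forces that disk to contain no crossings, and a short case analysis of which of $D_1\cap D_2$, $D_1\setminus D_2$, $D_2\setminus D_1$, $D_1\cup D_2$ it bounds produces an isotopy removing the two intersection points, contradicting minimality. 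That is the key observation your proposal never makes; once you have it, you need neither the twist-region structure nor the weakly twist reduced hypothesis.
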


\begin{proof}
The boundaries $\bdy D_1$ and $\bdy D_2$ are quadrilaterals, with sides of $\bdy D_i$ between intersections with interior edges. Note that $\bdy D_1$ can intersect $\bdy D_2$ at most once in any of its sides by the requirement that the number of intersections be minimal (else isotope through a disk face). 
Thus there are at most four intersections of $\bdy D_1$ and $\bdy D_2$. If $\bdy D_1$ meets $\bdy D_2$ four times, then the two quads run through the same regions, both bounding disks, and can be isotoped off each other using the fact that the diagram is weakly prime. Since the quads intersect an even number of times, there are either zero or two intersections. If zero intersections, we are done.

So suppose there are two intersections. Since all regions are disks, $\pi(L)$ is checkerboard colourable. Suppose $\bdy D_1$ intersects $\bdy D_2$ exactly twice in faces of the opposite colour. Then an arc $\alpha_1\subset \bdy D_1$ has both endpoints on $\bdy D_1\cap \bdy D_2$ and meets only one intersection of $\bdy D_1$ with an interior edge of the chunk decomposition. Similarly, an arc $\alpha_2\subset\bdy D_2$ has both endpoints on $\bdy D_1\cap \bdy D_2$ and meets only one intersection of $\bdy D_2$ with an interior edge of the chunk decomposition. Then $\alpha_1\cup \alpha_2$ is a closed curve on $F$ meeting exactly two interior edges of the chunk decomposition and bounding a disk in $F$. 

There are four cases: $\alpha_1\cup\alpha_2$ bounds the disk $D_1\cap D_2$, $D_1\setminus D_2$, $D_2\setminus D_1$, or $D_1\cup D_2$ in the chunk. It follows that the corresponding disk on $F$ contains no crossings. But then the arcs $\alpha_1$ and $\alpha_2$ are parallel and in the first three cases, the arcs can be isotoped through the disk to remove the intersections of $D_1$ and $D_2$. In the final case, the interior of the disk bounded by $\alpha_1\cup\alpha_2$ is not disjoint from $\bdy D_1\cup \bdy D_2$, but it is still possible to isotope $D_1$ to be disjoint from $D_2$. 
\end{proof}

\begin{theorem}\label{Thm:chiguts}
Let $\pi(L)$ be a weakly twist-reduced, weakly generalised alternating diagram on a generalised projection surface $F$ in a 3-manifold $Y$. 
%\marginjess{Corrected statement here}
If $\bdy Y \neq \emptyset$, then suppose $\bdy Y$ is incompressible in $Y\setminus N(F)$. Also suppose $Y\setminus N(F)$ is atoroidal and $\bdy$-anannular.
Suppose $F$ has genus at least one, all regions of $F\setminus\pi(L)$ are disks, and $r(\pi(L),F)>4$.

Let $S$ and $W$ denote the checkerboard surfaces of $\pi(L)$, and let $r_S = r_S(\pi(L))$ and $r_W = r_W(\pi(L))$ denote the number of non-bigon regions of $S$ and $W$ respectively.
Write $M_S = X\cut S$, and $M_W=X\cut W$. Then
\[\chi(\guts(M_S))=\chi(F)+\frac{1}{2}\chi(\bdy Y)-r_W,\hspace{5mm} \chi(\guts(M_W))=\chi(F)+\frac{1}{2}\chi(\bdy Y)-r_S.\]
\end{theorem}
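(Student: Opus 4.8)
The plan is to adapt the guts computation of Lackenby \cite{lac04} and Futer--Kalfagianni--Purcell \cite{fkp13} to the bounded angled chunk decomposition of $M_S=X\cut S$ supplied by \refprop{AngledChunkDecomp}, so that the estimate of \refthm{agolguts} can be applied in \refsec{Volume}. I will establish the formula for $\chi(\guts(M_S))$; the one for $\chi(\guts(M_W))$ then follows by interchanging the two colours, which is legitimate because all regions of $F\setminus\pi(L)$ are disks and so the hypotheses are symmetric in $S$ and $W$.

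\emph{Step 1: the characteristic submanifold.} Since $r(\pi(L),F)>4$ we have $r(\pi(L),F)\ge 4$ and $\hat r(\pi(L),F)>4$, so $X=X(L)$ is hyperbolic by \refthm{Hyperbolic} and $S$ is $\pi_1$-essential by \refthm{hress}; hence $M_S$ is irreducible and boundary irreducible. I then invoke the characteristic submanifold theory for the pair $(M_S,P)$, with $P$ the parabolic locus: $M_S=\guts(M_S)\cup\Xi$, where $\guts(M_S)$ admits a metric with geodesic boundary and $\Xi$ is a disjoint union of $I$-bundles and Seifert-fibered solid tori, glued to $\guts(M_S)$ along essential annuli. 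An essential torus in $M_S$ would, by the loop-theorem argument of \refprop{HypToroidal}, give an essential torus in $Y\setminus N(F)$, contrary to atoroidality, so there are none; and, as in \cite{fkp13}, boundary-irreducibility of $M_S$ together with \refcor{NoNormalBigons} rules out $I$-bundle pieces over a disk. Since the Seifert-fibered pieces and all gluing annuli have Euler characteristic $0$, letting $B$ be the union of the $I$-bundle components of $\Xi$ we obtain
\[ \chi(M_S)=\chi(\guts(M_S))+\chi(B). \]

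\emph{Step 2: bookkeeping.} A cut-and-paste computation gives $\chi(M_S)=\chi(X)+\chi(S)$, and $\chi(X)=\chi(Y)=\tfrac12\chi(\bdy Y)$ since $\bdy N(L)$ consists of tori. As all regions of $F\setminus\pi(L)$ are disks and $\pi(L)$ is $4$-valent, Euler's formula on $F$ reads $\chi(F)=c-2c+(s+w)=s+w-c$, where $c$, $s$, $w$ are the numbers of crossings, shaded regions, and white regions; and building $S$ from the $s$ shaded disks by adding one half-twisted band per crossing gives $\chi(S)=s-c=\chi(F)-w$. Substituting into Step 1,
\[ \chi(\guts(M_S))=\chi(F)+\tfrac12\chi(\bdy Y)-w-\chi(B), \]
so the theorem is equivalent to the assertion that $\chi(B)=r_W-w$, i.e.\ that $\chi(B)=-(w-r_W)$ equals minus the number of bigon white faces.

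\emph{Step 3: computing $\chi(B)$.} This is the technical core and, I expect, the main obstacle; it is where \reflem{ProductRectangles} and \reflem{MarcLemma7} (and the weakly twist-reduced hypothesis) do the work. By \reflem{ProductRectangles}, the maximal $I$-bundle $B$ meets every white face $R$ in product rectangles with ends on ideal edges of $R$; as in the proof of \refthm{Fibered}, over a bigon white face $B$ may be taken to fill $R$, while over a non-bigon white face the rectangles can be isotoped out of $R$. The essential annuli forming $\bdy_v B$ can be put in normal form (\refthm{NormalForm}, \refprop{GaussBonnet}, \refprop{NonnegArea}), and \reflem{MarcLemma7} restricts how two of them meet inside any face of the chunk decomposition — at most twice, and only in faces of one colour — which allows them, hence $B$, to be isotoped into a standard position. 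With $B$ in this position, $\bdy_h B$ is a subsurface of $\widetilde S=\bdy N(S)$ determined by the tiling of $\widetilde S$ by doubled shaded disks and crossing bands and by the bigon white faces; carrying out the resulting Euler-characteristic count as in \cite{fkp13}, and using $\chi(B)=\tfrac12\chi(\bdy_h B)$, yields $\chi(B)=-(w-r_W)$. Atoroidality and $\bdy$-anannularity of $Y\setminus N(F)$ enter here to guarantee that the chunks — copies of components of $Y\setminus N(F)$ — contribute no further $I$-bundle pieces, so that $B$ is accounted for entirely by the white bigons; the weakly twist-reduced hypothesis ensures the maximal bigon strings behave as expected under this analysis. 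Combining the three steps gives the stated formula for $\chi(\guts(M_S))$, and interchanging colours gives that for $\chi(\guts(M_W))$. The genuine difficulty, exactly as in \cite{fkp13}, is verifying that the chunk-combinatorial input (embeddedness of normal squares, absence of normal bigons via \refcor{NoNormalBigons}, and the parallel-disk analysis of \reflem{MarcLemma7}) suffices to pin $B$ down up to isotopy and to evaluate its Euler characteristic.
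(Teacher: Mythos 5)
Your Steps 1 and 2 are sound and reproduce the standard bookkeeping: Step 2 is just the Euler-characteristic arithmetic $\chi(M_S)=\chi(X)+\chi(S)=\tfrac12\chi(\bdy Y)+\chi(F)-w$, and Step 1 (modulo some informality about invoking \refprop{HypToroidal} for tori \emph{inside} $M_S$ rather than inside $X(L)$) correctly reduces the theorem to showing that the maximal $I$-bundle piece $B$ of the annulus--torus decomposition of $M_S$ satisfies $\chi(B)=-(w-r_W)$. This matches the strategy of \cite{fkp13}. The problem is Step~3, which is where the proof has to actually be done and is not carried out. You name the right ingredients (\reflem{ProductRectangles}, \reflem{MarcLemma7}, \refcor{NoNormalBigons}, weakly twist-reduced) and acknowledge that this is the technical core, but you do not prove the crucial claim that $B$ is exactly a regular neighbourhood of the white bigon faces. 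In particular the assertion that ``over a non-bigon white face the rectangles can be isotoped out of $R$'' is the crux, and it is neither justified nor obviously true as stated: the vertical boundary of $B$ is a union of essential annuli in $M_S$ that are \emph{a priori} free to run through non-bigon white faces in normal form, and you would need to show (as the paper does via a delicate normal-square analysis) that they cannot.

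The paper's route is actually structured differently and avoids computing $\chi(B)$ when $B$ is nonempty. It first rules out essential annuli with a boundary component on $\bdy Y$ using $\bdy$-anannularity of $Y\setminus N(F)$ together with the all-regions-disks hypothesis (any normal piece of such an annulus would be a square with two sides on $\widetilde{S}$, contradicting embeddedness in $M_S$). Then Lemmas \refl{NoParabComprAnnulus} and \refl{NoParabIncomp}, which are proved under the extra hypothesis that $\pi(L)$ has \emph{no} white bigons, show there are no essential annuli with both boundary components on $\widetilde{S}$ at all — so in that case $B=\emptyset$ and $\guts(M_S)=M_S$, and the formula is immediate since $w=r_W$. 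The general case is then handled by a replacement trick: collapse each string of white bigons to a single crossing; this does not change $\guts(M_S)$, and for the new diagram $r_W$ is unchanged and equals the new $w$. That reduction is exactly what substitutes for your missing Euler-characteristic count of $B$. So: right framework, correct arithmetic, but the heart of the argument (your Step 3) is left as a gap, and the paper fills that gap by a somewhat different and cleaner reduction rather than by directly evaluating $\chi(B)$.
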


%\marginjess{and here}
We know $M_S$ is obtained by gluing chunks along white faces only.
If $\guts(M_S)$ happens to equal $M_S$, then $\chi(M_S)$ is obtained by taking the sum of the Euler characteristics of each chunk $C$, which is $\chi(C) = \chi(\bdy C)/2$, and subtracting one from the total for each (disk) white face. Note that each component of $F$ appears as a boundary component of exactly two chunks; the other boundary components come from $\bdy Y$. Thus the sum of Euler characteristics of chunks is $\chi(F) + \chi(\bdy Y)/2$.

However, note that if there are any white bigon regions, then the white bigon can be viewed as a quad with two sides on the parabolic locus $P$ and two sides on $\widetilde{S}$. A neighbourhood of this is part of an $I$-bundle, with horizontal boundary on $\widetilde{S}$ and vertical boundary on the parabolic locus. Thus white bigon faces cannot be part of the guts. We will remove them.

To find any other $I$-bundle or Seifert fibered components, we must identify any essential annuli embedded in $M_S$, disjoint from the parabolic locus, and with $\bdy A\subset \widetilde{S}$; these give the JSJ-decomposition of $M_S$. So suppose $A$ is such an essential annulus.

\begin{definition}\label{Def:ParabolicallyCompressible}
We say that $A$ is \emph{parabolically compressible} if there exists a disk $D$ with interior disjoint from $A$, with $\bdy D$ meeting $A$ in an essential arc $\alpha$ on $A$, and with $\bdy D\setminus \alpha$ lying on $\widetilde{S}\cup P$, with $\alpha$ meeting $P$ transversely exactly once. We may surger along such a disk; this is called a \emph{parabolic compression}, and it turns the annulus $A$ into a disk meeting $P$ transversely exactly twice, with boundary otherwise on $\widetilde{S}$. This is called an \emph{essential product disk (EPD)}; each EPD will be part of the $I$-bundle component of the JSJ-decomposition.
\end{definition}

\begin{lemma}\label{Lem:NoParabComprAnnulus} 
Let $\pi(L)$ be a weakly twist-reduced, weakly generalised alternating diagram on a generalised projection surface $F$ in a 3-manifold $Y$. Suppose
all regions of $F\setminus\pi(L)$ are disks, $r(\pi(L),F)>4$, and there are no white bigon regions. Let $A$ be an essential annulus embedded in $M_S = X\cut S$, disjoint from the parabolic locus, with $\bdy A\subset \widetilde{S}$. Then $A$ is not parabolically compressible.
\end{lemma}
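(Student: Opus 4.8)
The plan is to suppose for contradiction that $A$ is parabolically compressible, and analyze the essential product disk (EPD) $E$ obtained by the parabolic compression. Recall $E$ is a disk properly embedded in $M_S = X\cut S$, meeting the parabolic locus $P$ transversely in exactly two points and with the rest of $\bdy E$ on $\widetilde S$. First I would put $E$ in normal form with respect to the bounded angled chunk decomposition of $M_S$ guaranteed by \refprop{AngledChunkDecomp}. Since $E$ meets $P$ twice, \refprop{GaussBonnet} gives $a(E) = -2\pi\chi(E) + \tfrac{\pi}{2}\cdot 2 = -2\pi + \pi = -\pi$, where the parameter $p$ counts intersections of $\bdy E$ with boundary edges adjacent to surface faces. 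But wait — I must be careful about how $E$ meets $P$: the two points where $E$ meets $P$ correspond to $E$ running over two meridional segments, each crossing boundary faces. So in the normal form, $\bdy E$ passes through exactly two boundary faces, contributing $p=2$ to the Gauss--Bonnet count. Thus $a(E) = -\pi < 0$.

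**The obstacle and its resolution.** On the other hand, by \refprop{NonnegArea} every normal component of $E$ in a chunk has nonnegative combinatorial area, so $a(E) \geq 0$, a contradiction. This is the heart of the argument, but the subtlety — and the main obstacle — is making sure the normal form really exists and the Gauss--Bonnet count is exactly $-\pi$ rather than something making the contradiction evaporate. The issue is that $E$'s boundary on $\widetilde S$ consists of two arcs (the two ``sides'' of the surgered annulus) together with two arcs crossing boundary faces, and one must verify that after normalization (using \refthm{NormalForm} applied to $M_S$, noting $M_S$ is irreducible and $\bdy$-irreducible because $S$ is $\pi_1$-essential by \refthm{hress} and $Y\setminus N(F)$ is atoroidal and $\bdy$-anannular) the two intersections with $P$ survive and each contributes $\tfrac\pi2$. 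One also needs that $E$ cannot be made disjoint from $P$ — but it cannot, because $E$ is an EPD, i.e. it is essential in $M_S$ rel $P$; if its intersections with $P$ could be removed it would be inessential, contradicting that $A$ was an essential annulus parabolically compressed to $E$.

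**Remaining checks.** I would then verify the two boundary-parallel degenerate possibilities are excluded: $E$ could a priori be a normal disk of combinatorial area zero (one of types (a), (b), (c) of \refprop{NonnegArea}), but type (b) and (c) have no boundary on $P$, and a type-(a) disk glued up to form something meeting $P$ twice would need total area $0$ while Gauss--Bonnet demands $-\pi$; the arithmetic simply does not close up. Finally I would note where the hypotheses enter: $r(\pi(L),F)>4$ and weakly prime are needed (via \refprop{AngledChunkDecomp}) to guarantee the decomposition is genuinely an angled chunk decomposition so that \refprop{NonnegArea} applies; the assumption that all regions of $F\setminus\pi(L)$ are disks and that there are no white bigons ensures $M_S$ really has the bounded angled chunk structure with the stated surface faces and no degenerate product pieces interfering. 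Since the Gauss--Bonnet value $-\pi$ is forced and nonnegativity of combinatorial area is unconditional, the contradiction is complete, so $A$ admits no parabolic compression.
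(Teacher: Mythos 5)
Your proposal hinges on the claim that Gauss--Bonnet gives $a(E)=-\pi$ for the essential product disk $E$, and this is where the argument fails. In \refprop{GaussBonnet}, the parameter $p$ counts intersections of $\bdy S$ with boundary edges adjacent to surface faces, not the number of boundary faces traversed. An EPD has $\bdy E$ consisting of two arcs on $\widetilde{S}$ alternating with two arcs running through boundary faces of $P$; each pass through a boundary face crosses two boundary edges (one entering, one exiting), and both are adjacent to surface faces since $\bdy E$ is otherwise on $\widetilde S$. Hence $p=4$, not $2$, and
\[
a(E) = -2\pi\chi(E) + \tfrac{\pi}{2}\cdot 4 = -2\pi + 2\pi = 0.
\]
This is fully consistent with \refprop{NonnegArea}: an EPD is, combinatorially, a rectangle with four right-angle corners, which has zero angled area. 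You appear to have conflated the EPD with the disk in the proof of \refthm{hress}, which has one arc on $\widetilde\Sigma$ and one arc on $P$ and therefore $p=2$, $a=-\pi$; that is a different object.

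Because the combinatorial area of $E$ is zero rather than negative, there is no immediate contradiction and the lemma genuinely requires work. The paper's proof proceeds by showing that the arcs $E\cap W$ split $E$ into normal squares, each parallel into $F$ by the hypothesis $r(\pi(L),F)>4$; then superimposing consecutive squares on one chunk and invoking \reflem{MarcLemma7}, the weakly twist-reduced hypothesis, and the absence of white bigons forces every $\bdy E_i$ to encircle a single crossing, so the original annulus $A$ is boundary parallel, contradicting its essentiality. A separate short case handles $E$ disjoint from $W$. None of this combinatorial analysis can be bypassed by a bare Gauss--Bonnet count, which is precisely why the hypotheses ``weakly twist-reduced'' and ``no white bigon regions'' appear in the lemma --- they do real work that your proposal never invokes.
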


\begin{proof}
Suppose $A$ is parabolically compressible. Then surger along a parabolic compressing disk to obtain an EPD $E$. Put $E$ into normal form with respect to the chunk decomposition of $M_S$. If $E$ intersects a white (interior) face coming from $W$, then consider the arcs $E\cap W$. Such an arc has both endpoints on $\widetilde{S}$. If one cuts off a disk on $E$ that does not meet the parabolic locus, then there will be an innermost such disk. This will be a normal bigon, i.e.\ a disk in the chunk decomposition meeting exactly two surface edges. 
This contradicts \refcor{NoNormalBigons}. 
So $E\cap W$ consists of arcs running from $\widetilde{S}$ to $\widetilde{S}$, cutting off a disk meeting the parabolic locus $P$ on either side. Thus $W$ cuts $E$ into normal squares $\{E_1, \dots, E_n\}$. On the end of $E$, the square $E_1$ has one side on $W$, two sides on $\widetilde{S}$, and the final side on a boundary face.
We may isotope slightly off the boundary face into an adjacent white face so that $E_1$ remains normal. Then $E_1$ and $E_2$ are both squares meeting no boundary faces.

Since $r(\pi(L),F)>4$, no $E_i$ can be a compressing disk for $F$, hence each is parallel into $F$. Superimpose $E_1$ and $E_2$ onto the boundary of one of the chunks. An edge of $E_1$ in a white face $U\subset W$ is glued to an edge of $E_2$ in the same white face. Because the two are in different chunks, when we superimpose, $\bdy E_2\cap U$ is obtained from $\bdy E_1\cap U$ by a rotation in $U$.

If $E_1\cap U$ is not parallel to a single boundary edge, then $\bdy E_1\cap U$ and $\bdy E_2\cap U$ intersect; see \reffig{NoEPD}.
But then \reflem{MarcLemma7} implies that $\bdy E_1$ and $\bdy E_2$ also intersect in another white face. But $\bdy E_1$ is parallel to a single boundary edge in its second white face, so $\bdy E_2$ cannot intersect it. This is a contradiction. 

\begin{figure}
  \import{figures/}{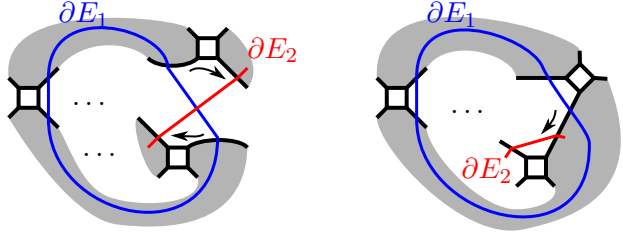}
  \caption{Left: $\bdy E_1$ is not parallel to a boundary edge in $U$, hence $\bdy E_1$ meets $\bdy E_2$ in $U$. Right: $\bdy E_1$ is parallel to a boundary edge. }
  \label{Fig:NoEPD}
\end{figure}

So $E_1\cap U$ is parallel to a single boundary edge (and hence so is $E_2\cap U$). But then $E_1$ meets both white faces in arcs parallel to boundary edges. Isotoping $\bdy E_1$ slightly, this gives a closed curve on $F$ meeting $\pi(L)$ in crossings. If the crossings are distinct, then because $\pi(L)$ is weakly twist reduced, the two crossings must bound white bigons between them, contradicting the fact that there are no white bigon regions in the diagram. If the crossings are not distinct, then $\bdy E_1$ encircles a single crossing of $\pi(L)$. Repeating the argument with $E_2$ and $E_3$, and so on, we find that each $\bdy E_i$ encircles a single crossing, and thus the original annulus $A$ is boundary parallel. This contradicts the fact that $A$ is essential.

So if there is an EPD $E$, it cannot meet $W$. Then it lies completely on one side of $F$. Its boundary runs through two shaded faces and two boundary faces. Slide slightly off the boundary faces; we see that its boundary defines a curve meeting the knot four times. Because $r(\pi(L),F)>4$, it cannot be a compressing disk, so $E$ is parallel into $F$. But then weakly twist-reduced implies its boundary encloses a string of white bigons, or its exterior in $F$ contains a string of white bigons meeting $\bdy E$ in the same two crossings. Since there are no white bigons, $\bdy E$ is boundary parallel, and the EPD is not essential. 
\end{proof}

\begin{lemma}\label{Lem:NoParabIncomp}
Let $\pi(L)$ be a weakly twist-reduced, weakly generalised alternating diagram on a generalised projection surface $F$ in a 3-manifold $Y$. 
Suppose $F$ has genus at least one, all regions of $F\setminus\pi(L)$ are disks, $r(\pi(L),F)>4$, and there are no white bigon regions. Let $A$ be an essential annulus embedded in $M_S = X\cut S$, disjoint from the parabolic locus, with $\bdy A\subset \widetilde{S}$. Then $A$ cannot be parabolically incompressible.
\end{lemma}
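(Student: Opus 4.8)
The plan is to argue by contradiction along the lines of the proof of \reflem{NoParabComprAnnulus}, working directly with $A$ rather than with a surgered essential product disk. Suppose $A$ is an essential, parabolically incompressible annulus in $M_S=X\cut S$, disjoint from the parabolic locus $P$, with $\bdy A\subset\widetilde S$. Put $A$ into normal form with respect to the bounded angled chunk decomposition of $M_S$, using \refthm{NormalForm}. Since $\bdy A$ misses $P$, it also misses every boundary edge adjacent to a surface face, so the correction term in \refprop{GaussBonnet} vanishes and $a(A)=-2\pi\chi(A)=0$. By \refprop{NonnegArea} every normal component of $A$ in a chunk has combinatorial area zero; since all regions of $F\setminus\pi(L)$ are disks and $A$ avoids $\bdy Y$, there are no non-contractible faces available, so no annular or toroidal components occur, and $A$ is a union of normal squares $A_1,\dots,A_n$ glued cyclically. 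Each $A_i$ meets exactly four edges, all of them surface edges (since $A$ avoids $P$), with two opposite sides lying in white faces --- where $A_i$ is glued to $A_{i-1}$ and $A_{i+1}$ --- and the remaining two opposite sides lying on shaded faces as arcs of $\bdy A$.

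Next I would show each square $A_i$ is parallel into $F$. Pushing $\bdy A_i$ onto the copy of $F$ carrying its faces produces a closed curve meeting $\pi(L)$ in at most four points, one for each edge met by $A_i$. Because $r(\pi(L),F)>4$, such a curve cannot be the boundary of a compressing disk for either side of $F$, so it bounds a disk $D_i\subset F$, and irreducibility of the chunk forces $A_i$ to be parallel to $D_i$. This is the step that uses the representativity hypothesis, exactly as in \reflem{NoParabComprAnnulus}.

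I would then analyse how the disks $D_i$ are nested. Superimposing $\bdy A_{i+1}$ on the chunk containing $A_i$, the arcs of $\bdy A_i$ and $\bdy A_{i+1}$ in their shared white face differ by the rotation of the face gluing map of \refprop{AltChunkDecomp}; \reflem{MarcLemma7} then shows that, after minimising intersections, $\bdy A_i$ and $\bdy A_{i+1}$ are disjoint or meet exactly twice, in faces of the same colour. Ruling out nontrivial intersection using weak primeness gives a cyclically telescoping family $D_1,\dots,D_n$; the subsurface of $F$ trapped between consecutive $\bdy D_i$ then meets $\pi(L)$ in a single crossing, and applying weak twist-reducedness together with the absence of white bigons (as in \reflem{NoParabComprAnnulus}) forces each $\bdy A_i$ to encircle a single crossing arc. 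In that configuration the cyclic chain of squares forms a neighbourhood of a single crossing arc, so $A$ is parallel into $\widetilde S$ and hence not essential, a contradiction. (Where the argument instead produces a string of bigons these would include a white bigon, again a contradiction; and where a simplification of $A$ would require isotoping through a disk meeting $P$, that disk exhibits $A$ as parabolically compressible, contrary to hypothesis.) Either way $A$ cannot exist.

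I expect the main obstacle to be the middle step: tracking how the squares $A_i$ are glued together and hence how the disks $D_i$ telescope, and in particular keeping the rotation directions of the face gluing maps straight so that the ends of the cyclic chain actually close up consistently. Because we work directly with the annulus, each square meets two shaded faces rather than a boundary face, so the bookkeeping differs from \reflem{NoParabComprAnnulus}, and isolating precisely which degenerate configuration is excluded by parabolic incompressibility --- as opposed to by essentiality or by the diagram hypotheses --- will require care.
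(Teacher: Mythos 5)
Your plan goes wrong at the middle step, and the error propagates to the final contradiction. You propose to ``rule out nontrivial intersection'' between $\bdy A_i$ and $\bdy A_{i+1}$ so as to obtain a telescoping nested family of disks $D_i$, and then to conclude that each $\bdy A_i$ encircles a single crossing, making $A$ boundary-parallel. But this is precisely the opposite of what happens, and the paper's proof hinges on this. The only way for the arcs $\bdy A_i\cap W$ and $\bdy A_{i+1}\cap W$ (superimposed on one chunk, related by the rotational gluing) to be \emph{disjoint} is for $\bdy A_i\cap W$ to be parallel to a boundary edge of the white face --- and this is exactly what parabolic incompressibility rules out: such an arc, together with the boundary edge and $\widetilde S$, cuts off a parabolic compression disk for $A$. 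Once you know no arc $A_i\cap W$ is parallel to a boundary edge, the rotation in each white face \emph{forces} $\bdy A_i$ and $\bdy A_{i+1}$ to intersect (this is the content of \reffig{NoEPD}, left). So you cannot get a nested telescoping family; you \emph{must} get intersections in both white faces, by \reflem{MarcLemma7}.

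The consequence is that your proposed contradiction is also wrong. The correct argument takes the forced intersections, observes via \reflem{MarcLemma7} and weak primeness that $E_{i-1}$, $E_i$, $E_{i+1}$ must line up as a cycle of ``units'' (fused tangles, as in Lackenby \cite{lac04}, \reffig{FusedUnits}), and then notes that since all regions of $F\setminus\pi(L)$ are disks, the inside and outside of this cycle are disks, the $E_i$ themselves bound disks, and hence $F$ is built from disks and an annulus of disks --- i.e.\ $F\cong S^2$. This contradicts the hypothesis that $F$ has genus at least one. Note that the genus hypothesis is essential here and plays no role in your proposal; that is a clear sign the argument you outlined cannot be correct, since the lemma is genuinely false for $F=S^2$ (fused-tangle cycles exist for ordinary alternating links on $S^2$). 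You do correctly set up the reduction to normal squares parallel into $F$, and you correctly identify where parabolic incompressibility should enter, but you misidentify its consequence: it excludes the boundary-parallel arcs, thereby \emph{forcing} intersections rather than \emph{ruling them out}.
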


\begin{proof}
Suppose $A$ is parabolically incompressible and put $A$ into normal form; \refprop{NonnegArea} implies $W$ cuts $A$ into squares $E_1, \dots, E_n$. Representativity $r(\pi(L),F)>4$ implies each square is parallel into $F$. Note that if a component of intersection $E_i\cap W$ is parallel to a boundary edge, then the disk of $W$ bounded by $E_i\cap W$, the boundary edge, and $\widetilde{S}$ defines a parabolic compression disk for $A$, contradicting the fact that $A$ is parabolically incompressible. So each component of $E_i\cap W$ cannot be parallel to a boundary edge.

Again superimpose all squares $E_1, \dots, E_n$ on one of the chunks. The squares are glued in white faces, and cut off more than a single boundary edge in each white face, so $\bdy E_i$ must intersect $\bdy E_{i+1}$ in a white face; see again \reffig{NoEPD}. Then \reflem{MarcLemma7} implies $\bdy E_i$ intersects $\bdy E_{i+1}$ in both of the white faces it meets. Similarly, $\bdy E_i$ intersects $\bdy E_{i-1}$ in both its white faces. Because $E_{i-1}$ and $E_{i+1}$ lie in the same chunk, they are disjoint (or $E_{i-1} = E_{i+1}$, but this makes $A$ a M\"obius band rather than an annulus).
%% Case $E_{i-1}=E_{i+1}$:
%% Consider edges in white faces. Gluing map gives rotation by one ``click''. If $E_{i-1}=E_{i+1}$ then there can only be two clicks total, so face is a quad. On the other hand, this glues a side of $E_i$ to $E_{i-1}$ in one direction, then the side of $E_{i-1}$ to $E_i$ in the opposite direction, which is impossible for an annulus. 
This is possible only if $E_{i-1}$, $E_i$, and $E_{i+1}$ line up as in \reffig{FusedUnits} left, bounding tangles as shown. Lackenby calls such tangles \emph{units} \cite{lac04}. Then all $E_j$ form a cycle of such tangles, as in \reffig{FusedUnits} right.

\begin{figure}
  \import{figures/}{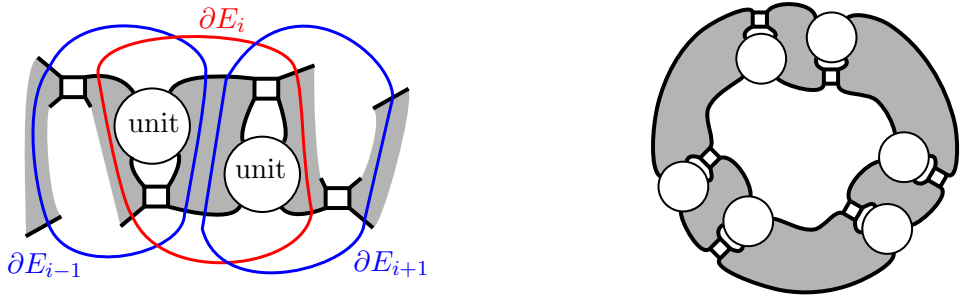}
  \caption{Left: $E_{i-1}$, $E_i$, and $E_{i+1}$ must intersect as shown. Right: cycle of three such tangles.}
  \label{Fig:FusedUnits}
\end{figure}

Since all white regions are disks, the inside and outside of the cycle are disks. Also, all $E_i$ bound disks on $F$. But then $F$ is the union of two disks and an annulus made up of disks; this is a sphere. This contradicts the fact that the genus of $F$ is at least 1.
\end{proof}

\begin{proof}[Proof of \refthm{chiguts}]
%\marginjess{Updated here}
First we rule out essential annuli with boundary components on $\bdy Y$. Suppose $A$ is an essential annulus in $M_S$ with both boundary components on $\bdy Y$. Because $Y\setminus N(F)$ is $\bdy$-anannular by assumption, $A$ must meet $N(F)$, and hence it meets a white face of the chunk decomposition. When we put $A$ into normal form, it decomposes into pieces, each with combinatorial area zero. The fact that all regions of $F\setminus \pi(L)$ are disks along with \refprop{NonnegArea} imply that each normal piece is a disk meeting four interior edges. But then $A$ intersects $\widetilde{S}$, hence is not embedded in $M_S$. Now suppose $A$ is an essential annulus in $M_S$ with only one boundary component on $\bdy Y$ and the other on $\widetilde{S}$. Because regions of $F\setminus\pi(L)$ are disks, the component of $\bdy Y$ on $\widetilde{S}$ must run through both white and shaded faces, and thus again $A$ is decomposed into squares in the chunk decomposition, each with two sides on $\widetilde{S}$. But then $A$ cannot have one boundary component on $\bdy Y$.

Now we rule out essential annuli with both boundary components on $\widetilde{S}$. Suppose first that $\pi(L)$ has no white bigon regions. Then \reflem{NoParabComprAnnulus} and \reflem{NoParabIncomp} imply that there are no embedded essential annuli in $M_S$ with both boundary components on $\widetilde{S}$.
It follows that $\chi(\guts(M_S)) = \chi(M_S) = \chi(F)+\chi(\bdy Y)/2-r_W$.

If $\pi(L)$ contains white bigon regions, then
replace each string of white bigon regions in the diagram $\pi(L)$ by a single crossing, to obtain a new link. Since white bigons lead to product regions, the guts of the shaded surface of the new link agrees with $\guts(M_S)$. Hence  $\chi(\guts(M_S)) = \chi(M_S) = \chi(F)+\chi(\bdy Y)/2-r_W$ in this case.

An identical argument applies to $M_W$, replacing the roles of $W$ and $S$ above.
\end{proof}

\begin{theorem}\label{Thm:volguts}
Let $\pi(L)$ be a weakly twist-reduced, weakly generalised alternating diagram on a generalised projection surface $F$ in a 3-manifold $Y$.
%\marginjess{modified}
Suppose %% that $Y$ is closed or has torus boundary components, 
that if $\bdy Y\neq \emptyset$, then $\bdy Y$ is incompressible in $Y\setminus N(F)$. Suppose also that $Y\setminus N(F)$ is atoroidal and $\bdy$-anannular. Finally suppose $F$ has genus at least one, that all regions of $F\setminus\pi(L)$ are disks, and that $r(\pi(L),F)>4$. 
Then $Y\setminus L$ is hyperbolic and 
\[\vol(Y\setminus L)\geq \frac{v_8}{2}\left(\tw(\pi(L))-\chi(F) -
\chi(\bdy Y)\right).\]
\end{theorem}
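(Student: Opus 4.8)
The plan is to combine the Agol--Storm--Thurston bound \refthm{agolguts}, applied to each of the two checkerboard surfaces, with the guts computation \refthm{chiguts}, and then to convert the resulting region-count estimate into a twist-number estimate using Euler's formula. Hyperbolicity of $Y\setminus L$ comes for free: since $\max(a,b)\ge\min(a,b)$ one has $\hat r(\pi(L),F)\ge r(\pi(L),F)>4$, so the hypotheses of \refthm{Hyperbolic} hold, and as $Y\setminus N(F)$ is atoroidal and $\bdy$-anannular that theorem gives a hyperbolic structure on the interior of $X:=Y\setminus N(L)$; when $\bdy Y$ has components of genus greater than one this is the finite-volume structure in which those components are totally geodesic, and $\vol(Y\setminus L)$ denotes its volume (as in \refthm{Intro}).

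For the bound, let $S$ and $W$ be the shaded and white checkerboard surfaces. By \refthm{hress} each is $\pi_1$-essential in $X$, and each is properly embedded with boundary on $\bdy N(L)$, hence disjoint from $\bdy Y$ and so from the geodesic boundary of $X$; thus \refthm{agolguts} applies to $M=X$ with either surface. Writing $M_S=X\cut S$ and $M_W=X\cut W$, it gives $\vol(Y\setminus L)\ge -v_8\,\chi(\guts(M_S))$ and $\vol(Y\setminus L)\ge -v_8\,\chi(\guts(M_W))$. Substituting $\chi(\guts(M_S))=\chi(F)+\tfrac12\chi(\bdy Y)-r_W$ and $\chi(\guts(M_W))=\chi(F)+\tfrac12\chi(\bdy Y)-r_S$ from \refthm{chiguts}, then adding the two inequalities and halving, gives
\[ \vol(Y\setminus L)\;\ge\;\frac{v_8}{2}\bigl(r_S+r_W-2\chi(F)-\chi(\bdy Y)\bigr). \]

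It remains to establish the bookkeeping identity $r_S+r_W=\tw(\pi(L))+\chi(F)$. Collapse each twist region of the (weakly twist-reduced) diagram $\pi(L)$ to a single crossing, producing a $4$-valent diagram $\pi(L)'$ with $\tw(\pi(L))$ crossings; this deletes exactly the bigon regions and merges the crossings within each twist region, so the regions of $\pi(L)'$ are in bijection with the $r_S+r_W$ non-bigon regions of $\pi(L)$ and, being obtained from disk regions, are all disks. Euler's formula for $\pi(L)'$ on $F$ then reads $\tw(\pi(L))-2\tw(\pi(L))+(r_S+r_W)=\chi(F)$, which is the claimed identity, and substituting it into the previous display yields
\[ \vol(Y\setminus L)\;\ge\;\frac{v_8}{2}\bigl(\tw(\pi(L))-\chi(F)-\chi(\bdy Y)\bigr). \]
The real content is packaged in \refthm{chiguts}, which I am taking as given; the only points that need care at this stage are that \refthm{agolguts} is stated so as to tolerate totally geodesic boundary coming from higher-genus components of $\bdy Y$ (it keeps that boundary separate from cusps), and the region count after twist-collapsing, which is exactly where the $\chi(F)$ term in the final bound comes from. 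I do not anticipate a genuine obstacle beyond this bookkeeping.
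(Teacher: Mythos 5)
Your proof is correct and follows the same route as the paper: hyperbolicity from Theorem~\ref{Thm:Hyperbolic} (via $\hat r\ge r>4$), then Agol--Storm--Thurston applied to each checkerboard surface combined with the guts computation of Theorem~\ref{Thm:chiguts}, and finally the Euler-characteristic identity $r_S+r_W=\tw(\pi(L))+\chi(F)$ obtained by collapsing twist regions of the $4$-valent diagram. The paper phrases this last step as ``replacing each twist region with a vertex,'' but that is exactly your bookkeeping, so there is no substantive difference.
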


\begin{proof}
The fact that $Y\setminus L$ is hyperbolic follows from \refthm{Hyperbolic}.

Let $\Gamma$ be the $4$-regular graph associated to $\pi(L)$ by replacing each twist-region with a vertex.
Let $|v(\Gamma)|$ denote the number of vertices of $\Gamma$, $|f(\Gamma)|$ the number of regions. Because $\Gamma$ is 4-valent, the number of edges is $2|v(\Gamma)|$, so
\[\chi(F)=-|v(\Gamma)|+|f(\Gamma)|=-\tw(\pi(L))+r_S+r_W.\]
Then applying Theorems~\ref{Thm:agolguts} and~\ref{Thm:chiguts} gives
\begin{align*}
\vol(X) \ &\geq \ -\frac{1}{2}v_8\chi(\guts(M_S))-\frac{1}{2}v_8\chi(\guts(M_W)) \\
\ &= \ -\frac{1}{2}v_8(2\chi(F) + \chi(\bdy Y) -r_S-r_W) \\
\ &= \ \frac{1}{2}v_8(\tw(\pi(L))-\chi(F)-\chi(\bdy Y)). \qedhere
\end{align*}
\end{proof}

Again the restriction on representativity is the best possible for \refthm{volguts}: The knot $10_{161}$, shown in \reffig{LowVolEg}, has a generalised alternating projection onto a Heegaard torus, with  $r(\pi(L),F)=4$, with 10 twist regions and 11 crossings, but $\vol(10_{161})\approx 5.6388< 5v_8$. In this case, there exists an $I$-bundle which does not come from a twist region.

\begin{figure}
\includegraphics[scale=0.8]{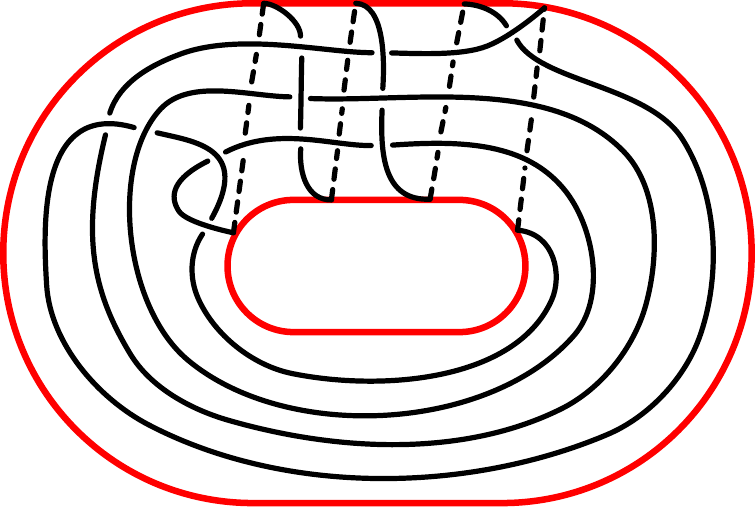}
  \caption{A generalised alternating diagram of $10_{161}$}
  \label{Fig:LowVolEg}
\end{figure}

% 7
%%%%%%%%%%%%%%%%%%%%%%%%%%%%%%%%%%%%%%%%%%%%%%%%%%%%%%%%%%%%%%%%%
\section{Exceptional fillings}\label{Sec:Filling}
This section combines results of the previous sections to address questions on the geometry of Dehn fillings of weakly generalised alternating links. These questions can be addressed in two ways, one geometric and the other combinatorial. Both arguments have advantages. We compare results in \refrem{CompareFilling}. 

\subsection{Geometry and slope lengths}

Let $M$ be a manifold with torus boundary whose interior admits a hyperbolic structure. Recall that a \emph{slope} on $\bdy M$ is an isotopy class of essential simple closed curves. A slope inherits a length from the hyperbolic metric, by measuring a geodesic representative on the (Euclidean) boundary of an embedded horoball neighborhood of all cusps. The 6--Theorem \cite{ago00, lac00} implies that if a slope has length greater than six, Dehn filling along that slope will result in a hyperbolic manifold. We will apply the 6--Theorem to bound exceptional fillings. First, we need to bound the lengths of slopes on weakly generalised alternating links, and this can be done by applying results of Burton and Kalfagianni~\cite{bk17}. Burton and Kalfagianni give bounds on slope lengths for hyperbolic knots that admit a pair of essential surfaces. By \refthm{hress}, weakly generalised alternating links admit such surfaces. Moreover, \refthm{Hyperbolic} gives conditions that guarantee the link complement is hyperbolic. Putting these together, we obtain the following.

\begin{theorem}\label{Thm:SlopeLengths}
Let $\pi(K)$ be a weakly generalised alternating diagram of a knot $K$ on a generalised projection surface $F$ in a closed 3-manifold $Y$ such that $Y\setminus N(F)$ is atoroidal, and such that the hypotheses of \refthm{Hyperbolic} are satisfied. Let $\mu$ denote the meridian slope on $\bdy N(K)$, i.e.\ the slope bounding a disk in $Y$, and let $\lambda$ denote the shortest slope with intersection number $1$ with $\mu$. Let $c$ be the number of crossings of $\pi(K)$. Then the lengths $\ell(\mu)$ and $\ell(\lambda)$ satisfy:
\[ \ell(\mu) \leq 3 - \frac{3\chi(F)}{c}, \quad\mbox{and}\quad \ell(\lambda) \leq 3(c-\chi(F)). \]
\end{theorem}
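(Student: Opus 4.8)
The plan is to translate the diagrammatic quantities $c$ and $\chi(F)$ into Euler characteristics of the two checkerboard surfaces of $\pi(K)$, and then feed these into the cusp length estimates of Burton and Kalfagianni~\cite{bk17}. First I would record the setup. Since $\pi(K)$ is weakly generalised alternating it is checkerboard colourable, so $K$ bounds its shaded and white checkerboard surfaces $\Sigma$ and $\Sigma'$; these are $\pi_1$-essential in $X(K)$ by \refthm{hress}, hence incompressible and boundary-incompressible. Because $Y$ is closed, $Y\setminus N(F)$ has no essential annulus with boundary on $\bdy Y$, so it is $\bdy$-anannular; together with the hypothesis that $Y\setminus N(F)$ is atoroidal and the assumed hypotheses of \refthm{Hyperbolic}, this gives that $X(K)=Y\setminus N(K)$ is hyperbolic. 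Because $K$ is a knot and all regions of $F\setminus\pi(K)$ are disks, $F$ is connected and neither checkerboard surface is an annulus or M\"obius band, so each is a genuine essential spanning surface of the type to which \cite{bk17} applies.

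Next I would compute Euler characteristics. Let $r_S$ and $r_W$ be the numbers of shaded and white regions, all disks. The $4$-valent diagram graph has $c$ vertices and $2c$ edges, so $\chi(F) = -c + r_S + r_W$. The surface $\Sigma$ is assembled from $r_S$ disks joined by $c$ twisted bands, each attached along two arcs, so $\chi(\Sigma) = r_S - c$; likewise $\chi(\Sigma') = r_W - c$. Adding these gives $-\chi(\Sigma) - \chi(\Sigma') = 2c - (r_S + r_W) = c - \chi(F)$. Moreover $\Sigma\cap\Sigma'$ is exactly the set of $c$ crossing arcs, each properly embedded in $X(K)$ with both endpoints on $\bdy N(K)$; for a weakly prime diagram these are in minimal position, so on the cusp torus $\bdy\Sigma$ and $\bdy\Sigma'$ meet in $2c$ points, while each of them meets the meridian $\mu$ exactly once.

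Then I would invoke Burton--Kalfagianni. Their combinatorial-area argument on a maximal cusp neighbourhood, applied to a pair of essential spanning surfaces meeting in $c$ arcs, bounds the meridian length by $\ell(\mu)\le 3(-\chi(\Sigma)-\chi(\Sigma'))/c$; substituting the identity above gives $\ell(\mu)\le 3(c-\chi(F))/c = 3-3\chi(F)/c$, which is the first claimed inequality. For $\lambda$, I would use that the boundary slope of either spanning surface already has intersection number $1$ with $\mu$, together with the bound from \cite{bk17} on the length of that slope by $3(-\chi(\Sigma)-\chi(\Sigma')) = 3(c-\chi(F))$; since $\lambda$ is by definition the shortest slope meeting $\mu$ once, we conclude $\ell(\lambda)\le 3(c-\chi(F))$.

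The main obstacle is that \cite{bk17} is stated for knots in $S^3$, whereas here $Y$ is an arbitrary closed $3$-manifold, so I must check that their argument — which only analyses how the two essential surfaces cut a horoball neighbourhood of the single cusp, using incompressibility, boundary-incompressibility, and the intersection pattern of $\Sigma$ and $\Sigma'$ near $\bdy N(K)$ — carries over verbatim to this more general setting. A secondary point requiring care is to pin down precisely which slope their longitude estimate governs and to confirm it dominates the shortest slope $\lambda$ with $i(\mu,\lambda)=1$, and to observe that the minimal-position hypothesis on $\Sigma\cap\Sigma'$ is automatic here from weak primeness (the diagram has no nugatory crossings).
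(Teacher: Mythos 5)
Your argument is correct and follows essentially the same route as the paper: both invoke \refthm{hress} and \refthm{Hyperbolic} for the hypotheses, compute $-\chi(\Sigma)-\chi(\Sigma')=c-\chi(F)$ and $i(\bdy\Sigma,\bdy\Sigma')=2c$ via the $4$-valent diagram graph, and feed these into the slope-length estimates of Burton--Kalfagianni \cite{bk17}. The paper likewise flags (in a parenthetical) that \cite[Theorem~4.1]{bk17} is stated for $S^3$ but its cusp-area argument applies to this setting, so the caveat you raise is exactly the one the authors address.
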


\begin{proof}
By \refthm{Hyperbolic}, the interior of $Y\setminus K$ is hyperbolic, and by \refthm{hress}, the checkerboard surfaces $\Sigma$ and $\Sigma'$ are $\pi_1$-essential. Let $\chi$ denote the sum $|\chi(\Sigma)| + |\chi(\Sigma')|$. Then \cite[Theorem~4.1]{bk17} implies the slope lengths of $\mu$ and $\lambda$ are bounded by:
\[ \ell(\mu) \leq\frac{6\chi}{i(\bdy\Sigma, \bdy\Sigma')}, \quad\mbox{and}\quad
\ell(\lambda) \leq 3\chi, \]
where $i(\bdy\Sigma, \bdy\Sigma')$ denotes the intersection number of $\bdy\Sigma$ and $\bdy\Sigma'$ on $\bdy N(L)$. (Note \cite[Theorem~4.1]{bk17} is stated for knots in $S^3$, but the proof applies more broadly to our situation.)

Because $\Sigma$ and $\Sigma'$ are checkerboard surfaces and $\pi(K)$ is alternating, $i(\bdy\Sigma, \bdy\Sigma')=2c$. Moreover, the sum of their Euler characteristics can be shown to be $\chi(F)-c$. %\cite[Lemma 3.9]{how15t} if citation needed
The result follows by plugging in these values of $\chi$ and $i(\bdy\Sigma,\bdy\Sigma')$. 
\end{proof}

The results of \refthm{SlopeLengths} should be compared to those of Adams \emph{et al} in \cite{acf06}: for regular alternating knots in $S^3$, they found that $\ell(\mu) \leq 3-6/c$ and $\ell(\lambda) \leq 3c-6$, matching our results exactly when $F=S^2$.

The bounds on slope length lead immediately to bounds on exceptional Dehn fillings, and to bounds on volume change under Dehn filling.

\begin{corollary}\label{Cor:ExceptionalLengthGeom}
Suppose $\pi(K)$, $F$, and $Y$ satisfy the hypotheses of \refthm{SlopeLengths}. Let $\sigma = p\mu + q\lambda$ be a slope on $\bdy N(K)$. If $|q|>5.373(1-\chi(F)/c)$ then the Dehn filling of $X(K)$ along slope $\sigma$ yields a hyperbolic manifold. 
\end{corollary}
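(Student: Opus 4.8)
The plan is to reduce everything to the 6--Theorem of Agol and Lackenby \cite{ago00, lac00}: once we know that the slope $\sigma = p\mu + q\lambda$ has length greater than $6$ on a maximal embedded cusp cross-section, the filled manifold is hyperbolic. So the only real work is to convert the hypothesis $|q| > 5.373\,(1-\chi(F)/c)$ into the estimate $\ell(\sigma) > 6$. First I would observe that the hypotheses of \refthm{SlopeLengths} — which include those of \refthm{Hyperbolic} — guarantee that the interior of $X(K)$ is hyperbolic; since $Y$ is closed, $X(K)$ has exactly one torus cusp, coming from $\bdy N(K)$. Fix the maximal embedded horoball neighbourhood of this cusp; its boundary is a Euclidean torus $T$, and all slope lengths below are measured on $T$, which is the normalisation used in the proof of \refthm{SlopeLengths}.

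Next I would record the single geometric estimate that is needed. Choose Euclidean coordinates on $T$ so that $\mu$ acts by a horizontal translation of length $\ell(\mu)$ and $\lambda$ acts by translation through $a + h\,i$ with $h > 0$, so that $\area(T) = h\,\ell(\mu)$. Then $\sigma = p\mu + q\lambda$ is represented by the translation vector $(p\,\ell(\mu) + qa) + (qh)\,i$, whose length is at least the absolute value of its imaginary part:
\[ \ell(\sigma) \ \geq\ |q|\,h \ =\ |q|\,\frac{\area(T)}{\ell(\mu)}. \]
This is purely the remark that a vector is at least as long as its imaginary part, so no hard hyperbolic geometry enters here.

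Finally I would feed in two numerical inputs. From \refthm{SlopeLengths} we have $\ell(\mu) \leq 3 - 3\chi(F)/c = 3\bigl(1 - \chi(F)/c\bigr)$, a quantity that is at least $3$ since $\chi(F) \leq 0$. From the Cao--Meyerhoff analysis of minimal-volume cusped hyperbolic $3$--manifolds, the maximal cusp cross-section of any orientable cusped hyperbolic $3$--manifold has area bounded below by the universal constant (approximately $3.35$); in particular $\area(T) \geq 3.35$. Combining,
\[ \ell(\sigma) \ \geq\ |q|\,\frac{\area(T)}{\ell(\mu)} \ \geq\ \frac{3.35\,|q|}{3\bigl(1-\chi(F)/c\bigr)}. \]
Thus $|q| > 5.373\,(1-\chi(F)/c) \approx \tfrac{18}{3.35}\,(1-\chi(F)/c)$ forces $\ell(\sigma) > 6$, and the 6--Theorem then shows that Dehn filling $X(K)$ along $\sigma$ produces a hyperbolic manifold. (Note the hypothesis also forces $q \neq 0$, so $\sigma \neq \mu$, consistent with the fact that $\mu$-filling recovers $Y$.)

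I do not anticipate a genuine obstacle: the argument is bookkeeping on top of \refthm{SlopeLengths}, the elementary Euclidean-torus inequality above, and a standard lower bound on maximal cusp area. The one point that needs care is consistency of normalisation — the upper bound on $\ell(\mu)$ and the lower bound on $\area(T)$ must both be taken with respect to the \emph{same} maximal cusp, which they are — and one should carry the precise value $18/3.35$ rather than its rounding $5.373$ to keep the inequality $\ell(\sigma) > 6$ strict.
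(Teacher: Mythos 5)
Your proof is correct and follows essentially the same route as the paper: both arguments rearrange the relation between cusp area, $\ell(\mu)$, and $\ell(\sigma)$, then feed in the Cao--Meyerhoff lower bound $\area\geq 3.35$ and the bound on $\ell(\mu)$ from Theorem~\ref{Thm:SlopeLengths}, and finish with the 6--Theorem. The only cosmetic difference is that the paper cites the inequality $\area(C)\leq \ell(\sigma)\ell(\mu)/|i(\sigma,\mu)|$ from Burton--Kalfagianni, whereas you rederive it directly from the Euclidean parallelogram picture on the cusp torus.
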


\begin{proof}
As in \cite{bk17}, the Euclidean area of a maximal cusp $C$ in a one-cusped $3$-manifold $X$ satisfies
\[ \area(C) \leq \frac{\ell(\sigma)\ell(\mu)}{|i(\sigma,\mu)|}. \]
Work of Cao and Meyerhoff \cite{cm01} implies $\area(C)\geq 3.35$. Plugging in the bound on $\ell(\mu)$ from \refthm{SlopeLengths}, along with the fact that $|i(\sigma, \mu)| = |q|$, and solving for $\ell(\sigma)$, we obtain
\[ \ell(\sigma) \geq \frac{3.35c}{3(c-\chi(F))}\,|q|. \]
Thus if $|q|>(18/3.35)(1-\chi(F)/c) > 5.373(1-\chi(F)/c)$, then $\ell(\sigma)>6$ and the 6--Theorem gives the result. 
\end{proof}

\begin{corollary}\label{Cor:VolumeDF}
Suppose $\pi(K)$, $F$, and $Y$ satisfy the hypotheses of \refthm{SlopeLengths}, and also
$Y$ is either closed or has only torus boundary components, and 
$r(\pi(K),F)>4$. Let $\sigma = p\mu + q\lambda$ be a slope on $\bdy N(K)$. If $|q|>5.6267(1-\chi(F)/c)$ then the Dehn filling of $X(K)$ along slope $\sigma$ yields a hyperbolic manifold $N$ with volume bounded by
\begin{gather*}
\vol(Y\setminus K) \geq \vol(N) \geq \left( 1-\left(\frac{3.35\,c}{3(c-\chi(F))}\right)^2\right)^{3/2}\vol(Y\setminus K) \\
 \geq \left( 1-\left(\frac{3.35\,c}{3(c-\chi(F))}\right)^2\right)^{3/2} \frac{v_8}{2}(\tw(\pi(K))-\chi(F)).
\end{gather*}
\end{corollary}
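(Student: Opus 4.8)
The plan is to assemble the estimate from four pieces that are already in hand: the slope–length bound extracted in the proof of \refcor{ExceptionalLengthGeom} (itself a consequence of \refthm{SlopeLengths}), the quantitative hyperbolic Dehn filling theorem of Futer--Kalfagianni--Purcell~\cite{fkp08}, Thurston's theorem that hyperbolic volume strictly decreases under Dehn filling, and the lower volume bound of \refthm{volguts}. Since the standing hypotheses include those of \refthm{SlopeLengths}, and hence of \refthm{Hyperbolic}, the interior of $Y\setminus K$ is hyperbolic; because $Y$ is closed or has only torus boundary, $X(K)$ is a finite–volume hyperbolic manifold and the $K$–cusp is a genuine cusp, so Dehn filling along $\sigma$ makes sense and $\chi(\bdy Y)=0$.

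First I would reproduce the length estimate from the proof of \refcor{ExceptionalLengthGeom}. Let $C$ be a maximal embedded horoball neighbourhood of the $K$–cusp. Cao--Meyerhoff~\cite{cm01} give $\area(C)\ge 3.35$, and the elementary inequality $\area(C)\le \ell(\sigma)\ell(\mu)/|i(\sigma,\mu)|$ together with $|i(\sigma,\mu)|=|q|$ and the bound $\ell(\mu)\le 3(c-\chi(F))/c$ from \refthm{SlopeLengths} yields
\[ \ell(\sigma)\ \ge\ \frac{3.35\,c}{3(c-\chi(F))}\,|q|. \]
The hypothesis $|q|>5.6267\,(1-\chi(F)/c)$ is chosen precisely so that the right–hand side exceeds $2\pi$: indeed $5.6267\approx 6\pi/3.35$, and $\tfrac{3.35c}{3(c-\chi(F))}\cdot\tfrac{6\pi(c-\chi(F))}{3.35c}=2\pi$. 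Hence $\ell(\sigma)>2\pi$.

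Next I would invoke the Dehn filling theorem of~\cite{fkp08}: if a finite–volume hyperbolic $3$–manifold $M$ is filled along a slope of length $\ell>2\pi$ on one cusp, the result $N$ is hyperbolic and $\vol(N)\ge\bigl(1-(2\pi/\ell)^2\bigr)^{3/2}\vol(M)$. Applying this with $M=Y\setminus K$ and $\ell=\ell(\sigma)$, and then replacing $2\pi/\ell(\sigma)$ by the upper bound coming from the displayed inequality — legitimate because $x\mapsto(1-x^2)^{3/2}$ is decreasing on $[0,1]$ — gives the middle inequality of the statement. The leftmost inequality $\vol(Y\setminus K)\ge\vol(N)$ is Thurston's strict volume decrease under Dehn filling. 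For the last inequality, note that $r(\pi(K),F)>4$ is assumed and the remaining hypotheses of \refthm{volguts} are part of the standing assumptions (with $\chi(\bdy Y)=0$, and after replacing $\pi(K)$ by a weakly twist–reduced diagram via flypes, which changes neither $X(K)$ nor $\tw(\pi(K))$); thus \refthm{volguts} gives $\vol(Y\setminus K)\ge\tfrac{v_8}{2}(\tw(\pi(K))-\chi(F))$, and multiplying through by the non-negative factor $\bigl(1-(2\pi/\ell(\sigma))^2\bigr)^{3/2}$ completes the chain.

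There is no real conceptual difficulty here; the whole argument is a careful splicing of earlier results, and the main obstacle is bookkeeping with the numerical constants. The step needing the most attention is checking that the threshold $5.6267\,(1-\chi(F)/c)$ is exactly the value of $|q|$ forcing $\ell(\sigma)>2\pi$, and that substituting the length bound into the Futer--Kalfagianni--Purcell estimate reproduces the explicit constant displayed in the middle inequality. A secondary point to verify is that every hypothesis of \refthm{volguts} and of the Dehn filling theorem — in particular that $Y\setminus N(F)$ is atoroidal and $\bdy$-anannular and that all regions of $F\setminus\pi(K)$ are disks — really is forced by the hypotheses of \refthm{SlopeLengths} together with $r(\pi(K),F)>4$ and the restriction on $\bdy Y$.
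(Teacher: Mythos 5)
Your approach is exactly the paper's (which is terse: quote the length estimate, cite the volume bound of Futer--Kalfagianni--Purcell, and chain through \refthm{volguts}), so no disagreement there. However, you should not assert that the substitution ``gives the middle inequality of the statement,'' because it does not. From $\ell(\sigma)\ge \tfrac{3.35\,c}{3(c-\chi(F))}|q|$, the upper bound you obtain for $2\pi/\ell(\sigma)$ is
\[
\frac{2\pi}{\ell(\sigma)}\ \le\ \frac{6\pi\,(c-\chi(F))}{3.35\,c\,|q|}\ =\ \frac{5.6267\,(1-\chi(F)/c)}{|q|},
\]
which still carries both the $2\pi$ and the $|q|$; it is not $\tfrac{3.35\,c}{3(c-\chi(F))}$. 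In fact, with $F$ of genus~$1$ one has $\chi(F)=0$ and $\tfrac{3.35\,c}{3(c-\chi(F))}=3.35/3>1$, so the coefficient $\bigl(1-(\,\cdot\,)^2\bigr)^{3/2}$ in the corollary as printed would be undefined; this strongly suggests a typographical error in the displayed expression, and the coefficient produced by your (correct) argument should read
\[
\left(1-\left(\frac{6\pi\,(c-\chi(F))}{3.35\,c\,|q|}\right)^2\right)^{3/2},
\]
which is positive precisely when $|q|>5.6267(1-\chi(F)/c)$, matching the hypothesis. You should flag this discrepancy rather than claim your computation reproduces the stated constant.

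Two smaller remarks. First, the phrase ``changes neither $X(K)$ nor $\tw(\pi(K))$'' when flyping to a weakly twist-reduced diagram is slightly off: a flype of a non-twist-reduced diagram can reduce the number of twist regions. The cleaner observation is that \refdef{TwistRegion} already defines $\tw(\pi(L))$ as the count in a weakly twist-reduced diagram, so no adjustment of the twist number is actually involved. Second, your check that the hypotheses of \refthm{volguts} are forced (atoroidality, $\bdy$-anannularity, disk regions, $r>4$, $\chi(\bdy Y)=0$) is the right thing to do and is indeed where the added hypotheses ($Y$ closed or with torus boundary, and $r(\pi(K),F)>4$) enter; keep that verification explicit.
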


\begin{proof}
If $|q|>5.6267(1-\chi(F)/c)$, then $\ell(\sigma)>2\pi$ and the hypotheses of \cite[Theorem~1.1]{fkp08} hold. The lower bound on volume comes from that theorem, and from \refthm{volguts}. 
\end{proof}

%%%%%%%%%%%%%%%%%%%%%%%%%%%%%%%%%%%%%%%%%%%%%%%%%%%%%%%%%%%%%%%%%
\subsection{Combinatorial lengths}

We now turn to combinatorial results on Dehn filling. In \cite{lac00}, Lackenby gives conditions that guarantee that an alternating link on $S^2$ in $S^3$ admits no exceptional Dehn fillings, using the dual of angled polyhedra. We reinterpret his arguments in terms of angled polyhedra, as in \cite{fp07}, and generalise to angled chunks. The main theorem is the following.

\begin{theorem}\label{Thm:DehnFilling}
Let $\pi(L)$ be a weakly twist-reduced, weakly generalised alternating diagram of a link $L$ on a generalised projection surface $F$ in a 3-manifold $Y$.
Suppose 
$Y\setminus N(F)$ is atoroidal and $\bdy$-anannular, and $F$ has genus at least $1$. 
Finally, suppose that the regions of $F\setminus\pi(L)$ are disks, and $r(\pi(L),F)>4$.
For some component(s) $K_i$ of $L$, pick surgery coefficient $p_i/q_i$ (in lowest terms) satisfying $|q_i|> 8/\tw(K_i,\pi(L))$. Then the manifold obtained by Dehn filling $Y\setminus L$ along the slope(s) $p_i/q_i$ is hyperbolic.
\end{theorem}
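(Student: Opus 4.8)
\emph{Reduction.}
Since $r(\pi(L),F)>4$ we have $\hat r(\pi(L),F)\ge r(\pi(L),F)>4$, so all the hypotheses of \refthm{Hyperbolic} are met, and $X:=X(L)$ is hyperbolic; in particular $X$ is irreducible, boundary irreducible, atoroidal and anannular. Let $N$ be the manifold obtained from $X$ by filling the chosen cusps along the slopes $p_i/q_i$, and let $V_i\subset N$ denote the filling solid tori. It will be enough to show that $N$ is irreducible, boundary irreducible, atoroidal and anannular: when $\bdy Y$ has components of genus at least two we first pass to the double, exactly as in the proof of \refthm{Hyperbolic}, and then Thurston's hyperbolisation theorem together with the geometrisation theorem and the word-hyperbolicity argument of \cite{lac00} (to cover a closed $N$ with all cusps filled) give that $N$ is hyperbolic. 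So the task is to exclude essential spheres, disks, tori and annuli in $N$.

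\emph{Extending the angled chunk decomposition.}
I would start from the angled chunk decomposition of $X$ produced by \refprop{AngledChunkDecomp}, in which every filled cusp torus $\bdy N(K_i)$ is tiled by square boundary faces and every edge carries angle $\pi/2$. Following Lackenby \cite{lac00}, in the angled-polyhedron reinterpretation of Futer and Purcell \cite{fp07}, I would glue in each filling solid torus $V_i$ along the slope $p_i\mu_i+q_i\lambda_i$, reclassifying the squares on $\bdy N(K_i)$ as interior faces and assigning angles to the edges of $V_i$ so that every glued edge class sums to $2\pi$. The filling slope bounds a meridian disk of $V_i$, which becomes a (large) normal disk; the requirement that this disk and the other normal disks of $V_i$ satisfy condition \refitm{NormalDiskSum} of \refdef{AngledChunk}, together with the gluing conditions, amounts to asking that the combinatorial length of $p_i\mu_i+q_i\lambda_i$ on the square-tiled torus $\bdy N(K_i)$ be large. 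Because the diagram is weakly twist-reduced and $K_i$ meets $\tw(K_i,\pi(L))$ twist regions, the cusp $\bdy N(K_i)$ is built, combinatorially, from one ladder of squares per twist region, and a count of how the filling slope traverses these ladders should show that the needed inequality, in this $\pi/2$ normalisation, is exactly $|q_i|\,\tw(K_i,\pi(L))>8$ — precisely our hypothesis. Thus $N$ inherits a genuine angled chunk decomposition.

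\emph{Gauss--Bonnet.}
Once $N$ has an angled chunk decomposition, \refthm{IrredBdyIrred} gives that $N$ is irreducible and boundary irreducible, and \refthm{NormalForm}, \refprop{GaussBonnet} and \refprop{NonnegArea} apply. An essential sphere or disk in $N$ would normalise to one of negative combinatorial area, which is impossible. An essential torus or annulus would normalise to one of zero combinatorial area, hence, by \refprop{NonnegArea}, decompose into zero-area pieces: disks, edge-free annuli lying in non-contractible faces, or incompressible tori disjoint from the boundary. But the chunks $V_i$ contain no incompressible torus and have only disk faces, the $X$-chunks have only disk faces since every region of $F\setminus\pi(L)$ is a disk, and there are no bigon faces; so no zero-area torus or annulus meeting any $V_i$ can occur — in the annulus case this reuses the classification of \refthm{AnAnnularLink}, which returns nothing in the absence of bigons. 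A zero-area torus or annulus disjoint from every $V_i$ would lie in $X$, contradicting that $X$ is atoroidal and anannular. Hence $N$ is atoroidal and anannular, completing the argument.

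\emph{Main obstacle.}
The heart of the proof is the second step: setting up the extended angled structure across the filling solid tori and pinning the threshold to the exact value $8/\tw(K_i,\pi(L))$. This requires a careful analysis of how the filling slope $p_i\mu_i+q_i\lambda_i$ runs through the square tiling of $\bdy N(K_i)$ — in particular keeping track of twist regions that contain bigons, whose ladders of squares allow a slope to be combinatorially short along the twist direction but long transverse to it — and verifying Lackenby's combinatorial length estimate together with the induced angle-sum and normal-disk conditions in the chunk setting rather than the polyhedral one. Everything else is a direct application of the machinery of Sections~\ref{Sec:Chunks}--\ref{Sec:Hyperbolic}.
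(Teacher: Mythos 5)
Your proof takes a genuinely different route from the paper's, and that route has a real obstruction.

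The paper's proof is short: $Y\setminus L$ is hyperbolic by \refthm{Hyperbolic}; \refthm{CombLengthWGA} gives $\ell_c(p/q)\geq |q|\tw(K,\pi(L))\pi/4$; the hypothesis forces $\ell_c(p/q)>2\pi$; and then the Combinatorial $2\pi$--Theorem (\refthm{Comb2PiThm}) finishes. Crucially, \refthm{Comb2PiThm} is proved \emph{without} extending the chunk decomposition over the filling solid tori. Instead, it puts essential spheres/disks/tori/annuli in the filled manifold into \emph{admissible} form with respect to the original chunk decomposition of $X$ --- these surfaces are allowed to be singular and to have boundary arcs running through the interiors of chunks (where they cross the filling cores) --- and then uses the admissible Gauss--Bonnet formula \refprop{GaussBonnetAdmissible}, the slope-length estimate \refprop{AdmissibleSlopeLength}, and, for the closed case, Gabai's ubiquity theorem to get a word-hyperbolic fundamental group.

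Your proposal instead tries to glue the filling solid tori $V_i$ in as new angled chunks and then appeal directly to Theorems \ref{Thm:NormalForm}, \ref{Thm:IrredBdyIrred}, and \refprop{GaussBonnet}. This does not work within \refdef{Chunk} and \refdef{AngledChunk}. Two concrete problems: (a) the edges of the harlequin tiling of $\bdy N(K_i)$ are boundary edges of the two original chunks, each carrying interior angle $\pi/2$; once the filling is glued in, the angle-sum condition forces $\alpha_{V_i}(e)=2\pi-\pi/2-\pi/2=\pi$ on every such edge, i.e.\ exterior angle $0$ throughout $V_i$, so condition \refitm{NormalDiskSum} fails for the meridian disk of $V_i$ (indeed for any normal disk meeting only $V_i$-edges). (b) The vertices of the harlequin tiling are interior points of the filled manifold, not ideal vertices, but \refdef{Chunk} truncates every vertex of $\Gamma$ and thereby creates a boundary face there --- so either you introduce spurious boundary components, or you must admit untruncated (``material'') vertices, which the chunk machinery in this paper does not support. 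Your own acknowledged ``main obstacle'' --- deriving the threshold $|q_i|\tw(K_i,\pi(L))>8$ from the angle conditions on $V_i$ --- cannot be carried out on this foundation. Finally, even granting the extended decomposition, you do not actually establish the combinatorial length estimate; you assert that a count ``should'' produce the hypothesised bound, which is exactly the content of \refthm{CombLengthWGA} and its supporting lemmas (\reflem{Marc4.2}, \reflem{Marc5.5-5.6-5.7-5.8}) and is where the hard combinatorial work lives.

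So: the reduction to hyperbolicity of $X$ and the intended Gauss--Bonnet endgame are on the right track, but the middle of the argument needs to be replaced. Work with the original chunk decomposition and the notion of admissible surfaces (\refdef{Admissible}, \refdef{CombAreaAdmissible}), prove the combinatorial length bound for slopes on the cusp tori, and then invoke the combinatorial $2\pi$--theorem; do not try to angle the filling solid tori.
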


Here $\tw(K,\pi(L))$ denotes the number of twist regions that the component $K$ runs through. See \cite{im16} for the complete classification of exceptional fillings on hyperbolic alternating knots when $F=S^2$ and $Y=S^3$. \refthm{DehnFilling} has the following corollaries.

\begin{corollary}\label{Cor:DehnBoundOnq}
Suppose $\pi(L)$, $F$, and $Y$ satisfy the hypotheses of \refthm{DehnFilling}.
For some component(s) $K_i$ of $L$, pick surgery coefficient $p_i/q_i$ (in lowest terms) satisfying $|q_i|> 4$. Then the manifold obtained by Dehn filling $Y\setminus L$ along the slope(s) $p_i/q_i$ is hyperbolic.
\end{corollary}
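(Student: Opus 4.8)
The plan is to deduce this directly from \refthm{DehnFilling}. That theorem guarantees that the filling of $Y\setminus L$ along $p_i/q_i$ is hyperbolic as soon as $|q_i| > 8/\tw(K_i,\pi(L))$, so it suffices to show that every component $K_i$ of $L$ runs through at least two twist regions of $\pi(L)$: for then $8/\tw(K_i,\pi(L)) \le 4 < |q_i|$ and \refthm{DehnFilling} applies verbatim. Thus the entire content of the corollary is the combinatorial statement that, under the hypotheses of \refthm{DehnFilling}, no component of $L$ runs through exactly one twist region.

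To prove this, suppose for contradiction that some component $K_i$ meets the diagram only inside a single twist region $T$, which lies in a disk $\Delta\subset F$ meeting $\pi(L)$ transversely in its four ends. By \refitm{slope} of \refdef{AltKnots}, $T$ is nonempty. If $T$ is a single crossing, then either both strands there belong to $K_i$, so $\pi(K_i)$ is a one-crossing closed curve and $K_i$ is an unknot, or the other strand belongs to some $K_j$, and since $K_i$ (and then $K_j$) meets no further crossings, $K_i\cup K_j$ is an unlink split from the rest of $L$; both possibilities contradict \refcor{IrredBdryIrred} (together with \refthm{wgaprime} in $S^3$). So $T$ is a maximal string of $n\ge 1$ bigons, and I would examine the finitely many ways the four ends of $T$ can be joined, in $F\setminus\Delta$, by the closing arcs carried by $K_i$ (and by the second strand's component, if distinct). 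In each case the dichotomy is: either all closing arcs are inessential in $F$, so the portion of $\pi(L)$ that meets $K_i$ is confined to a disk in $F$ whose complement then contains a non-disk region of $F\setminus\pi(L)$ (if $L$ consists of this portion alone) or exhibits $L$ as split (otherwise), contradicting respectively the disk-region hypothesis on a surface of genus $\ge 1$, or \refcor{IrredBdryIrred}; or else some closing arc is essential in $F$, so the circles obtained by resolving $T$ are disjoint essential curves on $F$, which forces a compressing disk for $F$ meeting $\pi(L)$ at most four times, contradicting $r(\pi(L),F)>4$ (and using weak-primality to control the small cases). A short additional argument, using that $\pi(L)$ is weakly twist-reduced, handles the pairing in which both closing arcs cap off a single side of $T$ directly from weak-primality.

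I expect the main obstacle to be precisely this last part: ruling out the bigon-string case requires careful bookkeeping of how the ends of $T$ close up (and, when the two strands of $T$ lie on different components, which component carries which arc), and in each case producing one of the three contradictions above — a disk in $F$ meeting $\pi(L)$ at most twice with a crossing inside, a non-disk complementary region, or a compressing disk for $F$ meeting $\pi(L)$ at most four times. Once $\tw(K_i,\pi(L))\ge 2$ is established for every component, the corollary follows immediately from \refthm{DehnFilling}.
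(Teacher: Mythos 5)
Your overall plan matches the paper: reduce to showing $\tw(K_i,\pi(L))\ge 2$ for every component and then quote \refthm{DehnFilling}.  That reduction is exactly right.  But the argument you sketch for $\tw(K_i,\pi(L))\ge 2$ is a different (and substantially more laborious) route than the paper's, and as written it has genuine gaps.

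The paper's proof of the $\tw\ge 2$ claim is short: if $K_i$ meets only one twist region, then there is a simple closed curve $\gamma$ on $F$ parallel to $K_i$ that meets $\pi(L)$ at most once.  If $\gamma$ meets $\pi(L)$ once, the diagram fails to be checkerboard colourable.  If $\gamma$ is essential in $F$ and disjoint from $\pi(L)$, some region of $F\setminus\pi(L)$ is not a disk.  If $\gamma$ is trivial in $F$ and disjoint from $\pi(L)$, isotoping it across the twist region produces a disk whose boundary meets $\pi(L)$ exactly twice but which contains a crossing, contradicting weak primality.  This trichotomy is the whole argument; there is no case analysis on bigon strings and no appeal to irreducibility.

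By contrast, your enumeration of closures of the twist region has concrete problems.  First, in the single-crossing case with a second component $K_j$, you assert that $K_j$ then meets no further crossings; this is unjustified — only $K_i$ was assumed to meet one twist region, and $K_j$ can run through many others.  Second, in the bigon-string case, you claim that an essential closing arc ``forces a compressing disk for $F$ meeting $\pi(L)$ at most four times,'' contradicting $r(\pi(L),F)>4$.  An essential curve on $F$ need not bound any compressing disk (indeed $F$ is often incompressible in $Y$, in which case $r=\infty$); the correct contradiction in that case is that the curve, pushed into a complementary region, witnesses a non-disk region of $F\setminus\pi(L)$.  Third, you invoke \refthm{wgaprime}, which is stated for $S^3$, while the corollary is proved for general $Y$; and you do not use (or need) the checkerboard-colourability hypothesis, whose role in the paper's proof is precisely to kill the ``$\gamma$ meets $\pi(L)$ once'' case.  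I'd suggest replacing your case analysis with the paper's single parallel curve $\gamma$ and its three-way dichotomy; it is both simpler and avoids these pitfalls.
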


\begin{proof}
Suppose that some component $K_i$ of $L$ only passes through one twist-region. Then there exists a curve $\gamma$ on $F$ which is parallel to $K_i$ and meets $\pi(L)$ at most once. If $\gamma$ meets $\pi(L)$ once, then $\pi(L)$ is not checkerboard colourable. If $\gamma$ is essential in $F$ and does not meet $\pi(L)$, then some region of $F\setminus \pi(L)$ is not a disk. If $\gamma$ is trivial in $F$ and does not meet $\pi(L)$, then $\gamma$ can be isotoped across the twist region to meet $\pi(L)$ exactly twice, contradicting the fact that $\pi(L)$ is weakly prime. Hence $\tw(K_i,\pi(L))\geq 2$, and then by \refthm{DehnFilling}, any filling along $K_i$ where $|q_i|> 4$ will be hyperbolic.
\end{proof}

\begin{corollary}\label{Cor:DehnGenusBound}
Let $\pi(K)$ be a weakly twist-reduced, weakly generalised alternating diagram of a knot $K$ on a generalised projection surface $F$ in a 3-manifold $Y$.
Suppose $Y\setminus N(F)$ is atoroidal and $\bdy$-anannular, and $F$ has genus at least $5$. 
Finally, suppose that the regions of $F\setminus\pi(K)$ are disks, and $r(\pi(K),F)>4$. Then all non-trivial fillings of $Y\setminus K$ are hyperbolic.
\end{corollary}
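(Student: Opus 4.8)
The plan is to deduce the corollary directly from \refthm{DehnFilling}, whose hypotheses are exactly those assumed here (genus at least $5$ certainly implies genus at least $1$). So it suffices to check that the standing hypotheses force $\tw(\pi(K)) > 8$. Indeed, since $K$ is a knot, every twist region of $\pi(K)$ is traversed by $K$, so $\tw(K,\pi(K)) = \tw(\pi(K))$; and a non-trivial Dehn filling of $X(K)$ is a filling along a slope $p/q$ in lowest terms with $q\neq 0$, hence $|q|\geq 1$. Once $\tw(\pi(K))>8$ we therefore have $|q|\geq 1 > 8/\tw(K,\pi(K))$, and \refthm{DehnFilling} gives that the filling is hyperbolic.

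To bound $\tw(\pi(K))$, first note that condition \refitm{Connected} of \refdef{AltKnots} requires $\pi(K)$ to meet every component of $F$, while the connected curve $K$ lies in a single component of $F\times I$; hence $F$ is connected, of genus $g\geq 5$, so $\chi(F)=2-2g\leq -8$. Next recall the Euler characteristic computation from the proof of \refthm{volguts}: collapsing each twist region of $\pi(K)$ to a vertex produces a $4$-valent graph $\Gamma$ on $F$ with $\tw(\pi(K))$ vertices, $2\tw(\pi(K))$ edges, and $r_S+r_W$ faces, where $r_S$ and $r_W$ count the non-bigon shaded and white regions of $F\setminus\pi(K)$; here we use that all regions are disks. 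Thus $\chi(F) = -\tw(\pi(K)) + r_S + r_W$, that is, $\tw(\pi(K)) = r_S + r_W - \chi(F) \geq r_S + r_W + 8$.

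It therefore remains only to see that $r_S + r_W \geq 1$, i.e.\ that $F\setminus\pi(K)$ is not made up entirely of bigons. If it were, then the $4$-valent diagram graph on the closed surface $F$, with $v$ vertices, would have $2v$ edges, and --- since each bigon face has two edges while each edge borders two faces --- also $2v$ faces, so $\chi(F) = v - 2v + 2v = v \geq 0$, contradicting $\chi(F)\leq -8$. Hence $r_S+r_W\geq 1$, so $\tw(\pi(K))\geq 9 > 8$, and the corollary follows from \refthm{DehnFilling}.

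The deduction is short, and the only load-bearing point beyond the Euler characteristic bookkeeping already carried out in \refsec{Volume} is the observation $r_S+r_W\geq 1$: this is precisely what handles the extremal case of genus exactly $5$, where $-\chi(F)=8$, and in particular the fillings with $|q|=1$, for which \refthm{DehnFilling} needs the strict inequality $\tw(\pi(K))>8$ rather than $\tw(\pi(K))\geq 8$.
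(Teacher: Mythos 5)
Your proof is correct and follows essentially the same route as the paper's: reduce to \refthm{DehnFilling} and use the Euler characteristic identity $\chi(F) = -\tw(\pi(K)) + r_S + r_W$ from the proof of \refthm{volguts} to force $\tw(\pi(K)) > 8$. The only difference is at the final step. The paper invokes checkerboard colourability to get $r_S + r_W \geq 2$ directly (the collapsed $4$-valent graph $\Gamma'$ has at least one edge, and a checkerboard colouring forces at least one white and one shaded face adjacent to it), giving $\tw(\pi(K)) \geq 2g(F) \geq 10$; you instead rule out $r_S + r_W = 0$ by an all-bigons Euler characteristic count, giving the slightly weaker but still sufficient bound $\tw(\pi(K)) \geq 2g(F)-1 \geq 9$. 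You also supply a justification for why $F$ must be connected (via condition \refitm{Connected} of \refdef{AltKnots} and the fact that $K$ is a knot), which the paper leaves implicit but which is needed to write $\chi(F) = 2-2g(F)$. Both versions are valid and of comparable length; yours buys a self-contained justification of the face count at the cost of a marginally weaker inequality.
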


\begin{proof}
Let $\Gamma'$ be a graph obtained by replacing each twist region of $\pi(K)$ by a vertex. If $t$ is the number of twist-regions in $\pi(K)$ and $f$ is the number of non-bigon faces, then Euler characteristic gives $-t+f=2-2g(F)$. Since $\pi(K)$ is checkerboard-colourable, it follows that $f\geq 2$ and hence that $t\geq 2g(F)\geq 10$. But then \refthm{DehnFilling} shows that any filling along $K$ where $q\not=0$ will be hyperbolic.
\end{proof}

\begin{remark}\label{Rem:CompareFilling}
Compare \refcor{ExceptionalLengthGeom} to \refthm{DehnFilling}. The hypotheses are slightly weaker for the corollary, and indeed, Burton and Kalfagianni's geometric estimates on slope length \cite{bk17} apply anytime a knot is hyperbolic with two essential spanning surfaces. Thus it applies to hyperbolic adequate knots, for which $r(\pi(K),F)=2$. However, the requirement on $|q|$ is also stronger in \refcor{ExceptionalLengthGeom}: it is necessary that $|q|\geq 6$ whenever $F$ has positive genus. In contrast, \refthm{DehnFilling} requires stronger restrictions on the diagram, namely $r(\pi(L),F)>4$, but works for links and only needs $|q|>4$, and often works with lower $|q|$. For example, when the diagram has a high number of twist regions, e.g.\ $\tw(K_i,\pi(L))>8$, \refthm{DehnFilling} will apply for any $q\not=0$.
\end{remark}

To prove \refthm{DehnFilling}, we will define a combinatorial length of slopes, and show that if the combinatorial length is at least $2\pi$, then the Dehn filling is hyperbolic. First, we need to extend our definition of combinatorial area to surfaces that may not be embedded, and may have boundary components lying in the interior of a chunk. 

\begin{definition}\label{Def:Admissible}
Let $S$ be the general position image of a map of a connected compact surface into a chunk $C$. Then $S$ 
is \emph{admissible} if:
\begin{itemize}
\item If $S$ is closed, it is $\pi_1$-injective in $C$.
\item $S$ and $\bdy S$ are transverse to all faces, boundary faces, and edges of $C$.
\item $\bdy S\setminus\bdy C$ is a (possibly empty) collection of embedded arcs with endpoints in interior faces of $C$, or embedded closed curves. 
\item $\bdy S\cap \bdy C$ consists of a collection of immersed closed curves or immersed arcs.
\item If an arc $\gamma_i$ of $\bdy S$ lies entirely in a single face of $C$, then either the arc is embedded in that face, or the face is not simply connected, and every subarc of $\gamma_i$ that forms a closed curve on the face does not bound a disk on that face. 
\item Each closed curve component satisfies conditions (3) through (5) of the definition of normal, \refdef{NormalSurface}. 
\item Each arc component satisfies conditions (4) and (5) of \refdef{NormalSurface}.
\end{itemize}
A surface $S'$ in an irreducible, $\bdy$-irreducible 3-manifold $M$ with a chunk decomposition is \emph{admissible} if each component of intersection of $S'$ with a chunk is admissible. 
\end{definition}

We noted above that admissible surfaces do not need to be embedded. In fact, they do not even need to be immersed. For example, an admissible surface could be the image of a compact surface under a continuous map. If $S$ is any immersed surface that is $\pi_1$-injective and boundary $\pi_1$-injective in a 3-manifold $M$ that is irreducible and $\bdy$-irreducible, we may homotope $S$ to be admissible. A similar result holds more generally for images of compact surfaces in general position.

We now adjust the definition of combinatorial area to include admissible surfaces.

\begin{definition}\label{Def:CombAreaAdmissible}
  Let $S$ be an admissible surface in an angled chunk $C$. Let $\sigma(\bdy S\setminus\bdy C)$ denote the number of components of the intersection of $\bdy S$ with the interior of $C$. For each point $v$ on $\bdy S$ lying between an interior face of $C$ and an arc of $\bdy S\setminus\bdy C$, define the exterior angle $\epsilon(v) = \pi/2$. Let $v_1, \dots, v_n \in \bdy S$ be all the points where $\bdy S$ meets edges of $C$, and all points on $\bdy S$ between an interior face of $C$ and an arc of $\bdy S\setminus\bdy C$. The \emph{combinatorial area} of $S$ is
  \[ a(S) = \sum_{i=1}^n \epsilon(v_i) - 2\pi\chi(S) + 2\pi\sigma(\bdy S\setminus\bdy C).\]

  Let $S'$ be an admissible surface in a 3-manifold $M$ with an angled chunk decomposition. Then the combinatorial area of $S'$ is the sum of the combinatorial areas of its components of intersection with the chunks. 
\end{definition}

\begin{proposition}[Gauss--Bonnet for admissible surfaces]\label{Prop:GaussBonnetAdmissible}
  Let $S$ be an admissible surface in an angled chunk decomposition. Let $\sigma(\bdy S \setminus \bdy M)$ be the number of arcs of intersection of $\bdy S\setminus \bdy M$ and the chunks. Then
  \[ a(S) = -2\pi\chi(S) + 2\pi\sigma(\bdy S\setminus\bdy M). \]
\end{proposition}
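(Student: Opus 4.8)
The plan is to mimic the proof of the embedded Gauss--Bonnet statement, \refprop{GaussBonnet}, adapting the bookkeeping to account for the extra ``free'' arcs of $\bdy S$ that lie in the interior of the chunks. As before, write $S$ as a union of its components of intersection $\{S_1,\dots,S_n\}$ with the chunks, and build up $S$ by gluing the $S_i$ along interior edges one at a time; let $S'$ denote an intermediate stage. The quantity to track is
\[
a(S') = \sum_i a(S_{i_k}) = \sum_{v\in\bdy S'}\epsilon(v) - 2\pi\chi(S') + 2\pi\,\sigma(\bdy S'\setminus\bdy M),
\]
where the sum runs over all the marked points $v$ (intersections with edges, plus the endpoints of interior arcs) and $\sigma$ counts arcs of $\bdy S'$ lying in chunk interiors. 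The base case, no gluings performed, is exactly \refdef{CombAreaAdmissible}. For the inductive step I would re-run Futer--Gu\'eritaud's case analysis used in \refprop{GaussBonnet}: gluing two edges with distinct endpoints, gluing two edges sharing a vertex, closing a bigon, capping a monogon, and our additional case of gluing two simple closed curves each lying in a single face. In each of these the triple $(\nu,\theta,\chi)$ combines so that the right-hand side is unchanged, and crucially none of these moves touch the interior arcs, so $\sigma$ is also unchanged; hence the displayed identity is preserved.

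Next I would evaluate $\sum_{v\in\bdy S}\epsilon(v)$ once $S'=S$, i.e.\ after all interior-edge gluings are complete. The marked points split into two types. Points where $\bdy S$ crosses an edge of a chunk contribute exterior angles that, by the computation in the proof of \refprop{GaussBonnet}, sum to zero: on a boundary edge meeting two interior faces we get $\pi-(\pi/2+\pi/2)=0$, and on interior edges glued in fours (or in the bounded case, surface edges where interior angles sum to $\pi$) we again get $\epsilon(v)=0$. The genuinely new contribution comes from the points where $\bdy S$ leaves an interior face along an interior arc of $\bdy S\setminus\bdy C$: by \refdef{CombAreaAdmissible} each such point is assigned $\epsilon(v)=\pi/2$, and each interior arc has two endpoints, so these points contribute $2\cdot(\pi/2)\cdot\sigma(\bdy S\setminus\bdy M) = \pi\,\sigma(\bdy S\setminus\bdy M)$. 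Plugging back in,
\[
a(S) = \pi\,\sigma(\bdy S\setminus\bdy M) - 2\pi\chi(S) + 2\pi\,\sigma(\bdy S\setminus\bdy M),
\]
which is not yet the claimed formula; this signals that I have miscounted and must instead only assign the $2\pi\sigma$ term in the definition and let the $\epsilon(v)=\pi/2$ endpoints be absorbed — i.e.\ I should present the bookkeeping exactly as \refdef{CombAreaAdmissible} does, so that the endpoint angles and the $2\pi\sigma$ correction together yield the net $2\pi\sigma(\bdy S\setminus\bdy M)$ on the right. Concretely, the cleanest route is: the $\pi/2$-angles at the $2\sigma$ endpoints supply $\pi\sigma$, and the definition's explicit $+2\pi\sigma$ term is what survives after the Euler-characteristic count of a disk-with-arcs is done correctly, because each interior arc, when cut, raises $\chi$ in a way that contributes an offsetting $-\pi\sigma$; carrying this through gives $a(S) = -2\pi\chi(S) + 2\pi\sigma(\bdy S\setminus\bdy M)$ as stated.

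The main obstacle I expect is precisely this Euler-characteristic accounting for the non-embedded, non-immersed pieces: when $\bdy S$ has arcs and closed curves sitting in chunk interiors, cutting $S$ along a chunk boundary does not simply partition it into surfaces-with-polygonal-boundary in the naive way, and one has to be careful that $\chi$ is additive and that the $\sigma$-count is consistent between the chunk-level definition and the global statement. I would handle this by first reducing to the case where $S$ meets each chunk in pieces whose boundary decomposes into polygonal arcs (on $\bdy C$) together with the free arcs, invoking admissibility to rule out the degenerate configurations, and then doing the induction on the number of glued interior edges exactly as above, checking in each of the (finitely many) gluing moves that $(\nu,\theta,\chi,\sigma)$ transform compatibly. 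Once the induction is set up correctly, the final evaluation is the same zero-sum-of-edge-angles computation as in \refprop{GaussBonnet}, with the only genuinely new input being the $\pi/2$ assignment at interior-arc endpoints, which is built into \refdef{CombAreaAdmissible} so as to make the bounded-surface-face case and the interior-arc case formally identical.
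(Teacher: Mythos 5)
Your overall strategy — running the Futer--Gu\'eritaud induction of \refprop{GaussBonnet} and carrying along a $\sigma$-term — is exactly the paper's proof, but the final angle count has a genuine error. You assert that once $S' = S$ the remaining exterior angles sum to $\pi\sigma(\bdy S\setminus\bdy M)$, supplied by the $\pi/2$-angles at the $2\sigma$ interior-arc endpoints, and then observe that this overshoots the claimed formula by $\pi\sigma$. The discrepancy arises because you have double-counted those endpoint angles. By \refdef{Admissible} the arcs of $\bdy S\setminus\bdy C$ have their endpoints in interior faces, and by transversality the cut locus $S\cap F$ emanates from each such endpoint along an arc in that interior face $F$. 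When the two chunks sharing $F$ are glued, these two cut-locus arcs are identified and the two endpoint-vertices — one from each chunk, each carrying interior angle $\pi/2$ — merge into a single vertex of $\bdy S$ with interior angle $\pi/2 + \pi/2 = \pi$, hence $\epsilon(v) = 0$, exactly as for boundary-edge crossings. So at the end of the induction the full angle sum $\sum_v \epsilon(v)$ vanishes, and $a(S) = -2\pi\chi(S) + 2\pi\sigma(\bdy S\setminus\bdy M)$ falls out immediately, with no residual correction needed.

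The patch you suggest — that cutting along each interior arc ``raises $\chi$ in a way that contributes an offsetting $-\pi\sigma$'' — cannot be made to work: $\chi(S)$ in the conclusion is simply the Euler characteristic of $S$, and the Euler-characteristic changes during the induction (a drop of one each time two pieces are glued along an arc) are already exactly what keeps the inductive identity invariant, as in \refprop{GaussBonnet}; they leave nothing over to absorb a stray $\pi\sigma$. The confusion traces back to your remark that the gluing moves ``don't touch the interior arcs.'' They do not glue those arcs, so $\sigma$ is indeed unchanged as the paper says; but they do glue the cut-locus arcs whose endpoints coincide with the interior-arc endpoints, and it is precisely this that makes the $\pi/2$ endpoint angles cancel in pairs rather than accumulate to $\pi\sigma$.
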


\begin{proof}
The proof is by induction, identical to that of \refprop{GaussBonnet}, except keeping track of a term $\sigma$ at each step, which remains unchanged throughout the proof. 
\end{proof}

\begin{definition}\label{Def:CombLength}
Let $C$ be an angled chunk in a chunk decomposition of $M$, and let $S$ be an admissible surface in $C$ that intersects at least one boundary face. Let $\gamma$ be an arc of intersection of $S$ with a boundary face of $C$. Define the \emph{length of $\gamma$ relative to $S$} to be
  \[ \ell(\gamma,S) = \frac{a(S)}{|\bdy S\cap \bdy M|}. \]

Suppose $\gamma$ is an immersed arc in a component of $\bdy M$ meeting boundary faces, with $\gamma$ disjoint from vertices on $\bdy M$. Let $\gamma_1, \dots, \gamma_n$ denote the arcs of intersection of $\gamma$ with boundary faces. Suppose that each $\gamma_i$ is embedded and no $\gamma_i$ has endpoints on the same boundary edge. For each $i$, let $S_i$ be an admissible surface in the corresponding chunk whose boundary contains $\gamma_i$. Then $S=\cup_{i=1}^n S_i$ is an \emph{inward extension of $\gamma$} if $\bdy S_i$ agrees with $\bdy S_{i+1}$ on their shared interior face and if $\gamma$ is closed, $\bdy S_n$ agrees with $\bdy S_1$ on their shared face.

Define the \emph{combinatorial length} of $\gamma$ to be
\[ \ell_c(\gamma) = \inf \left\{ \sum_{i=1}^n \ell(\gamma_i, S_i) \right\}, \]
where the infimum is taken over all inward extensions of $\gamma$. 
\end{definition}

The following is a generalisation of \cite[Proposition~4.8]{lac00}. 
\begin{proposition}\label{Prop:AdmissibleSlopeLength}
Let $S$ be an admissible surface in an angled chunk decomposition on $M$. Let $\gamma_1, \dots, \gamma_m$ be the components of $\bdy S\cap \bdy M$ on torus components of $\bdy M$ made up of boundary faces, each $\gamma_j$ representing a non-zero multiple of some slope $s_{i_j}$. Then
  \[ a(S) \geq \sum_{j=1}^m \ell_c(s_{i_j}). \]
\end{proposition}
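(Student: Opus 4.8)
The plan is to follow the proof of \cite[Proposition~4.8]{lac00}: rewrite $a(S)$ as a sum of the relative lengths of the boundary arcs of $S$ lying on boundary faces, and then, one slope at a time, recognise the pieces of $S$ carrying those arcs as an inward extension of a curve on the corresponding cusp torus.

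First I would normalise $S$. After an isotopy (pushing across a face any disk component of $S\cap C$ whose boundary lies entirely in one face of $C$, and discarding any sphere components of $S$) we may assume that no chunk piece of $S$ is such a disk. The purpose of this normalisation is the first claim: every admissible chunk piece $S_k$ then satisfies $a(S_k)\geq 0$. In the formula of \refdef{CombAreaAdmissible} all exterior angles are positive and the $\sigma$-term is nonnegative, so $a(S_k)\geq 0$ whenever $\chi(S_k)\leq 0$; the only remaining case after the normalisation is a disk piece, which either contains an interior arc, forcing $a(S_k)\geq\pi$, or is (after surgering self-intersections) a normal disk, for which \refitm{NormalDiskSum} of \refdef{AngledChunk} gives $\sum\epsilon(e_i)\geq 2\pi$, hence $a(S_k)\geq 0$. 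This is exactly the argument of \refprop{NonnegArea} with the $\sigma$-term carried along.

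Next comes the bookkeeping. Write $S=\bigcup_k S_k$ for the decomposition into chunk pieces, so $a(S)=\sum_k a(S_k)$. For each piece meeting a boundary face, \refdef{CombLength} gives $\ell(\delta,S_k)=a(S_k)/|\bdy S_k\cap\bdy M|$ for every boundary-face arc $\delta$ of $S_k$, so the sum of these over the boundary-face arcs of $S_k$ is at most $a(S_k)$; combining this with $a(S_k)\geq 0$ for the remaining pieces yields $a(S)\geq\sum_\delta\ell(\delta,S_{k(\delta)})$, the sum running over all boundary-face arcs $\delta$ of $\bdy S$, with $S_{k(\delta)}$ the piece containing $\delta$. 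Each summand is nonnegative, so I may discard every arc not lying on one of the distinguished tori and regroup the survivors by which curve $\gamma_j$ they belong to, obtaining $a(S)\geq\sum_{j=1}^m\sum_i\ell(\gamma_{j,i},S_{k(\gamma_{j,i})})$, where $\gamma_{j,1},\dots,\gamma_{j,n_j}$ are the arcs of $\gamma_j$ on boundary faces in cyclic order. For fixed $j$, consecutive arcs $\gamma_{j,i},\gamma_{j,i+1}$ lie in boundary faces of two chunks glued along a common interior face, and the boundaries of the corresponding pieces of $S$ agree on that face simply because $S$ is a genuine surface in $M$ (and close up when $\gamma_j$ does); thus $\{S_{k(\gamma_{j,i})}\}_i$ is an inward extension of $\gamma_j$, so the inner sum is at least $\ell_c(\gamma_j)$.

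The last step, which I expect to be the only real obstacle, is to pass from $\ell_c(\gamma_j)$ to $\ell_c(s_{i_j})$, i.e.\ to show that combinatorial length is homogeneous: if $\gamma_j$ represents $m_j\neq 0$ times the slope $s_{i_j}$, then $\ell_c(\gamma_j)=|m_j|\,\ell_c(s_{i_j})\geq\ell_c(s_{i_j})$. The bound $\ell_c(\gamma_j)\leq|m_j|\,\ell_c(s_{i_j})$ is immediate by taking $|m_j|$ parallel copies of a near-optimal inward extension of $s_{i_j}$. For the reverse bound one starts with an inward extension of $\gamma_j$, observes that its cyclic sequence of pieces traverses the boundary faces met by $s_{i_j}$ exactly $|m_j|$ times, breaks the sequence into $|m_j|$ periods, chooses the period of least total relative length, and then adjusts it near a single interior face so that it closes up into a bona fide inward extension of $s_{i_j}$ of total relative length at most $\ell_c(\gamma_j)/|m_j|$; controlling this closing-up modification while keeping track of the relative length it costs is the delicate point, and it is carried out in \cite{lac00}. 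Assembling the four steps gives $a(S)\geq\sum_{j=1}^m\ell_c(s_{i_j})$, as required.
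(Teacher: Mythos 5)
Your steps 1--4 reproduce, with full details, what the paper's one-sentence proof is asserting: the chunk pieces of $S$ have nonnegative combinatorial area, so $a(S)$ dominates the sum of relative lengths of the boundary-face arcs, and the pieces of $S$ that carry the arcs of each $\gamma_j$ form an inward extension of $\gamma_j$, giving $\sum_i \ell(\gamma_{j,i}, S_{k(\gamma_{j,i})}) \geq \ell_c(\gamma_j)$. So far this is the paper's argument.

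The step you single out as ``the only real obstacle'' --- passing from $\ell_c(\gamma_j)$ to $\ell_c(s_{i_j})$ via a homogeneity statement $\ell_c(k\cdot s)=|k|\,\ell_c(s)$ --- is not actually needed, and the paper does not perform it. In \refdef{CombLength}, $\ell_c$ is defined for a curve $\gamma$; when the paper then writes $\ell_c(s)$ for a slope $s$ (in this proposition and in \refthm{Comb2PiThm} and \refthm{CombLengthWGA}), the intended meaning is the infimum of $\ell_c(\gamma)$ over all curves $\gamma$ representing a nonzero multiple of $s$, together with all of their inward extensions. This is visible in the proof of \refthm{CombLengthWGA}: there one takes $\gamma$ to be an arbitrary representative of an arbitrary nonzero multiple $k$ of $p/q$, bounds its inward extensions, and concludes a bound on the combinatorial length of the slope. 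Under this reading, $\ell_c(\gamma_j)\geq\ell_c(s_{i_j})$ is immediate, since $\gamma_j$ together with the chunk pieces of $S$ is a competitor in the infimum defining $\ell_c(s_{i_j})$. No period-splitting or closing-up argument is required, and the reverse bound $\ell_c(\gamma_j)\leq |m_j|\,\ell_c(s_{i_j})$ that you discuss plays no role. If one did insist on reading $\ell_c(s)$ as the length of a curve traversing $s$ exactly once, then your step 5 would indeed be needed --- but in that case your proposal only sketches the argument and defers it to \cite{lac00}, so it would constitute a gap. As written, you have over-complicated a definitional matter; otherwise the proposal is sound and matches the paper.
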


\begin{proof}
  Intersections of $S$ with the chunks form an inward extension of $\gamma_j$. Sum the lengths of arcs relative to these intersections. 
\end{proof}

Propositions~\ref{Prop:GaussBonnetAdmissible} and~\ref{Prop:AdmissibleSlopeLength} allow us to generalise the following result of Lackenby. 

\begin{theorem}[Combinatorial $2\pi$-theorem, \cite{lac00}]\label{Thm:Comb2PiThm}
Let $M$ be a compact orientable 3-manifold with an angled chunk decomposition. Suppose $M$ is atoroidal and not Seifert fibered and has boundary
containing
a non-empty union of tori. Let $s_1, \dots, s_n$ be a collection of slopes on $\bdy M$, at most one for each component made up of boundary faces (but no slopes on exterior faces that are tori). Suppose for each $i$, $\ell_c(s_i)>2\pi$. Then the manifold $M(s_1, \dots, s_n)$ obtained by Dehn filling $M$ along these slopes is hyperbolic. 
\end{theorem}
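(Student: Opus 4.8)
The plan is to follow Lackenby's proof of the corresponding statement for angled ideal polyhedra~\cite{lac00}, substituting angled chunks for polyhedra and \refprop{GaussBonnetAdmissible} and \refprop{AdmissibleSlopeLength} for his Gauss--Bonnet and slope-length estimates. Write $N = M(s_1,\dots,s_n)$ and realise $N$ as $M\cup V_1\cup\dots\cup V_n$, where $V_i$ is the filling solid torus glued to the $i$-th boundary-face torus $T_i$ of $M$ so that $s_i$ bounds a meridian disk of $V_i$; let $K_i$ be the core of $V_i$, so that drilling out all the $K_i$ returns $M$. By geometrisation --- Thurston's hyperbolisation for Haken manifolds together with Perelman's theorem to exclude closed small Seifert fibered spaces in the non-Haken case --- it suffices to show $N$ is irreducible, boundary irreducible, atoroidal and not Seifert fibered. (If $\bdy M$ has exterior faces of genus greater than one, first double $N$ along them, exactly as in the proof of \refthm{Hyperbolic}, and work with the double.) Recall that $M$ itself is irreducible and boundary irreducible by \refthm{IrredBdyIrred}, and atoroidal and not Seifert fibered by hypothesis.

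The heart of the matter is excluding essential spheres, disks and tori. Suppose $S\subset N$ is such an essential surface, chosen to minimise $|S\cap(V_1\cup\dots\cup V_n)|$ within its isotopy class, so that $S$ meets each $V_i$ in meridian disks. If $S$ is disjoint from all the $V_i$, then $S\subset M$ and the hypotheses on $M$ already give a contradiction: a sphere or compressing disk contradicts \refthm{IrredBdyIrred}, while an essential torus contradicts atoroidality of $M$, after observing that a torus that is boundary parallel in $M$ is either boundary parallel in $N$ or, if parallel to some $T_i$, compressible in $N$ (it bounds a solid torus together with $V_i$). Otherwise set $S_0 = S\cap M$; its boundary consists of $k\geq 1$ curves, each a non-zero multiple of some slope $s_{i}$ on a boundary-face torus, plus at most one extra curve on an exterior face in the disk case, and cutting $S$ along meridian disks gives $\chi(S_0)=\chi(S)-k$. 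Using \refthm{NormalForm} and the discussion following \refdef{Admissible}, put $S_0$ into admissible form; each closed component is $\pi_1$-injective in a chunk, hence a two-sided incompressible torus in $M$, which by atoroidality is boundary parallel and may be isotoped away without increasing $|S\cap(V_1\cup\dots\cup V_n)|$ (or else it is compressible in $N$). So $S_0$ has no closed components and $\sigma(\bdy S_0\setminus\bdy M)=0$; then \refprop{GaussBonnetAdmissible} gives $a(S_0)=-2\pi\chi(S_0)=-2\pi\chi(S)+2\pi k$, while \refprop{AdmissibleSlopeLength} gives $a(S_0)\geq\sum_{j=1}^k\ell_c(s_{i_j})>2\pi k$, the curve on the exterior face (if any) not contributing. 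Hence $-2\pi\chi(S)>0$, which is impossible for a sphere ($\chi=2$), a disk ($\chi=1$: here $\chi(S_0)=1-k$ and one reads off $2\pi(k-1)>2\pi k$), or a torus ($\chi=0$: $2\pi k>2\pi k$). Thus $N$ is irreducible, boundary irreducible and atoroidal.

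The remaining step, ruling out that $N$ is Seifert fibered, is the one I expect to be the main obstacle: it is the one case in which the combinatorial-area estimate does not apply directly, because the surfaces obstructing a Seifert fibration (horizontal surfaces, vertical annuli) need not lie in $M$. Here I would reproduce Lackenby's argument from~\cite{lac00} essentially verbatim, as it uses nothing about the polyhedral structure beyond what has now been established. Since $N$ is already known to be irreducible, boundary irreducible and atoroidal, a Seifert fibered $N$ must be one of a short list: a solid torus (excluded by boundary irreducibility); $T^2\times I$ or the twisted $I$-bundle over the Klein bottle, each of which, upon drilling the $K_i$ back out, would make $M$ Seifert fibered or toroidal, contrary to hypothesis; or a closed small Seifert fibered space, in which case the cores $K_i$ are isotopic to (possibly exceptional) fibres and drilling them returns a Seifert fibration to $M$, again a contradiction. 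This leaves $N$ irreducible, boundary irreducible, atoroidal and not Seifert fibered --- and hence also anannular, since for a manifold with torus boundary an essential annulus together with atoroidality and irreducibility would force a Seifert fibration --- so geometrisation gives that $N$ is hyperbolic.
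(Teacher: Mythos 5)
Your normal-form combinatorial-area argument ruling out essential spheres, disks, and tori in $N=M(s_1,\dots,s_n)$ matches the paper's first paragraph, and the reduction to the closed case via doubling is fine. But the step you flag as the main obstacle---ruling out that $N$ is Seifert fibered when it is closed---is where your proposal goes wrong, and it is also where you depart from what Lackenby (and this paper) actually do. You say you would ``reproduce Lackenby's argument essentially verbatim,'' but the argument you then sketch is not Lackenby's. Lackenby's argument, reproduced in the paper, does \emph{not} classify Seifert fibered possibilities and try to contradict them; it instead invokes Gabai's ubiquity theorem to show directly that $\pi_1(N)$ is word hyperbolic, by bounding $|S\cap\bdy M|$ linearly in the combinatorial length $\sigma(\gamma)$ for any admissible (possibly singular) planar surface $S$ with boundary on nonzero multiples of the $s_i$ and on a homotopically trivial loop $\gamma$. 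That requires \refprop{GaussBonnetAdmissible} and \refprop{AdmissibleSlopeLength} in their full strength for admissible (not merely embedded normal) surfaces, which is why the paper developed that machinery; your sketch never uses it.

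The ``short list'' argument you substitute does not work. The key claim---that if $N$ is a closed small Seifert fibered space then the cores $K_i$ of the filling solid tori are isotopic to (possibly exceptional) fibres, so drilling them returns a Seifert fibration to $M$---is false. For instance $S^3$ is an atoroidal small Seifert fibered space, and a hyperbolic knot $K\subset S^3$ is not isotopic to a fibre of any Seifert fibration of $S^3$, yet $S^3$ is recovered from $S^3\setminus N(K)$ by meridional filling; drilling $K$ gives a hyperbolic, not Seifert fibered, manifold. Nothing in ``irreducible, $\bdy$-irreducible, atoroidal, not SFS'' for $M$ constrains the cores to be fibres in $N$, so your argument produces no contradiction in the small-SFS case. (A related minor issue: you also need to rule out that $N$ has finite fundamental group, which Perelman's geometrisation requires you to handle separately; the ubiquity-theorem route makes this automatic, since a word hyperbolic group is either finite or contains $\mathbb{Z}$, and the cores are shown there to have infinite order.) To repair the proof along the paper's lines you should carry out the two ubiquity-theorem estimates: first that each surgery core has infinite order in $\pi_1(N)$, then the linear isoperimetric-type bound $|S\cap\bdy M|\le (2\pi/\epsilon)\sigma(\gamma)$ using $\ell_c(s_i)\ge 2\pi+\epsilon$, after which \cite[Theorem~2.1]{lac00} gives word hyperbolicity and hence hyperbolicity of $N$.
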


\begin{proof}
The proof is nearly identical to that of \cite[Theorem~4.9]{lac00}. In particular, if $M(s_1, \dots, s_n)$ is toroidal, annular, reducible, or $\bdy$-reducible, then it contains an essential punctured torus, annulus, sphere, or disk $S$ with punctures on slopes $s_i$. The punctured surface $S$ can be put into normal form. \refprop{GaussBonnetAdmissible} implies $a(S) = -2\pi\chi(S) \leq 2\pi|\bdy S|$, and \refprop{AdmissibleSlopeLength} implies $a(S)>2\pi|\bdy S|$, a contradiction.

If not all components of $\bdy M$ are filled, then Thurston's hyperbolisation theorem
immediately implies $M(s_1, \dots, s_n)$ is hyperbolic. If $M(s_1, \dots, s_n)$ is closed, hyperbolicity will follow when we show it has word hyperbolic fundamental group. We use Gabai's ubiquity theorem \cite{gab98} in the form stated as \cite[Theorem~2.1]{lac00}. 

Suppose $M(s_1, \dots, s_n)$ is closed and the core of a surgery solid torus has finite order in $\pi_1(M(s_1, \dots, s_n))$. Then there exists a singular disk with boundary on the core. Putting the disk into general position with respect to all cores and then drilling, we obtain a punctured singular disk $S$ in $M$ with all boundary components on multiples of the slopes $s_i$. If necessary, replace $S$ with a $\pi_1$-injective and boundary $\pi_1$-injective surface with the same property. Because the boundary of $S$ meets $\bdy M$ in boundary faces, and runs monotonically through these faces, we may homotope $S$ so that each arc of $\bdy S$ in a boundary face is embedded (note this is not necessarily possible if $\bdy S$ lies on an exterior face). Similarly, moving singular points away from edges and faces of the chunk decomposition, then using the fact that $M$ is irreducible and boundary irreducible, we may homotope $S$ to satisfy the other requirements of an admissible surface. Then again \refprop{GaussBonnetAdmissible} implies $a(S)=-2\pi\chi(S) \leq 2\pi|\bdy S|$ and \refprop{AdmissibleSlopeLength} implies $a(S)>2\pi|\bdy S|$, a contradiction. Thus each core of a surgery solid torus has infinite order.

Let $\gamma$ be a curve in $M$ that is homotopically trivial in $M(s_1, \dots, s_n)$, and arrange $\gamma$ to meet the chunk decomposition transversely. Let $S$ be a compact planar surface in $M$ with $\bdy S$ consisting of nonzero multiples of the slopes $s_i$ as well as $\gamma$. If necessary, replace $S$ with a $\pi_1$-injective and boundary $\pi_1$-injective surface with the same property, and adjust $S$ to be admissible. Let $\sigma(\gamma)$ be the number of arcs of intersection between $\gamma$ and the chunks of $M$. Let $\epsilon>0$ be such that $\ell_c(s_i)\geq 2\pi+\epsilon$ for all $i$. Then
\begin{align*}
(2\pi+\epsilon)|S\cap \bdy M| & \leq \sum_{j=1}^{|S\cap \bdy M|} \ell_c (s_{i_j}) \hspace{1.1in} \mbox{by definition of combinatorial length}\\
& \leq a(S) = -2\pi\chi(S) + 2\pi\sigma(\gamma) \quad \mbox{ by \refprop{AdmissibleSlopeLength} } \\
& < 2\pi|S\cap \bdy M| + 2\pi\sigma(\gamma) \quad\quad\quad\: \mbox{ since } \chi(S)=2-(|S\cap\bdy M|+1), \mbox{ so}
\end{align*}
\[ |S\cap \bdy M| < (2\pi/\epsilon)\sigma(\gamma). \]

We have found a constant $c=2\pi/\epsilon$ that depends on $M$ and $s_1, \dots, s_n$, but is independent of $\gamma$ and $S$ with $|S\cap \bdy M|\leq c\sigma(\gamma)$. Since $\sigma(\gamma)$ is the length of $\gamma$ in a simplicial metric on the chunk decomposition, the ubiquity theorem \cite[Theorem~2.1]{lac00} implies $\pi_1(M(s_1, \dots, s_n))$ is word hyperbolic. 
\end{proof}

\begin{theorem}\label{Thm:CombLengthWGA}
Let $\pi(L)$ be a weakly twist-reduced weakly generalised alternating diagram of a link $L$ on a generalised projection surface $F$ in a 3-manifold $Y$. Suppose $Y\setminus N(F)$ is atoroidal and $\bdy$-anannular, $F$ has genus at least one, all regions of $F\setminus\pi(L)$ are disks, and $r(\pi(L),F)>4$. Then the combinatorial length of the slope $p/q$ on a component $K$ of $L$ is at least $|q|\tw(K,\pi(L))\pi/4$. 
\end{theorem}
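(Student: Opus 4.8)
The plan is to follow Lackenby's computation of the combinatorial length of a slope on an alternating link \cite{lac00}, transported to the angled chunk decomposition of $X(L)$ furnished by \refprop{AngledChunkDecomp}, in which every edge carries exterior angle $\pi/2$. Fix the slope $s=p/q$ on the torus $\bdy N(K)$; if $q=0$ the asserted bound is vacuous, so assume $q\neq 0$. This torus is tiled by boundary faces (squares) of the truncated chunks, organised in consecutive blocks along $K$, one block for each passage of $K$ through a twist region of $\pi(L)$. A simple closed curve of slope $s$ winds $|q|$ times longitudinally around $K$, so it meets the boundary faces in arcs $\gamma_1,\dots,\gamma_n$ that fall into at least $|q|\,\tw(K,\pi(L))$ natural groups, each group being the intersection of $s$ with one block. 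So the first step is: fix an arbitrary inward extension $S=\bigcup_{i=1}^{n}S_i$ of $s$, as in \refdef{CombLength}, and record this partition of its boundary arcs.

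The second step is to bound the combinatorial area of each admissible piece $S_i$ from below. By \refdef{CombAreaAdmissible}, $a(S_i)$ is a sum of terms $\pi/2$ (one for each edge met by $\bdy S_i$ and each corner where $\bdy S_i$ crosses an interior face) minus $2\pi\chi(S_i)$ plus $2\pi\sigma(\bdy S_i\setminus\bdy C)$; a non-negativity estimate $a(S_i)\ge 0$ holds just as in \refprop{NonnegArea}, and the pieces of very small area — those whose boundary meets only two or three edges — are ruled out by the normality conditions together with weak primeness and $r(\pi(L),F)>4$, exactly the analysis carried out in \reflem{NormalSquare}, \refcor{NoNormalBigons}, and the proof of \refprop{AngledChunkDecomp}. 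The crucial local estimate, which I would establish by adapting the combinatorial count of \cite{lac00} to chunks, is that for any inward extension the total normalised length $\sum \ell(\gamma_i,S_i)=\sum a(S_i)/|\bdy S_i\cap\bdy M|$ taken over the arcs $\gamma_i$ belonging to a single block is at least $\pi/4$; for a block coming from a single crossing adjacent to no bigons one checks directly that the contribution is at least $\pi/2$. The hypothesis that $\pi(L)$ is weakly twist-reduced (\refdef{WeaklyTwistReduced}) is used here to control the combinatorics of the chunks inside a string of bigons.

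Summing the per-block estimate over the $|q|\,\tw(K,\pi(L))$ (or more) blocks gives $\sum_{i=1}^{n}\ell(\gamma_i,S_i)\ge |q|\,\tw(K,\pi(L))\,\pi/4$ for every inward extension, and passing to the infimum over inward extensions gives $\ell_c(s)\ge |q|\,\tw(K,\pi(L))\,\pi/4$, which is the claim.

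The main obstacle is the per-block estimate, and in particular the reason the bound is phrased with the number of twist regions rather than the number of crossings. An admissible piece $S_i$ need not be embedded and may meet many boundary faces simultaneously, so a single surface can straddle an entire string of bigons and, by inflating $|\bdy S_i\cap\bdy M|$, attempt to make each $\ell(\gamma_i,S_i)$ arbitrarily small; one must show that this cannot reduce the normalised length of a block below $\pi/4$. Porting Lackenby's count to the chunk setting requires re-examining the arrangement of boundary squares and interior edges around a bigon string and verifying that the uniform $\pi/2$ angle assignment, together with weak primeness and $r(\pi(L),F)>4$, still forces the estimate; the remaining parts of the argument are routine adaptations of \cite{lac00}.
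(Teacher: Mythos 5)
Your outline correctly identifies the strategy (adapt Lackenby's combinatorial-length argument to the angled chunk decomposition) and the right order of magnitude for the answer, but the proof has a genuine gap: the ``crucial local estimate'' — that every block of arcs associated to a single passage of $K$ through a twist region contributes at least $\pi/4$ to $\sum \ell(\gamma_i,S_i)$ — is stated, not proven, and it is precisely where the real work lives. You acknowledge this yourself (``the main obstacle is the per-block estimate''), but then defer to ``adapting the combinatorial count of \cite{lac00}'' without carrying it out.

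The paper does not actually prove a per-block estimate. Instead it unwinds $\bdy N(K)$ in the longitude direction, assigns integer $x$-coordinates to the boundary squares in the harlequin tiling, and counts \emph{transition arcs} — indices $i$ with $x_i'\neq x_{i+1}'$, i.e.\ places where $\gamma$ genuinely advances longitudinally rather than skirting. This reformulation is not cosmetic: within a bigon string every admissible piece $S_i$ can be a boundary bigon of combinatorial area zero (the inward extension can wrap around a crossing arc again and again), so a naive per-block bound fails unless one specifies that the curve actually crosses the block. Tracking $x_i'$ is exactly the device that forces this. The paper then needs two nontrivial lemmas: an analogue of Lackenby's Lemma~4.2 (here Lemma~\ref{Lem:Marc4.2}) showing that a non-normal admissible piece meeting $\bdy X$ has strictly positive area and that positive area already gives $a(S_i)/|S_i\cap\bdy X|\geq\pi/4$, and an analogue of Lackenby's Lemmas~5.5--5.8 (here Lemma~\ref{Lem:Marc5.5-5.6-5.7-5.8}) classifying when consecutive pieces can simultaneously have zero area, which uses the classification of zero-area normal squares (Lemma~\ref{Lem:NormalSquare}), weak primeness, and weakly-twist-reduced to rule out bad configurations. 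These are where the hypotheses $r(\pi(L),F)>4$, genus $\geq 1$, and disk regions are actually consumed. Until your per-block estimate is restated in a form that survives inward extensions made entirely of boundary bigons and then proved via this kind of local classification, the argument does not close.
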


\begin{proof}[Proof of \refthm{DehnFilling} from \refthm{CombLengthWGA}]
By \refthm{Hyperbolic}, $Y\setminus L$ is hyperbolic. Thus it is atoroidal and not Seifert fibered. By \refthm{CombLengthWGA}, the combinatorial length of $p/q$ is at least $|q|\tw(K,\pi(L))\pi/4$. By hypothesis, $|q|>8/\tw(K,\pi(L))$, so $\ell_c(p/q)>2\pi$.
Since $K$ lies on $F$, in the angled chunk decomposition of the link exterior, each slope on $K$ lies on a torus boundary component made up of boundary faces and not exterior faces. 
Then \refthm{Comb2PiThm} implies that the manifold obtained by Dehn filling $K$ along slope $p/q$ is hyperbolic. 
\end{proof}

The remainder of the section is devoted to the proof of \refthm{CombLengthWGA}.

\begin{lemma}[See Lemmas~4.2 and~5.9 of \cite{lac00}]\label{Lem:Marc4.2}
Let $\pi(L)$ be a weakly twist-reduced, weakly generalised alternating diagram on a generalised projection surface $F$ in a 3-manifold $Y$. 
Suppose all regions of $F\setminus\pi(L)$ are disks and $r(\pi(L),F)>4$. If $S$ is an orientable admissible surface in a chunk $C$ of $X:=Y\setminus N(L)$, and $S\cap \bdy X\neq \emptyset$, and further $\bdy S$ does not agree with the boundary of a normal surface, then $a(S)>0$.

Furthermore, if $S$ is normal or admissible and $a(S)>0$, then $a(S)/|S\cap \bdy X| \geq \pi/4$.
\end{lemma}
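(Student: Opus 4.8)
The plan is to mimic Lackenby's Lemmas~4.2 and~5.9 in \cite{lac00}, with the bookkeeping adapted to chunks. Since every edge of $C$ has exterior angle $\pi/2$ by \refprop{AngledChunkDecomp}, and every ``cap point'' of an arc of $\bdy S\setminus\bdy C$ also has exterior angle $\pi/2$ by \refdef{CombAreaAdmissible}, the combinatorial area formula reads
\[ a(S)\;=\;\tfrac{\pi}{2}\,k\;+\;3\pi\,\sigma_{a}\;+\;2\pi\,\sigma_{c}\;-\;2\pi\chi(S), \]
where $k$ is the number of points of $\bdy S$ on edges of $C$, $\sigma_{a}$ the number of arc components of $\bdy S\setminus\bdy C$, and $\sigma_{c}$ the number of its closed components. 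Write $n=|S\cap\bdy X|$ for the number of arcs of $\bdy S$ lying in boundary faces; since each such arc has two endpoints on distinct occurrences of boundary edges, and boundary edges are edges of $C$, we have $k\ge 2n$. Because $C$ is irreducible and $S$ is orientable, $S$ is not a sphere; because $\bdy S$ meets $\bdy X$, it is not closed, so $\chi(S)\le 1$ with equality iff $S$ is a disk. If $\chi(S)\le 0$ both assertions are immediate, since $a(S)\ge \tfrac{\pi}{2}k\ge \pi n>0$ (using $k\ge 1$), so all the work is in the disk case $\chi(S)=1$, where $a(S)=\tfrac{\pi}{2}k+3\pi\sigma_a+2\pi\sigma_c-2\pi$.

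For the first assertion, suppose $S$ is a disk with $a(S)\le 0$. Then $\sigma_a=\sigma_c=0$ and $k\le 4$, since any arc or closed component of $\bdy S\setminus\bdy C$ would push $a(S)$ up to at least $\pi$. After the standard homotopy making $\bdy S$ embedded (removing self-intersections of $\bdy S$ using irreducibility of $C$, as in \cite{lac00}), such an $S$ is a normal disk. I claim no normal disk meeting a boundary face has $k\le 3$: if $k=0$ it lies in a single face, necessarily a disk since all faces of the decomposition are disks, contradicting admissibility; if $1\le k\le 3$, pushing $\bdy S$ off the boundary faces and projecting to $F$ yields a closed curve meeting $\pi(L)$ at most four times transversely, which is ruled out by weak primeness, $r(\pi(L),F)>4$, and the parity of the intersection of a closed curve with the $4$--valent graph $\pi(L)$ --- exactly the case analysis in the verification of condition \refitm{NormalDiskSum} of \refdef{AngledChunk} in the proof of \refprop{AngledChunkDecomp}, with the weakly twist--reduced hypothesis entering the two--crossing subcase as it does in \reflem{NoParabComprAnnulus}. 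Hence $k=4$, so $a(S)=0$ and $\bdy S$ has one of the three forms of \reflem{NormalSquare}; in every case $\bdy S$ is the boundary of a normal surface, contrary to hypothesis. Therefore $a(S)>0$.

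For the ratio bound, let $a(S)>0$; the case $\chi(S)\le 0$ is done above, so take $\chi(S)=1$. If $\sigma_a+\sigma_c\ge 1$ then $a(S)\ge \tfrac{\pi}{2}k+\pi$, and with $n\le k/2$ we get $a(S)/n\ge \pi$. Otherwise $a(S)=\tfrac{\pi}{2}k-2\pi$; since $a(S)>0$ and, by \reflem{NormalSquare}, \refcor{NoNormalBigons}, and (for admissible $S$) the previous paragraph, a disk has $k\ne 1,2,3$ while $k=4$ forces $a(S)=0$, we must have $k\ge 5$. Using $n\le\lfloor k/2\rfloor$ this gives
\[ \frac{a(S)}{n}\;\ge\;\frac{\tfrac{\pi}{2}k-2\pi}{\lfloor k/2\rfloor}, \]
whose right-hand side is at least $\pi/4$ for every integer $k\ge 5$: for $k=5$ it equals $(\pi/2)/2=\pi/4$, and for $k\ge 6$ it is at least $\bigl(\tfrac{\pi}{2}k-2\pi\bigr)/(k/2)=\pi-4\pi/k\ge\pi/3$.

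The main obstacle is not the area arithmetic but the passage from admissible (possibly singular) surfaces to the normal, embedded situation: the low-edge-count analysis of \refprop{AngledChunkDecomp} and \reflem{NormalSquare} is written for embedded normal disks, and care is needed to check that removing self-intersections of $\bdy S$ and projecting to $F$ genuinely produces the diagrammatic contradictions (violating weak primeness, $r(\pi(L),F)>4$, or weak twist reducedness) without disturbing the count $n$. Matching this with the normalization of $|S\cap\bdy X|$ used in \refdef{CombLength} is the only other point needing attention.
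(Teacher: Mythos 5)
Your combinatorial area formula $a(S)=\tfrac{\pi}{2}k+3\pi\sigma_a+2\pi\sigma_c-2\pi\chi(S)$ is correct, and your treatment of the cases $\chi(S)\le 0$, $\sigma_a+\sigma_c\ge 1$, and the ratio arithmetic leading to $k\ge 5$ with $a(S)/n\ge\pi/4$ matches the paper in substance. (Your intermediate claim $a(S)\ge\tfrac{\pi}{2}k+\pi$ is off by $\pi$ when $\sigma_a=0,\sigma_c\ge 1$ — the right bound is $a(S)\ge\tfrac{\pi}{2}k$ — but the conclusion $a(S)/n\ge\pi$ still holds via $k\ge 2n$.)

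The genuine gap is in the first assertion, and it is exactly the obstacle you flag at the end. You reduce to a disk with $\sigma_a=\sigma_c=0$ and $k\le 4$, then say ``after the standard homotopy making $\bdy S$ embedded, such an $S$ is a normal disk'' and derive a contradiction because the boundary of this normal disk must then be one of the three forms of \reflem{NormalSquare}. But that homotopy changes $\bdy S$: after removing self-intersections, the new boundary curve is not the original one, so concluding that \emph{it} bounds a normal disk does not contradict the hypothesis that the \emph{original} $\bdy S$ is not the boundary of a normal surface. The paper instead argues by strict area descent, which is what is actually needed here: if $\bdy S$ is embedded, the Loop Theorem gives an embedded disk with the same boundary, which would be normal — a direct contradiction with the hypothesis. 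If $\bdy S$ has a self-intersection point $p$, take a subarc $\alpha$ of $\bdy S$ from $p$ to $p$ meeting strictly fewer edges and boundary faces; $\alpha$ bounds a disk, and iterating gives a normal disk $D$ with $a(D)<a(S)$, since each removed edge crossing costs $\pi/2$. Then $a(D)\ge 0$ by \refprop{NonnegArea} forces $a(S)>0$, contradicting $a(S)\le 0$. Your argument needs this descent step (or an equivalent) to dispose of the non-embedded case; without it, the case analysis on $k\le 4$ only applies to an auxiliary surface whose boundary need not equal $\bdy S$.
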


\begin{proof}
Suppose $a(S)\leq 0$. By definition, $a(S) = \sum \epsilon(v_i) - 2\pi\chi(S) + 2\pi\sigma(\bdy S\setminus \bdy X)$. 

If $\chi(S)<0$, then the combinatorial area is positive. So $\chi(S)\geq 0$. Since $S$ meets a boundary face, it cannot be a sphere or torus. It follows that $S$ is a disk or annulus. If an annulus, then $a(S) \geq \sum\epsilon(v_i)$, and since $S\cap \bdy X\neq \emptyset$, at least two terms in the sum come from intersections with boundary edges, so the sum is strictly positive. So $S$ is a disk.

Next, note if $\sigma(\bdy S\setminus \bdy X)>1$, then $a(S)>0$. If $\sigma(\bdy S\setminus \bdy X)=1$ and $a(S)\leq 0$, then $\sum \epsilon(v_i)=0$. Again this is impossible: two terms in the sum arise as endpoints of an arc in $\bdy S\setminus \bdy X$, and each has angle $\epsilon(v_i)=\pi/2$. So $\sigma(\bdy S\setminus \bdy X)=0$.

Now as in the proof of \cite[Lemma~4.2]{lac00}, consider the number of intersections of $S$ with interior edges of the chunk. If $S$ meets no interior edges, then it meets at least two boundary faces
(by representativity and weakly prime conditions), so $a(S)\geq 0$.
If $\bdy S$ is embedded, the Loop Theorem gives an embedded disk with the same boundary, but by assumption, $\bdy S$ is not the boundary of a normal disk. Thus there is a point $p$ of intersection of $\bdy S$ with itself. 
Then there is an arc $\alpha$ of $\bdy S$ running from $p$ to $p$ that meets $\bdy X$ fewer times than $\bdy S$, and a disk with boundary $\alpha$. If the boundary of the disk is not embedded, repeat. If it is embedded, the Loop Theorem gives a normal disk with area strictly less than that of $S$. \refprop{NonnegArea} then implies $a(S)>0$.

So now suppose $\bdy S$ meets an interior edge. Again $\bdy S$ is not embedded. 
Then there is a point of self intersection $p$ in $\bdy S$. Let $\alpha$ be an arc of $\bdy S$ running from $p$ to $p$; we can ensure it meets fewer edges or boundary edges than $\bdy S$. There exists a disk with boundary $\alpha$. If the boundary of $\alpha$ is not embedded, we repeat until we have a disk with embedded boundary. By the Loop Theorem, we obtain a normal disk $D$ meeting strictly fewer edges and boundary edges than $\bdy S$. \refprop{NonnegArea} implies $a(D)\geq 0$. So $a(S)>0$.

Finally, we show if $a(S)>0$ then $a(S)/|S\cap \bdy X| \geq \pi/4$. By definition,
\[ \frac{a(S)}{|S\cap \bdy X|} = \frac{\sum \epsilon(v_i) - 2\pi\chi(S) + 2\pi\sigma(\bdy S\setminus \bdy X)}{|S\cap\bdy X|}. \]
The sum $\sum \epsilon(v_i)$ breaks into $\pi|S\cap \bdy X| + \pi\sigma(\bdy S\setminus \bdy X) + \sum_{j} \epsilon(w_j)$ where $\{w_1,\dots,w_m\}$ denote the intersections of $\bdy S$ with interior edges, and each $\epsilon(w_j)=\pi/2$.

Note $\chi(S)\leq 1$ because $S$ is not a sphere. Also, $\sigma(\bdy S\setminus \bdy X)\geq 0$. Hence
\[ \frac{a(S)}{|S\cap \bdy X|} \geq \frac{\sum_{w_j} \pi/2}{|S\cap\bdy X|} + \pi - \frac {2\pi}{|S\cap\bdy X|}. \]

If $|S\cap \bdy X|\geq 3$, $a(S)/|S\cap\bdy X| \geq 0 + \pi -2\pi/3 >\pi/4.$

If $|S\cap \bdy X|=2$, $a(S)/|S\cap\bdy X| \geq (\sum_{w_j} \pi/2)/2 + \pi - (2\pi)/2 = m\pi/4$. Since $a(S)>0$, it follows that $m\geq 1$ and the sum is at least $\pi/4$. 

If $|S\cap \bdy X|=1$, $a(S)/|S\cap\bdy X| \geq \sum_{w_j} \pi/2 + \pi - 2\pi = m\pi/2 -\pi.$ Since $a(S)>0$, $\bdy S$ meets at least three interior edges, so this is at least $\pi/2 >\pi/4$. 
\end{proof}

\begin{remark}
  The proof of \cite[Lemma~4.2]{lac00} had additional cases, because the definition of normal in that paper required $\bdy S$ to avoid intersecting the same boundary face more than once, and the same interior edge more than once. Because our definition of normal, based off that of \cite{fg09}, did not have these restrictions, our proof is simpler.
\end{remark}

We now consider the boundary faces of an angled chunk. These are all squares. One diagonal of the square, running between vertices of the square that correspond to identified edges, forms a meridian of the link. The squares glue together to form what Adams calls a \emph{harlequin tiling} \cite{acf06}; see \reffig{HarlequinTiling}.
Unwind the boundary $\bdy X$ in the longitude direction by taking the cover corresponding to a meridian. Assign an $x$-coordinate to the centre of each square, with adjacent coordinates of squares differing by $1$, as on the right of \reffig{HarlequinTiling}. 

\begin{figure}
\import{figures/}{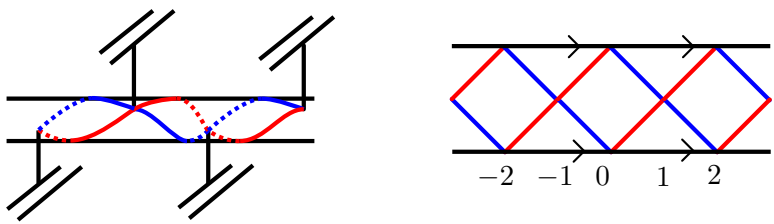}
\caption{The boundary tiling of an alternating knot is called a harlequin tiling. Right: unwinding in the longitude direction gives a string of quads as shown, with coordinates in $\ZZ$.}
\label{Fig:HarlequinTiling}
\end{figure}

A curve $\gamma$ representing a non-zero multiple $k$ of the slope $p/q$ lifts to an arc $\widetilde{\gamma}$ starting at $x=0$, ending at $k|q|\Cr(K,\pi(L))$, where $\Cr(K,\pi(L))$ denotes the number of crossings met by $K$. Note that crossings of $\pi(L)$ are double-counted if both strands are part of $K$.
Let $\gamma_i$ denote the $i$-th intersection of $\widetilde{\gamma}$ with lifts of boundary squares. The arc $\gamma_i$ lies in one boundary square, and has some $x$-coordinate $x_i$. Define $x_i'$ to be the integer $x_i' = (x_{i-1}+x_{i+1})/2$. Say $\gamma_i$ is a \emph{skirting arc} if it has endpoints on adjacent edges of a boundary square. 

Now let $S$ be an inward extension of $\gamma$. Each $\gamma_i$ is then part of the boundary of an admissible surface $S_i$ lying in a single chunk.

\begin{lemma}[See Lemmas~5.5, 5.6, 5.7, and 5.8 of \cite{lac00}]\label{Lem:Marc5.5-5.6-5.7-5.8}
The following hold for $S_i$, $\gamma_i$:
  \begin{enumerate}
  \item[(5.6)] If $S_i$ is admissible and $\gamma_i$ is a skirting arc, either $a(S_i)>0$ or $S_i$ is a boundary bigon.
  \item[(5.7)] Let $\gamma_i$ and $\gamma_{i+1}$ be non-skirting arcs in boundary squares $B_i$ and $B_{i+1}$, such that the gluing map gluing a side of $S_i$ to a side of $S_{i+1}$ does not take the point on $B_i$ to that on $B_{i+1}$ by rotating (clockwise or counter-clockwise) past a bigon edge. Then at least one of $a(S_i)$, $a(S_{i+1})$ is positive.
  \item[(5.8)] If exactly one of $\gamma_i$, $\gamma_{i+1}$ is a skirting arc, and $B_i$, $B_{i+1}$ satisfy the same hypothesis as in (5.7) above, and $x_i'\neq x_{i+1}'$, then at least one of $a(S_i)$, $a(S_{i+1})$ is positive.
  \item[(5.5)] There are at least $k|q|$ arcs $\gamma_i$ with $x_i'\neq x_{i+1}'$.
  \end{enumerate}
\end{lemma}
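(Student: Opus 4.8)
The plan is to follow Lackenby's arguments for \cite[Lemmas~5.5--5.8]{lac00} essentially line by line, translating his statements about angled polyhedra (and their duals) into the language of angled chunks, and replacing his primeness and twist-reducedness hypotheses by ``weakly prime'', ``weakly twist reduced'', and $r(\pi(L),F)>4$. The common engine for statements (5.6), (5.7), and (5.8) is \reflem{Marc4.2}: an orientable admissible disk $S_i$ in a chunk of $X$ that meets a boundary face and has $a(S_i)\le 0$ must have boundary agreeing with the boundary of a normal surface, and then by the Loop Theorem and irreducibility this is a normal disk, of combinatorial area $\ge 0$ by \refprop{NonnegArea}, hence of area exactly $0$ and therefore of one of the three forms of \reflem{NormalSquare}. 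So for each of (5.6)--(5.8) I would assume for contradiction that all the relevant $a(S_i)$ are nonpositive, pass to normal disks of those three forms, and then eliminate the configuration using how such disks sit on $F$ together with the gear-rotation gluing of \refprop{AltChunkDecomp}.

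For (5.6): if $\gamma_i$ is a skirting arc and $a(S_i)\le 0$, then $S_i$ is a normal disk of area zero meeting a boundary face. A skirting arc lies in a single boundary square with its two endpoints on \emph{adjacent} boundary edges, which is incompatible with forms (1) and (3) of \reflem{NormalSquare} (in both of those the arc in a boundary face runs between \emph{opposite} boundary edges). This leaves only form (2), the disk meeting two boundary faces in two adjacent boundary edges of each and encircling a single interior edge; this is precisely the boundary bigon of the statement, giving the claimed dichotomy. As in \cite{lac00}, one also checks the normalization does not disturb the rest of the inward extension, since the replacement fixes $\bdy S_i$ on the interior faces bordering $\gamma_i$.

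For (5.7) and (5.8): here I would superimpose $\bdy S_i$ and $\bdy S_{i+1}$ on the two chunks sharing the interior face they are glued along. If both areas are nonpositive then each is a normal disk of one of the three forms, and the prescription on the arcs ($\gamma_i,\gamma_{i+1}$ non-skirting in (5.7); exactly one skirting in (5.8)) cuts down to a short list of pairs of forms. The gluing of the shared face is a gear rotation, and the hypothesis that this rotation carries the boundary point on $B_i$ to that on $B_{i+1}$ \emph{without passing a bigon edge} forces the superimposed arcs to bound a disk on $F$ whose boundary meets $\pi(L)$ in too few edges, contradicting weak primeness or $r(\pi(L),F)>4$, or else (by the argument used for \reflem{NoParabComprAnnulus}, via weak twist-reducedness) it produces a string of bigons where the diagram has none; the one surviving degenerate arrangement in (5.8) is exactly the one excluded by the extra hypothesis $x_i'\ne x_{i+1}'$, just as in \cite[Lemma~5.8]{lac00}. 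In each subcase the positive genus of $F$ is used, exactly as in \reflem{NoParabIncomp} and \refthm{chiguts}, to rule out the possibility that the disks produced would cap $F$ off to a sphere. Finally, (5.5) uses no chunk geometry at all: the lift $\widetilde\gamma$ of $k$ times $p/q$ runs from $x=0$ to $x=k|q|\,\Cr(K,\pi(L))$ in the longitude-unwound cover of $\bdy X$, and a bookkeeping argument on the harlequin tiling, tracking how the integers $x_i$ and the averages $x_i'=(x_{i-1}+x_{i+1})/2$ can change as $\widetilde\gamma$ makes net longitudinal progress, gives the bound of $k|q|$ exactly as in \cite[Lemma~5.5]{lac00}. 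I expect the main obstacle to be the case analysis in (5.7)--(5.8): keeping the gear-rotation gluing, the ``no rotation past a bigon edge'' condition, and the possible pairs of normal-disk forms straight, while in each subcase extracting the right curve on $F$ to contradict weak primeness, weak twist-reducedness, or $r(\pi(L),F)>4$ --- this is where the extra generality of chunks (non-contractible faces, bigon faces) forces more care than Lackenby's original polyhedral argument, even though no new idea is required.
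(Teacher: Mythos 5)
Your plan matches the paper's proof in structure: for (5.6)--(5.8) you assume the relevant combinatorial areas are nonpositive, use \reflem{Marc4.2} plus the Loop Theorem and \refprop{NonnegArea} to upgrade the $S_i$ to normal disks of area zero, apply \reflem{NormalSquare} to restrict their forms, and then superimpose boundaries via the gear rotation and derive contradictions from weak primeness and weak twist-reducedness; (5.5) is indeed pure bookkeeping on the harlequin tiling, exactly as in Lackenby. So the route is the same.

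Two misstatements are worth flagging. First, you claim that ``in each subcase the positive genus of $F$ is used, exactly as in \reflem{NoParabIncomp} and \refthm{chiguts}, to rule out the possibility that the disks produced would cap $F$ off to a sphere.'' That genus argument is specific to \reflem{NoParabIncomp}, where a whole \emph{cycle} of fused normal squares has to be ruled out; it plays no role in (5.7) or (5.8). The paper's proof of (5.7) only ever considers the two adjacent disks $S_i$, $S_{i+1}$: in the form~(1) case, weak primeness plus \reflem{MarcLemma7} forces the superimposed boundaries to cut off a bigon between them; in the form~(3) case, weak twist-reducedness forces a bigon between them; in either case the resulting bigon is adjacent to the shared edge and directly contradicts the hypothesis that the gluing does not rotate past a bigon edge. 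No global statement about $F$ is needed. Second, you phrase the contradiction as producing ``a string of bigons where the diagram has none,'' but the diagram is perfectly allowed to have bigons elsewhere --- the hypothesis being violated is local, about the particular rotation between $B_i$ and $B_{i+1}$ not crossing a bigon edge. Finally, your (5.7)/(5.8) plan should be explicit that the non-skirting/skirting prescription reduces the candidate forms to $\{(1),(3)\}$ and $\{(2),(3)\}$ respectively and that the remaining subcase of (5.8), namely $S_i$ of form~(3) with $S_{i+1}$ a boundary bigon encircling an edge adjacent to a bigon region bounded by $S_i$, is killed by the explicit computation $x_i'=x_{i+1}'$ in the unwound tiling, exactly as you anticipated.
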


\begin{proof}
  For (5.6), (5.7), and (5.8) we will assume the combinatorial areas are zero. Then \reflem{Marc4.2} implies the $S_i$ are normal, and \reflem{NormalSquare} implies it has one of three forms: form (1) meeting a boundary face and two interior edges, form (2) a boundary bigon, or form (3) meeting two boundary faces and interior faces of the same colour.
  
  (5.6) If $\gamma_i$ is a skirting arc, $a(S_i)=0$, and $S_i$ is not a boundary bigon, it has form (1) or (3). But both of these give non-skirting arcs.

  (5.7) If $a(S_i)=a(S_{i+1})=0$, then both are normal disks of form (1) or (3) of \reflem{NormalSquare} because the arcs are non-skirting. Then $\bdy S_i$ and $\bdy S_{i+1}$ glue in (without loss of generality) a white face of the chunk via a clockwise twist. As in \refsec{Hyperbolic}, superimpose $\bdy S_i$ and $\bdy S_{i+1}$ onto the boundary of the same chunk.
In the first case, $\bdy S_i \cap \bdy S_{i+1}$ must be nonempty. Then \reflem{MarcLemma7} implies the boundaries of the squares meet in opposite white faces. Since the diagram is weakly prime, this forces $\bdy S_{i+1}$ and $\bdy S_i$ to bound a bigon, as on the left of \reffig{5.7}, contradicting the assumption that there is no bigon edge adjacent to the boundary squares. In the case $S_i$ and $S_{i+1}$ are of form (3), weakly twist-reduced implies that both bound a string of bigons, and the two have just one edge between them implies that there is a single bigon between $\bdy S_i$ and $\bdy S_{i+1}$, as on the right of \reffig{5.7}. Again this is a contradiction.

\begin{figure}
\import{figures/}{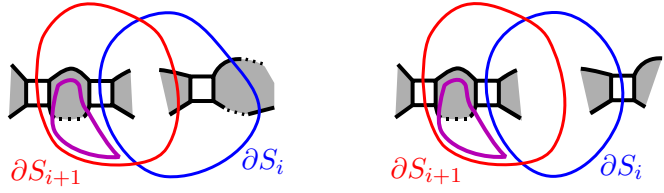}
\caption{Left: if normal disks $S_i$ and $S_{i+1}$ are of form (1), they bound a bigon between them. Right: the same conclusion holds if they are of form (3).}
\label{Fig:5.7}
\end{figure}

  (5.8) If $a(S_i)=a(S_{i+1})=0$, either $S_i$ is a boundary bigon and $S_{i+1}$ is of form (3) or vice versa. They glue in a face via a twist. As in the previous argument, without loss of generality $S_i$ glues to $S_{i+1}$ in a white face by a single clockwise rotation. If $S_i$ is a boundary bigon, then the gluing implies $S_{i+1}$ bounds a bigon, and this contradicts our assumption on $B_i$, $B_{i+1}$. So $S_{i+1}$ is a boundary bigon and $S_i$ is of form (3). See \reffig{5.8}, left and middle.
In this case, $S_{i+1}$ encircles an edge adjacent to a bigon region bounded by $S_i$. The arc $\gamma_{i+1}$ is a skirting arc running vertically in the tiling of \reffig{HarlequinTiling} (vertical because it must cut off the edge meeting the boundary quad from the upper chunk twice; see \reffig{5.8} right), so
  \[ x_i' = \frac{x_i +1 + x_i - 1}{2} = x_i \quad \mbox{and} \quad
  x_{i+1}' = \frac{x_i + x_i}{2} = x_i. \]
  This contradicts the assumption that $x_i\neq x_{i+1}'$.

\begin{figure}
\import{figures/}{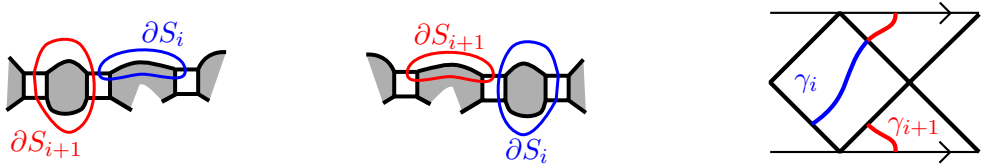}
\caption{Left: If $S_{i+1}$ is of form (3) and $S_i$ is a boundary bigon. Middle: $S_{i+1}$ is a boundary bigon and $S_i$ is of form (3). Right: $\gamma_{i+1}$ is a skirting arc running vertically.}
\label{Fig:5.8}
\end{figure}

  (5.5)
  As in \cite[Lemma~5.5]{lac00}: $|x_{i+1}' - x_i'|\leq 1$, and if $x_i'\neq x_{i+1}'$ then $\{ x_i',x_{i+1}'\} = \{x_i, x_{i+1}\}$. Thus for each $j\in \ZZ$ with $x_1'\leq j\leq x_1'+k|q|\Cr(K, \pi(L))$, the pair $\{j, j+1\}$ occurs as $\{x_i', x_{i+1}'\}$ for some arc. 
\end{proof}

\begin{proof}[Proof of \refthm{CombLengthWGA}]
  Let $\gamma$ be a curve representing a nonzero multiple $k$ of the slope $p/q$, and let $S$ be an inward extension, with $S_i$ the intersections of $S$ with chunks as above.

  There are $2\tw(K,\pi(L))$ boundary faces meeting $\gamma$ that are not adjacent to their right neighbour across a bigon region of the chunk. The previous lemma implies there are $k|q|$ arcs on $\bdy X$ with $x_i'\neq x_{i+1}'$. Let $\gamma_i$ be one such arc. Then $S_i$, $S_{i+1}$ cannot both be boundary bigons. The previous lemma implies that at least one of $S_i$, $S_{i+1}$ has positive combinatorial area, say $S_i$. Then \reflem{Marc4.2} implies $a(S_i)/|S_i \cap \bdy X| \geq \pi/4$. Then $\ell(\gamma_i,S_i)$ satisfies
  \[ \ell(\gamma_i,S_i) \geq (1/2)\,2\,\tw(K,\pi(L))\,k|q|\,\pi/4 \geq \tw(K,\pi(L))|q|\pi/4.\]
  The factor $1/2$ ensures we have not double counted: for non-skirting $\gamma_i$ it may be the case that both $S_{i-1}$ and $S_{i+1}$ are boundary bigons, and so we must share the combinatorial area of $S_i$ between them.
  
  Now, since $\gamma$ was an arbitrary representative of a nonzero multiple of the slope $p/q$, and since $S$ was an arbitrary inward extension, it follows that
  \[\ell_c(\gamma) \geq \pi|q|\tw(K,\pi(L))/4.\qedhere \]
\end{proof}

\bibliography{hyperbolicwga}
\bibliographystyle{amsplain}

\end{document}